\documentclass[12pt,oneside,reqno,english]{amsart}
\usepackage[foot]{amsaddr}
\usepackage{lmodern} 
\usepackage[T1]{fontenc}
\usepackage[latin9]{luainputenc}
\usepackage{geometry}
\usepackage[active]{srcltx}
\usepackage{color}
\usepackage{babel}
\usepackage{amstext}
\usepackage{amsthm}
\usepackage{amssymb} 
\usepackage{xcolor}
\usepackage{stmaryrd}
\usepackage{graphicx}
\usepackage{setspace}
\usepackage{enumitem}
\usepackage[unicode=true,
 bookmarks=false,
 breaklinks=false,pdfborder={0 0 1},backref=section,colorlinks=true]
 {hyperref}
\hypersetup{
 linkcolor=blue,urlcolor=blue,citecolor=blue,anchorcolor=blue}

\makeatletter
\numberwithin{equation}{section}
\numberwithin{figure}{section}
\theoremstyle{plain}
\newtheorem{thm}{\protect\theoremname}[section]
\theoremstyle{plain}
\newtheorem{assumption}[thm]{\protect\assumptionname}
\theoremstyle{remark}
\newtheorem{rem}[thm]{\protect\remarkname}
\theoremstyle{plain}
\newtheorem{defn}[thm]{\protect\definitionname}
\theoremstyle{remark}
\newtheorem{notation}[thm]{\protect\notationname}
\theoremstyle{plain}
\newtheorem{prop}[thm]{\protect\propositionname}
\theoremstyle{plain}
\newtheorem{lem}[thm]{\protect\lemmaname}
\theoremstyle{plain}

\@ifundefined{date}{}{\date{}}
\usepackage{babel}

\makeatother

\newcommand{\R}{\mathbb{R}}

\providecommand{\assumptionname}{Assumption}
\providecommand{\lemmaname}{Lemma}
\providecommand{\corollaryname}{Corollary}
\providecommand{\notationname}{Notation}
\providecommand{\propositionname}{Proposition}
\providecommand{\remarkname}{Remark}
\providecommand{\definitionname}{Definition}
\providecommand{\theoremname}{Theorem}

\begin{document}
\title[Regular points and the boundary-value problem for a hypoelliptic process]{Regular points and the boundary-value problem for a hypoelliptic process in a general smooth domain}
\author[Mouad Ramil]{Mouad Ramil, Mathias Rousset}
\address{Univ Rennes, INRIA, IRMAR [(Institut de Recherche Mathematique de Rennes)] - UMR 6625, F-35000 Rennes, France}
\email{mouad.ramil@inria.fr, mathias.rousset@inria.fr}

\begin{abstract} We fully describe the set of regular and irregular boundary points for a general hypoelliptic stochastic process $(X(t))_{t\geq0}$ absorbed outside a $C^{3,\,1}$ domain $\Omega$. The well-posedness of the associated boundary-value problem on $\Omega$ is also established in the classical sense. Let us emphasize that, throughout this work, no assumption is made regarding the inherent structure of the boundary of $\Omega$. In particular, characteristic and non-characteristic points may coexist on $\partial\Omega$, thus extending previous results obtained in the literature. We show that, unlike previous settings, the solution to this boundary-value problem may exhibit discontinuity at the boundary $\partial\Omega$. In certain cases involving a $C^\infty$ domain $\Omega$ and $C^\infty$ coefficients, the discontinuity set is even of positive Lebesgue surface measure on $\partial\Omega$. Finally, we highlight a cone condition under which the continuity is preserved at the boundary.
\end{abstract}

\maketitle

\section{Introduction} 

Consider the hypoelliptic process $(X(t)=(q(t),\,p(t)))_{t\geq0}$ which is the strong solution to the stochastic differential equation (SDE)
\begin{equation}\label{eq:sde}
\left\{ \begin{aligned} & \mathrm{d}q(t)=G(p(t))\mathrm{d}t\;,\\
 & \mathrm{d}p(t)=F(q(t),\,p(t))\mathrm{d}t+\Sigma(q(t),\,p(t))\mathrm{d}B(t)\;,
\end{aligned}
\right.
\end{equation}
where $G:\R^d\to\R^d$, $F:\mathbb{R}^{2d}\to\mathbb{R}^d$ and $\Sigma:\R^{2d}\to\R^{d\times d}$ are $C^\infty$ functions.
Given a smooth open domain $\Omega\subset\R^{2d}$, let
$$\tau_{\Omega^c}:=\inf\{t>0:\,X(t)\notin \Omega\}\;.$$
In this article, we are interested in three main questions arising in Potential Theory:
\begin{enumerate}
    \item Considering an initial condition $x\in\partial\Omega$, when is the first exit time $\tau_{\Omega^c}$ equal to zero?
    \item What are the subsets of $\partial\Omega$ which are attained by $(X(t))_{t\geq0}$ with probability one?
    \item Which is an appropriate definition of the classical boundary-value problem on $\Omega$ associated with the hypoelliptic infinitesimal generator of $(X(t))_{t\geq0}$? At which points of $\overline{\Omega}$ is the solution continuous?
\end{enumerate}

\subsection{Question $(1)$} The first question has been widely studied in the literature. The standard terminology is the following; we say that
\begin{itemize}
    \item $x$ is \emph{regular} if $\mathbb{P}_x(\tau_{\Omega^c}=0)=1$,
    \item $x$ is \emph{irregular} if $\mathbb{P}_x(\tau_{\Omega^c}=0)=0$.
\end{itemize}
Note that the Blumenthal zero-one law guarantees that the probability above is either equal to $0$ or $1$. Previous results have shown that when $\Omega$ is a smooth domain and the diffusion process under study admits a uniformly elliptic infinitesimal generator, all boundary points are regular. However, given the lack of noise in the position coordinates $q$ for the process~\eqref{eq:sde}, the infinitesimal generator in this case is only hypoelliptic.

The classification of regular and irregular boundary points for hypoelliptic diffusion processes such as~\eqref{eq:sde} has only been established in the literature in specific settings. Consider for instance the cylindrical setting $\Omega=\mathcal{O}\times\R^d$ where $\mathcal{O}$ is a smooth domain. In this case, the boundary is fully characteristic and the classification of regular and irregular boundary points was established in~\cite{LelRamRey}. The exit event in this case is only determined by the position coordinate which significantly simplifies the computations. More recently, the description of regular boundary points was completed in~\cite{avelin2026weak} for product domains $\Omega:=\mathcal{U}\times\mathcal{V}$. The scheme of proof relies on the construction of well-suited barrier functions and heavily relies on the product structure of the domain.

In this work, we achieve a full classification of the regular and irregular boundary points for a general $C^{3,\,1}$ domain $\Omega$. Our result detailed in Theorem~\ref{thm:reg points} highlights the importance of second order terms of the distance function in the description of regular boundary points. We also show that the tangential or grazing boundary points are to be regular only in certain cases which are explicitly identified, providing an important difference with the characteristic setting studied in~\cite{LelRamRey}.

\subsection{Question $(2)$} We say that the set $\mathcal{P}$ is polar for the process $(X(t))_{t\geq0}$ if for all $x\notin\mathcal{P}$,
$$\mathbb{P}_x(\exists t>0,\,X(t)\in \mathcal{P})=0\;.$$
The boundary $\partial\Omega$ is partitioned in Section~\ref{sec:domain def} using relevant quantities constructed from the distance function. We investigate the polarity of each set of this explicit partition. In particular, we extend a result of~\cite{LelRamRey} and assert that the set of grazing boundary points is always polar, see Section~\ref{sec:regular polar pts}.

\subsection{Question $(3)$} We consider the problem of finding classical solutions $u$ to the boundary-value problem 
\begin{equation}\label{eq:bdy value pb}
\left\{
    \begin{aligned}
        \mathcal{L} u(x)+c(x)u(x) &=g(x)\,,&&\quad x\in \Omega,\\
        u(x) &=f(x) \,,&&\quad x\in \Lambda
    \end{aligned}
\right.
\end{equation}
where $\Lambda\subset\partial\Omega$ is a boundary subset to be determined, $\mathcal{L}$ is the infinitesimal generator of~\eqref{eq:sde}, $c,\,g$ are smooth functions, and $f$ is the prescribed boundary value. We propose an appropriate boundary subset $\Lambda$ leading to the existence of a unique solution $u \in C^2(\Omega)\cap C_b(\Omega\cup\Lambda)$ for any $f$ bounded and continuous on $\Lambda$. We also provide a precise study of the continuity of $u$ at boundary points.

Using standard probabilistic arguments, it can be shown that, if a smooth solution $u$ of~\eqref{eq:bdy value pb} exists, it admits the following probabilistic representation
\begin{equation}\label{eq:def fctn h}
h:x\in\Omega\mapsto\mathbb{E}_x\bigg[\mathrm{e}^{\int_0^{\tau_{\Omega^c}}c(X(s))\mathrm{d}s}f(X(\tau_{\Omega^c}))-\int_0^{\tau_{\Omega^c}}\mathrm{e}^{\int_0^sc(X(r))\mathrm{d}r}g(X(s))\mathrm{d}s\bigg]\;.
\end{equation}
Given the term $f(X(\tau_{\Omega^c}))$ in the expectation, the boundary condition applies only to an almost-surely attainable subset of $\partial\Omega$ (a subset hit by~\eqref{eq:sde} when exiting $\Omega$ with probability one). As a result, studying the exit event and excluding the set of polar points obtained in Question $(2)$, we are able to deduce the well-posedness of~\eqref{eq:bdy value pb} for an appropriate and explicit boundary subset $\Lambda$. The fact that~\eqref{eq:def fctn h} is indeed the unique classical solution to~\eqref{eq:bdy value pb} is obtained by checking that it is a weak solution combined with a hypoellipticity argument. In addition, the proof of the continuity of $h$ at $\Lambda$ involves trajectorial arguments.

Previous works have already provided the well-posedness of the boundary-value problem associated to a hypoelliptic operator on a smooth domain admitting a non-characteristic boundary $\partial\Omega$, see~\cite{cattiaux1992stochastic,derridj1971probleme,bony1969principe}. Through different techniques, the authors were also able to show therein the smoothness of the solution $u$ up to the boundary $\partial\Omega$. In the fully characteristic setting, a vast literature was developed as well regarding the well-posedness of weak solutions, for instance in~\cite{zhu2024regularity,garain2022regularity,sanchez2023krein,bernard2024existence,avelin2026weak,Mischler}. The well-posedness of classical solutions was also developed in~\cite{Vel,LelRamRey}. Solutions therein are shown to be continuous in $\overline{\Omega}$ but exhibit a loss of regularity at the grazing set where they are only H\"older continuous. The optimal H\"older regularity at the grazing set was established in~\cite{ramil2022explicit,kim2026sharp}.

 In this article, we also explore the continuity of the solution $u$ to~\eqref{eq:bdy value pb} on a general $C^{3,\,1}$ domain $\Omega$. We show that the solution $u$ does not always belong to $C_b(\overline{\Omega})$. The discontinuity phenomenon happens when characteristic and non-characteristic points are allowed to coexist at the boundary $\partial\Omega$. This constitutes a fundamental difference with the previous cases studied in the literature. In particular, we are also able to provide a solution $u$ of~\eqref{eq:bdy value pb} with $C^\infty$ coefficients on a $C^\infty$ domain $\Omega$  such that the discontinuity set at the boundary has positive Lebesgue surface measure on $\partial\Omega$, highlighting the possible roughness of solutions even in smooth settings. Nonetheless, the boundary continuity is shown to hold at any boundary point provided the convergent sequence remains in a cone supported by the boundary point, thus excluding sequences that graze the boundary. All in all, this work may be a building stone for more refined work on the study of the boundary regularity of solutions to~\eqref{eq:bdy value pb} for hypoelliptic operators on general domains including less smooth domains.

\section{Model and main results}\label{sec:model}
\subsection{Model}
\subsubsection{Hypoelliptic process}
Let $d\geq1$. In this article, we focus on the study of a general hypoelliptic process $(X(t)=(q(t),\,p(t)))_{t\geq0}$
in $\mathbb{R}^{d}\times\mathbb{R}^{d}$ solution to~\eqref{eq:sde} and study the well-posedness of the associated boundary-value problem~\eqref{eq:bdy value pb} whose coefficients are assumed to satisfy the following conditions.

\begin{assumption}[Coefficients]\label{ass:coeff}\
\begin{itemize}
    \item $F\in C^\infty(\mathbb{R}^{2d},\,\mathbb{R}^d)$\,;
    \item $G:\R^d\to\R^d$ is a $C^\infty$ diffeomorphism\,;
    \item $\Sigma\in C^\infty(\R^{2d},\,\R^{d\times d})$ and $\Sigma(x)$ is invertible for all $x\in\R^{2d}$\,;
    \item $c,\,g\in C_b^\infty(\mathbb{R}^{2d},\,\mathbb{R})$.
\end{itemize}
\end{assumption}

For $x\in\R^{2d}$, we shall denote by $\mathbb{P}_x$ the law of the process $(X(t))_{t\geq0}$
under which $X(0)=x$. The expectation under $\mathbb{P}_x$ is denoted by $\mathbb{E}_x$. Let us also make the following assumption.
\begin{assumption}[Non-explosion]\label{ass:non expl}
The explosion time $\tau_\infty$ of~\eqref{eq:sde} is such that for all $x\in\R^{2d}$,
$$\mathbb{P}_x(\tau_\infty=\infty)=1\;.$$
\end{assumption}

The infinitesimal generator $\mathcal{L}$ associated to the stochastic process $(X(t))_{t\geq0}$ writes for $(q,\;p)\in\mathbb{R}^{d}\times\mathbb{R}^{d}$,
\begin{equation}
\mathcal{L}=\langle G(p),\,\nabla_{q}\rangle+\langle F(q,\,p),\,\nabla_{p}\rangle+\frac{1}{2}\mathrm{Tr}\big(\Sigma(q,\,p)\Sigma(q,\,p)^\dagger\nabla^2_{p,\,p}\big)\;.\label{eq:gen}
\end{equation}
Let us now introduce several notations used throughout this work.

\begin{notation}
\label{not:main}The following notations will be frequently used in
this article. 
\begin{itemize}
\item (Vector notation) We denote by 
\begin{equation}
x=(q,\,p)\in\mathbb{R}^{2d}\label{eq:xqp}
\end{equation}
 a $2d$-dimensional vector where $q,\,p\in\mathbb{R}^{d}$.
\item (Coordinate) $v_i$ is the $i$-th coordinate of the vector $v\in\R^n$.
\item (Open ball) $\mathrm{B}(v,\,\rho)$ is the open ball centered on $v\in\R^n$ of radius $\rho>0$\;.
\item (Identity) $I_n$ is the identity function on $\R^n$\;.
\item (Space function) $C^{k,\,\alpha}(\R^n,\,\R^m)$ is the set of $C^k$ functions with values on $\R^m$ whose partial derivatives of order $k$ are $\alpha$-H\"olderian continuous. Additionally, let $C^{k,\,\alpha}(\R^n):=C^{k,\,\alpha}(\R^n,\,\R)$
\item (Differential) For $G\in C^1(\R^n,\,\R^m)$, we denote by
$$DG(x):=(\partial_{x_j} G_i(x))_{1\leq i\leq m,\,1\leq j\leq n}\in\R^{m\times n}$$
the differential matrix of $G$ at $x\in\R^n$.
\item For a symmetric matrix $\mathbb{M}$, we denote by $\lambda_{\min}(\mathbb{M})$ its smallest eigenvalue and by $\mathrm{rank}(\mathbb{M})$ its rank.
\item (Differential tensors) For $f\in C^{3}(\mathbb{R}^{2d},\,\mathbb{R})$ and $x=(q,\,p)\in\mathbb{R}^{2d}$,
we denote by
\begin{align*}
\nabla_pf(x)&=\left(\partial_{p_i}f(x)\right)_{1\leq i\leq d}\in\R^d\,;\\
\nabla^2_{p,\,p}f(x)&=\left(\partial^2_{p_i,\,p_j}f(x)\right)_{1\leq i,\,j\leq d}\in\R^{d\times d}\,;\\
\nabla^3_{p,\,p,\,p}f(x)&=\left(\partial^3_{p_i,\,p_j,\,p_k}f(x)\right)_{1\leq i,\,j,\,k\leq d}\in\R^{d\times d\times d}\;.
\end{align*}
The same definition holds when considering the partial derivatives with respect to $q$ and/or mixed-derivatives in $q,\,p$. In particular, notice that
$$\nabla^2_{q,\,p}f(x)=(\nabla^2_{p,\,q}f(x))^\dagger\;.$$
\end{itemize}
\end{notation}
\subsubsection{Domain definition}\label{sec:domain def}

For any set $A\subset\R^{2d}$, let $\tau_{A}$ be the first hitting time of $A$ for $(X(t))_{t\geq0}$, i.e.
\begin{equation}\label{eq:not hitting time}
\tau_{A}:=\inf\{t>0:X(t)\in A\}\;.
\end{equation}

Let us now consider a set $\Omega\subset\R^{2d}$ and let $\tau_{\Omega^c}$ be the first exit time from $\Omega$ of  $(X(t))_{t\geq0}$. We make the following assumptions on $\Omega$ and $\tau_{\Omega^c}$ depending on the positivity of the constant
\begin{equation}\label{eq:def c_infty}
c_\infty:=\sup_{x\in\Omega}\big(c(x)\lor0\big)\;.
\end{equation}

\begin{assumption}[Exit time]\label{ass:Omega}
The set $\Omega$ is an open connected and $C^{3,\,1}$ subset of $\R^{2d}$. In addition, we assume that
\begin{equation}\label{eq:uniform integ tau}
\begin{cases}
    \underset{T\rightarrow\infty}{\lim}\;\underset{x\in\Omega}{\sup}\;\mathbb{E}_x\big[\mathbf{1}_{\tau_{\Omega^c}>T}(1+||g||_\infty\tau_{\Omega^c})\big]=0\;,& \text{if $c_\infty\leq0$,}\\
    \underset{T\rightarrow\infty}{\lim}\;\underset{x\in\Omega}{\sup}\;\mathbb{E}_x\big[\mathbf{1}_{\tau_{\Omega^c}>T}\,\mathrm{e}^{c_\infty\tau_{\Omega^c}}\big]=0\;,& \text{otherwise.}
  \end{cases}
  \end{equation}
\end{assumption}
\begin{rem} Assume that $c_\infty>0$ and let $\Omega=\mathcal{O}\times\R^d$ be a cylinder. Under suitable assumptions on the variations of $F,\,\Sigma$ and the existence of a Lyapunov function (see Assumptions (A1-glob), (A2-glob), (H) in~\cite{champagnat2024quasi}), it was shown in~\cite[Theorem 2.5]{champagnat2024quasi} that the process $(X(t))_{t\geq0}$ admits a unique quasi-stationary distribution on $\Omega$, which implies uniform exponential moments of $\tau_{\Omega^c}$, thus ensuring~\eqref{eq:uniform integ tau} provided $c_\infty$ is not too large.
\end{rem}
Let us denote by $\mathrm{d}_{\Omega}$ the signed Euclidean distance to the boundary $\partial\Omega$, i.e.
\begin{equation*}
  \mathrm{d}_{\Omega} : x \in \mathbb{R}^{2d} \mapsto \begin{cases}
    \mathrm{dist}(x,\partial\Omega) & \text{if $x \in \Omega$,}\\
    -\mathrm{dist}(x,\partial\Omega) & \text{if $x \notin \Omega$.}
  \end{cases}
\end{equation*}
Given Assumption~\ref{ass:Omega}, the distance function $\mathrm{d}_{\Omega}$ belongs to $C^{3,\,1}$ in a neighborhood of any boundary point in $\partial\Omega$. Denote by $n(x)\in\R^{2d}$ the unitary inward normal vector to $\Omega$ at $x\in\partial\Omega$. Then, for all $x\in\partial\Omega$,
\begin{equation}\label{eq:normal gradient Psi}
n(x)=\nabla\mathrm{d}_{\Omega}(x)\;.
\end{equation}
The normal vector $n(x)$ is also decomposed as follows
\begin{equation}\label{eq:projection n(x)}
n(x)=\begin{pmatrix}n_q(x) \\
n_p(x)
\end{pmatrix}\in\R^{2d}\,,
\end{equation}
where $n_{q}(x),\,n_p(x)\in\R^d$ correspond to the projections of $n(x)$ on the $(q,\,p)$-coordinates.

Let us now define a partition of the boundary $\partial\Omega$ which, as it will turn out, enables the classification of regular points. In particular, this variety of behaviors contrasts with the well-known elliptic setting when the domain is smooth. The partitioning involves among other quantities the following scalar product defined for $x=(q,\,p)\in\R^{2d}$ by
\begin{equation}\label{eq:def xi}
\xi(x):=\langle G(p),\,n_q(q,\,p)\rangle\;.
\end{equation}
The boundary $\partial\Omega$ is then partitioned as follows
\begin{align*}
\mathrm{T}&:=\{x\in\partial\Omega:\,|n_p(x)|>0\}&&\text{(non-characteristic or transversal boundary)}\;,\\
\Upsilon&:=\{x\in\partial\Omega:\,|n_p(x)|=0\,,\,\,\,\lambda_{\min}(\nabla^2_{p,\,p}\mathrm{d}_\Omega(x))<0\}&&\text{(characteristic $p$-nonconvex boundary)}\;,\\
\Gamma&:=\{x\in\partial\Omega:\,|n_p(x)|=0\,,\,\,\,\lambda_{\min}(\nabla^2_{p,\,p}\mathrm{d}_\Omega(x))\geq0\}&&\text{(characteristic $p$-convex boundary)}\;.
\end{align*} 
Furthermore, the subset $\Gamma$ is partitioned as follows
\begin{align*}
\Gamma^+&:=\{x\in\Gamma:\,\xi(x)>0\}&&\text{(entering velocities)}\,,\\
\Gamma^-&:=\{x\in\Gamma:\,\xi(x)<0\}&&\text{(exiting velocities)}\,,\\
\Gamma^0&:=\{x\in\Gamma:\,\xi(x)=0\}&&\text{(grazing velocities)}\,.
\end{align*}
Note that the normal vector $n(x)$ is pointed toward the inside of $\Omega$, hence the names provided above. This convention differs with the choice made in~\cite{LelRamRey} for instance where the normal vector is pointed toward the outside of $\Omega$. We conclude this partition with a decomposition of the grazing boundary $\Gamma^0$
\begin{align*}
\Gamma^{0,\,r}&:=\{x\in\Gamma^0:\,\mathrm{rank}(\nabla^2_{p,\,p}\mathrm{d}_\Omega(x))\in\llbracket0,\,2\rrbracket\}&&\text{(regular grazing boundary)}\,,\\
\Gamma^{0,\,i}&:=\{x\in\Gamma^0:\,\mathrm{rank}(\nabla^2_{p,\,p}\mathrm{d}_\Omega(x))\geq3\}&&\text{(irregular grazing boundary)}\,.
\end{align*}

The partitioning of $\partial\Omega$ is actually independent of the choice of the level set function defining the domain $\Omega$. In fact, we shall see in Proposition~\ref{prop:indep of the def function} that if we replace the distance $\mathrm{d}_\Omega$ in the partitioning by any smooth level set function without critical point, then the corresponding subsets in the partition of $\partial\Omega$ remain invariant.

\subsubsection{Discussion on the partitioning}\label{sec:discussion partitioning}
The cylindrical setting ($\Omega=\mathcal{O}\times\R^d$) studied for instance in~\cite{LelRamRey} is a particular case of the present work. The partitioning of $\partial\Omega$ in this case is given by
$$\partial\Omega=\Gamma^+\cup\Gamma^-\cup\Gamma^0\;.$$
If we denote by $\mathcal{R}$ and $\mathcal{I}$ the set of regular and irregular boundary points on $\partial\Omega$, then
$$\mathcal{R}=\Gamma^-\cup\Gamma^0\;,\qquad\mathcal{I}=\Gamma^+\;,$$
which was shown in~\cite[Proposition 2.8]{LelRamRey} for a simplified version of~\eqref{eq:sde}. In the non-cylindrical setting, in many instances the domain of interest is delimited by level sets of a separable \emph{Hamiltonian function}, see for instance~\cite{EK},
\begin{equation}\label{eq:hamiltonian set}
\Omega:=\{U(q)+|p|^2/2>U_m\}\;.
\end{equation}
In this case, if we assume that $G=I_d$ in~\eqref{eq:sde}, the partitioning of $\partial\Omega$ is given by
$$\partial\Omega=\mathrm{T}\cup\Gamma^0\;.$$
The description of the regular and irregular boundary points in this case as well as in the cylindrical setting will be a particular consequence of Theorem~\ref{thm:reg points}.

\subsubsection{Cylindrical part}   The interior of the boundary complementary of $\mathrm{T}$ satisfies by definition $\partial_q d_{\Omega} \neq 0 $ on each its points. By the implicit function theorem, this implies that it is locally cylindrical, that is of the form $\mathcal{O}\times\R^d$. In particular the momentum Hessian vanishes: $\nabla_{p,p}^2 d_\Omega = 0$, and the classification of regular points simplifies as in general cylindrical case described above.
The classification above is thus meaningful for points on the boundary of the non-characteristic set $\mathrm{T}$, and it could be argued from the decomposition that for the latter, $\Gamma$ are of "cylindrical type", while  $\Upsilon$ are of "transversal type".

\subsection{Main results}

\subsubsection{Regular, irregular points and polar sets}\label{sec:regular polar pts} Our first main result describes the regularity of boundary points at $\partial\Omega$ for the process~\eqref{eq:sde}. Denote by $\mathcal{R}$ and $\mathcal{I}$ the set of regular and irregular points at $\partial\Omega$ for the process $(X(t))_{t\geq0}$. 
\begin{thm}[Description of $\mathcal{R}$ and $\mathcal{I}$]\label{thm:reg points}\
\begin{align*}
\mathcal{R}&=\mathrm{T}\cup\Upsilon\cup\Gamma^-\cup\Gamma^{0,\,r}\;,\\
\mathcal{I}&=\Gamma^+\cup\Gamma^{0,\,i}\;.
\end{align*}
\end{thm}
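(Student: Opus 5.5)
Fix $x_0\in\partial\Omega$. The idea is to study the small-time behaviour of $\mathrm{d}_\Omega(X(t))$ under $\mathbb{P}_{x_0}$ by an Itô--Taylor expansion of the $C^{3,1}$ function $\mathrm{d}_\Omega$ near $x_0$. Regularity then follows from the Blumenthal zero--one law: it suffices to show that $\mathbb{P}_{x_0}(\mathrm{d}_\Omega(X(t))<0)$ stays bounded below along a sequence $t\downarrow0$. Irregularity follows if $\mathrm{d}_\Omega(X(t))>0$ on $(0,\varepsilon)$ with positive probability. Write $X(t)-x_0=(\delta q,\delta p)$ with
\[
\delta p\approx \Sigma(x_0)B(t)+F(x_0)t,\qquad \delta q\approx G(p_0)t+DG(p_0)\int_0^t\delta p\,\mathrm{d}s,
\]
so that $\delta p\sim t^{1/2}$ and $\delta q-G(p_0)t\sim t^{3/2}$. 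Expanding,
\[
\mathrm{d}_\Omega(X(t))=\langle n_p,\delta p\rangle+\langle n_q,\delta q\rangle+\tfrac12\langle \delta p,\nabla^2_{p,p}\mathrm{d}_\Omega\,\delta p\rangle+O(t^{3/2}).
\]

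\textbf{Case $\mathrm{T}$.} The leading term $\langle n_p,\Sigma(x_0)B(t)\rangle$ is a nondegenerate Gaussian of order $t^{1/2}$, because $\Sigma(x_0)$ is invertible and $n_p\neq0$. Hence $\mathbb{P}(\mathrm{d}_\Omega(X(t))<0)\to1/2$, and $x_0$ is regular.

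\textbf{Case $n_p=0$.} The leading terms are now $\xi(x_0)t$ and $\tfrac12\langle\Sigma B(t),\nabla^2_{p,p}\mathrm{d}_\Omega\,\Sigma B(t)\rangle$, both of order $t$.
\begin{itemize}
\item On $\Upsilon$ the Hessian has a negative eigenvalue. With positive probability (uniform in $t$ after Brownian rescaling), $B(t)/\sqrt t$ lies in a cone where the quadratic form is less than $-C$ for $C$ large enough to beat $\xi t$, so the point is regular.
\item On $\Gamma^-$ the drift $\xi t<0$ competes with a nonnegative quadratic form. With positive probability $|B(t)|\le\eta\sqrt t$, making $\mathrm{d}_\Omega<0$, so the point is regular.
\item On $\Gamma^+$, $\xi t>0$ dominates. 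Since $\nabla^2_{p,p}\mathrm{d}_\Omega\ge0$, I need a pathwise bound showing that the remaining terms are $o(t)$ uniformly on $[0,\varepsilon]$. The mixed term $\langle\delta q,\nabla^2_{q,p}\delta p\rangle$ is $O(t^{3/2})$. The correction $\langle n_q,DG\int\delta p\rangle$ is of order $t^{3/2}$ by the law of the iterated logarithm, and $\delta q$ also has the $\xi t$ part. Then $\mathrm{d}_\Omega(X(t))\ge \xi t/2$ near $0$ almost surely, so the point is irregular.
\end{itemize}

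\textbf{Case $\Gamma^0$.} This is the main obstacle. The order-$t$ term is only the nonnegative quadratic form $Q(W)=\tfrac12\langle W,\Sigma^\dagger\nabla^2_{p,p}\mathrm{d}_\Omega\Sigma W\rangle$ in a $k$-dimensional Brownian motion $W$, with $k$ the rank. The competing terms are of order $t^{3/2}$: the Gaussian $\langle n_q,DG(p_0)\Sigma\int_0^t B\rangle$, cubic terms $\nabla^3_{p,p,p}\mathrm{d}_\Omega(\delta p)^3$, and mixed terms $\nabla^2_{q,p}\mathrm{d}_\Omega[G t,\delta p]$. Exiting therefore requires the $k$-dimensional Brownian motion to come within $O(t^{1/2})$ of the origin, i.e. relative distance $|W_{\mathrm{proj}}(t)|\lesssim t^{1/4}\cdot t^{1/4}$. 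This is essentially a question of whether the $k$-dimensional Bessel process approaches $0$ faster than $\sqrt t$ at rate $t^{1/4}$ infinitely often as $t\to0$, which is a Dvoretzky--Erdős type test.
\begin{itemize}
\item For $k\le2$, the lower envelope test shows that $|W(t)|\le t^{1/2}h(t)$ infinitely often for $h(t)=t^{1/4}$. This holds because $\int (h^{k-2}\text{ or }1/|\log h|)\,\mathrm{d}t/t$ diverges for $k\le 2$. Combined with the independent nonzero Gaussian fluctuation of order $t^{3/2}$ coming from the remaining directions or the integrated noise, this gives infinitely many exits, so the point is regular.
\item For $k\ge3$, $\int t^{(k-2)/4}\,\mathrm{d}t/t<\infty$, so eventually $Q(W(t))\ge c\,t\cdot t^{1/2-\epsilon'}$, which dominates the $O(t^{3/2}\sqrt{\log\log})$ terms. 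Hence $\mathrm{d}_\Omega>0$ near $0$, and the point is irregular.
\end{itemize}

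For the technical steps, I would first change coordinates locally (invariance under level-set functions, as in Proposition~\ref{prop:indep of the def function}) so that $\nabla^2_{p,p}\mathrm{d}_\Omega(x_0)$ is diagonal and $\Sigma(x_0)=I$. I would then use Girsanov and time-change comparisons to reduce to constant coefficients. Controlling the errors uniformly requires careful pathwise LIL estimates, and the critical $\Gamma^0$ rank computation is where I expect the real difficulty.
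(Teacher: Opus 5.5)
Your treatment of $\mathrm{T}$, $\Upsilon$, $\Gamma^\pm$ and $\Gamma^{0,\,i}$ is sound and essentially parallels the paper. You use fixed-time probabilities plus Blumenthal where the paper computes almost-sure $\liminf$'s via the LIL and Gantert's theorem, and a polynomial rather than logarithmic Dvoretzky--Erd\H{o}s envelope for rank $\geq3$; both work.

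The genuine gap is the inclusion $\Gamma^{0,\,r}\subset\mathcal{R}$. This is the hard part of the theorem, and you settle it with a single unproved sentence. For rank $k\in\{1,2\}$ your fixed-time criterion cannot work, since $\mathbb{P}(|W_{\mathrm{proj}}(t)|\le t^{3/4})\to0$. You must instead argue pathwise along random times $\tau_n\downarrow0$ at which the projected Brownian motion nearly returns to $0$. At those times the competing order-$t^{3/2}$ quantity is
$\tfrac16\nabla^3_{p,p,p}\mathrm{d}_\Omega[\delta p]^3+t\langle\alpha,\delta p\rangle+\int_0^t\langle\theta,W(s)\rangle\,\mathrm{d}s$, with $\theta=\Sigma(x_0)^\dagger DG(p_0)^\dagger n_q(x_0)\neq0$. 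This is \emph{not} an independent Gaussian: its integrated part along the range of the Hessian is a functional of the very path that selects $\tau_n$.

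Consider the basic case $d\le2$ with nondegenerate $\nabla^2_{p,p}\mathrm{d}_\Omega$, e.g.\ the Hamiltonian domain. There are no ``remaining directions'' at all. The only possible source of a negative sign is then $\int_0^{\tau_n}\langle\theta,W(s)\rangle\,\mathrm{d}s$, evaluated at times where $W(\tau_n)\approx0$. You would have to show that this integral is negative infinitely often and not anomalously small. It must be at least of size $\tau_n^{3/2}\log(1/\tau_n)^{-3/2-\epsilon}$, so that it beats both the quadratic term and the next-order remainder. That remainder must therefore be computed to $O(t^2\log\log(1/t)^2)$, not merely $o(t^{3/2})$. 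Nothing in your sketch provides this. Also, with $h(t)=t^{1/4}$ the quadratic term is of the same order $t^{3/2}$ as the fluctuation, so you need near-returns much closer than that.

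Proposition~\ref{prop:liminf bessel int BM} supplies exactly these missing ingredients:
(i) Spitzer's test in dimension $2$ gives returns with $|W_1(\tau_n)|^2+|W_2(\tau_n)|^2=o(\tau_n^{\rho})$ for every $\rho$, which makes the quadratic part negligible.
(ii) Since $\tau_n$ is built from $|W_1|,|W_2|$ only, coordinate sign flips preserve it. Combined with Blumenthal's zero-one law in a contradiction argument, this isolates $\int_0^{\tau_n}W_j$ for a coordinate with $\widetilde\theta_j\neq0$.
(iii) Lachal's lower envelope for $(W_j,\int_0^\cdot W_j)$ then forces $|\int_0^{\tau_n}W_j|\gg\tau_n^{3/2}\log(1/\tau_n)^{-3/2-\epsilon}$, because $W_j(\tau_n)$ is tiny.
(iv) When the nonzero component of $\widetilde\theta$ lies outside the Hessian's range, a scaling/independence argument concludes.

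Some argument of this type, handling the dependence between the near-return times and the integrated noise, is the missing idea in your proposal. Without it the regularity of $\Gamma^{0,\,r}$ is not established.
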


\noindent Let us provide some comments on this result. Unlike what can be expected, the boundary subset
$$\{x\in\partial\Omega:\,|n_p(x)|=0\,,\,\,\,\xi(x)>0\}$$
is not necessarily irregular. In fact, the existence of a negative eigenvalue for the Hessian matrix $\nabla^2_{p,\,p}\mathrm{d}_\Omega(x)$ takes precedent over the sign of $\xi(x)$ to guarantee the regularity. Another striking element is that the regularity of a point $x\in\Gamma^0$ depends on the rank of the matrix $\nabla^2_{p,\,p}\mathrm{d}_\Omega(x)$. For instance, in the Hamiltonian setting described in~\eqref{eq:hamiltonian set}, the Hessian matrix is the identity matrix and therefore the regularity of $\Gamma^0$ depends on the phase-space dimension as follows
\begin{itemize}
    \item if $d\leq2$,\quad$\Gamma^0$ is regular;
    \item if $d\geq3$,\quad$\Gamma^0$ is irregular.
\end{itemize}

Let us also show the polarity of certain boundary subsets.

\begin{thm}[Polar set]\label{thm:polarity} The set
$$\big\{x\in\partial\Omega:|n_p(x)|=0,\,\xi(x)=0\big\}\cup\big\{x\in\partial\Omega:|n_p(x)|=0,\,\nabla^2_{p,\,p}\mathrm{d}_\Omega(x)\neq\mathbb{O}_{d}\big\}$$
is polar for the process~\eqref{eq:sde}.
\end{thm}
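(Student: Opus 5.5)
The plan is to reduce the statement to a local, two–dimensional non-hitting problem of Kolmogorov type and to solve it with a Lyapunov function that blows up on the set. Write $S_1:=\{x\in\partial\Omega:|n_p(x)|=0,\,\xi(x)=0\}$ and $S_2:=\{x\in\partial\Omega:|n_p(x)|=0,\,\nabla^2_{p,p}\mathrm{d}_\Omega(x)\neq\mathbb{O}_d\}$. Polarity is a local property up to countable unions, by the strong Markov property and Assumption~\ref{ass:non expl}. It therefore suffices to show the following: every $x_0\in S_1\cup S_2$ has a neighbourhood $\mathcal{V}$ such that $(S_1\cup S_2)\cap\mathcal{V}$ is polar.

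\textbf{Step 1 (local geometry).} On $\{n_p=0\}\cap\partial\Omega$ we have $|n_q|=1$. I first treat $x_0\in S_2$. Let $v$ be a unit vector with $\nabla^2_{p,p}\mathrm{d}_\Omega(x_0)v\neq0$ and set $\zeta:=\langle\nabla_p\mathrm{d}_\Omega,e\rangle$ for a suitable unit $e$, so that $\nabla_p\zeta(x_0)\neq0$. Then near $x_0$,
\[
S_2\subset\{\psi=0,\ \zeta=0\},\qquad \psi:=\mathrm{d}_\Omega ,
\]
where $\nabla_q\psi(x_0)\neq0$, $\nabla_p\psi(x_0)=0$ and $\nabla_p\zeta(x_0)\neq0$.

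For $x_0\in S_1\setminus S_2$, the full $p$-Hessian of $\mathrm{d}_\Omega$ vanishes. Hence $\nabla_p\xi(x_0)=DG(p_0)^\dagger n_q(x_0)\neq0$, since $G$ is a diffeomorphism. I then take $\zeta:=\mathcal{L}\mathrm{d}_\Omega$, which coincides with $\xi$ up to terms vanishing on $\{n_p=0\}$ at first order, and again get $\nabla_p\zeta(x_0)\neq0$. In both cases, after a local $C^{2}$ change of variables $(q,p)\mapsto(a,b,y)$ with $a=\psi$ and $b=\zeta$, the set lies in $\{a=b=0\}$. The dynamics of this pair read
\[
\mathrm{d}a=\beta(X)\,\mathrm{d}t+\langle\nabla_p\psi,\Sigma\,\mathrm{d}B\rangle,\qquad \mathrm{d}b=\gamma(X)\,\mathrm{d}t+\langle\nabla_p\zeta,\Sigma\,\mathrm{d}B\rangle .
\]
Here the noise in $b$ is nondegenerate, with $|\Sigma^\dagger\nabla_p\zeta|\geq c>0$. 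The drift $\beta=\mathcal{L}\psi$ satisfies $|\beta|\leq C(|b|+|a|)$ in the case $S_1$, and in the case $S_2$ it is handled together with the term $\langle\nabla_p\psi,\Sigma\,\mathrm{d}B\rangle$. That noise has size $O(|\nabla_p\psi|)=O(|b|+|a|+|\text{other }p\text{-components}|)$; in the $S_2$ case I will arrange the choice of $e$, for instance by diagonalising the Hessian, so that $\nabla_p\psi$ is controlled by $|b|$ plus a term of the same homogeneity. In both situations the pair $(a,b)$ is thus a perturbation of the Kolmogorov pair $\mathrm{d}a=b\,\mathrm{d}t$, $\mathrm{d}b=\mathrm{d}W$, under the anisotropic scaling in which $a$ has weight $3$ and $b$ has weight $1$.

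\textbf{Step 2 (Lyapunov function).} I will look for $V(a,b)=\rho(a,b)^{-\alpha}$ with the homogeneous quasi-norm $\rho=(a^2+b^6)^{1/6}$ and some small $\alpha>0$. The goal is to show $\mathcal{L}(V(a,b))\leq C\,V$ on $\mathcal{V}\setminus\{a=b=0\}$. For the model operator $b\partial_a+\tfrac12\partial_b^2$, the quantity $\rho^{2+\alpha}\,\mathcal{L}_0V$ is homogeneous of degree $0$, so it suffices to check its sign on the unit quasi-sphere. On that sphere the second-order term contributes $\tfrac12\alpha(\alpha+6)\,b^{10}\rho^{-10}$ minus a term linear in $\alpha$ of order $b^4\rho^{-4}$. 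The transport term contributes $-\tfrac{\alpha}{3}ab\rho^{-6}$, which is odd in $b$.

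The sign issue is the main obstacle. A purely radial $\rho^{-\alpha}$ may fail near the points $b=0$, $a=\pm1$ of the sphere. There I will try an angular correction $V=\rho^{-\alpha}\Theta(a\rho^{-3})$. The profile $\Theta$ should be positive and chosen by solving a one-dimensional ODE along the sphere, exactly as in the classical proof that the two-dimensional Kolmogorov diffusion does not hit the origin. The perturbative terms from Step 1 are of strictly lower homogeneity, so they are absorbed into $C\,V$ after shrinking $\mathcal{V}$.

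\textbf{Step 3 (conclusion).} With $V\geq0$, $V=+\infty$ exactly on $\{a=b=0\}$ and $\mathcal{L}V\leq CV$, the process $\mathrm{e}^{-Ct}V(X(t\wedge\tau_n\wedge\sigma))$ is a supermartingale, where $\tau_n$ is the first time $V\geq n$ and $\sigma$ is the exit time from $\mathcal{V}$. Starting from $x\notin\{a=b=0\}$, this gives $\mathbb{P}_x(\tau_n<\sigma\wedge T)\leq \mathrm{e}^{CT}V(x)/n\to0$. Iterating over successive returns to $\mathcal{V}$ via the strong Markov property, and covering $S_1\cup S_2$ by countably many such neighbourhoods, yields polarity.
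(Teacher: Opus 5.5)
\textbf{There is a genuine gap in Step~1, and it cannot be repaired by choosing $\Theta$ or by shrinking $\mathcal{V}$.} The problem is the martingale part of $a=\mathrm{d}_\Omega(X)$, whose quadratic-variation density is $|\Sigma^\dagger\nabla_p\mathrm{d}_\Omega|^2$.

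With your weights ($a$ of degree $3$, $b$ of degree $1$), the term $\frac12|\Sigma^\dagger\nabla_p\mathrm{d}_\Omega|^2\partial_a^2V$ has size $|\nabla_p\mathrm{d}_\Omega|^2\rho^{-\alpha-6}$. It is negligible against the model part $\rho^{-\alpha-2}$ only if $|\nabla_p\mathrm{d}_\Omega|=o(\rho^2)$. It also has a definite bad sign: on $\{b=0\}$ your $V$ equals $\Theta(\pm1)|a|^{-\alpha/3}$, which is strictly convex in $a$. Hence $\partial_a^2V\gtrsim\rho^{-\alpha-6}$ on a region $\{|b|\leq\eta\rho\}$, whatever positive $\Theta$ you choose. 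This breaks both of your cases.

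\emph{The $S_2$ case.} Here $b=\langle\nabla_p\mathrm{d}_\Omega,e\rangle$ is itself a component of $\nabla_p\mathrm{d}_\Omega$, so $|\nabla_p\mathrm{d}_\Omega|\geq|b|$. Along $b=\eta\rho$ this gives $\mathcal{L}V\gtrsim\eta^2\rho^{-\alpha-4}\gg V$ arbitrarily close to $x_0$. So ``$\nabla_p\psi$ controlled by $|b|$'' is exactly what is \emph{not} enough. In addition, the drift $\mathcal{L}\mathrm{d}_\Omega(x_0)=\xi(x_0)+\frac12\mathrm{Tr}(\Sigma\Sigma^\dagger\nabla^2_{p,\,p}\mathrm{d}_\Omega(x_0))$ is generically non-zero. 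It contributes a term of order $\rho^{-\alpha-3}$ of both signs. Near $S_2$ the pair $(a,b)$ is simply not of Kolmogorov type.

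\emph{Both cases.} $V=+\infty$ on the whole codimension-two manifold $\{a=b=0\}$. That manifold imposes only one scalar condition besides $\mathrm{d}_\Omega=0$, so in general it contains transversal points accumulating at $x_0$, where $a$ has non-degenerate noise.

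\emph{Example.} Take $d=2$, $G=I$, $F=0$, $\Sigma=I$, and $\Omega$ locally equal to $\{q_1>p_1q_2\}$, with $x_0=0$. Then $x_0\in S_1\setminus S_2$, $\nabla_p\xi(x_0)\neq0$, and the target set is locally the line $\{q=0,\,p_1=0\}$. But your $\{a=b=0\}$ is a surface through $x_0$, parametrised by $(q_2,p_2)$. It contains boundary points with $q_2\neq0$, where $\nabla_p\mathrm{d}_\Omega$ is a positive multiple of $(-q_2,0)$. There $\mathcal{L}V\leq CV$ fails.

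So your ``perturbative terms of strictly lower homogeneity'' include terms \emph{more} singular than the model. Even the genuinely milder ones, such as $\gamma\,\partial_bV\sim\rho^{-\alpha-1}$, are not absorbed by $CV\sim\rho^{-\alpha}$. They would require a strict bound $\mathcal{L}_0V\leq-c\rho^{-\alpha-2}$, not just a sign check.

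\textbf{The non-degeneracy claim for $S_1\setminus S_2$ is also wrong.} Vanishing of $\nabla^2_{p,\,p}\mathrm{d}_\Omega(x_0)$ does not remove the mixed term:
$\nabla_p\xi(x_0)=DG(p_0)^\dagger n_q(x_0)+\nabla^2_{p,\,q}\mathrm{d}_\Omega(x_0)G(p_0)$,
and the two terms can cancel. In the same example at $x_0=(q,p)=((0,0),(0,1))$, $\xi$ is a positive multiple of $p_1(1-p_2)$ on $\partial\Omega$, and both $\nabla_p\xi(x_0)$ and $\nabla_p\mathcal{L}\mathrm{d}_\Omega(x_0)$ vanish. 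Moreover, $\frac12\mathrm{Tr}(\Sigma\Sigma^\dagger\nabla^2_{p,\,p}\mathrm{d}_\Omega)$ is not of second order on $\{n_p=0\}$, since its gradient involves $\nabla^3_{p,\,p,\,p}\mathrm{d}_\Omega$.

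\textbf{What a Lyapunov route would need, and how the paper avoids it.} A working Lyapunov function would have to blow up only on the target set, which need not be a manifold. It would also have to interpolate between parabolic, Kolmogorov and two-dimensional logarithmic regimes; none of this is addressed. The paper sidesteps local normal forms entirely. It splits $[0,T]$ into $n$ steps. Using $\alpha$-H\"older continuity of $p$ together with $|n_p|=0$ (resp.\ $\xi=0$ and a small Hessian), it shows that a hit in $(t_k,t_{k+1}]$ forces $X(t_k)$ into a thin set. Via the Gaussian transition-density bound and the coarea estimate of the surface-measure proposition, this set has probability $\lesssim n^{-(3\alpha-3\beta)}$ (resp.\ $n^{-(2\alpha+\beta)}$). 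Summing over $k$ gives a vanishing bound, with no structural assumption on the set.
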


The polar set above can be interpreted as the set of points $x$ at the characteristic boundary ($|n_p(x)| = 0$) defined either by grazing velocities ($\xi(x)=0$) or by not being locally cylindrical ($\nabla^2_{p, \, p} \mathrm{d}_\Omega (x)\neq\mathbb{O}_{d}$). In particular, an immediate consequence of the theorem above is that the sets $\Gamma^0$ and $\Upsilon$ are always non attainable sets for the process~\eqref{eq:sde} starting from almost-every initial condition in $\R^{2d}$. However, the other sets defined in the partitioning of $\partial\Omega$ may be attained. In fact, $\Gamma^+$ and $\Gamma^-$ are attained in the cylindrical setting, see~\cite{lelievre2024estimation}, and $\mathrm{T}$ is attained in the Hamiltonian setting since $\Gamma^0$ is polar.

\begin{rem}[Grazing velocities] Unlike the cylindrical case, the set
$$\{x\in\partial\Omega:\,\xi(x)=0\}$$
may be attained with a positive probability. As an example, let $\mathbb{A}$ be an invertible skew-symmetric matrix and let us define the open set
$$\Omega:=\{(q,\,p)\in\R^{2d}:\,\langle q,\,\mathbb{A}p\rangle>1\}\;.$$
If $G=I_d$, then for all $x=(q,\,p)\in\partial\Omega$,
$$\xi(x)=\langle p,\mathbb{A}p\rangle=0\;.$$
However, under proper recurrence assumptions, the process~\eqref{eq:sde} exits $\Omega$ in finite time and thus reaches $\partial\Omega$ in finite time almost-surely.
\end{rem}
We complete the previous results by describing the attainable boundary points on $\partial\Omega$ for the process~\eqref{eq:sde} starting inside $\Omega$. 
Let us define the boundary subset:
\begin{equation}\label{eqn:boundary set}
\Lambda := \mathrm{T}\cup\big\{x=(q,\,p)\in\partial\Omega:\,|n_p(x)|=0,\,\nabla^2_{p,\,p}\mathrm{d}_\Omega(x)=\mathbb{O}_{d},\,\xi(x)<0\big\} .
\end{equation}
By construction  $\Lambda \subset \mathrm{T} \cup \Gamma^-$;
in conjunction with Theorem~\ref{thm:reg points}, we deduce that the boundary set $\Lambda$ contains regular points only:
$$
\Lambda \subset \mathcal R.
$$
Note also that
\[
\mathcal R \subset \overline{\Lambda}\;.
\]
In fact, points in $\mathcal R \setminus \Lambda$ are by definition the characteristic points with non zero Hessian in velocity $\nabla^2_{p,\,p}\mathrm{d}_\Omega(x) \neq \mathbb{O}_{d}$ which have zero Lebesgue surface measure as is later shown in Proposition~\ref{prop:control neighborhood Gamma_0} and therefore belong to the boundary of the non-characteristic part $\mathrm{T} \subset\Lambda$.
\begin{thm}[Exit event]\label{thm:polarity Gamma+} Recall the notation~\eqref{eq:not hitting time}. For all $x\in\Omega\cup\mathcal{I}$,
\begin{equation}\label{eq:proba hitting ext vs bdy}
\mathbb{P}_x\big( \tau_{\partial \Omega}  = \tau_{\Omega^c} \big)=1
\end{equation}
and
\begin{equation}\label{eq:bdy exit event}
\mathbb{P}_x\big( X(\tau_{\partial \Omega})\in\Lambda \big)=1\;.
\end{equation}
%$$\mathbb{P}_x\big(\{X(\tau_{\Omega^c})\in\mathrm{T}\}\cup\{|n_p(X(\tau_{\Omega^c}))|=0,\,\nabla^2_{p,\,p}\mathrm{d}_\Omega(X(\tau_{\Omega^c}))=\mathbb{O}_{d},\,\xi(X(\tau_{\Omega^c}))<0\}\big)=1\;.$$
\end{thm}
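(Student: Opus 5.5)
Theorem~\ref{thm:polarity Gamma+} has two claims, and I will reduce both to the two earlier results: Theorem~\ref{thm:reg points} (regular and irregular points) and Theorem~\ref{thm:polarity} (polar sets).

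\emph{Proof of \eqref{eq:proba hitting ext vs bdy}.} Fix $x\in\Omega\cup\mathcal{I}$. Since $X$ has continuous paths and $\Omega$ is open, $X(\tau_{\Omega^c})\in\partial\Omega$ whenever $\tau_{\Omega^c}<\infty$. Hence $\tau_{\partial\Omega}\le\tau_{\Omega^c}$.
\begin{itemize}
\item If $x\in\Omega$, the path stays in $\Omega$ on $(0,\tau_{\Omega^c})$, so it cannot touch $\partial\Omega$ before $\tau_{\Omega^c}$, and equality follows.
\item If $x\in\mathcal{I}$, we have $\tau_{\Omega^c}>0$ a.s. by irregularity. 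On $(0,\tau_{\Omega^c})$ the path lies in $\overline\Omega$. I must rule out that it touches $\partial\Omega$ at some time $s<\tau_{\Omega^c}$ while staying in $\overline\Omega$.
\end{itemize}
For the second case I use the strong Markov property at the hitting times of $\partial\Omega$. Let $\sigma=\tau_{\partial\Omega}$, and suppose $X(\sigma)\in\partial\Omega$ with $\sigma<\tau_{\Omega^c}$.
\begin{itemize}
\item If $X(\sigma)\in\mathcal{R}$, then after $\sigma$ the process exits immediately a.s. Hence $\tau_{\Omega^c}=\sigma$.
\item So it suffices to show that $X(\sigma)\notin\mathcal{I}$ a.s. when $\sigma<\infty$. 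This is the point where polarity enters.
\end{itemize}
The set $\mathcal{I}=\Gamma^+\cup\Gamma^{0,i}$ contains $\Gamma^{0,i}\subset\Gamma^0$, which is polar by Theorem~\ref{thm:polarity}. It remains to show that $\Gamma^+$ is not hit at a positive time.

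\emph{Ruling out $\Gamma^+$.} At a point of $\Gamma^+$ we have $n_p=0$ and $\xi>0$. Near such a point, $\frac{d}{dt}\mathrm{d}_\Omega(X(t))$ has drift $\mathcal{L}\mathrm{d}_\Omega\approx\xi>0$ and martingale part $\langle\nabla_p\mathrm{d}_\Omega,\Sigma\,dB\rangle$, whose coefficient vanishes at the point. I plan to run the argument with time reversed. A hit of $\Gamma^+$ from inside at time $\sigma$ would force $\mathrm{d}_\Omega(X(t))$ to decrease to $0$ as $t\uparrow\sigma$. Near the hitting point, however, the leading behaviour is the deterministic increase $\mathrm{d}_\Omega(X(\sigma-h))\approx-\xi h<0$, which contradicts $X(\sigma-h)\in\Omega$.

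To make this rigorous I will use a Taylor expansion along the path. I expect the cleanest route to be the same local analysis that proves irregularity of $\Gamma^+$ in Theorem~\ref{thm:reg points}: locally $\mathrm{d}_\Omega(X(t))-\mathrm{d}_\Omega(X(s))\ge c(t-s)$ for small increments near $\Gamma^+$. This monotonicity comes from the $p$-convexity together with $n_p=0$, which makes the quadratic $p$-term nonnegative up to small errors. Monotonicity gives directly that $\partial\Omega$ near $\Gamma^+$ can only be crossed outward-to-inward. This step is the main obstacle. It needs careful control of the $p$-fluctuations through the $p$-Hessian sign, and near points where $\nabla^2_{p,p}\mathrm{d}_\Omega$ is degenerate it may require the polarity of $\{n_p=0,\ \nabla^2_{p,p}\mathrm{d}_\Omega\neq0\}$.

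\emph{Proof of \eqref{eq:bdy exit event}.} By the first part, $X(\tau_{\partial\Omega})=X(\tau_{\Omega^c})$. This point is not in $\mathcal{I}$: otherwise the process would continue inside $\Omega$, contradicting the definition of the exit time. It is not in the polar set of Theorem~\ref{thm:polarity}. For $x\in\mathcal{I}$, apply the Markov property at a small positive time when the path is in $\Omega$; for $x\in\Omega$, apply polarity directly. The remaining boundary points are $\mathrm{T}$ together with characteristic points with $\nabla^2_{p,p}\mathrm{d}_\Omega=\mathbb{O}_d$ and $\xi\neq0$. Among these, $\xi>0$ gives a point of $\Gamma^+\subset\mathcal{I}$, which is already excluded. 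What is left is exactly $\Lambda$.
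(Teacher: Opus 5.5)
Your reduction is the right one. You show that the first boundary point $X(\tau_{\partial\Omega})$ avoids $\mathcal{I}$ and the polar set of Theorem~\ref{thm:polarity}, using the Markov property at a small positive time when $x\in\mathcal{I}$ (a detail the paper leaves implicit), and then read off $\Lambda$ from what is left. Your reversed-time heuristic $\mathrm{d}_\Omega(X(\sigma-h))\approx-h\xi$ is also exactly the paper's idea.

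\textbf{The gap.} It lies in the step you flag as the main obstacle, and it has three parts.
\begin{itemize}
\item The local monotonicity $\mathrm{d}_\Omega(X(t))-\mathrm{d}_\Omega(X(s))\ge c(t-s)$ near $\Gamma^+$ is false as soon as $\nabla_p\mathrm{d}_\Omega$ does not vanish identically near the point. This happens, e.g., if $\nabla^2_{p,p}\mathrm{d}_\Omega\neq\mathbb{O}_d$ there, or if the point lies in $\overline{\mathrm{T}}$. At an interior point $y$ with $\nabla_p\mathrm{d}_\Omega(y)\neq0$, the process $\mathrm{d}_\Omega(X)$ has a martingale part of size $|\Sigma(y)^\dagger\nabla_p\mathrm{d}_\Omega(y)|\sqrt{h}$. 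By the law of the iterated logarithm (as in the case $x\in\mathrm{T}$ of Theorem~\ref{thm:reg points}), $\mathrm{d}_\Omega(X(h))<\mathrm{d}_\Omega(y)$ for arbitrarily small $h$ under $\mathbb{P}_y$.
\item You cannot import the forward analysis of Theorem~\ref{thm:reg points}. Those expansions hold at time $0$ under $\mathbb{P}_x$. Looking backward from the random time $\sigma$, you only have pathwise information such as H\"older continuity of $p$.
\item With only that information, $p$-convexity works against you. Write $\Delta p=p(\sigma-h)-p(\sigma)$ and use $n_p(X(\sigma))=0$:
\[
\mathrm{d}_\Omega(X(\sigma-h))=-h\,\xi(X(\sigma))+\tfrac12\big\langle\nabla^2_{p,p}\mathrm{d}_\Omega(X(\sigma))\Delta p,\,\Delta p\big\rangle+o(h).
\]
The quadratic term is nonnegative and only bounded by $O(h^{2\alpha})$ with $2\alpha<1$. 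It can therefore offset $-h\xi$, and no contradiction follows.
\end{itemize}

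\textbf{The missing idea.} Split $\Gamma^+$ according to the $p$-Hessian; the paper does this with the roles of the two cases opposite to what you suggest.

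The part $\Gamma^+\cap\{\nabla^2_{p,p}\mathrm{d}_\Omega\neq\mathbb{O}_d\}$ lies inside the polar set of Theorem~\ref{thm:polarity}. So it is never hit, whatever the degeneracy of the Hessian.

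Only on $\Gamma^+\cap\{\nabla^2_{p,p}\mathrm{d}_\Omega=\mathbb{O}_d\}$ does the reversed-time argument run, and there it is purely pathwise. Take $\alpha\in(1/3,1/2)$. Then:
\begin{itemize}
\item $|\Delta p|=o(h^\alpha)$ and $q(\sigma-h)-q(\sigma)=-hG(p(\sigma))+o(h)$;
\item the $q$-$q$ and $q$-$p$ Hessian terms are $O(h^2)$ and $o(h^{1+\alpha})$;
\item the $p$-$p$ term vanishes;
\item the Taylor remainder is $o(h^{3\alpha})=o(h)$.
\end{itemize}
Hence $\mathrm{d}_\Omega(X(\sigma-h))/h\to-\xi(X(\sigma))<0$, which contradicts $X\in\Omega$ on $(0,\sigma)$. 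This is the paper's proof of \eqref{eq:bdy exit event}. Then \eqref{eq:proba hitting ext vs bdy} follows from $\Lambda\subset\mathcal{R}$ and the strong Markov property.

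\textbf{A consistency point on $\tau_{\Omega^c}$.} With the literal definition, $\partial\Omega\subset\Omega^c$, so the path lies in $\Omega$ on $(0,\tau_{\Omega^c})$ also when $x\in\mathcal{I}$. Then \eqref{eq:proba hitting ext vs bdy} is immediate in both of your cases. If you instead mean exit from $\overline\Omega$, your case $x\in\Omega$ is not trivial and needs the same strong Markov argument.
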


\begin{rem}\label{rem:first exit point} The theorem above states that the process exhibits two distinct types of exit event: either an exit by the non-characteristic (transversal) part of the boundary, or an exit through the 'cylindrical part' of the boundary with an outward-pointing velocity.

\end{rem}

\subsubsection{Boundary-value problem}

In this last section we state the well-posedness of the boundary-value problem on $\Omega$ associated with the operator $\mathcal{L}$ defined in~\eqref{eq:gen}.

 %We define the function $h$ as in~\eqref{eq:def fctn h} where the coefficients therein satisfy Assumption~\ref{ass:coeff}.

\begin{thm}[Boundary-value problem]\label{thm:edp eq pot} Let $f\in C_b(\Lambda)$. There exists a unique classical solution $u$ in $C^2(\Omega)\cap C_b(\Omega\cup\Lambda)$ of
\begin{equation}\label{eq:edp h}
  \left\{\begin{aligned}
    \mathcal{L}\,u(x) +c(x)u(x) & =g(x), &&\qquad x\in\Omega\;,\\
    u(x) &=f(x), && \qquad x\in\Lambda \;.% \mathrm{T}\cup\Gamma^-\;.
  \end{aligned}\right. 
\end{equation}
Besides, for all $x\in\Omega \cup\Lambda$, $u(x) = h(x)$ where $h$ is defined by the probabilistic representation ~\eqref{eq:def fctn h}.
\end{thm}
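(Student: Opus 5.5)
We split the argument into three parts: uniqueness via the probabilistic representation, interior regularity of $h$, and continuity of $h$ up to $\Lambda$.

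\textbf{Uniqueness.} Let $u\in C^2(\Omega)\cap C_b(\Omega\cup\Lambda)$ solve~\eqref{eq:edp h} and fix $x\in\Omega$. Take an exhaustion of $\Omega$ by smooth domains $\Omega_k\uparrow\Omega$ and let $\tau_k$ be the exit time from $\Omega_k$. Applying It\^o's formula to $\mathrm{e}^{\int_0^{t}c(X(r))\mathrm{d}r}u(X(t))$ up to $\tau_k\wedge T$ gives an identity for $u(x)$. We then let $k\to\infty$ and $T\to\infty$:
\begin{itemize}
\item Assumption~\ref{ass:non expl} and the continuity of paths give $\tau_k\uparrow\tau_{\Omega^c}$.
\item Theorem~\ref{thm:polarity Gamma+} gives $X(\tau_{\Omega^c})\in\Lambda$ almost surely, so $u(X(\tau_k))\to f(X(\tau_{\Omega^c}))$ by continuity of $u$ on $\Omega\cup\Lambda$.
\item Dominated convergence applies because of the uniform integrability~\eqref{eq:uniform integ tau} together with the bounds $|u|\le\|u\|_\infty$, $\|g\|_\infty<\infty$ and $\mathrm{e}^{\int c}\le \mathrm{e}^{c_\infty t}$.
\end{itemize}
This yields $u=h$ on $\Omega$, and trivially on $\Lambda$. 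Hence any classical solution coincides with $h$.

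\textbf{Interior regularity.} We show $h$ is a distributional solution of $\mathcal{L}h+ch=g$ in $\Omega$. Boundedness of $h$ follows from~\eqref{eq:uniform integ tau}. For any small ball $B\Subset\Omega$, the strong Markov property at the exit time $\tau_{B^c}$ writes $h$ on $B$ as the Feynman--Kac functional with boundary data $h|_{\partial B}$. Testing against $\varphi\in C_c^\infty(B)$, using $\mathcal{L}^*$ and Dynkin's formula, gives $\int h(\mathcal{L}^*\varphi+c\varphi)=\int g\varphi$.

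Next, $\mathcal{L}$ satisfies H\"ormander's bracket condition:
\begin{itemize}
\item the vector fields $\Sigma_{\cdot j}\cdot\nabla_p$ span the $p$-directions, since $\Sigma$ is invertible;
\item the brackets with the drift $\langle G(p),\nabla_q\rangle$ produce $DG(p)\Sigma\nabla_q$ plus $p$-terms, which span the $q$-directions because $G$ is a diffeomorphism.
\end{itemize}
Hence $\mathcal{L}+c$ is hypoelliptic, so $h\in C^\infty(\Omega)$ is a classical solution.

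\textbf{Boundary continuity on $\Lambda$ (main obstacle).} Fix $z\in\Lambda$ and $x_n\to z$ with $x_n\in\Omega$. It suffices to show $\tau_{\Omega^c}$ under $\mathbb{P}_{x_n}$ tends to $0$ in probability and $X(\tau_{\Omega^c})\to z$. Continuity of $f$ at $z$ and the bounds on $c,g$ then give $h(x_n)\to f(z)$; the integral term is handled by~\eqref{eq:uniform integ tau}. We argue trajectorially with the coupling given by strong solutions driven by the same Brownian motion, so that $X^{x_n}\to X^{z}$ uniformly on compacts almost surely.

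By Theorem~\ref{thm:reg points}, $z$ is regular, so $\tau_{\Omega^c}=0$ under $\mathbb{P}_z$. This alone is not enough: we need that $X^{z}$ enters the \emph{interior} of $\Omega^c$ at arbitrarily small times, so that nearby paths exit before a small time $\delta$. We treat the two types of points in $\Lambda$ separately.
\begin{itemize}
\item \textbf{$z\in\mathrm{T}$.} Write $\mathrm{d}_\Omega(X(t))=\langle n_p,\Sigma B(t)\rangle+O(t)$. The Brownian component in the normal direction is nondegenerate, so by the law of the iterated logarithm it takes strictly negative values on every interval $(0,\delta)$.
\item \textbf{$z$ with $n_p=0$, $\nabla^2_{p,p}\mathrm{d}_\Omega=0$ and $\xi<0$.} A Taylor expansion gives $\mathrm{d}_\Omega(X(t))=\xi(z)t+o(t)$. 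The linear term dominates because the Hessian vanishes: the $p$-fluctuations enter only at order $t^{3/2}$ through the $q$-variable, plus third-order terms. Hence $\mathrm{d}_\Omega(X(t))<0$ for small $t>0$, deterministically up to a probability-one event.
\end{itemize}
In either case uniform convergence yields $\limsup_n\tau^{x_n}_{\Omega^c}\le\delta$ almost surely. We expect the main difficulty to be the uniformity of these Taylor estimates near $z$, which is where the $C^{3,1}$ regularity of $\mathrm{d}_\Omega$ is used.
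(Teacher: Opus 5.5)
\paragraph{Verdict.} Your uniqueness argument is the paper's: It\^o's formula on an exhaustion, Theorem~\ref{thm:polarity Gamma+} for the exit point, and dominated convergence via~\eqref{eq:uniform integ tau}. Your boundary-continuity argument is the paper's trajectorial one (Lemma~\ref{lem:subseq x_phi_n}, Step 2.a). You are more explicit than the paper in requiring that $X^z$ enter $\R^{2d}\setminus\overline{\Omega}$ at arbitrarily small times, and your two expansions do establish this. No uniformity of the Taylor estimates near $z$ is needed once you have the coupling. The genuine gap is in the interior step, and that is where the real difficulty of existence lies.

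\paragraph{The gap.} The sentence ``testing against $\varphi\in C_c^\infty(B)$, using $\mathcal{L}^*$ and Dynkin's formula, gives $\int h(\mathcal{L}^*\varphi+c\varphi)=\int g\varphi$'' does not follow. Dynkin's formula applies to smooth functions evaluated along the path, so it puts $\mathcal{L}$ on the \emph{terminal} variable: $\mathbb{E}_x[\psi(X(t))]-\psi(x)=\mathbb{E}_x\int_0^t\mathcal{L}\psi(X(s))\,\mathrm{d}s$ for $\psi\in C^2_c$. The weak equation for $h$ instead needs $\mathcal{L}$ acting on the \emph{starting point} $x$ of a function known only to be bounded and measurable. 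Take $c=g=0$ for simplicity and set $\mu_t(\mathrm{d}y):=\int\varphi(x)\,\mathbb{P}_x(X(t)\in\mathrm{d}y)\,\mathrm{d}x$. Then you would need $t^{-1}(\mu_t-\varphi\,\mathrm{d}y)\to\mathcal{L}^*\varphi\,\mathrm{d}y$ when tested against bounded measurable functions. Dynkin's formula only gives this against $C^2$ functions. What is missing is a smoothing or duality property of the transition kernel. Localizing to a ball $B$ changes nothing: showing that the Feynman--Kac functional on $B$ with bounded measurable data $h|_{\partial B}$ solves the equation in $\mathcal{D}'(B)$ is the same problem you started with.

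\paragraph{How the paper closes it.} The paper supplies exactly this ingredient in four steps.
\begin{enumerate}
\item Lemma~\ref{lem:exit time compact set} gives $\sup_{x\in K}\mathbb{P}_x(\tau_{\Omega^c}\le t)\le c_1\mathrm{e}^{-c_2/t}$ on compacts $K\subset\Omega$. By the Markov property, $h=P_th+O(\mathrm{e}^{-c_2/t})$ locally uniformly, where $P_t$ is the full-space Feynman--Kac semigroup.
\item Lemma~\ref{lem:smoothness P_tf} shows that $(t,x)\mapsto P_th(x)$ is $C^\infty$ and satisfies $\partial_tP_th=\mathcal{L}P_th+cP_th-g$. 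The proof adds a vanishing noise $\sqrt{2\alpha}\,\mathrm{d}\widetilde{B}$ to the $q$-equation, uses classical parabolic theory for the regularized, elliptic problem, passes to the limit in the distributional formulation, and invokes hypoellipticity of $\partial_t-\mathcal{L}$.
\item Gaussian density bounds give $P_rh\to h$ in $L^1_{\mathrm{loc}}$.
\item Write $P_th-P_rh=\int_r^t\partial_sP_sh\,\mathrm{d}s$, integrate by parts against $\varphi$, and let $r\to0$, then $t\to0$. This yields the distributional equation for $h$.
\end{enumerate}
Only after step 4 can H\"ormander's theorem be applied to conclude $h\in C^\infty(\Omega)$.
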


Let us now analyze the boundary continuity of the solution $h$. For $f\in C_b(\Lambda)$, define
\begin{equation}\label{eq:def R_f}
\mathcal{R}_f:=\big\{x\in\mathcal{R}:\,f\text{ can be continuously extended at }x\big\}\;.
\end{equation}

\begin{thm}[Continuity of $h$]\label{thm:existence sol PDE}
The unique solution $h$ of the boundary-value problem~\eqref{eq:edp h} is in $C^\infty(\Omega)$. Furthermore, for all $x\in\mathcal{R}_f$,
$$\lim_{y\in\Omega\rightarrow x}h(y)=f(x)\;.$$
For all $x\in\mathrm{int}_{\partial\Omega}\mathcal{I}$,
$$\lim_{y\in\Omega\rightarrow x}h(y)=h(x)\;,$$
where $\mathrm{int}_{\partial\Omega}\mathcal{I}$ is the interior of $\mathcal{I}$ relative to the topology of $\partial\Omega$.  Additionally, if $(x_n)_{n\geq0}$ is a sequence in $\Omega$ converging to $x\in\mathcal{I}\setminus\mathrm{int}_{\partial\Omega}\mathcal{I}$ such that
\begin{equation}\label{eq:cone condition}
\liminf_{n\rightarrow\infty}\frac{\langle x_n-x,\,n(x)\rangle}{|x_n-x|}>0\;;
\end{equation}
then
$$\lim_{n\rightarrow\infty}h(x_n)=h(x)\;.$$
\end{thm}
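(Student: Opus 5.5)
The plan has three parts. Smoothness in $\Omega$ comes from H\"ormander hypoellipticity. Continuity at points of $\mathcal{R}_f$ comes from regularity combined with the strong Feller/continuity of exit distributions. Continuity at irregular points comes from the strong Markov property applied at a small positive time.

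\emph{Smoothness.} By Theorem~\ref{thm:edp eq pot}, $h$ is a classical, hence distributional, solution of $(\mathcal{L}+c)h=g$ in $\Omega$. Since $G$ is a diffeomorphism and $\Sigma$ is invertible, the vector fields $\Sigma\nabla_p$ and their brackets with the drift $\langle G(p),\nabla_q\rangle+\ldots$ span $\R^{2d}$: the bracket produces $DG(p)\Sigma\,\nabla_q$ modulo $\nabla_p$ terms. H\"ormander's theorem then gives $h\in C^\infty(\Omega)$.

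\emph{Regular points.} Let $x\in\mathcal{R}_f$ and $y_n\to x$ with $y_n\in\Omega$. The key steps are as follows.

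(i) Regularity of $x$ (Theorem~\ref{thm:reg points}) together with the Feller property implies $\mathbb{P}_{y_n}(\tau_{\Omega^c}>\delta)\to0$ for every $\delta>0$. The standard argument uses lower semicontinuity of $y\mapsto\mathbb{P}_y(\tau_{\Omega^c}\le\delta)$, obtained from $\mathbb{P}_y(\exists s\in(\epsilon,\delta]:X(s)\notin\overline\Omega)$, which is lower semicontinuous by Feller continuity. This requires that a regular point be hit through the exterior, $\tau_{\overline\Omega^c}=0$, which I would extract from the proof of Theorem~\ref{thm:reg points}, where exits are shown via sign changes of $\mathrm{d}_\Omega$.

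(ii) Paths are continuous, so $\sup_{s\le\delta}|X(s)-y_n|$ is small with high probability. Hence $X(\tau_{\Omega^c})\to x$ in probability, and by Theorem~\ref{thm:polarity Gamma+} the exit point lies in $\Lambda$ almost surely.

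(iii) Continuity of the extension of $f$ at $x$, together with boundedness, gives $\mathbb{E}[f(X(\tau))]\to f(x)$. The integral and exponential terms are controlled by $\tau\to0$ in probability plus the uniform integrability in Assumption~\ref{ass:Omega}.

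\emph{Irregular points.} For $x\in\mathcal{I}$, write by the strong Markov property at time $t$
$$h(y)=\mathbb{E}_y\Big[\mathbf 1_{\tau_{\Omega^c}>t}\,\mathrm{e}^{\int_0^t c}h(X(t))-\ldots\Big]+\mathbb{E}_y\big[\mathbf 1_{\tau_{\Omega^c}\le t}(\ldots)\big].$$
The second term is small uniformly in $y$ near $x$ as $t\to0$ provided $\mathbb{P}_y(\tau_{\Omega^c}\le t)\to0$ uniformly as $y\to x$, $t\to0$; this is \textbf{the main obstacle}. The first term is continuous in $y$ by the strong Feller property of the killed semigroup (hypoellipticity gives a smooth transition density). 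So the claim reduces to the uniform estimate
$$\lim_{t\to0}\limsup_{y\to x}\mathbb{P}_y(\tau_{\Omega^c}\le t)=0.$$

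For $x\in\mathrm{int}_{\partial\Omega}\mathcal{I}$, a neighborhood of $x$ on $\partial\Omega$ consists of $\Gamma^+\cup\Gamma^{0,i}$ points. Here I would construct a local barrier/Lyapunov function $\phi=\mathrm{d}_\Omega+$ corrections, as in the proof of irregularity in Theorem~\ref{thm:reg points}, but uniformly for starting points near $x$ (inside and on the boundary). Since the relevant quantities ($\xi\ge0$ and $\nabla^2_{p,p}\mathrm{d}_\Omega\ge0$ of high rank, or strictly positive $\xi$) persist in a neighborhood, the exit probability before time $t$ is uniformly small. For a boundary point of $\mathcal{I}$ in $\partial\Omega$, nearby regular points break the uniformity. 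Under the cone condition~\eqref{eq:cone condition}, however, $\mathrm{d}_\Omega(x_n)\ge c|x_n-x|$. The process then starts at a distance from $\partial\Omega$ comparable to its distance to $x$, and on the time scale $t$ it stays within $O(\sqrt t+|x_n-x|)$-type neighborhoods. I would compare the trajectory from $x_n$ with the one from $x$, which stays inside $\Omega$ for small time with high probability, using a Taylor expansion of $\mathrm{d}_\Omega(X(s))$ at order $s^{3/2}$ and the fact that the initial interior margin $c|x_n-x|$ dominates the discrepancy $O(|x_n-x|)$ times small constants. I expect making this comparison rigorous to be the technical heart of the proof.
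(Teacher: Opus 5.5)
Your smoothness step is the paper's argument (H\"ormander). However, you should not take $h\in C^2(\Omega)$ from Theorem~\ref{thm:edp eq pot}: in the paper the existence half of that theorem is derived from the present result. One instead shows directly that $h$ is a distributional solution, via $|h(y)-P_th(y)|\le C\,\mathbb{P}_y(\tau_{\Omega^c}\le t)$, the exponential bound of Lemma~\ref{lem:exit time compact set} on compacts, and Lemma~\ref{lem:smoothness P_tf}. Your treatment of $\mathcal{R}_f$ through lower semicontinuity of exterior-hitting probabilities is a valid variant of the paper's coupling argument. You also correctly flag that it needs $\tau_{\overline{\Omega}^c}=0$ at regular points.

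In the irregular case, the assertion that $y\mapsto\mathbb{E}_y[\mathbf{1}_{\tau_{\Omega^c}>t}(\cdots)]$ is continuous at $x\in\partial\Omega$ by hypoellipticity is unfounded. Hypoellipticity gives regularity of the killed semigroup only inside $\Omega$, and its continuity at a boundary point is exactly the kind of statement you are trying to prove. The reduction is rescued by comparing with the free semigroup instead. Let $\tilde h$ be $h$ extended by $0$ outside $\Omega$. Then $|h(y)-P_t\tilde h(y)|\le C\,\mathbb{P}_y(\tau_{\Omega^c}\le t)$ for $y\in\Omega\cup\mathcal{I}$ and $t\le 1$. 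Moreover $P_t\tilde h$ is continuous on $\R^{2d}$ by Lemma~\ref{lem:smoothness P_tf}, and $\mathbb{P}_x(\tau_{\Omega^c}\le t)\to0$ since $x\in\mathcal{I}$. So continuity along $(x_n)$ does reduce to $\lim_{a\to0}\limsup_n\mathbb{P}_{x_n}(\tau_{\Omega^c}\le a)=0$, which is the paper's Lemma~\ref{lem:rapid exit time}. This route is leaner than the paper's, which proves $\tau^{x_n}_{\Omega^c}\to\tau^x_{\Omega^c}$ and convergence of exit points in probability and then uses $f\in C_b(\Lambda)$; yours needs only $f$ bounded.

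The genuine gap is that this estimate, which you rightly call the crux, is not proved. Your plan for it is misdirected in one case and lacks the key tool in the other.

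\emph{Interior case.} For $x\in\mathrm{int}_{\partial\Omega}\mathcal{I}$, the irregularity proof of Theorem~\ref{thm:reg points} is a pointwise almost-sure small-time asymptotic with random constants (resting, for $\Gamma^{0,i}$, on Dvoretzky--Erd\H{o}s). It does not uniformize over starting points. Besides, no $\Gamma^{0,i}$ point lies in $\mathrm{int}_{\partial\Omega}\mathcal{I}$: on a relatively open set where $n_p\equiv0$ the $p$-directions are tangent, hence $\nabla^2_{p,p}\mathrm{d}_\Omega\equiv0$. No barrier is needed. By Theorem~\ref{thm:polarity Gamma+} the exit point lies a.s.\ in $\Lambda\subset\mathcal{R}$, and $\mathrm{dist}(x,\mathcal{R})\ge r>0$. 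So for $|y-x|<r/2$, exiting before time $a$ forces $\sup_{s\le a}|X(s)-y|\ge r/2$, whose probability is small uniformly in such $y$.

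\emph{Cone case.} Under~\eqref{eq:cone condition} you assert a discrepancy ``$O(|x_n-x|)$ times small constants'' but give no way to obtain it. What is needed is the synchronous-coupling bound $\sup_{t\le T}|X^{M,x_n}(t)-X^{M,x}(t)|\le Y|x_n-x|$, with one finite random variable $Y$ for all $n$ (differentiability of the stochastic flow for localized coefficients). Combine it with $n_p(x)=0$: the normal component of $X^{x_n}(t)-X^x(t)-(x_n-x)$ is then $\int_0^t\langle G(p^{x_n}(s))-G(p^x(s)),\,n_q(x)\rangle\,\mathrm{d}s=O(aY|x_n-x|)$. A second-order Taylor expansion of $\mathrm{d}_\Omega$ around $X^x(t)$ then gives, for $t\le a$ on $\{\tau^x_{\Omega^c}>a\}$,
\[
\mathrm{d}_\Omega(X^{x_n}(t))\;\ge\;\langle x_n-x,\,n(x)\rangle-\big(D(a)+K|x_n-x|\big)|x_n-x|\,,
\]
with $D(a)\to0$ a.s.\ and $K$ a finite random variable independent of $n$. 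The cone condition together with $\mathbb{P}_x(\tau_{\Omega^c}\le a)\to0$ then concludes. A localization bound of size $\sqrt{t}+|x_n-x|$, or a time expansion of $\mathrm{d}_\Omega(X(s))$ at order $s^{3/2}$, cannot produce the required normal discrepancy $o(|x_n-x|)$ uniformly in $n$ when $|x_n-x|\ll\sqrt{a}$.
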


The theorem above states that $h$ is continuous at any point in $\mathcal{R}_f\cup\mathrm{int}_{\partial\Omega}\mathcal{I}$. Regarding points in $\mathcal{I}\setminus\mathrm{int}_{\partial\Omega}(\mathcal{I})$, the continuity is preserved provided the converging sequence remains in a cone supported by the boundary point, see~\eqref{eq:cone condition}. This excludes convergent sequences grazing the boundary. 

\begin{rem} A consequence of Theorem~\ref{thm:existence sol PDE} is that if $f\in C_b(\mathcal{R})$ and $\overline{\mathcal{R}}\cap\mathcal{I}=\emptyset$ then $h\in C_b(\overline{\Omega})$. The last condition happens for instance if $\Omega$ is a cylinder given that $\overline{\mathcal{R}}=\mathcal{R}$ in this case (see Section~\ref{sec:discussion partitioning}) or in the Hamiltonian setting if $d\in\llbracket1,\,2\rrbracket$ given that $\partial\Omega=\mathcal{R}$ in this case.
\end{rem}
\begin{rem}
Theorem~\ref{thm:edp eq pot} and Theorem~\ref{thm:existence sol PDE} still hold true by the same proofs if the boundary set $\Lambda$ is replaced by any exit subset $\widetilde{\Lambda}\subset\mathcal{R}$ satisfying $\mathbb P_x ( X(\tau_{\partial \Omega}) \in\widetilde{\Lambda}) = 1$ for all $x \in \Omega$.
The definition of the boundary set $\Lambda$ above can also be seen as a refinement of the Fichera boundary set obtained in~\cite{fichera1959unified}.
\end{rem}
We illustrate in the following proposition that in the general case $h\notin C_b(\overline{\Omega})$ as very singular behaviors may appear at the intersection $\overline{\mathcal{R}}\cap\mathcal{I}$ even though the coefficients of~\eqref{eq:sde} and the domain $\Omega$ are $C^\infty$.

\begin{prop}[Discontinuity set]\label{prop:discont set} There exist explicit functions $G,\,F,\,\Sigma,\,c,\,g$ satisfying Assumption~\ref{ass:coeff}, a $C^\infty$ open set $\Omega$ satisfying Assumption~\ref{ass:Omega} and $f\in C^\infty(\mathcal{R})$ such that the function $h$ defined in~\eqref{eq:def fctn h} does not belong to $C_b(\overline{\Omega})$ and its discontinuity set has a positive Lebesgue surface measure on $\partial\Omega$.
\end{prop}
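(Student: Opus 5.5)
We will construct an explicit example in dimension $d=3$, since by Theorem~\ref{thm:reg points} this is the smallest dimension where irregular grazing points $\Gamma^{0,\,i}$ can exist. Take $G=I_d$, $F=0$, $\Sigma=I_d$ and $c=g=0$, so that $(X(t))$ is the Langevin-type process $\mathrm{d}q=p\,\mathrm{d}t$, $\mathrm{d}p=\mathrm{d}B$. The domain $\Omega$ will be $C^\infty$ and designed so that the irregular set $\mathcal{I}$ has positive surface measure and lies in the closure of the regular set $\mathcal{R}$ along a set of positive measure.

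The construction I would try is a cylindrical domain $\mathcal{O}\times\R^d$ glued to a non-cylindrical piece. Concretely, let $\partial\Omega$ contain a flat cylindrical portion $\{q_1=0\}$ with $\Omega$ lying on the side $q_1>0$, so that $n=(e_1,0)$ and $\xi(x)=p_1$. On this portion $\Gamma^+=\{p_1>0\}$ is irregular and has positive measure. Then bend the boundary smoothly near a hypersurface in $p$ (say for $p_2$ near some value) into a transversal part $\mathrm{T}$. We do this so that the boundary of $\mathrm{T}$, relative to $\partial\Omega$, contains a codimension-one piece of $\Gamma^+$. One way is to use the level set function $\psi(q,p)=q_1+\chi(p_2)$, where $\chi$ is smooth, vanishes on $(-\infty,0]$, and is strictly increasing on $(0,\infty)$. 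Every point of $\{p_2\le 0\}$ is then cylindrical, and points with $p_2>0$ are in $\mathrm{T}$. Keeping $\Omega$ bounded-like requires a cutoff elsewhere, together with a check of Assumption~\ref{ass:Omega}. I would obtain the latter by making $\Omega$ bounded in $q$ (for instance intersecting with a large box, smoothed) and using standard exponential moment bounds for exit times of the kinetic process from $q$-bounded sets.

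The discontinuity mechanism is as follows. Choose $f$ smooth on $\mathcal{R}$ with $f=1$ on $\mathrm{T}$, and with $f=0$ on the exiting cylindrical part $\Gamma^-$ that the process from $\Gamma^+$ eventually reaches. Fix $x\in\Gamma^+$ with $p_2<0$ small. Since $x$ is irregular, $h(x)=\mathbb{E}_x[f(X(\tau))]$, and starting from $x$ the process moves inward, since $p_1>0$. We will arrange the domain so that from such $x$ the process exits through $\Gamma^-$ with probability close to $1$, giving $h(x)\approx 0$. On the other hand, points $y\in\Omega$ approaching $x$ tangentially from the $\mathrm{T}$ side are very close to $\mathrm{T}$, which is regular. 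By continuity of $h$ at $\mathcal{R}_f$ (Theorem~\ref{thm:existence sol PDE}), suitable sequences $y_n\to x$ built from points of $\mathrm{T}$ satisfy $h(y_n)\to1$. To get positive measure we need this at every $x$ of a full-measure piece of $\Gamma^+\cap\overline{\mathrm{T}}$. A single codimension-one interface has zero surface measure, so instead $\mathrm{T}$ must accumulate on a fat set. We therefore take $\chi$ to vanish exactly on a fat Cantor set $K\subset\R$ of positive Lebesgue measure and to be positive on $\R\setminus K$, which is possible for smooth $\chi$ since any closed set is a zero set of a $C^\infty$ function. Then the cylindrical irregular points with $p_2\in K$ and $p_1>0$ form a set of positive surface measure, each of which lies in $\overline{\mathrm{T}}$.

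The main obstacle is to show $h(x)$ stays bounded away from $1$ at those points while $h\to1$ along nearby sequences. I would handle it with the cone statement of Theorem~\ref{thm:existence sol PDE}: along normal sequences $h(x_n)\to h(x)$. It therefore suffices to show $h(x)\le 1-\delta$ uniformly for $x$ in the fat set. This comes from a lower bound on the probability of a trajectory that moves inward in $q_1$ and exits through a region where $f=0$, obtained by a support-theorem (Stroock–Varadhan) argument with a controlled path. The subtle point is that $\mathrm{T}$ points with $p_2\notin K$ are dense near $x$, and the process from $x$ hits them immediately with positive probability. I would control this by using the polarity Theorem~\ref{thm:polarity} together with the fact that $\chi$, being flat, makes the "bumps" extremely thin. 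Alternatively, I would choose the sign of $\chi$ so that the bumps protrude outward from $\Omega$ and are reached only by near-grazing paths of small probability. This estimate is where the real work lies.
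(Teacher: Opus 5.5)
Your construction is essentially the paper's, but the step you defer (showing $h(x)<1$) is left unproved, and your account of why it is hard is mistaken. The common idea is a fat Cantor set realised as the zero set of a smooth function of a momentum variable. This gives a positive-measure set of characteristic points with zero momentum Hessian and $\xi>0$, hence in $\Gamma^+\subset\mathcal{I}$, each of which is a limit of points of $\mathrm{T}\subset\mathcal{R}$. One then takes $f=1$ on that boundary piece and $f=0$ on a far wall.

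The paper does this in $d=1$, with $\Omega=\R^2\setminus\overline{\mathcal{A}\cup\mathcal{B}}$, $\mathcal{A}=\{q^2+\int_a^pU(s)\,\mathrm{d}s<m^2\}$ and $\mathcal{B}=\{|q|>L\}\times\R$, where $U^{-1}(0)$ is a Smith--Volterra--Cantor set. Your reason for taking $d=3$ (irregular grazing points $\Gamma^{0,\,i}$) is irrelevant, since your mechanism uses only $\Gamma^+$. Your final reduction, combining the cone statement with continuity at $\mathcal{R}_f$ from Theorem~\ref{thm:existence sol PDE}, is sound. It needs only $h(x)<1$ at each point of the fat set, not a uniform bound.

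\textbf{The gap.} You assert that the process started at $x$ (with $p_2\in K$, $p_1>0$) hits nearby points of $\mathrm{T}$ immediately with positive probability. This contradicts Theorem~\ref{thm:reg points}: $x\in\Gamma^+$ is irregular. In your geometry it is elementary: with $\chi\geq 0$ one has $\Omega\supset\{q_1>0\}$ near $x$, and $q_1(t)=p_1t+\int_0^tB_1(s)\,\mathrm{d}s>0$ for small $t>0$. Your proposed remedies therefore target an obstruction that does not exist. Theorem~\ref{thm:polarity} could not help in any case, since it only concerns characteristic points and says nothing about hitting $\mathrm{T}$.

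\textbf{The missing argument} is short and pointwise:
\begin{enumerate}
\item By irregularity, choose $\delta>0$ with $\mathbb{P}_x(\tau_{\Omega^c}\leq\delta)\leq\epsilon/2$.
\item The momentum gradient and Hessian of $\mathrm{d}_\Omega$ vanish at $x$, so Theorem~\ref{thm:small time asymp distance} gives $\mathrm{d}_\Omega(X(t))/t\to\xi(x)>0$. After decreasing $\delta$, $X(\delta)$ lies in a compact $K_\delta\subset\Omega$ with probability at least $1-\epsilon/2$.
\item Apply the Markov property at time $\delta$, together with a controllability bound: the infimum over $y\in K_\delta$ of the probability of exiting through the part where $f=0$ is positive. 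This gives $h(x)<1$.
\end{enumerate}
The support theorem cannot be applied directly from $x$, because every tube around a path starting at $x\in\partial\Omega$ leaves $\Omega$. Irregularity is exactly what handles this initial time layer.

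\textbf{Choice of $f$.} Your prescription must be pinned down. On the bent wall, the points with $p_2\in K$ and $p_1<0$ have $|n_p|=0$, zero momentum Hessian and $\xi<0$. So they belong to $\Lambda$, and they are limits of points of $\mathrm{T}$, which forces $f=1$ there. The region where $f=0$ must therefore be a genuinely separate piece of $\partial\Omega$, ideally another connected component, as $\partial\mathcal{B}$ is relative to $\partial\mathcal{A}$ in the paper. Doing so also avoids smoothing the corners of a box. You should also take $\chi$ bounded, so that $\Omega$ stays bounded in $q$ and Assumption~\ref{ass:Omega} can be checked as you indicate.
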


\textbf{Outline of the article.} The proof of Theorem~\ref{thm:reg points} classifying regular and irregular boundary points is provided in Section~\ref{sec:reg points}. Section~\ref{sec: polarity} focuses on the proofs of Theorems~\ref{thm:polarity} and~\ref{thm:polarity Gamma+} describing polar sets at the boundary and the exit subset from $\Omega$. Section~\ref{sec:prop function h} delves into the proof of the well-posedness of the boundary-value problem given in Theorem~\ref{thm:edp eq pot} and proves the continuity of the solution expected in Theorem~\ref{thm:existence sol PDE}. Finally, Section~\ref{sec:counter example} provides an example where the solution of the boundary-value problem in~\eqref{eq:edp h} admits a non-negligible discontinuity set at the boundary $\partial\Omega$ which concludes the proof of Proposition~\ref{prop:discont set}.

\section{Classification of regular points: proof of Theorem~\ref{thm:reg points}}\label{sec:reg points}
The proof of Theorem~\ref{thm:reg points} is divided into three subsections. Section~\ref{sec:prelim results} provides some preliminary results, Section~\ref{sec:small time expansion dist} proves a small-time expansion of the distance $\mathrm{d}_\Omega(X(t))$ which is used in Section~\ref{sec:proof of regular points} to conclude the proof of Theorem~\ref{thm:reg points}.
\subsection{Preliminary results}\label{sec:prelim results}
We first prove here that the partitioning of $\partial\Omega$ is independent of the choice of the defining function of $\Omega$.

\begin{prop}[Defining function]\label{prop:indep of the def function}
Let $\Psi:\R^{2d}\to\R$ be a defining function of $\Omega$, i.e. such that
\begin{align*}
\Omega&=\{x\in\R^{2d}\;:\,\,\Psi(x)>0\}\;,\\
\partial\Omega&=\{x\in\R^{2d}\;:\,\,\Psi(x)=0\}\;,
\end{align*}
and such that for all $x\in\partial\Omega$, $|\nabla\Psi(x)|>0$. Then, for all $x=(q,\,p)\in\partial\Omega$,
$$|n_p(x)|>0\quad\Longleftrightarrow\quad|\nabla_p\Psi(x)|>0\;.$$
Additionally, if $|n_p(x)|=0$, then
\begin{itemize}
    \item $\xi(x)$ and $\langle G(p),\,\nabla_q\Psi(x)\rangle$ share the same sign\,,
    \item $\lambda_{\min}(\nabla_{p,\,p}^2\mathrm{d}_\Omega(x))$ and $\lambda_{\min}(\nabla_{p,\,p}^2\Psi(x))$ share the same sign\,,
    \item $\mathrm{rank}(\nabla_{p,\,p}^2\mathrm{d}_\Omega(x))=\mathrm{rank}(\nabla_{p,\,p}^2\Psi(x))$.
\end{itemize}
\end{prop}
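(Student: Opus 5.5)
The plan is to compare $\Psi$ and $\mathrm{d}_\Omega$ near a boundary point $x_0\in\partial\Omega$ by writing one as a positive multiple of the other. Both functions vanish exactly on $\partial\Omega$, are positive in $\Omega$, and have nonvanishing gradient there. Locally, then, $\Psi=\phi\,\mathrm{d}_\Omega$ for some positive function $\phi$. I would construct $\phi$ explicitly via the Hadamard-type formula
$$\phi(x)=\int_0^1\langle\nabla\Psi(y(x)+s\,\mathrm{d}_\Omega(x)\nabla\mathrm{d}_\Omega(x)),\,\nabla\mathrm{d}_\Omega(x)\rangle\,\mathrm{d}s,$$
where $y(x)$ is the projection onto $\partial\Omega$. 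A simpler route, enough for our purposes, is to avoid constructing $\phi$ and only use derivatives at $x_0$.

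\textbf{First-order claims.} Since $\Psi$ vanishes on $\partial\Omega$ and is positive inside, $\nabla\Psi(x_0)$ is normal to $\partial\Omega$ and points inward, so $\nabla\Psi(x_0)=\alpha\, n(x_0)$ with $\alpha=|\nabla\Psi(x_0)|>0$. Projecting onto the $p$- and $q$-coordinates gives $\nabla_p\Psi(x_0)=\alpha n_p(x_0)$, hence the equivalence $|n_p|>0\Leftrightarrow|\nabla_p\Psi|>0$. It also gives $\langle G(p),\nabla_q\Psi(x_0)\rangle=\alpha\,\xi(x_0)$, which is the sign statement for $\xi$.

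\textbf{Second-order claims.} Now assume $n_p(x_0)=0$. The tangent space at $x_0$ is $n(x_0)^\perp$, and it contains every direction $(0,v)$ with $v\in\R^d$. Take a $C^2$ curve $\gamma(s)\subset\partial\Omega$ with $\gamma(0)=x_0$ and $\gamma'(0)=(0,v)$. Differentiating $\Psi(\gamma(s))=0$ twice at $s=0$ gives
$$\langle\nabla^2\Psi(x_0)(0,v),(0,v)\rangle+\langle\nabla\Psi(x_0),\gamma''(0)\rangle=0,$$
and the same identity holds with $\mathrm{d}_\Omega$ in place of $\Psi$. Because $\nabla\Psi(x_0)=\alpha\nabla\mathrm{d}_\Omega(x_0)$, subtracting $\alpha$ times the second identity from the first yields
$$\langle\nabla^2_{p,p}\Psi(x_0)v,v\rangle=\alpha\langle\nabla^2_{p,p}\mathrm{d}_\Omega(x_0)v,v\rangle\quad\text{for all } v.$$
Both matrices are symmetric, so polarization gives $\nabla^2_{p,p}\Psi(x_0)=\alpha\nabla^2_{p,p}\mathrm{d}_\Omega(x_0)$. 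Multiplying a matrix by $\alpha>0$ preserves the sign of its smallest eigenvalue and its rank, which gives the last two bullets.

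\textbf{Main obstacle.} The only delicate point is that the Hessian of $\Psi$ is compared only on the directions $(0,v)$. This is legitimate precisely because $n_p=0$ puts these directions in the tangent space; the curve argument exploits exactly this. The regularity needed is $\Psi\in C^2$, which I would assume implicitly as in the statement. No $p$-$q$ cross terms arise, since the comparison is restricted to $p$-directions.
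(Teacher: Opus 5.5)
Your proof is correct, and its overall structure matches the paper's. The first-order claims come from $\nabla\Psi(x)=|\nabla\Psi(x)|\,n(x)$ on $\partial\Omega$. The last two bullets then follow from the identity $\nabla^2_{p,p}\Psi(x)=|\nabla\Psi(x)|\,\nabla^2_{p,p}\mathrm{d}_\Omega(x)$ when $n_p(x)=0$. The difference is only in how that Hessian identity is derived.

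The paper differentiates the first-order relation $\nabla_p\mathrm{d}_\Omega=\nabla_p\Psi/|\nabla\Psi|$ once in the tangential direction $(0,e^j)$. It uses $\nabla_p\Psi(x)=0$ to discard the derivative of the normalising factor $1/|\nabla\Psi|$. Since $x+t(0,e^j)$ leaves $\partial\Omega$, this tacitly relies on the fact that two $C^1$ maps agreeing on $\partial\Omega$ have equal derivatives along tangent directions.

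You instead differentiate the zeroth-order relations $\Psi\circ\gamma=\mathrm{d}_\Omega\circ\gamma=0$ twice along a boundary curve. You then cancel the curvature term $\langle\nabla\Psi(x_0),\gamma''(0)\rangle$ against $\alpha\langle\nabla\mathrm{d}_\Omega(x_0),\gamma''(0)\rangle$.

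Your route stays on $\partial\Omega$ throughout and never evaluates an on-boundary identity off the boundary. The hypothesis $n_p=0$ enters only to make $(0,v)$ tangent. The cost is needing a $C^2$ boundary curve with prescribed tangent vector, which is available since $\partial\Omega$ is a $C^{3,1}$ hypersurface. Your computation is essentially the standard second-fundamental-form comparison: $\nabla^2\Psi(x_0)$ and $\alpha\nabla^2\mathrm{d}_\Omega(x_0)$ agree as quadratic forms on the whole tangent space.

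The paper's route is a one-line computation once the tangential-derivative fact is granted. Both arguments need $\Psi\in C^2$ near $\partial\Omega$, which the paper also assumes without comment.
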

\begin{proof}
Since $\Omega$ is $C^{3,\,1}$ we deduce that $\Psi$ belongs to $C^{3,\,1}$ on a neighborhood of any point in $\partial\Omega$. Also, for all $x\in\partial\Omega$,
\begin{equation}\label{eq:normal defining fctn}
\nabla\mathrm{d}_\Omega(x)=\frac{\nabla\Psi(x)}{|\nabla\Psi(x)|}\;.
\end{equation}
In particular, we easily deduce from~\eqref{eq:normal gradient Psi} and~\eqref{eq:projection n(x)} that $\xi(x)$ and $\langle G(p),\,\nabla_q\Psi(x)\rangle$ share the same sign and that
$$|n_p(x)|>0\qquad\Longleftrightarrow\qquad|\nabla_p\Psi(x)|>0\;.$$
Now let us fix $x\in\partial\Omega$ such that $|n_p(x)|=|\nabla_p\Psi(x)|=0$ and let $e^j\in\R^d$ be the vector defined as follows
$$\forall i\in\llbracket1,\,d\rrbracket\,,\qquad e^j_i=\delta_{i,\,j}\;.$$
Given that the vector $(0,\,e^j)\in\R^{2d}$ is orthogonal to $n(x)$, we deduce from~\eqref{eq:normal defining fctn} that
$$\lim_{t\rightarrow0}\frac{1}{t}\nabla_p\mathrm{d}_\Omega(x+t(0,\,e^j))=\lim_{t\rightarrow0}\frac{1}{t}\frac{\nabla_p\Psi(x+t(0,\,e^j))}{|\nabla\Psi(x+t(0,\,e^j))|}\;.$$
Since this is valid for any $j\in\llbracket1,\,d\rrbracket$ we obtain that
$$\nabla^2_{p,\,p}\mathrm{d}_\Omega(x)=\frac{\nabla^2_{p,\,p}\Psi(x)}{|\nabla\Psi(x)|}\;,$$
which concludes the proof.
\end{proof}

The next lemma introduces small-time comparaisons of time-integrals given a small-time asymptotic of the integrand. We first clarify the meaning of small-time comparaisons through the following definition which is used in the rest of this work.
\begin{defn}[Small-time comparaison]\label{def:small time compar}
Let $\psi$ be a measurable function and let $(Y_1(t))_{t\geq0}$, $(Y_2(t))_{t\geq0}$ be stochastic processes. We say that, almost-surely,
$$Y_1(t)=Y_2(t)+\underset{t\rightarrow0}{O}(\psi(t))$$
if, almost-surely,
$$\limsup_{t\rightarrow0}\frac{|Y_1(t)-Y_2(t)|}{\psi(t)}<\infty\;.$$
\end{defn}

\begin{lem}[Small-time comparaison]\label{lem:holderian martingale}
Let $n\geq1$. Let $(A(t))_{t\geq0}$ be a stochastic process with values in $\R^n$, $(W(t))_{t\geq0}$ be a Brownian motion in $\R^n$. Assume that there exist constants $\rho,\,\theta\geq0$ such that, almost-surely,
\begin{equation}\label{eq:ineq A small time}
A(t)=\underset{t\rightarrow0}{O}(t^\rho\log\log(1/t)^\theta)\;.
\end{equation}
Then, almost-surely,
\begin{align}
\int_0^t|A(s)|\mathrm{d}s&=\underset{t\rightarrow0}{O}(t^{\rho+1}\log\log(1/t)^\theta)\label{eq:cv a.s. quad var small time}\\
\int_0^t\langle A(s),\,\mathrm{d}W(s)\rangle&=\underset{t\rightarrow0}{O}(t^{\rho+1/2}\log\log(1/t)^{\theta+1/2})\;.\label{eq:cv a.s. martingale small time}
\end{align}
\end{lem}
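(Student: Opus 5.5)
The plan is to handle the two estimates separately. The first is deterministic along each trajectory. The second comes from a time change combined with the law of the iterated logarithm.

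For \eqref{eq:cv a.s. quad var small time}, fix an $\omega$ in the almost-sure event where \eqref{eq:ineq A small time} holds. There are then a (random) constant $C$ and a $t_0>0$ such that $|A(s)|\leq C s^\rho\log\log(1/s)^\theta$ for all $s\in(0,t_0]$. The map $s\mapsto \log\log(1/s)$ is decreasing on $(0,e^{-1})$. Hence for $s\leq t$ small we have $\log\log(1/s)\geq\log\log(1/t)$, which points the wrong way for a direct monotone bound. I will therefore split the integral at $t^2$, say. For $s\in[t^2,t]$ one has $\log\log(1/s)\leq\log\log(1/t^2)=\log 2+\log\log(1/t)\leq 2\log\log(1/t)$, so this piece is bounded by $C' t^{\rho+1}\log\log(1/t)^\theta$. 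For $s\in(0,t^2]$ I use the crude bound $\log\log(1/s)^\theta\leq C_\varepsilon s^{-\varepsilon}$ with $\varepsilon<1/2$. This piece is then $O(t^{2(\rho+1-\varepsilon)})$, which is $o(t^{\rho+1})$ as soon as $\rho+1>2\varepsilon$; this holds since $\rho\geq0$.

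For \eqref{eq:cv a.s. martingale small time}, write $M(t)=\int_0^t\langle A(s),\mathrm{d}W(s)\rangle$. This is a continuous local martingale with $\langle M\rangle_t=\int_0^t|A(s)|^2\mathrm{d}s$. Applying the first step to $|A|^2$, with exponents $2\rho$ and $2\theta$, gives almost surely $\langle M\rangle_t=O(t^{2\rho+1}\log\log(1/t)^{2\theta})$. By Dambis--Dubins--Schwarz there is a Brownian motion $\beta$, possibly on an enlarged space, with $M(t)=\beta(\langle M\rangle_t)$. The law of the iterated logarithm at time $0$ gives $|\beta(u)|\leq 2\sqrt{u\log\log(1/u)}$ for $u$ small. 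On the event where $\langle M\rangle_t\to0$ and, say, $\langle M\rangle_t>0$, this yields
$$|M(t)|\leq 2\sqrt{\langle M\rangle_t\log\log(1/\langle M\rangle_t)}\;.$$

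The main obstacle is controlling $\log\log(1/\langle M\rangle_t)$. Because $\log\log(1/u)$ decreases in $u$, an upper bound on $\langle M\rangle_t$ does not directly bound it. I will use that $u\mapsto u\log\log(1/u)$ is increasing for small $u$. Set $\phi(t):=Ct^{2\rho+1}\log\log(1/t)^{2\theta}$, so that $\langle M\rangle_t\leq\phi(t)$. Monotonicity then gives $\langle M\rangle_t\log\log(1/\langle M\rangle_t)\leq\phi(t)\log\log(1/\phi(t))$. Moreover $\log(1/\phi(t))\leq (2\rho+2)\log(1/t)$ for small $t$, so $\log\log(1/\phi(t))\leq\log\log(1/t)+O(1)$. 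Combining these,
$$|M(t)|=O\big(t^{\rho+1/2}\log\log(1/t)^{\theta+1/2}\big)\;.$$

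Two degenerate cases remain. On the event where $\langle M\rangle_t=0$ for small $t$, $M$ vanishes near $0$ and there is nothing to prove. If $\langle M\rangle$ is positive but not strictly increasing, the time-change representation is still valid, so the argument above goes through unchanged.
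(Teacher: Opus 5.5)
Your proof is correct and follows the same route as the paper. The first estimate is a pathwise bound on $\int_0^t s^\rho\log\log(1/s)^\theta\,\mathrm{d}s$. The second uses Dambis--Dubins--Schwarz, the small-time law of the iterated logarithm and the monotonicity of $u\mapsto u\log\log(1/u)$ near $0$, with the same treatment of the degenerate case $\langle M\rangle_{t_0}=0$. The only minor difference is that the paper bounds that integral by integration by parts, absorbing the remainder via $\log(1/s)\log\log(1/s)\to\infty$, whereas you split at $t^2$; both arguments are valid.
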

\begin{proof}[Proof of Lemma~\ref{lem:holderian martingale}]
\textbf{Step 1}: Let us first prove~\eqref{eq:cv a.s. quad var small time}. From~\eqref{eq:ineq A small time} we deduce the existence of a finite random variable $C>0$ such that for $t>0$ small enough, almost-surely,
\begin{align*}
\int_0^t|A(s)|\mathrm{d}s&\leq C\int_0^ts^\rho\log\log(1/s)^\theta\mathrm{d}s\\
&=C\frac{t^{\rho+1}}{\rho+1}\log\log(1/t)^\theta+C\frac{\theta}{\rho+1}\int_0^t\frac{s^\rho\log\log(1/s)^\theta}{\log(1/s)\log\log(1/s)}\mathrm{d}s\;,
\end{align*}
by integration by parts. Therefore, we easily deduce~\eqref{eq:cv a.s. quad var small time} since $\log(1/s)\log\log(1/s)\underset{s\rightarrow0}{\longrightarrow}\infty$.

\textbf{Step 2}: Let us now prove~\eqref{eq:cv a.s. martingale small time}. Define the martingale 
$$Y:t\geq0\mapsto\int_0^t\langle A(s),\,\mathrm{d}W(s)\rangle\;.$$
It is a locally square-integrable martingale with quadratic variation $\langle Y\rangle_t=\int_0^t|A(s)|^2\mathrm{d}s$. Assume that there exists $t_0>0$ such that $\langle Y\rangle_{t_0}=0$ then for all $t\in(0,\,t_0)$, $|A(t)|=0$ and~\eqref{eq:cv a.s. martingale small time} immediately ensues.

Assume now that $\langle Y\rangle_{t}>0$ for all $t>0$. By the Dambis-Dubins-Schwartz theorem, there exists a Brownian motion $(\widetilde{W}(t))_{t\geq0}$ such that $Y(t)=\widetilde{W}(\langle Y\rangle_t)$ for all $t\geq0$. Using the Law of the Iterated Logarithm, we deduce that, almost-surely,
$$\limsup_{t\rightarrow0}\frac{|Y(t)|}{\sqrt{2\langle Y\rangle_t\log\log(1/\langle Y\rangle_t)}}=1\;.$$
Using~\eqref{eq:ineq A small time} and Step 1, we also deduce that
$$\langle Y\rangle_t=\underset{t\rightarrow0}{O}(t^{1+2\rho}\log\log(1/t)^{2\theta})\;.$$ 
As a result,
$$\frac{|Y(t)|}{t^{\rho+1/2}\log\log(1/t)^{\theta+1/2}}=\frac{|Y(t)|}{\sqrt{2\langle Y\rangle_t\log\log(1/\langle Y\rangle_t)}}\sqrt{\frac{2\langle Y\rangle_t\log\log(1/\langle Y\rangle_t)}{t^{1+2\rho}\log\log(1/t)^{1+2\theta}}}\;.$$
Given that the function $x>0\mapsto x\log\log(1/x)$ is increasing in a neighborhood of $x=0$ the second term in the right-hand side of the equality above is bounded when $t\rightarrow0$, hence the proof of~\eqref{eq:cv a.s. martingale small time}.
\end{proof}
\subsection{Small-time expansion}\label{sec:small time expansion dist}
In this section, we establish a small-time expansion of $f(X(t))$  for $f\in C^{3,\,1}(\R^{2d})$. In particular, this result provides the small-time behavior of the distance to $\Omega$ of the process~\eqref{eq:sde}. To simplify the statements, we only consider the case when $G=I_d$ in~\eqref{eq:sde}. In Section~\ref{sec:proof of regular points} we shall see how this expansion in the case $G=I_d$ is sufficient to obtain the proof of Theorem~\ref{thm:reg points} for any coefficient $G$ satisfying Assumption~\ref{ass:coeff}.

Let us now remind the reader of the notation~\eqref{eq:xqp} which is used throughout this work and let us state the main result of this subsection. Let
\begin{equation}\label{eq:def M_t}
M:t\geq0\mapsto\int_0^t\Sigma(X(s))\,\mathrm{d}B(s)\;.
\end{equation}
\begin{thm}[Small-time expansion]\label{thm:small time asymp distance}
Assume that $G=I_d$ in~\eqref{eq:sde} and let $f\in C^{3,\,1}(\R^{2d})$. For all $x\in\R^{2d}$, $\mathbb{P}_x$ almost-surely,
\begin{align*}
f(X(t))&=f(x)+\langle\nabla_pf(x),\,M(t)\rangle+\frac{1}{2}\langle\nabla^2_{p,\,p}f(x)M(t),\,M(t)\rangle+t\langle p,\,\nabla_{q}f(x)\rangle\\
&+t\langle F(x),\,\nabla_{p}f(x)\rangle+\frac{1}{6}\langle\nabla^3_{p,\,p,\,p}f(x)\,M(t),\,M(t),\,M(t)\rangle\\
&+\int_0^t\langle M(s),\,\nabla_qf(x)+D_pF(x)^\dagger\nabla_pf(x)\rangle\mathrm{d}s+t\langle M(t),\,\nabla^2_{p,\,q}f(x)\,p+\nabla^2_{p,\,p}f(x)F(x)\rangle\\
&+\underset{t\rightarrow0}{O}(t^2\log\log(1/t)^2)\;,
\end{align*}
where $D_pF(x)=(\partial_{p_j}F_i(x))_{1\leq i,\,j\leq d}\in\R^{d\times d}$.
\end{thm}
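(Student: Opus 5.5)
We first establish almost-sure small-time sizes for the increments of $X$ and then Taylor-expand $f$ to third order around $x$, controlling every remainder with Lemma~\ref{lem:holderian martingale}.

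\textbf{Step 1 (sizes of the increments).} Write $X(t)=(q(t),p(t))$ with $X(0)=x=(q,p)$. Since $\Sigma$ is continuous, $\Sigma(X(s))=O(1)$, so Lemma~\ref{lem:holderian martingale} with $\rho=\theta=0$ gives $M(t)=O(t^{1/2}\log\log(1/t)^{1/2})$. With $F(X(s))=O(1)$ this yields $p(t)-p=O(t^{1/2}\log\log(1/t)^{1/2})$. Since $G=I_d$, $q(t)-q=\int_0^tp(s)\,\mathrm{d}s=tp+O(t^{3/2}\log\log(1/t)^{1/2})$, by~\eqref{eq:cv a.s. quad var small time}.

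We then refine both expansions one order further. First, $F(X(s))=F(x)+D_pF(x)(p(s)-p)+D_qF(x)(q(s)-q)+O(|X(s)-x|^2)$. Inserting Step 1 and integrating gives
$$\int_0^tF(X(s))\,\mathrm{d}s=tF(x)+\int_0^tD_pF(x)M(s)\,\mathrm{d}s+O(t^2\log\log(1/t)).$$
Second, $\Sigma(X(s))-\Sigma(x)=O(s^{1/2}\log\log(1/s)^{1/2})$, so by~\eqref{eq:cv a.s. martingale small time} we get $M(t)=\Sigma(x)B(t)+O(t\log\log(1/t))$. This refinement of $M$ is not needed in the final formula, because $M$ appears as is. The point is the resulting bounds
$$p(t)-p=M(t)+tF(x)+O(t^{3/2}\log\log(1/t)^{1/2}),\qquad q(t)-q=tp+\int_0^tM(s)\,\mathrm{d}s+\tfrac{t^2}{2}F(x)+O(\cdots).$$

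\textbf{Step 2 (Taylor expansion).} Since $f\in C^{3,1}$, we have $f(X(t))=f(x)+Df(x)\Delta+\frac12D^2f(x)[\Delta,\Delta]+\frac16D^3f(x)[\Delta,\Delta,\Delta]+O(|\Delta|^4)$, where $\Delta=X(t)-x=(\Delta_q,\Delta_p)$. Because $|\Delta|=O(t^{1/2}\log\log(1/t)^{1/2})$, the remainder is $O(t^2\log\log(1/t)^2)$. We expand term by term using Step 1.

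In the linear term, $\langle\nabla_qf,\Delta_q\rangle$ contributes $t\langle p,\nabla_qf\rangle+\int_0^t\langle M(s),\nabla_qf\rangle\,\mathrm{d}s$ plus $O(t^2)$. The term $\langle\nabla_pf,\Delta_p\rangle$ must be expanded exactly as $\langle\nabla_pf,M(t)+\int_0^tF(X(s))\,\mathrm{d}s\rangle$. By the refinement above, this yields $\langle\nabla_pf,M(t)\rangle+t\langle F,\nabla_pf\rangle+\int_0^t\langle M(s),D_pF^\dagger\nabla_pf\rangle\,\mathrm{d}s+O(t^2\log\log(1/t))$.

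In the quadratic term, $\frac12\langle\nabla^2_{pp}f\,\Delta_p,\Delta_p\rangle$ gives $\frac12\langle\nabla^2_{pp}fM,M\rangle+t\langle M(t),\nabla^2_{pp}fF\rangle+O(t^2\log\log)$. This uses $\Delta_p=M+tF+O(t^{3/2}\cdots)$: the cross term with the error is $O(t^{1/2}\cdot t^{3/2})$ up to logs. The mixed term $\langle\nabla^2_{pq}f\,\Delta_q,\Delta_p\rangle$ gives $t\langle M(t),\nabla^2_{p,q}f\,p\rangle+O(t^2\log\log)$. The term $\Delta_q^{\otimes2}$ is $O(t^2)$.

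In the cubic term, only $\frac16\nabla^3_{ppp}f[\Delta_p^{\otimes3}]$ survives at the order of interest, and replacing $\Delta_p$ by $M(t)$ costs $O(t\cdot t^{1}\log\log)$. Every cubic term involving $\Delta_q$ is $O(t^2\log\log)$.

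\textbf{Main obstacle.} The delicate point is the bookkeeping of the logarithmic factors, together with verifying that no term of order between $t^{3/2}$ and $t^2$ is dropped. The most dangerous term is the drift integral $\int_0^tF(X(s))\,\mathrm{d}s$. Its first-order fluctuation $\int_0^tD_pF(x)M(s)\,\mathrm{d}s$ is of order $t^{3/2}$, so it must be kept; it produces the $D_pF^\dagger$ term in the statement. Similarly, the $O(t^{3/2})$ error in $p(t)-p$ is always multiplied by at least $t^{1/2}$ before entering the remainder. Every remainder estimate is then an application of Lemma~\ref{lem:holderian martingale} with an explicit choice of $(\rho,\theta)$.
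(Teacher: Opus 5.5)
Your proof is correct, but it takes a genuinely different route from the paper. The paper starts from It\^o's formula $f(X(t))=f(x)+\int_0^t\mathcal{L}f(X(s))\,\mathrm{d}s+\int_0^t\langle\nabla_pf(X(s)),\mathrm{d}M(s)\rangle$. It expands the drift integral in Lemma~\ref{lem:small time 1}, which produces the trace term $\frac12\int_0^t\mathrm{Tr}(\Sigma\Sigma^\dagger\nabla^2_{p,p}f(X(s)))\,\mathrm{d}s$ and extra terms $\int_0^t\langle M(s),\nabla^2_{p,q}f(x)p+\nabla^2_{p,p}f(x)F(x)\rangle\,\mathrm{d}s$. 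It then treats the stochastic integral in Lemma~\ref{lem:small time II} by comparing $X$ with $Z(s)=x+(0,M(s))$. There, $\frac12\langle\nabla^2_{p,p}f(x)M,M\rangle$ and $\frac16\langle\nabla^3_{p,p,p}f(x)M,M,M\rangle$ are recovered by It\^o's formula applied to these polynomials in $M$. The $t\langle M(t),\cdot\rangle$ terms come from the integration by parts $\int_0^ts\,\mathrm{d}M(s)=tM(t)-\int_0^tM(s)\,\mathrm{d}s$. The stated formula then follows after two cancellations: trace terms against It\^o corrections, and $\int_0^t\langle M(s),\cdot\rangle\,\mathrm{d}s$ terms against integration-by-parts remainders.

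You instead Taylor-expand $f$ pathwise to third order at $x$. In the linear term you use the exact identity $\Delta_p=M(t)+\int_0^tF(X(s))\,\mathrm{d}s$, keeping the fluctuation $\int_0^tD_pF(x)M(s)\,\mathrm{d}s$. In the quadratic and cubic terms you use the cruder forms $\Delta_p=M(t)+tF(x)+O(t^{3/2}\log\log(1/t)^{1/2})$ and $\Delta_q=tp+O(t^{3/2}\log\log(1/t)^{1/2})$. Since the statement is expressed in $M$ rather than $B$, the polynomial terms in $M$ appear directly with no It\^o correction. The terms $t\langle M(t),\nabla^2_{p,q}f(x)p+\nabla^2_{p,p}f(x)F(x)\rangle$ arise as cross terms, so neither cancellation is needed. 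Your bookkeeping checks out term by term, and every remainder is indeed $O(t^2\log\log(1/t)^2)$ by Lemma~\ref{lem:holderian martingale}.

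What your route buys is brevity: stochastic calculus enters only through that lemma's size estimates. The paper's generator/martingale decomposition is the more standard organisation, but for this statement it only adds bookkeeping. As you say, the refinement $M(t)=\Sigma(x)B(t)+O(t\log\log(1/t))$ is not needed here; the paper only uses it later, in the proof of Theorem~\ref{thm:reg points}.
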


The proof of Theorem~\ref{thm:small time asymp distance} relies on several lemmas detailed in this section. Before delving into the proofs, note that the continuity of the sample paths of $(X(t))_{t\geq0}$ ensures that, for all $x\in\R^{2d}$, $\mathbb{P}_x$ almost-surely, 
\begin{equation}\label{eq:asymptote X sde}
X(t)=x+\begin{pmatrix}0 \\
M(t)
\end{pmatrix}+\underset{t\rightarrow0}{O}(t)\;.
\end{equation}
\begin{lem}[Small-time expansion I]\label{lem:small time 1}
Under the assumptions of Theorem~\ref{thm:small time asymp distance}, for all $x\in\R^{2d}$, $\mathbb{P}_x$ almost-surely,
\begin{align*}
&\int_0^t\mathcal{L}\,f(X(s))\mathrm{d}s-t\big(\langle p,\,\nabla_{q}f(x)\rangle+\langle F(x),\,\nabla_{p}f(x)\rangle\big)\\
&=\int_0^t\langle M(s),\,\nabla_{q}f(x)+D_pF(x)^\dagger\nabla_{p}f(x)+\nabla^2_{p,\,q}f(x)p+\nabla^2_{p,\,p}f(x)F(x)\rangle\\
&+\frac{1}{2}\int_0^t\mathrm{Tr}\big(\Sigma(X(s))\Sigma(X(s))^\dagger\nabla^2_{p,\,p}f(X(s))\big)\mathrm{d}s+\underset{t\rightarrow0}{O}(t^2\log\log(1/t))\;.
\end{align*}
\end{lem}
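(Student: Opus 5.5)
We write $\mathcal{L}f(y)=\langle p_y,\nabla_q f(y)\rangle+\langle F(y),\nabla_pf(y)\rangle+\frac12\mathrm{Tr}(\Sigma\Sigma^\dagger\nabla^2_{p,p}f)(y)$ with $y=X(s)=(q(s),p(s))$. The trace term is kept unexpanded on both sides of the identity, so it suffices to expand the first-order part
$$\phi(y):=\langle p_y,\nabla_qf(y)\rangle+\langle F(y),\nabla_pf(y)\rangle.$$
Since $f\in C^{3,1}$, the function $\phi$ is $C^{2}$ near $x$, because it involves only first derivatives of $f$ and $F$ is smooth. Hence
$$\phi(X(s))=\phi(x)+\langle\nabla\phi(x),X(s)-x\rangle+O(|X(s)-x|^2).$$

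\textbf{Gradient of $\phi$ in $p$.} A direct computation gives
$$\nabla_p\phi(x)=\nabla_qf(x)+\nabla^2_{p,q}f(x)\,p+D_pF(x)^\dagger\nabla_pf(x)+\nabla^2_{p,p}f(x)F(x).$$
This is exactly the vector paired with $M(s)$ in the statement. The $q$-gradient $\nabla_q\phi(x)$ is some bounded vector.

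\textbf{Increments of $X$.} By~\eqref{eq:asymptote X sde}, $X(s)-x=(0,M(s))+O(s)$, and more precisely $q(s)-x_q=O(s)$. Thus
$$\langle\nabla\phi(x),X(s)-x\rangle=\langle\nabla_p\phi(x),M(s)\rangle+O(s).$$
The quadratic remainder is controlled as follows. Since $M(s)=\int_0^s\Sigma(X(r))\,\mathrm dB(r)$ with locally bounded integrand, Lemma~\ref{lem:holderian martingale} with $\rho=\theta=0$ gives $M(s)=O(s^{1/2}\log\log(1/s)^{1/2})$. Hence $|X(s)-x|^2=O(s\log\log(1/s))$.

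\textbf{Integration.} We integrate over $[0,t]$ and apply~\eqref{eq:cv a.s. quad var small time} with $\rho=1,\theta=0$ for the $O(s)$ term, and with $\rho=1,\theta=1$ for the quadratic term. Both yield $O(t^2\log\log(1/t))$. The constant term integrates to $t\phi(x)$, which matches the subtracted quantity. The linear term gives $\int_0^t\langle M(s),\nabla_p\phi(x)\rangle\,\mathrm ds$, the claimed expression, where $\mathrm ds$ is implicit in the statement.

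\textbf{Main obstacle.} The only delicate point is justifying that the $O(\cdot)$ constants are almost-surely finite random variables, uniformly on a small time interval. The argument is that continuity of paths keeps $X(s)$ in a compact neighborhood of $x$ for small $s$, on which the $C^2$ norm of $\phi$ is bounded. The regularity $f\in C^{3,1}$ is more than enough for this step; only $C^3$ is needed to make $\phi\in C^2$.
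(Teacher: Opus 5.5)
Your proposal is correct and is essentially the paper's argument: expand the first-order part of $\mathcal{L}f$ around $x$ using $X(s)=x+(0,M(s))+O(s)$, bound the quadratic remainder by $|M(s)|^2=O(s\log\log(1/s))$ from Lemma~\ref{lem:holderian martingale}, and integrate in $s$. The only difference is cosmetic. You Taylor-expand the combined function $\phi$ once and read off $\nabla_p\phi(x)$, whereas the paper treats the two pieces separately in \eqref{eq:first expansion} and \eqref{eq:second expansion} via product-rule splittings.
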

\begin{proof}[Proof of Lemma~\ref{lem:small time 1}] The proof is an immediate consequence of the following small-time expansions proven below
\begin{align}
&\int_0^t\big[\langle p(s),\,\nabla_{q}f(X(s))\rangle-\langle p,\,\nabla_{q}f(x)\rangle\big]\mathrm{d}s\nonumber\\
&=\int_0^t\langle M(s),\,\nabla_{q}f(x)+\nabla^2_{p,\,q}f(x)p\rangle\mathrm{d}s+\underset{t\rightarrow0}{O}(t^2\log\log(1/t))\label{eq:first expansion}\;,
\end{align}
\begin{align}
&\int_0^t\big[\langle F(X(s)),\,\nabla_{p}f(X(s))\rangle-\langle F(x),\,\nabla_{p}f(x)\rangle\big]\mathrm{d}s\nonumber\\
&=\int_0^t\langle M(s),\,D_pF(x)^\dagger\nabla_{p}f(x)+\nabla^2_{p,\,p}f(x)F(x)\rangle\mathrm{d}s+\underset{t\rightarrow0}{O}(t^2\log\log(1/t))\label{eq:second expansion}\;.
\end{align}

Let us start with the proof of~\eqref{eq:first expansion}. We deduce from~\eqref{eq:asymptote X sde} that, $\mathbb{P}_x$ almost-surely,
\begin{align*}
&\langle p(s),\,\nabla_{q}f(X(s))\rangle-\langle p,\,\nabla_{q}f(x)\rangle\\
&=\langle p(s)-p,\,\nabla_{q}f(X(s))\rangle+\langle p,\,\nabla_{q}f(X(s))-\nabla_{q}f(x)\rangle\\
&=\langle M(s),\,\nabla_{q}f(X(s))\rangle+\underset{s\rightarrow0}{O}(s)+\langle p,\,\nabla^2_{q,\,p}f(x)M(s)\rangle+\underset{s\rightarrow0}{O}(|M(s)|^2)\\
&=\langle M(s),\,\nabla_{q}f(x)+\nabla^2_{p,\,q}f(x)p\rangle+\underset{s\rightarrow0}{O}(|M(s)|^2)\;.
\end{align*}
Since $|M(s)|^2=\underset{s\rightarrow0}{O}(s\log\log(1/s))$ by Lemma~\ref{lem:holderian martingale}, we immediately deduce the proof of~\eqref{eq:first expansion}. Similarly, the proof of~\eqref{eq:second expansion} follows from the equalities
\begin{align*}
&\langle F(X(s)),\,\nabla_{p}f(X(s))\rangle-\langle F(x),\,\nabla_{p}f(x)\rangle\\
&=\langle F(X(s))-F(x),\,\nabla_{p}f(X(s))\rangle+\langle F(x),\,\nabla_{p}f(X(s))-\nabla_{p}f(x)\rangle\\
&=\langle D_pF(x)M(s),\,\nabla_{p}f(x)\rangle+\langle F(x),\,\nabla^2_{p,\,p}f(x)M(s)\rangle+\underset{s\rightarrow0}{O}(s\log\log(1/s))\;.
\end{align*}
\end{proof}

The next lemma provides additional small-time expansions appearing in the proof of Theorem~\ref{thm:small time asymp distance}. They involve the following stochastic process in $\R^{2d}$
$$Z:t\geq0\mapsto X(0)+\begin{pmatrix}0 \\
M(t)
\end{pmatrix}\;.$$
\begin{lem}[Small-time expansion II]\label{lem:small time II}
Under the assumptions of Theorem~\ref{thm:small time asymp distance}, for all $x\in\R^{2d}$, $\mathbb{P}_x$ almost-surely,
\begin{align*}
&(1)\quad\int_0^t\langle\nabla_pf(Z(s)),\,\mathrm{d}M(s)\rangle\\
&=\langle\nabla_pf(x),\,M(t)\rangle+\frac{1}{2}\langle\nabla^2_{p,\,p}f(x)M(t),\,M(t)\rangle+\frac{1}{6}\langle\nabla^3_{p,\,p,\,p}f(x)\,M(t),\,M(t),\,M(t)\rangle\\
&-\frac{1}{2}\int_0^t\mathrm{Tr}\big(\Sigma(X(s))\Sigma(X(s))^\dagger\nabla^2_{p,\,p}f(X(s))\big)\mathrm{d}s+\underset{t\rightarrow0}{O}(t^2\log\log(1/t)^{2})\;.\\
&(2)\quad\int_0^t\langle\nabla^2_{p,\,q}f(Z(s))(q(s)-q),\,\mathrm{d}M(s)\rangle\\
&=t\langle M(t),\,\nabla^2_{p,\,q}f(x)\,p\rangle-\int_0^t\langle M(s),\,\nabla^2_{p,\,q}f(x)\,p\rangle\mathrm{d}s+\underset{t\rightarrow0}{O}(t^2\log\log(1/t))\;.\\
&(3)\quad\int_0^t\langle\nabla^2_{p,\,p}f(Z(s))(p(s)-p-M(s)),\,\mathrm{d}M(s)\rangle\\
&=t\langle\nabla^2_{p,\,p}f(x)F(x),\,M(t)\rangle-\int_0^t\langle\nabla^2_{p,\,p}f(x)F(x),\,M(s)\rangle\mathrm{d}s+\underset{t\rightarrow0}{O}(t^{2}\log\log(1/t))\;.
\end{align*}
\end{lem}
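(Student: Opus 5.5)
We prove the three expansions in turn. Throughout we use two facts: the a.s. bound $|M(s)|=O(s^{1/2}\log\log(1/s)^{1/2})$ coming from Lemma~\ref{lem:holderian martingale}, and the relation $X(s)-Z(s)=O(s)$ from~\eqref{eq:asymptote X sde}. We also use that $\mathrm{d}\langle M\rangle_s=\Sigma\Sigma^\dagger(X(s))\,\mathrm{d}s$.

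\textbf{Item (1).} Since $f\in C^{3,1}$, we apply Itô's formula to the $C^{3,1}$ function $m\mapsto f(q,p+m)$ along the continuous martingale $M$. Here $Z(s)=(q,p+M(s))$, so
$$f(Z(t))=f(x)+\int_0^t\langle\nabla_pf(Z(s)),\mathrm{d}M(s)\rangle+\frac12\int_0^t\mathrm{Tr}\big(\Sigma\Sigma^\dagger(X(s))\nabla^2_{p,p}f(Z(s))\big)\mathrm{d}s.$$
We then Taylor expand $f(Z(t))-f(x)$ in $M(t)$ up to third order. The Lipschitz remainder of $\nabla^3 f$ gives an error $O(|M(t)|^4)=O(t^2\log\log(1/t)^2)$. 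This produces the three polynomial terms of (1).

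It remains to replace $\nabla^2_{p,p}f(Z(s))$ by $\nabla^2_{p,p}f(X(s))$ inside the trace integral. The difference is $O(|X(s)-Z(s)|)=O(s)$, and integrating gives $O(t^2)$. This proves (1).

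\textbf{Item (2).} Since $G=I_d$, we have $q(s)-q=\int_0^s p(r)\,\mathrm{d}r=sp+\int_0^s(p(r)-p)\,\mathrm{d}r=sp+O(s^{3/2}\log\log(1/s)^{1/2})$. We also have $\nabla^2_{p,q}f(Z(s))=\nabla^2_{p,q}f(x)+O(|M(s)|)$. Hence the integrand equals $s\,\nabla^2_{p,q}f(x)p+A(s)$ with $A(s)=O(s^{3/2}\log\log(1/s)^{1/2})$. Lemma~\ref{lem:holderian martingale}, applied with $\mathrm{d}M=\Sigma(X)\,\mathrm{d}B$, bounds $\int_0^t\langle A,\mathrm{d}M\rangle$ by $O(t^2\log\log(1/t))$; here we absorb the bounded factor $\Sigma(X(s))^\dagger$ into $A$. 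For the main term, integration by parts on the deterministic factor $s$ gives
$$\int_0^t s\,\mathrm{d}M(s)=tM(t)-\int_0^tM(s)\,\mathrm{d}s,$$
which yields (2).

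\textbf{Item (3).} This is identical to item (2). From the SDE, $p(s)-p-M(s)=\int_0^sF(X(r))\,\mathrm{d}r=sF(x)+O(s^{3/2}\log\log(1/s)^{1/2})$, because $F(X(r))-F(x)=O(|M(r)|+r)$. As before, freezing $\nabla^2_{p,p}f(Z(s))$ at $x$ costs $O(|M(s)|)$, so the product error is again $O(s^{3/2}\log\log(1/s)^{1/2})$. The same integration by parts then gives (3).

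\textbf{Main obstacle.} The main point is the bookkeeping of iterated-logarithm powers. In (1), the crude bound on $|M|^4$ gives exactly the exponent $2$ in $\log\log(1/t)^2$. In (2) and (3), Lemma~\ref{lem:holderian martingale} with $\rho=3/2$ and $\theta=1/2$ gives the exponent $1$. One should also check that Itô's formula is valid for $C^{3,1}$ functions, which it is, since only $C^2$ regularity is needed for Itô's formula itself.
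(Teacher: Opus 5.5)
Your proof is correct. Items (2) and (3) follow the paper's argument essentially verbatim: expand $q(s)-q$ and $p(s)-p-M(s)$ to first order, freeze the Hessian at $x$, control the remainder with Lemma~\ref{lem:holderian martingale} at $\rho=3/2,\ \theta=1/2$, and integrate $s\,\mathrm{d}M(s)$ by parts. Your remark about absorbing $\Sigma(X(s))^\dagger$ into $A$ makes explicit a point the paper leaves implicit.

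For item (1), however, you take a genuinely different route. The paper proceeds in three stages:
\begin{itemize}
\item it Taylor-expands the integrand $\nabla_pf(Z(s))$ to second order and bounds the resulting stochastic remainder via Lemma~\ref{lem:holderian martingale} with $\rho=\theta=3/2$;
\item it applies It\^o's formula separately to the quadratic and cubic monomials $\langle\nabla^2_{p,p}f(x)M,M\rangle$ and $\langle\nabla^3_{p,p,p}f(x)M,M,M\rangle$;
\item it recombines the two resulting drift terms $\mathrm{Tr}(\Sigma\Sigma^\dagger\nabla^2_{p,p}f(x))$ and $\mathrm{Tr}(\Sigma\Sigma^\dagger\nabla^3_{p,p,p}f(x)M(s))$ into $\mathrm{Tr}(\Sigma\Sigma^\dagger\nabla^2_{p,p}f(X(s)))$, through a first-order expansion of $\nabla^2_{p,p}f(X(s))$ with $O(s\log\log(1/s))$ error.
\end{itemize}

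You instead use the fact that $\nabla_pf(Z(s))$ is exactly the gradient of $m\mapsto f(q,p+m)$ evaluated along $M$. One application of It\^o's formula to $f(Z(t))$ then expresses the stochastic integral in closed form. All that remains is a deterministic Taylor expansion at the terminal time, with remainder $O(|M(t)|^4)$, and a Lipschitz swap $Z(s)\to X(s)$ inside the trace integral costing $O(t^2)$.

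Your version is shorter and entirely pathwise after that single It\^o step, since no stochastic remainder needs to be controlled. The paper's version keeps the same ``expand the integrand, then integrate'' template used in Lemma~\ref{lem:small time 1} and in items (2)--(3). Both give the same $O(t^2\log\log(1/t)^2)$ rate; in your argument it comes transparently from the single term $|M(t)|^4$.
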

\begin{proof}[Proof of Lemma~\ref{lem:small time II}]
Let us fix $x\in\R^{2d}$. We start with the proof of $(1)$. By Lemma~\ref{lem:holderian martingale}, $\mathbb{P}_x$ almost-surely,
\begin{equation}\label{eq:asymptote Z(t)}
|Z(t)-x|=|M(t)|=\underset{t\rightarrow0}{O}(t^{1/2}\log\log(1/t)^{1/2})\;.
\end{equation}
As a result, since $f\in C^{3,\,1}(\R^{2d})$, $\mathbb{P}_x$ almost-surely,
$$\nabla_pf(Z(t))=\nabla_pf(x)+\nabla^2_{p,\,p}f(x)M(t)+\frac{1}{2}\langle\nabla^3_{p,\,p,\,p}f(x)M(t),\,M(t)\rangle+\underset{t\rightarrow0}{O}(t^{3/2}\log\log(1/t)^{3/2})\;.$$
Hence, by Lemma~\ref{lem:holderian martingale},
\begin{align*}
&\int_0^t\langle\nabla_pf(Z(s)),\,\mathrm{d}M(s)\rangle\\
&=\langle\nabla_pf(x),\,M(t)\rangle+\int_0^t\langle\nabla^2_{p,\,p}f(x)M(s),\,\mathrm{d}M(s)\rangle+\frac{1}{2}\int_0^t\langle\nabla^3_{p,\,p,\,p}f(x)M(s),\,M(s),\,\mathrm{d}M(s)\rangle\\
&+\underset{t\rightarrow0}{O}(t^2\log\log(1/t)^2)\;.
\end{align*}
Furthermore, by It\^o's formula,
$$\int_0^t\langle\nabla^2_{p,\,p}f(x)M(s),\,\mathrm{d}M(s)\rangle=\frac{1}{2}\langle\nabla^2_{p,\,p}f(x)M(t),\,M(t)\rangle-\frac{1}{2}\int_0^t\mathrm{Tr}\big(\Sigma(X(s))\Sigma(X(s))^\dagger\nabla^2_{p,\,p}f(x)\big)\mathrm{d}s\;.$$
Additionally,
\begin{align*}
&\int_0^t\langle\nabla^3_{p,\,p,\,p}f(x)\,M(s),\,M(s),\,\mathrm{d}M(s)\rangle\\
&=\frac{1}{3}\langle\nabla^3_{p,\,p,\,p}f(x)\,M(t),\,M(t),\,M(t)\rangle-\int_0^t\mathrm{Tr}\big(\Sigma(X(s))\Sigma(X(s))^\dagger\nabla^3_{p,\,p,\,p}f(x)M(s)\big)\mathrm{d}s\,,
\end{align*}
where $\nabla^3_{p,\,p,\,p}f(x)M(s)\in\R^{d\times d}$ is a matrix defined for $1\leq i,\,j\leq d$ as follows
$$\big(\nabla^3_{p,\,p,\,p}f(x)M(s)\big)_{i,\,j}=\sum_{k=1}^d\partial^3_{p_i,\,p_j,\,p_k}f(x)M_k(s)\;.$$
In order to conclude the proof of $(1)$, it remains to prove that
\begin{align*}
&\int_0^t\mathrm{Tr}\big(\Sigma(X(s))\Sigma(X(s))^\dagger\nabla^2_{p,\,p}f(X(s))\big)\mathrm{d}s\\
&=\int_0^t\mathrm{Tr}\big(\Sigma(X(s))\Sigma(X(s))^\dagger\nabla^2_{p,\,p}f(x)\big)\mathrm{d}s+\int_0^t\mathrm{Tr}\big(\Sigma(X(s))\Sigma(X(s))^\dagger\nabla^3_{p,\,p,\,p}f(x)M(s)\big)\mathrm{d}s\\
&+\underset{t\rightarrow0}{O}(t^2\log\log(1/t)^2)\,,
\end{align*}
which is a consequence of~\eqref{eq:asymptote X sde} and the fact that
$$\nabla^2_{p,\,p}f(X(s))=\nabla^2_{p,\,p}f(x)+\nabla^3_{p,\,p,\,p}f(x)M(s)+\underset{s\rightarrow0}{O}(s\log\log(1/s))\,,$$
hence the proof of $(1)$. Let us now prove the expansion in $(2)$.

By~\eqref{eq:asymptote X sde},~\eqref{eq:asymptote Z(t)} and Lemma~\ref{lem:holderian martingale},
$$q(t)-q=\int_0^tp(s)\mathrm{d}s=tp+\underset{t\rightarrow0}{O}(t^{3/2}\log\log(1/t)^{1/2})\;.$$
Therefore, by Lemma~\ref{lem:holderian martingale},
\begin{align*}
\int_0^t\langle\nabla^2_{p,\,q}f(Z(s))(q(s)-q),\,\mathrm{d}M(s)\rangle&=\int_0^ts\langle\nabla^2_{p,\,q}f(Z(s))\,p,\,\mathrm{d}M(s)\rangle+\underset{t\rightarrow0}{O}(t^2\log\log(1/t))\\
&=\int_0^ts\langle\nabla^2_{p,\,q}f(x)\,p,\,\mathrm{d}M(s)\rangle+\underset{t\rightarrow0}{O}(t^2\log\log(1/t))\;.
\end{align*}
Furthermore, by It\^o's formula,
$$\int_0^ts\langle\nabla^2_{p,\,q}f(x)\,p,\,\mathrm{d}M(s)\rangle=t\langle\nabla^2_{p,\,q}f(x)\,p,\,M(t)\rangle-\int_0^t\langle\nabla^2_{p,\,q}f(x)\,p,\,M(s)\rangle\mathrm{d}s\,,$$
hence $(2)$. It remains now to prove $(3)$. By~\eqref{eq:sde},~\eqref{eq:asymptote X sde} and Lemma~\ref{lem:holderian martingale}
\begin{align*}
p(t)-p-M(t)&=\int_0^tF(X(s))\mathrm{d}s\\
&=tF(x)+\underset{t\rightarrow0}{O}(t^{3/2}\log\log(1/t)^{1/2})\;.
\end{align*}
Therefore, using Lemma~\ref{lem:holderian martingale} we deduce that
$$\int_0^t\langle\nabla^2_{p,\,p}f(Z(s))(p(s)-p-M(s)),\,\mathrm{d}M(s)\rangle=\int_0^ts\langle\nabla^2_{p,\,p}f(x)F(x),\,\mathrm{d}M(s)\rangle+\underset{t\rightarrow0}{O}(t^2\log\log(1/t))\;.$$
Furthermore, by It\^o's formula,
\begin{align*}
&\int_0^ts\langle\nabla^2_{p,\,p}f(x)F(x),\,\mathrm{d}M(s)\rangle\\
&=t\langle\nabla^2_{p,\,p}f(x)F(x),\,M(t)\rangle-\int_0^t\langle\nabla^2_{p,\,p}f(x)F(x),\,M(s)\rangle\mathrm{d}s\;.
\end{align*}
which concludes the proof of $(3)$.
\end{proof}
Let us now prove Theorem~\ref{thm:small time asymp distance}.
\begin{proof}[Proof of Theorem~\ref{thm:small time asymp distance}]
By It\^o's formula, $\mathbb{P}_x$ almost-surely,
\begin{equation}\label{eq:ito formula distance}
f(X(t))=f(x)+\int_0^t\mathcal{L}\,f(X(s))\mathrm{d}s+\int_0^t\langle\nabla_pf(X(s)),\,\mathrm{d}M(s)\rangle\;.
\end{equation}
The asymptotic expansion of the first integral when $t\rightarrow0$ was obtained in Lemma~\ref{lem:small time 1}. It remains to study the second stochastic integral. Define the following stochastic process
$$V:t\geq0\mapsto\nabla_pf(X(t))-\nabla_pf(Z(t))-\nabla(\nabla_{p}f)(Z(t))(X(t)-Z(t))\;.$$
By Taylor's inequality and~\eqref{eq:sde},
$$|V(t)|=\underset{t\rightarrow0}{O}(|X(t)-Z(t)|^2)=\underset{t\rightarrow0}{O}(t^2)\;.$$
Therefore, by Lemma~\ref{lem:holderian martingale},
\begin{align*}
\int_0^t\langle\nabla_pf(X(s)),\,\mathrm{d}M(s)\rangle&=\int_0^t\langle\nabla_pf(Z(s)),\,\mathrm{d}M(s)\rangle+\int_0^t\langle\nabla^2_{p,\,q}f(Z(s))(q(s)-q),\,\mathrm{d}M(s)\rangle\\
&+\int_0^t\langle\nabla^2_{p,\,p}f(Z(s))(p(s)-p-M(s)),\,\mathrm{d}M(s)\rangle+\underset{t\rightarrow0}{O}(t^2)\;.
\end{align*}
As a result, reinjecting the asymptotic expansions obtained in Lemma~\ref{lem:small time II} one immediately concludes the proof.
\end{proof}

\subsection{Proof of Theorem~\ref{thm:reg points}}\label{sec:proof of regular points} This section is devoted to the proof of Theorem~\ref{thm:reg points}. First, we introduce intermediary results which provide small-time asymptotics of the Brownian and time-integrated Brownian motions. Second, we prove Proposition~\ref{prop:liminf bessel int BM} which is a highly non-trivial result aimed at proving the regularity of points in $\Gamma^{0,\,r}$. All these elements are then combined to conclude the proof of Theorem~\ref{thm:reg points}.

Let us first recall some well-known properties satisfied by any given Brownian motion $(W(t))_{t\geq0}$
\begin{itemize}
    \item (Symmetry) $(-W(t))_{t\geq0}$ shares the same law as $(W(t))_{t\geq0}$,
    \item (Time-inversion) $(tW(1/t))_{t\geq0}$ shares the same law as $(W(t))_{t\geq0}$,
    \item (Scaling) $(W(at)/\sqrt{a})_{t\geq0}$ shares the same law as $(W(t))_{t\geq0}$ for any $a>0$.
\end{itemize}

The following lemma is a consequence of well-known results by Erd\"os and Dvoretzky in~\cite{dvoretzky1951some} and Spitzer in~\cite{spitzer1958some} which characterize the lower-class envelopes of the Brownian motion in any dimension.

\begin{lem}[Small-time asymptotic]\label{lem:liminf BM recurrent}
Let $(W(t))_{t\geq0}$ be a Brownian motion in $\R^d$.
\begin{itemize}
    \item If $d=2$,
    \begin{equation}\label{eq:liminf Brownian 2d}
\forall\rho\geq0,\qquad\liminf_{t\rightarrow0}\,\frac{|W(t)|}{t^\rho}=0\;.
\end{equation}
    \item If $d\geq3$,
\begin{equation}\label{eq:liminf Brownian 3d}
\liminf_{t\rightarrow0}\,(\log1/t)^{(1+\epsilon)/(d-2)}\frac{|W(t)|}{t^{1/2}}=\left\{ \begin{aligned} & 0\;,\qquad&&\text{if }\epsilon=0\,,\\
 &+\infty\;,\qquad&&\text{if }\epsilon>0\;.
\end{aligned}
\right.
\end{equation}
\end{itemize}
\end{lem}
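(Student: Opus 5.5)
The plan is to reduce both statements, via time inversion, to the classical large-time lower envelopes of Brownian motion due to Dvoretzky--Erd\H{o}s (for $d\geq3$) and Spitzer (for $d=2$). The reduction runs through the process $\widetilde W(s):=sW(1/s)$, which is again a Brownian motion. With $s=1/t$ we have $|W(t)|=|\widetilde W(s)|/s$, so
$$\frac{|W(t)|}{t^{1/2}}=\frac{|\widetilde W(s)|}{s^{1/2}}\,,\qquad \log(1/t)=\log s\;.$$
Every $\liminf_{t\to0}$ in the statement therefore becomes a $\liminf_{s\to\infty}$ for $\widetilde W$.

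For $d\geq3$, the Dvoretzky--Erd\H{o}s test says the following. If $\psi$ is decreasing, then $|\widetilde W(s)|\geq s^{1/2}\psi(s)$ eventually, almost surely, exactly when
$$\int^\infty \psi(s)^{d-2}\,\frac{\mathrm{d}s}{s}<\infty\;.$$
We apply it with $\psi(s)=c(\log s)^{-(1+\epsilon)/(d-2)}$, where $c>0$ is arbitrary. The integral becomes $\int^\infty (\log s)^{-(1+\epsilon)}\,\mathrm{d}s/s$, which converges iff $\epsilon>0$. When $\epsilon>0$, every $c$ is a lower bound eventually, so the $\liminf$ is $+\infty$. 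When $\epsilon=0$ the integral diverges for every $c$, so $|\widetilde W(s)|<c\,s^{1/2}(\log s)^{-1/(d-2)}$ infinitely often. Letting $c\downarrow0$ along a countable sequence gives the value $0$. Translating back through the inversion yields the $d\geq3$ statement.

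For $d=2$, fix $\rho\geq0$. After inversion, $|W(t)|/t^\rho=|\widetilde W(s)|\,s^{\rho-1}$, so it suffices to show that $\liminf_{s\to\infty}|\widetilde W(s)|\,s^{\rho-1}=0$. Spitzer's test, in the form
$$\int^\infty \frac{\mathrm{d}s}{s\,|\log\psi(s)|}$$
for $|\widetilde W(s)|\geq s^{1/2}\psi(s)$, settles this. Take $\psi(s)=c\,s^{1/2-\rho}$, where we may assume $\rho>1/2$ since the statement is monotone in $\rho$. The integral behaves like $\int^\infty \mathrm{d}s/(s\log s)$, which diverges. Hence $|\widetilde W(s)|<c\,s^{1-\rho}$ infinitely often, and $c\downarrow0$ concludes.

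There is also a more elementary route for $d=2$ that avoids Spitzer's test, namely neighborhood recurrence combined with the Blumenthal $0$-$1$ law. The event $\{\liminf_{t\to 0}|W(t)|/t^\rho<c\}$ belongs to the germ $\sigma$-field. Its probability is at least $\limsup_n\mathbb{P}(|W(t_n)|<c\,t_n^\rho)$ along any sequence $t_n\to0$. However, these marginal probabilities tend to $0$, so this alone does not suffice.

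The main obstacle is therefore precisely the $d=2$ case. There I would rely on Spitzer's integral test rather than on a marginal argument. The delicate point is checking the test's monotonicity hypotheses on $\psi$ and the direction of the time inversion.
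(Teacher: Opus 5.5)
Your proposal is correct and follows the same route as the paper: the time inversion $\widetilde W(s)=sW(1/s)$ reduces both claims to large-time lower envelopes, which are handled by Spitzer's test for $d=2$ and the Dvoretzky--Erd\H{o}s test for $d\geq3$. The only difference is the choice of test functions. The paper uses the single function $\exp(-\log t\log\log t)$ for all $\rho$ at once when $d=2$, and $(\log t\log\log t)^{-(1+\epsilon)/(d-2)}$ when $d\geq3$. Your free multiplicative constant $c$ makes the $\epsilon>0$ case immediate. By contrast, the paper's extra $\log\log$ factor there only gives a lower bound tending to $0$, unless the test is reapplied with a slightly smaller $\epsilon$.
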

\begin{proof}
The first result is a consequence of~\cite[Theorem 1]{spitzer1958some} which states that if $g$ is a positive non-increasing function then
$$\mathbb{P}\left(|W(t)|<\sqrt{t}g(t)\,\text{ for arbitrary large }t>0\right)=0\text{ or }1$$
depending on whether
$$\int^\infty\frac{1}{t|\log g(t)|}\mathrm{d}t<\infty\text{ or }=+\infty\;.$$
Define
$$g:t>1\mapsto\mathrm{exp}(-\log(t)\log\log(t))\;.$$
Then,
$$\int^\infty\frac{1}{t|\log g(t)|}\mathrm{d}t=+\infty\;.$$
As a result,
$$\liminf_{t\rightarrow\infty}\,\frac{|W(t)|}{t^{1/2}\mathrm{exp}(-\log(t)\log\log(t))}\leq1\;.$$
The time-inversion property applied to $(W(t))_{t\geq0}$ ensures that
$$\liminf_{t\rightarrow0}\,\frac{|W(t)|}{t^{1/2+\log\log(1/t)}}\leq1$$
which concludes the proof of~\eqref{eq:liminf Brownian 2d}. 

The proof of~\eqref{eq:liminf Brownian 3d} is a consequence of the result~\cite[Theorem 6]{dvoretzky1951some}. Namely, if $g$ is a positive monotonic function on $(1,\,+\infty)$, then
$$\mathbb{P}\left(|W(t)|<\sqrt{t}g(t)\,\text{ for arbitrary large }t>0\right)=0\text{ or }1$$
depending on whether
$$\int^\infty\frac{g(t)^{d-2}}{t}\mathrm{d}t<\infty\text{ or }=+\infty\;.$$
For $\epsilon>0$, let
$$g:t>1\mapsto\frac{1}{(\log t\log\log t)^{(1+\epsilon)/(d-2)}}\;.$$
Then,
$$\int^\infty\frac{g(t)^{d-2}}{t}\mathrm{d}t=\left\{ \begin{aligned} & +\infty\;,\qquad&&\text{if }\epsilon=0\,,\\
 &<\infty\;,\qquad&&\text{if }\epsilon>0\;.
\end{aligned}
\right.$$
Therefore, the proof follows from~\cite[Theorem 6]{dvoretzky1951some} and the time-inversion property.
\end{proof}
In more recent years, Lachal has also provided in~\cite[Theorem 14]{lachal1997local} the lower-class envelope of the norm of the process $(W_1(t),\,\int_0^tW_1(s)\mathrm{d}s)_{t\geq0}$ where $(W_1(t))_{t\geq0}$ is a one-dimensional Brownian motion. In particular, he has shown that for all $\epsilon>0$, almost-surely,
\begin{equation}\label{eq:cv lachal int BM}
\liminf_{t\rightarrow0}\frac{\log(1/t)^{3/2+\epsilon}}{t^{3/2}}\sqrt{|W_1(t)|^2+\left|\int_0^tW_1(s)\mathrm{d}s\right|^2}=+\infty\;.
\end{equation}

The small-time asymptotics of the Brownian and time-integrated Brownian motions recalled in this section are key ingredients of the proof of Theorem~\ref{thm:reg points}. The most difficult part lies in determining the regularity of the boundary points in $\Gamma^{0,\,r}$ and relies on the following proposition whose proof is fairly technical.
\begin{prop}[Small-time asymptotic]\label{prop:liminf bessel int BM}Let
\begin{itemize}
    \item $\mathbb{S}\in\R^{d\times d}$ be a symmetric non-negative matrix satisfying $\mathrm{rank}(\mathbb{S})\in\llbracket0,\,2\rrbracket$\,;
    \item $\mathbb{T}\in\R^{d\times d\times d}$ be a symmetric tensor\,;
    \item $\alpha,\,\theta\in\R^d$ such that $\theta\neq0$\;.
\end{itemize}
Then, for any $\epsilon>0$, for any $x\in\R^{2d}$, $\mathbb{P}_x$ almost-surely,
\begin{equation}\label{eq:liminf negative}
\liminf_{t\rightarrow0}\frac{\log(1/t)^{3/2+\epsilon}}{t^{3/2}}\left[\langle\mathbb{S}\,M(t),\,M(t)\rangle+\langle\mathbb{T}\,M(t),\,M(t),\,M(t)\rangle+t\langle\alpha,\,M(t)\rangle+\int_0^t\langle\theta,\,M(s)\rangle\mathrm{d}s\right]=-\infty\;,
\end{equation}
where $(M(t))_{t\geq0}$ is defined in~\eqref{eq:def M_t}.
\end{prop}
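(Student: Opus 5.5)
The plan is to reduce \eqref{eq:liminf negative} to a statement about a standard Brownian motion, and then produce a sequence of random times $t_n\downarrow0$ along which the quadratic, cubic and linear terms are negligible while the time-integral term is negative and large enough.

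\textbf{Reduction to a Brownian motion.} Write $M(t)=\Sigma(x)W(t)+R(t)$ with $W:=B$ and $R(t):=\int_0^t(\Sigma(X(s))-\Sigma(x))\mathrm{d}B(s)$. By \eqref{eq:asymptote X sde} and Lemma~\ref{lem:holderian martingale}, $R(t)=O(t\log\log(1/t))$ almost-surely. Since $\Sigma(x)$ is invertible, replacing $\mathbb{S},\mathbb{T},\alpha,\theta$ by their pull-backs through $\Sigma(x)$ keeps all the hypotheses, including $\mathrm{rank}\le2$, non-negativity and $\theta\neq0$. So I will work with $W$ in place of $M$.

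The error terms created by $R$ are at most $O(|W(t)|\,t\log\log(1/t))+O(t^{2}\log\log(1/t)^2)$. The first of these is only $t^{3/2}(\log\log)^{3/2}$ in general, which is too big. This already dictates the strategy: every time $t_n$ I select must have $|W(t_n)|$ much smaller than $\sqrt{t_n}$, and along such times all the errors are negligible. For the same reason, the terms $\langle\mathbb{S}W,W\rangle$, $\langle\mathbb{T}W,W,W\rangle$ and $t\langle\alpha,W\rangle$ are then $o(t^{3/2}\log(1/t)^{-3/2-\epsilon})$. It thus suffices to find, almost-surely, a sequence $t_n\to0$ with
\begin{equation*}
|W(t_n)|\le t_n^{3/4}\log(1/t_n)^{-2}\quad\text{and}\quad \frac{\log(1/t_n)^{3/2+\epsilon}}{t_n^{3/2}}\int_0^{t_n}\langle\theta,W(s)\rangle\mathrm{d}s\longrightarrow-\infty\;.
\end{equation*}
Strictly speaking, the smallness condition is only needed for $PW$, the projection on $\mathrm{Ran}(\mathbb{S})+\mathbb{R}\theta$. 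However, the linear and cubic terms and the error $|W|t\log\log$ also involve the remaining components, so I will have to control the full vector $W$. This is exactly where the rank assumption enters, as explained next.

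\textbf{Case analysis.} The subspace $E:=\mathrm{Ran}(\mathbb{S})+\mathbb{R}\theta$ has dimension at most $3$.
\begin{itemize}
\item If $\theta\in\mathrm{Ran}(\mathbb{S})$, then $\dim E\le2$. I first handle the components of $W$ transverse to the directions that matter, and then use \eqref{eq:liminf Brownian 2d}: a planar Brownian motion satisfies $\liminf_{t\to0}|W(t)|/t^\rho=0$ for every $\rho$, so it returns arbitrarily close to $0$ at suitable small times. At such times Lachal's estimate \eqref{eq:cv lachal int BM}, applied to the one-dimensional Brownian motion $\langle\theta,W\rangle/|\theta|$, forces $\bigl|\int_0^{t}\langle\theta,W\rangle\bigr|\ge t^{3/2}\log(1/t)^{-3/2-\epsilon/2}$. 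This is because its companion coordinate $\langle\theta,W(t)\rangle$ is negligible there. That yields the required size; the sign is handled below.
\item If $\theta\notin\mathrm{Ran}(\mathbb{S})$, I decompose $\theta=\theta_1+\theta_2$ with $\theta_1\in\mathrm{Ran}(\mathbb{S})$ and $\theta_2\perp\mathrm{Ran}(\mathbb{S})$, $\theta_2\neq0$. I choose the times $t_n$ as stopping-type times determined only by the projection onto $\mathrm{Ran}(\mathbb{S})$, which is at most two-dimensional, so \eqref{eq:liminf Brownian 2d} applies again. The component along $\theta_2$ is independent of this projection. Conditionally on it, $\int_0^{t_n}\langle\theta_2,W\rangle\mathrm{d}s$ is a centered Gaussian variable of standard deviation of order $t_n^{3/2}$. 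Hence it is $\le -t_n^{3/2}\log(1/t_n)^{-3/2-\epsilon/2}$ with conditional probability bounded below, and a conditional Borel--Cantelli argument along well-separated times makes this happen infinitely often.
\end{itemize}

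\textbf{Sign and zero-one law.} To get a negative sign in the first case, I use the symmetry $W\mapsto-W$. It preserves the event ``$|W(t)|$ is tiny'' and flips the sign of the integral, so it only changes $\mathbb{T}$ and $\alpha$ terms that are already negligible. Hence the event $\{$the liminf in \eqref{eq:liminf negative} equals $-\infty\}$ has probability at least $1/2$. This event belongs to the germ $\sigma$-field $\mathcal{F}_{0+}$, so the Blumenthal zero-one law upgrades the probability to $1$.

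\textbf{Main obstacle.} I expect the hard step to be the remaining components in the first case, and, in both cases, making \emph{all} coordinates of $W$ small at the chosen times. The cubic and linear terms and the error $R$ involve the full vector $W$, and a Brownian motion in dimension $\ge3$ is not point-recurrent, by \eqref{eq:liminf Brownian 3d}. I would first try to exploit that these terms are odd in the transverse directions and that the error is only $t^{3/2}\log\log$, beating it with the freedom of $\epsilon$. If that fails, I would instead impose only $|W(t_n)|\le\sqrt{t_n}\log(1/t_n)^{-1}$ for the transverse directions, which holds infinitely often by \eqref{eq:liminf Brownian 3d}, and check that it still suffices.
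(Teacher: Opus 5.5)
\textbf{Gap: the transverse cubic and linear terms.}

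Your reduction needs times with $|W(t_n)|\le t_n^{3/4}\log(1/t_n)^{-2}$ for the \emph{full} vector $W\in\R^d$. For $d\ge3$ no such times exist: by \eqref{eq:liminf Brownian 3d} with $\epsilon=1$, almost surely $|W(t)|\ge\sqrt{t}\,\log(1/t)^{-2/(d-2)}$ for all small $t$.

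The error $R$ is not what forces this requirement. The only $R$-term of size $|W|\,t\log\log(1/t)$ is the cross term $2\langle\mathbb{S}\Sigma(x)W,R\rangle$. It only involves the component of $W$ in the range of $\Sigma(x)^\dagger\mathbb{S}\Sigma(x)$, and $2ab\le a^2+b^2$ absorbs it as in the paper. All other errors are $O(t^2\log\log(1/t)^2)$.

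The real obstruction is $\langle\mathbb{T}W,W,W\rangle+t\langle\alpha,W\rangle$ along the directions transverse to the (at most two-dimensional) subspace where recurrence is available. These terms are of order $t^{3/2}$ and of either sign. They can therefore swamp the lower bound $t^{3/2}\log(1/t)^{-3/2-\epsilon/2}$ that Lachal's estimate gives for the integral.

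Your fallbacks do not remove them:
\begin{itemize}
\item The freedom in $\epsilon$ only shifts powers of $\log(1/t)$.
\item Imposing $|W_\perp(t_n)|\le\sqrt{t_n}\log(1/t_n)^{-1}$ still leaves $t\langle\alpha,W_\perp\rangle$ of order $t^{3/2}\log(1/t)^{-1}$, which is too large. Moreover, such times do not exist for small $t$ once the transverse dimension is at least $4$. Nothing guarantees they coincide with the times where the planar part is tiny.
\end{itemize}

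The same terms also break your second case. There you only require the $\theta_2$-Gaussian to lie below $-t_n^{3/2}\log(1/t_n)^{-3/2-\epsilon/2}$. But the transverse terms, together with the planar part $\int_0^{t_n}\langle\theta_1,W\rangle\mathrm{d}s$ (of size up to $t_n^{3/2}$ and of unknown sign at your selected times), can exceed that. As written, the argument only goes through for $d\le2$, where your first-case reasoning (recurrence, Lachal, symmetry, Blumenthal) is correct.

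\textbf{Missing idea: symmetry and scaling instead of smallness.}

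Take $\tau_n$ to be a function of $(|W_1|,|W_2|)$ only. Then it is invariant under coordinate sign flips and independent of $(W_3,\ldots,W_d)$. The paper argues by contradiction:
\begin{itemize}
\item If \eqref{eq:liminf negative} failed, Blumenthal's law and $W\mapsto-W$ would force the normalized sum of the cubic, linear and integral terms to tend to $0$ along $\tau_n$.
\item Adding the same statement for the process obtained by flipping every coordinate except one index $j$ with $\widetilde{\theta}_j\neq0$ kills all terms odd in the other coordinates.
\item For $j\in\{1,2\}$ this contradicts \eqref{eq:cv lachal int BM}.
\item For $j\ge3$, the rescaled quantity $\tau_n^{-3/2}(\cdots)$ has a law independent of $n$, so it would vanish almost surely. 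This is impossible, because $\int_0^1 W_j(s)\mathrm{d}s$ has positive variance conditionally on $W_j(1)$ and the other coordinates.
\end{itemize}

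Your ``probability at least $1/2$, then Blumenthal'' route can be completed along the same lines. Conditionally on $(W_1,W_2)$, the rescaled transverse functional has a fixed law. That law is symmetric if $\theta$ has no transverse component, and unbounded below otherwise. Combining this with the sign flip of $(W_1,W_2)$ and reverse Fatou gives an event of positive probability. That is precisely the step the proposal leaves open.
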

\begin{proof} 
\textbf{Step 1}: Let us fix $x\in\R^{2d}$ and let us first show the existence of
\begin{itemize}
    \item $\lambda_1,\,\lambda_2>0$\,;
    \item $\widetilde{\mathbb{T}}\in\R^{d\times d\times d}$ a symmetric tensor\,;
    \item $\widetilde{\alpha},\,\widetilde{\theta}\in\R^d$ such that $\widetilde{\theta}\neq0$\,;
    \item $(W(t)=(W_1(t),\ldots,\,W_d(t)))_{t\geq0}$ a Brownian motion in $\R^d$\,;
\end{itemize} 
such that $\mathbb{P}_x$ almost-surely,
\begin{align}
&\langle\mathbb{S}\,M(t),\,M(t)\rangle+\langle\mathbb{T}\,M(t),\,M(t),\,M(t)\rangle+t\langle\alpha,\,M(t)\rangle+\int_0^t\langle\theta,\,M(s)\rangle\mathrm{d}s\nonumber\\
&\leq\lambda_1W_1(t)^2+\lambda_2W_2(t)^2+\langle\widetilde{\mathbb{T}}\,W(t),\,W(t),\,W(t)\rangle+t\langle\widetilde{\alpha},\,W(t)\rangle+\int_0^t\langle\widetilde{\theta},\,W(s)\rangle\mathrm{d}s+\underset{t\rightarrow0}{O}(t^2\log\log(1/t)^2)\label{eq:upper-bound brownian terms}\;.
\end{align}

Define $E:t\geq0\mapsto M(t)-\Sigma(x)B(t)$. By Lemma~\ref{lem:holderian martingale} and~\eqref{eq:asymptote X sde}, $\mathbb{P}_x$ almost-surely,
\begin{equation}\label{eq:difference M(t) Sigma B(t)}
E(t)=\int_0^t(\Sigma(X(s))-\Sigma(x))\mathrm{d}B(s)=\underset{t\rightarrow0}{O}(t\log\log(1/t))\;.
\end{equation}
Thus,
\begin{align}
&\langle\mathbb{S}\,M(t),\,M(t)\rangle\\
&=\langle\mathbb{S}\,\Sigma(x)B(t),\,\Sigma(x)B(t)\rangle+2\langle\mathbb{S}\,\Sigma(x)B(t),\,E(t)\rangle+\underset{t\rightarrow0}{O}(t^2\log\log(1/t)^2)\nonumber\\
&=\langle\Sigma(x)^\dagger\mathbb{S}\,\Sigma(x)B(t),\,B(t)\rangle+2\langle\Sigma(x)^\dagger\mathbb{S}\,\Sigma(x)B(t),\,\Sigma(x)^{-1}E(t)\rangle+\underset{t\rightarrow0}{O}(t^2\log\log(1/t)^2)\;.\label{eq:decomposition SM(t)M(t)}
\end{align}
 Since $\Sigma(x)\in\R^{d\times d}$ is invertible and $\mathbb{S}\in\R^{d\times d}$ is a non-negative symmetric matrix, then $\Sigma(x)^\dagger\mathbb{S}\Sigma(x)$ is a symmetric non-negative matrix satisfying 
\begin{equation}\label{eq:rank product matrix}
\mathrm{rank}(\Sigma(x)^\dagger\mathbb{S}\Sigma(x))=\mathrm{rank}(\mathbb{S})\in\llbracket0,\,2\rrbracket\;.
\end{equation}
Therefore, let $(e_1,\,\ldots,\,e_d)$ be an orthonormal diagonalisation basis of $\Sigma(x)^\dagger\mathbb{S}\Sigma(x)$ associated with the eigenvalues $\lambda_1,\,\lambda_2\geq0$. Up to taking positive upper-bounds of $\lambda_1,\,\lambda_2$, we can assume that $\lambda_1,\,\lambda_2>0$. Consider now the process
$$W:t\geq0\mapsto(\langle B(t),\,e_1\rangle,\,\ldots,\,\langle B(t),\,e_d\rangle)\;.$$
It is easy to see that $(W(t))_{t\geq0}$ is a Brownian motion in $\R^d$. Therefore, by~\eqref{eq:decomposition SM(t)M(t)},
\begin{align*}
&\langle\mathbb{S}\,M(t),\,M(t)\rangle\\
&\leq\lambda_1W_1(t)^2+\lambda_2W_2(t)^2+2\lambda_1W_1(t)\langle e_1,\,\Sigma(x)^{-1}E(t)\rangle+2\lambda_2W_2(t)\langle e_2,\,\Sigma(x)^{-1}E(t)\rangle+\underset{t\rightarrow0}{O}(t^2\log\log(1/t)^2)\\
&\leq2\lambda_1W_1(t)^2+2\lambda_2W_2(t)^2+\lambda_1\langle e_1,\,\Sigma(x)^{-1}E(t)\rangle^2+\lambda_2\langle e_2,\,\Sigma(x)^{-1}E(t)\rangle^2+\underset{t\rightarrow0}{O}(t^2\log\log(1/t)^2)\;,
\end{align*}
using the inequality $2ab\leq a^2+b^2$ for $a,\,b\geq0$. Therefore, by~\eqref{eq:difference M(t) Sigma B(t)},
$$\langle\mathbb{S}\,M(t),\,M(t)\rangle\leq2\lambda_1W_1(t)^2+2\lambda_2W_2(t)^2+\underset{t\rightarrow0}{O}(t^2\log\log(1/t)^2)\;.$$
Additionally, by~\eqref{eq:difference M(t) Sigma B(t)}, we also have that
\begin{align*}
&\langle\mathbb{T}\,M(t),\,M(t),\,M(t)\rangle+t\langle\alpha,\,M(t)\rangle+\int_0^t\langle\theta,\,M(s)\rangle\mathrm{d}s\\
&=\langle\mathbb{T}\,\Sigma(x)B(t),\,\Sigma(x)B(t),\,\Sigma(x)B(t)\rangle+t\langle\alpha,\,\Sigma(x)B(t)\rangle+\int_0^t\langle\theta,\,\Sigma(x)B(s)\rangle\mathrm{d}s+\underset{t\rightarrow0}{O}(t^2\log\log(1/t)^2)\\
&=\langle\widetilde{\mathbb{T}}\,W(t),\,W(t),\,W(t)\rangle+t\langle\widetilde{\alpha},\,W(t)\rangle+\int_0^t\langle\widetilde{\theta},\,W(s)\rangle\mathrm{d}s+\underset{t\rightarrow0}{O}(t^2\log\log(1/t)^2)\;,
\end{align*}
where, for $1\leq i,\,j,\,k\leq d$,
\begin{align*}
\widetilde{\mathbb{T}}_{i,\,j,\,k}&=\langle\mathbb{T}\,\Sigma(x)e_i,\,\Sigma(x)e_j,\,\Sigma(x)e_k\rangle,\\
\widetilde{\alpha}_i&=\langle\Sigma(x)^\dagger\alpha,\,e_i\rangle,\\
\widetilde{\theta}_i&=\langle\Sigma(x)^\dagger\theta,\,e_i\rangle\;.
\end{align*}
Notice that since $\mathbb{T}$ is symmetric, $\widetilde{\mathbb{T}}$ is symmetric as well. Also, since $\theta\neq0$ then $\widetilde{\theta}\neq0$ as well. Hence the proof of~\eqref{eq:upper-bound brownian terms}. 

\textbf{Step 2}: By~\eqref{eq:liminf Brownian 2d} in Lemma~\ref{lem:liminf BM recurrent}, almost-surely, there exists a sequence $(\tau_n)_{n\geq0}$ of positive times decreasing to zero such that for any $\rho \geq 0$
\begin{equation}\label{eq:decreasing W 2d}
|W_1(\tau_n)|^2+|W_2(\tau_n)|^2=\underset{n\rightarrow\infty}{o}(\tau_n^{\rho})\;.
\end{equation}
Let $\widehat{T}\in\R^{(d-2)\times(d-2)\times(d-2)}$, $\widehat{\alpha}\in\R^{d-2}$ and $(\widehat{W}(t))_{t\geq0}$ in $\R^{d-2}$ be the symmetric tensor, vector and Brownian motion respectively, obtained from $\widetilde{\mathbb{T}}$, $\widetilde{\alpha}$ and $(W(t))_{t\geq0}$ by removing the coordinates associated with the indices in $\{1,\,2\}$. Note that if $d\leq2$, these objects are assumed to vanish. 

Let us now assume by contradiction that the conclusion of the present proposition is false: there exists $\epsilon>0$ such that with a $\mathbb{P}_x$-positive probability~\eqref{eq:liminf negative} does not hold. Blumenthal's zero-one law for the Brownian motion (recall that $t\geq0 \mapsto X(t)$ is a strong solution of the equation~\eqref{eq:sde}) ensures that the probability of the event~\eqref{eq:liminf negative} is in fact $0$. We now prove as a consequence that there exists $\rho>0$ such that, almost-surely,
\begin{equation}\label{eq:limit to zero remaining Brownian terms}
\frac{\log(1/\tau_n)^{3/2+\rho}}{\tau_n^{3/2}}\left[\langle\widehat{\mathbb{T}}\widehat{W}(\tau_n),\,\widehat{W}(\tau_n),\,\widehat{W}(\tau_n)\rangle+\tau_n\langle\widehat{\alpha},\,\widehat{W}(\tau_n)\rangle+\int_0^{\tau_n}\langle\widetilde{\theta},\,W(s)\rangle\mathrm{d}s\right]=\underset{n\rightarrow\infty}{o}(1)\;.
\end{equation}

Indeed, we can deduce from the fact that~\eqref{eq:liminf negative} has zero probability and from~\eqref{eq:upper-bound brownian terms} the existence of $\lambda>0$ such that, almost-surely,
$$\liminf_{t\rightarrow0}\frac{\log(1/t)^{3/2+\epsilon}}{t^{3/2}}\left[\lambda (W_1(t)^2+W_2(t)^2)+\langle\widetilde{\mathbb{T}}\,W(t),\,W(t),\,W(t)\rangle+t\langle\widetilde{\alpha},\,W(t)\rangle+\int_0^t\langle\widetilde{\theta},\,W(s)\rangle\mathrm{d}s\right]>-\infty$$
and by the symmetry property of the Brownian motion $(W(t))_{t\geq0}$,
$$\liminf_{t\rightarrow0}\frac{\log(1/t)^{3/2+\epsilon}}{t^{3/2}}\left[\lambda (W_1(t)^2+W_2(t)^2)-\langle\widetilde{\mathbb{T}}\,W(t),\,W(t),\,W(t)\rangle-t\langle\widetilde{\alpha},\,W(t)\rangle-\int_0^t\langle\widetilde{\theta},\,W(s)\rangle\mathrm{d}s\right]>-\infty\;.$$
As a result, for any $\rho\in(0,\,\epsilon)$, almost-surely,
\begin{equation}\label{eq:liminf pm}
\liminf_{t\rightarrow0}\frac{\log(1/t)^{3/2+\rho}}{t^{3/2}}\left[\lambda (W_1(t)^2+W_2(t)^2)\pm\langle\widetilde{\mathbb{T}}\,W(t),\,W(t),\,W(t)\rangle\pm t\langle\widetilde{\alpha},\,W(t)\rangle\pm\int_0^t\langle\widetilde{\theta},\,W(s)\rangle\mathrm{d}s\right]\geq0\;.
\end{equation}
Given that for any $k\in\llbracket3,\,d\rrbracket$, the Brownian terms $W_k(\tau_n)$ are of order at most $\tau_n^{1/2}\log\log(1/\tau_n)^{1/2}$ by the law of the iterated logarithm, one deduces from~\eqref{eq:decreasing W 2d} that
\begin{align*}
&\langle\widetilde{\mathbb{T}}\,W(\tau_n),\,W(\tau_n),\,W(\tau_n)\rangle+\tau_n\langle\widetilde{\alpha},\,W(\tau_n)\rangle+\int_0^{\tau_n}\langle\widetilde{\theta},\,W(s)\rangle\mathrm{d}s\\
&=\langle\widehat{\mathbb{T}}\,\widehat{W}(\tau_n),\,\widehat{W}(\tau_n),\,\widehat{W}(\tau_n)\rangle+\tau_n\langle\widehat{\alpha},\,\widehat{W}(\tau_n)\rangle+\int_0^{\tau_n}\langle\widetilde{\theta},\,W(s)\rangle\mathrm{d}s+\underset{n\rightarrow\infty}{O}(\tau_n^2\log\log(1/\tau_n))\;.
\end{align*}
Consequently, considering the sequence $(\tau_n)_{n\geq0}$ in~\eqref{eq:liminf pm} we deduce that
$$\liminf_{n\rightarrow\infty}\pm\frac{\log(1/\tau_n)^{3/2+\rho}}{\tau_n^{3/2}}\left[\langle\widehat{\mathbb{T}}\,\widehat{W}(\tau_n),\,\widehat{W}(\tau_n),\,\widehat{W}(\tau_n)\rangle+\tau_n\langle\widehat{\alpha},\,\widehat{W}(\tau_n)\rangle+\int_0^{\tau_n}\langle\widetilde{\theta},\,W(s)\rangle\mathrm{d}s\right]\geq0$$
hence~\eqref{eq:limit to zero remaining Brownian terms}. The rest of the proof is devoted to proving that the limit in~\eqref{eq:limit to zero remaining Brownian terms} can be false with a positive probability, which will conclude the proof by a contradiction argument.

\textbf{Step 3}: In this step, we assume ~\eqref{eq:limit to zero remaining Brownian terms} almost surely holds and isolate terms involving a one-dimensional Brownian motion in order to simplify the expression. We then proceed to obtain consequences that will turn out to be untrue.

Following the definition in~\eqref{eq:decreasing W 2d}, the sequence $(\tau_n)_{n\geq0}$ can be constructed independently of the sign of the coordinates of the Brownian motion $(W(t))_{t\geq0}$ (conditional on their absolute values). In particular, the fact that the limit~\eqref{eq:limit to zero remaining Brownian terms} is assumed to be almost surely true can be applied to any $\R^d$-Brownian motion $(W^{r}(t))_{t\geq0}$ deduced from $(W(t))_{t\geq0}$ as follows
$$\forall k\in\llbracket1,\,d\rrbracket\,,\qquad W_k^{r}:t\geq0\mapsto\left\{\begin{aligned}
   &-W_k(t)\,, &&\qquad k\neq r\;,\\
    &W_{r}(t)\,,&&\qquad k=r\;.
  \end{aligned}\right.
$$

Also, since $\widetilde{\theta}\neq0$, there exists $j\in\llbracket1,\,d\rrbracket$ such that $$\widetilde{\theta}_{j}\neq0.$$ Assume first that $j\in\{1,\,2\}$. Since the first two coordinates of $(W(t))_{t\geq0}$ do not appear in the process $(\widehat{W}(t))_{t\geq0}$, we obtain by summing the quantity~\eqref{eq:limit to zero remaining Brownian terms} for both Brownian motions $(W(t))_{t\geq0}$ and $(W^{j}(t))_{t\geq0}$ that
\begin{equation}\label{eq:lim integrale W_j}
\frac{\log(1/\tau_n)^{3/2+\rho}}{\tau_n^{3/2}}\int_0^{\tau_n}W_{j}(s)\mathrm{d}s=\underset{n\rightarrow\infty}{o}(1)\;.
\end{equation}
However, by Lachal's result recalled in~\eqref{eq:cv lachal int BM}, almost-surely,
$$\lim_{n\rightarrow\infty}\frac{\log(1/\tau_n)^{3/2+\rho}}{\tau_n^{3/2}}\sqrt{|W_j(\tau_n)|^2+\left|\int_0^{\tau_n}W_j(s)\mathrm{d}s\right|^2}=+\infty\;.$$
Therefore, it follows from~\eqref{eq:decreasing W 2d} that, almost-surely,
\begin{equation}\label{eq:lim integrale Brownian subsequence}
\lim_{n\rightarrow\infty}\frac{\log(1/\tau_n)^{3/2+\rho}}{\tau_n^{3/2}}\left|\int_0^{\tau_n}W_j(s)\mathrm{d}s\right|=+\infty
\end{equation}
which contradicts~\eqref{eq:lim integrale W_j}.

We now assume that $j\in\llbracket3,\,d\rrbracket$. Define now the process
\begin{equation}\label{eq:expr process V}
V:t\geq0\mapsto 3\sum_{i,\,k\in\llbracket3,\,d\rrbracket\setminus\{j\}}\widehat{\mathbb{T}}_{j,\,i,\,k}W_i(t)W_k(t)+\widehat{\alpha}_{j}t\;.
\end{equation}
The summation of~\eqref{eq:limit to zero remaining Brownian terms} for both Brownian motions $(W(t))_{t\geq0}$ and $(W^{j}(t))_{t\geq0}$ in the case $j\in\llbracket3,\,d\rrbracket$ yields that, almost-surely,
\begin{equation}\label{eq:limit to zero remaining terms}
\frac{\log(1/\tau_n)^{3/2+\rho}}{\tau_n^{3/2}}\left(W_{j}(\tau_n)\left[V(\tau_n)+\widehat{\mathbb{T}}_{j,\,j,\,j}|W_{j}(\tau_n)|^2\right]+\widetilde{\theta}_{j}\int_0^{\tau_n}W_{j}(s)\mathrm{d}s\right)=\underset{n\rightarrow\infty}{o}(1)\;,
\end{equation}
since $\widehat{\mathbb{T}}$ is a symmetric tensor. We perform the following co-variant time change for each $i \geq 3$:
\[
\widetilde{ W^n_i}(l )  := \sqrt{1/\tau_n} W_{i}( l \tau_n)
\]
whose joint marginal distributions is again a triplet of unit Brownian motions for any $n$ since the sequence $(\tau_n)_{n \geq 1}$ and the Brownian motions $W^n_i$ for $ i \geq 3$ are independent by construction. Denoting $\widetilde V^n$ the analogous of the expression~\eqref{eq:expr process V} obtained with these modified $\widetilde W _i $, ~\eqref{eq:limit to zero remaining terms} can be re-written after a routine time re-scaling:
\begin{equation*}
\log(1/\tau_n)^{3/2+\rho} \left(\widetilde W^n_{j}(1)\left[\widetilde V^n(1)+\widehat{\mathbb{T}}_{j,\,j,\,j}|\widetilde W^n_{j}(1)|^2\right]+\widetilde{\theta}_{j}\int_0^{1} \widetilde W^n_{j}(s)\mathrm{d}s\right)=\underset{n\rightarrow\infty}{o}(1)\;,
\end{equation*}
which implies that the quantity $\widetilde W^n_{j}(1)\left[\widetilde V^n(1)+\widehat{\mathbb{T}}_{j,\,j,\,j}|\widetilde W^n_{j}(1)|^2\right]+\widetilde{\theta}_{j}\int_0^{1} \widetilde W^n_{j}(s)\mathrm{d}s $, whose distribution is independent of $n$ tends to $0$ and thus is equal to $0$ for all $n$. Now, since $\widetilde V^n_j$ is independent of $\widetilde W^n_j$, and $\widetilde W^n_{j}(s)\mathrm{d}s$ has non-zero variance conditional on $\widetilde W^n_j(1) $, we obtain a contradiction with the fact that $\tilde \theta_j \neq 0$.
\end{proof}

Let us now conclude this section with the proof of Theorem~\ref{thm:reg points}. The proof relies mainly on the small-time asymptotics obtained in Theorem~\ref{thm:small time asymp distance} and in Proposition~\ref{prop:liminf bessel int BM}. However, Theorem~\ref{thm:small time asymp distance} only applies to the case $G=I_d$. For this purpose, we define
\begin{equation}\label{eq:def Phi}
\Phi:(q,\,p)\in\R^{2d}\mapsto (q,\,G(p))\;.
\end{equation}
We ought to place ourselves in the setting $G=I_d$ by considering the stochastic process $(\Phi(X(t)))_{t\geq0}$. Using~\eqref{eq:sde} and It\^o's formula, we can show that the process $(\Phi(X(t)))_{t\geq0}$ satisfies the same SDE as~\eqref{eq:sde} with coefficients $\widetilde{G},\,\widetilde{F},\,\widetilde{\Sigma}$ defined for $(q,\,p)\in\R^{2d}$ by
\begin{equation}\label{eq:expr coeff X tilde}
\left\{ \begin{aligned} \widetilde{G}(p)&=p\;,\\
 \widetilde{F}(q,\,p)&=DG(G^{-1}(p))F(q,\,G^{-1}(p))+\frac{1}{2}\mathrm{Tr}\big[D^2G(G^{-1}(p))\Sigma(q,\,G^{-1}(p))\Sigma^\dagger(q,\,G^{-1}(p))\big]\;,\\
 \widetilde{\Sigma}(q,\,p)&=DG(G^{-1}(p))\Sigma(q,\,G^{-1}(p))\;.
\end{aligned}
\right.
\end{equation}
In particular, since $G$ is a $C^\infty$ diffeomorphism in $\R^d$, these coefficients verify Assumption~\ref{ass:coeff}. Let us now define
\begin{equation}\label{eq:distance X tilde}
\widetilde{\mathrm{d}_\Omega}:(q,\,p)\in\R^{2d}\mapsto\mathrm{d}_\Omega(q,\,G^{-1}(p))
\end{equation}
and
$$\widetilde{M}:t\geq0\mapsto\int_0^t\widetilde{\Sigma}(\Phi(X(s)))\mathrm{d}B(s)\;.$$

\begin{proof}[Proof of Theorem~\ref{thm:reg points}] Let us fix $x=(q,\,p)\in\partial\Omega$. By definition, $\mathbb{P}_x$ almost-surely, for all $t\geq0$,
\begin{equation}\label{eq:distance processes X X_tilde}
\mathrm{d}_\Omega(X(t))=\widetilde{\mathrm{d}_\Omega}(\Phi(X(t)))\;.
\end{equation} 
The point $x$ is regular if there exists a positive function $\Psi$ on $\R_+^*$ such that, $\mathbb{P}_x$ almost-surely,
\begin{equation}\label{eq:liminf modified dist}
\liminf_{t\rightarrow0}\frac{\mathrm{d}_\Omega(X(t))}{\Psi(t)}=\liminf_{t\rightarrow0}\frac{\widetilde{\mathrm{d}_\Omega}(\Phi(X(t)))}{\Psi(t)}<0\;.
\end{equation}
If the liminf above is positive instead, then $x$ is irregular. However, no conclusion can be drawn when the liminf above vanishes. Therefore, the goal is to construct a function $\Psi$ such that the liminf above is non-zero. For this purpose we heavily rely on the small-time expansion obtained in Theorem~\ref{thm:small time asymp distance}.

Given that the liminf in~\eqref{eq:liminf modified dist} only depends on the values of $\widetilde{\mathrm{d}_\Omega}$ in any small neighborhood of $\Phi(x)$, up to extending $\widetilde{\mathrm{d}_\Omega}$ outside of this neighborhood, we can assume that $\widetilde{\mathrm{d}_\Omega}\in C^{3,\,1}(\R^{2d})$ given the smoothness of $\Omega$ in Assumption~\ref{ass:Omega} and~\cite[Lemma 14.16]{GT}. This allows us to apply Theorem~\ref{thm:small time asymp distance} to the function $\widetilde{\mathrm{d}_\Omega}$ and the process $(\Phi(X(t)))_{t\geq0}$. Finally, notice that by Lemma~\ref{lem:holderian martingale}, $\mathbb{P}_x$ almost-surely,
\begin{equation}\label{eq:DL M(t)}
\big|\widetilde{M}(t)-\widetilde{\Sigma}(\Phi(x))B(t)\big|=\big|\widetilde{M}(t)-DG(p)\Sigma(x)B(t)\big|=\underset{t\rightarrow0}{O}(t\log\log(1/t))\;.
\end{equation}

\underline{\textbf{Case} $x\in\mathrm{T}$}. By~\eqref{eq:distance processes X X_tilde},
$$\widetilde{\mathrm{d}_\Omega}(\Phi(x))=\mathrm{d}_{\Omega}(x)=0\;.$$
Additionally, given that $x\in\mathrm{T}$, we deduce from~\eqref{eq:normal gradient Psi} that
\begin{equation}\label{eq:norm grad p distance case 1}
|\nabla_p\mathrm{d}_\Omega(x)|>0  
\end{equation}
Furthermore, the definition~\eqref{eq:distance X tilde} ensures that
\begin{equation}\label{eq:gradient p distance tilde}
\nabla_p\widetilde{\mathrm{d}_\Omega}(\Phi(x))=DG^{-1}(G(p))^{\dagger}\nabla_p\mathrm{d}_\Omega(x)=DG(p)^{-\dagger}\nabla_p\mathrm{d}_\Omega(x)
\end{equation}
Therefore, applying Theorem~\ref{thm:small time asymp distance} to the function $\widetilde{\mathrm{d}_\Omega}$ and the process $(\Phi(X(t)))_{t\geq0}$, we deduce from~\eqref{eq:distance processes X X_tilde} and~\eqref{eq:DL M(t)} that, $\mathbb{P}_x$ almost-surely,
\begin{align*}
\mathrm{d}_\Omega(X(t))&=\langle\nabla_p\widetilde{\mathrm{d}_\Omega}(\Phi(x)),\,\widetilde{M}(t)\rangle+\underset{t\rightarrow0}{O}(t\log\log(1/t))\\
&=\langle DG(p)^{-\dagger}\nabla_p\mathrm{d}_\Omega(x),\,DG(p)\Sigma(x)B(t)\rangle+\underset{t\rightarrow0}{O}(t\log\log(1/t))
\end{align*}
By the Law of the Iterated Logarithm, $\mathbb{P}_x$ almost-surely,
\begin{align*}
\liminf_{t\rightarrow0}\frac{\mathrm{d}_\Omega(X(t))}{\sqrt{2t\log\log(1/t)}}&=\liminf_{t\rightarrow0}\frac{\langle \Sigma(x)^\dagger \nabla_p\mathrm{d}_\Omega(x),\,B(t)\rangle}{\sqrt{2t\log\log(1/t)}}\nonumber\\
&=-|\Sigma(x)^\dagger \nabla_p\mathrm{d}_\Omega(x)|\;.
\end{align*}

Therefore, we deduce from~\eqref{eq:norm grad p distance case 1} that the liminf above is negative, hence $x$ is regular.

\underline{\textbf{Case} $x\in\Upsilon$}. In this case, given that $\nabla_p\mathrm{d}_\Omega(x)=~0$, we deduce from~\eqref{eq:gradient p distance tilde} that
$$\nabla_p\widetilde{\mathrm{d}_\Omega}(\Phi(x))=0\;.$$ Consequently, differentiating~\eqref{eq:distance X tilde} two times with respect to $p$, we obtain that
$$\nabla^2_{p,\,p}\widetilde{\mathrm{d}_\Omega}(\Phi(x))=DG(p)^{-\dagger}\nabla^2_{p,\,p}\mathrm{d}_\Omega(x)DG(p)^{-1}\;.$$
As a result, Theorem~\ref{thm:small time asymp distance} along with~\eqref{eq:DL M(t)} ensure that, $\mathbb{P}_x$ almost-surely,
\begin{align*}
\mathrm{d}_\Omega(X(t))&=\frac{1}{2}\langle\nabla^2_{p,\,p}\widetilde{\mathrm{d}_\Omega}(\Phi(x))\widetilde{M}(t),\,\widetilde{M}(t)\rangle+\underset{t\rightarrow0}{O}(t)\\
&=\frac{1}{2}\langle\Sigma(x)^\dagger DG(p)^\dagger\nabla^2_{p,\,p}\widetilde{\mathrm{d}_\Omega}(\Phi(x))DG(p)\Sigma(x) B(t),\,B(t)\rangle+\underset{t\rightarrow0}{O}(t)\\
&=\frac{1}{2}\langle\Sigma(x)^\dagger \nabla^2_{p,\,p}\mathrm{d}_{\Omega}(x)\Sigma(x) B(t),\,B(t)\rangle+\underset{t\rightarrow0}{O}(t)\;.
\end{align*}
Since $\Sigma(x)^\dagger \nabla^2_{p,\,p}\mathrm{d}_{\Omega}(x)\Sigma(x)$ and $\nabla^2_{p,\,p}\mathrm{d}_\Omega(x)$ are symmetric congruent matrices, they have the same rank and the same number of negative eigenvalues. Therefore, since $x\in\Upsilon$, both matrices admit at least one negative eigenvalue. Denote now by $z$ a unitary eigenvector associated with the smallest negative eigenvalue $\lambda_{\min}(\Sigma(x)^\dagger \nabla^2_{p,\,p}\mathrm{d}_{\Omega}(x)\Sigma(x))$. By~\cite[Theorem 1]{gantert1993inversion}, $z$ belongs almost-surely to the set of limit points of $B(t)/\sqrt{2t\log\log(1/t)}$ when $t$ goes to zero. As a result,
\begin{align*}
&\liminf_{t\rightarrow0}\frac{\mathrm{d}_\Omega(X(t))}{2t\log\log(1/t)}\\
&=\frac{1}{2}\liminf_{t\rightarrow0}\left\langle\Sigma(x)^\dagger \nabla^2_{p,\,p}\mathrm{d}_{\Omega}(x)\Sigma(x)\frac{B(t)}{\sqrt{2t\log\log(1/t)}},\,\frac{B(t)}{\sqrt{2t\log\log(1/t)}}\right\rangle\\
&=\frac{1}{2}\lambda_{\min}(\Sigma(x)^\dagger \nabla^2_{p,\,p}\mathrm{d}_{\Omega}(x)\Sigma(x))<0\,,
\end{align*}
hence $x\in\mathcal{R}$.

\underline{\textbf{Case} $x\in\Gamma^-\cup\Gamma^+$}. Differentiating~\eqref{eq:distance X tilde} with respect to the $q$-coordinate ensures that
\begin{equation}\label{eq:grad q distance tilde case 3}
\nabla_q\widetilde{\mathrm{d}_\Omega}(\Phi(x))=\nabla_q\mathrm{d}_\Omega(x)\;.
\end{equation}
Therefore, Theorem~\ref{thm:small time asymp distance} and the same arguments deployed in the case $x\in\Upsilon$ yield that, $\mathbb{P}_x$ almost-surely,
\begin{equation}\label{eq:developt asymptot Gamma}
\mathrm{d}_\Omega(X(t))=\frac{1}{2}\langle\Sigma(x)^\dagger \nabla^2_{p,\,p}\mathrm{d}_{\Omega}(x)\Sigma(x) B(t),\,B(t)\rangle+t\xi(x)+\underset{t\rightarrow0}{O}(t^{3/2}\log\log(1/t)^{3/2})
\end{equation}
where $\xi(x)$ is defined in~\eqref{eq:def xi}. Also, since $x\in\Gamma^-\cup\Gamma^+$, the matrix $\nabla^2_{p,\,p}\mathrm{d}_\Omega(x)$ admits only non-negative eigenvalues and so does its congruent matrix $\Sigma(x)^\dagger \nabla^2_{p,\,p}\mathrm{d}_{\Omega}(x)\Sigma(x)$. Consequently,
\begin{align*}
\liminf_{t\rightarrow0}\frac{\mathrm{d}_\Omega(X(t))}{t}&=\liminf_{t\rightarrow0}\frac{\langle\Sigma(x)^\dagger \nabla^2_{p,\,p}\mathrm{d}_{\Omega}(x)\Sigma(x) B(t),\,B(t)\rangle}{2t}+\xi(x)\\
&=\xi(x)\,,
\end{align*}
since by Lemma~\ref{lem:liminf BM recurrent},
$$0\leq\liminf_{t\rightarrow0}\frac{\langle \Sigma(x)^\dagger \nabla^2_{p,\,p}\mathrm{d}_{\Omega}(x)\Sigma(x)B(t),\,B(t)\rangle}{2t}\leq\Vert\Sigma(x)^\dagger \nabla^2_{p,\,p}\mathrm{d}_{\Omega}(x)\Sigma(x)\Vert\,\liminf_{t\rightarrow0}\frac{|B(t)|^2}{2t}=0\;.$$ Therefore, $x$ is regular if $\xi(x)<0$ and irregular if $\xi(x)>0$.

\underline{\textbf{Case} $x\in\Gamma^{0,\,i}$}. It holds that
$$\mathrm{rank}\big(\Sigma(x)^\dagger \nabla^2_{p,\,p}\mathrm{d}_{\Omega}(x)\Sigma(x)\big)=\mathrm{rank}\big(\nabla^2_{p,\,p}\mathrm{d}_{\Omega}(x)\big)\geq3\;.$$
Therefore, there exists an orthonormal family of vectors $(e_1,\,e_2,\,e_3)$ in $\R^d$ associated with eigenvalues $\lambda_1\geq\lambda_2\geq\lambda_3>0$ of $\Sigma(x)^\dagger \nabla^2_{p,\,p}\mathrm{d}_{\Omega}(x)\Sigma(x)$. As a result, Theorem~\ref{thm:small time asymp distance} ensures that, $\mathbb{P}_x$ almost-surely,
\begin{align*}
\mathrm{d}_\Omega(X(t))&=\frac{1}{2}\langle\Sigma(x)^\dagger \nabla^2_{p,\,p}\mathrm{d}_{\Omega}(x)\Sigma(x)B(t),\,B(t)\rangle+\underset{t\rightarrow0}{O}(t^{3/2}\log\log(1/t)^{3/2})\;.\\
&\geq\frac{\lambda_3}{2}\sum_{i=1}^3\langle B(t),\,e_i\rangle^2+\underset{t\rightarrow0}{O}(t^{3/2}\log\log(1/t)^{3/2})\;.
\end{align*}
Since the processes $(\langle B(t),\,e_i\rangle)_{t\geq0}$ are independent one-dimensional Brownian motions for $i\in\llbracket1,\,3\rrbracket$, we deduce by Lemma~\ref{lem:liminf BM recurrent} that for any $\epsilon>0$,
\begin{align*}
&\liminf_{t\rightarrow0}(\log1/t)^{2+\epsilon}\frac{\mathrm{d}_\Omega(X(t))}{t}\\
&\geq\frac{\lambda_3}{2}\,\liminf_{t\rightarrow0}(\log1/t)^{2+\epsilon}\frac{\sum_{i=1}^3\langle B(t),\,e_i\rangle^2}{t}=+\infty,
\end{align*}
hence $x$ is irregular.

\underline{\textbf{Case} $x\in\Gamma^{0,\,r}$}. In this case,
$$\mathrm{rank}\big(\Sigma(x)^\dagger \nabla^2_{p,\,p}\mathrm{d}_{\Omega}(x)\Sigma(x)\big)=\mathrm{rank}\big(\nabla^2_{p,\,p}\mathrm{d}_{\Omega}(x)\big)\in\llbracket0,\,2\rrbracket\;.$$
Also, by~\eqref{eq:gradient p distance tilde} and~\eqref{eq:grad q distance tilde case 3},
$$|\nabla_q\widetilde{\mathrm{d}_\Omega}(\Phi(x))+D_p\widetilde{F}(\Phi(x))^\dagger\nabla_p\widetilde{\mathrm{d}_\Omega}(\Phi(x))|=|\nabla_q\mathrm{d}_\Omega(x)|=1$$
since the Eikonal equation ensures that $|\nabla_q\mathrm{d}_\Omega(x)|^2+|\nabla_p\mathrm{d}_\Omega(x)|^2=1$. Therefore, an immediate application of Theorem~\ref{thm:small time asymp distance} and Proposition~\ref{prop:liminf bessel int BM} yields that $x$ is regular.
\end{proof}

\section{Polarity and exit event: proof of Theorems~\ref{thm:polarity} and~\ref{thm:polarity Gamma+}}\label{sec: polarity} In Section~\ref{sec:prelim results polarity}, we provide a precise upper-bound on a surface measure which appears later in Section~\ref{sec:thm polarity Gamma0} for the proof of Theorem~\ref{thm:polarity}. In Section~\ref{sec:proof thm polarity Gamma +} we prove Theorem~\ref{thm:polarity Gamma+}.
\subsection{Preliminary results}\label{sec:prelim results polarity}
This first lemma provides a boundary identity which is a consequence of the Eikonal equation.
\begin{lem}[Boundary identity]\label{lem:signed distance}
For all $x\in\partial\Omega$,
$$\nabla^2_{p,\,q}\mathrm{d}_{\Omega}(x)\,n_q(x)+\nabla^2_{p,\,p}\mathrm{d}_{\Omega}(x)\,n_p(x)=0\;.$$
\end{lem}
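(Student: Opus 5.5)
The plan is to differentiate the Eikonal equation satisfied by the signed distance near the boundary. By Assumption~\ref{ass:Omega}, $\mathrm{d}_\Omega$ is $C^{3,\,1}$ in a neighborhood $\mathcal{N}$ of any boundary point. On that neighborhood it satisfies
$$|\nabla\mathrm{d}_\Omega(y)|^2=|\nabla_q\mathrm{d}_\Omega(y)|^2+|\nabla_p\mathrm{d}_\Omega(y)|^2=1\,,\qquad y\in\mathcal{N}\;.$$
This is the same Eikonal identity already used at the end of the proof of Theorem~\ref{thm:reg points}. For a $C^2$ domain it holds in a full tubular neighborhood of $\partial\Omega$, and not only on $\partial\Omega$ itself. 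This point matters, because we need to differentiate in all $2d$ directions and in particular transversally.

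Next, we differentiate the identity with respect to $p_j$ for $j\in\llbracket1,\,d\rrbracket$, which is allowed since $\mathrm{d}_\Omega\in C^2(\mathcal{N})$. This gives, for all $y\in\mathcal{N}$,
$$2\sum_{i=1}^d\partial_{q_i}\mathrm{d}_\Omega(y)\,\partial^2_{p_j,\,q_i}\mathrm{d}_\Omega(y)+2\sum_{i=1}^d\partial_{p_i}\mathrm{d}_\Omega(y)\,\partial^2_{p_j,\,p_i}\mathrm{d}_\Omega(y)=0\;.$$
In the matrix notation of Notation~\ref{not:main}, the first sum is the $j$-th entry of $\nabla^2_{p,\,q}\mathrm{d}_\Omega(y)\nabla_q\mathrm{d}_\Omega(y)$, where $\nabla^2_{p,\,q}$ has rows indexed by $p$ and columns by $q$. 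The second sum is the $j$-th entry of $\nabla^2_{p,\,p}\mathrm{d}_\Omega(y)\nabla_p\mathrm{d}_\Omega(y)$. Evaluating at $y=x\in\partial\Omega$ and using~\eqref{eq:normal gradient Psi}--\eqref{eq:projection n(x)}, namely $\nabla_q\mathrm{d}_\Omega(x)=n_q(x)$ and $\nabla_p\mathrm{d}_\Omega(x)=n_p(x)$, yields the claimed identity.

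The only step needing care is justifying that the Eikonal equation holds on an open neighborhood and not merely on $\partial\Omega$. The regularity of $\mathrm{d}_\Omega$ there follows from the $C^{3,\,1}$ assumption through the standard result~\cite[Lemma 14.16]{GT}, already invoked in the paper. Equivalently, the lemma is the $p$-block of the general identity $\nabla^2\mathrm{d}_\Omega(x)\,n(x)=0$, which comes from differentiating $|\nabla\mathrm{d}_\Omega|^2=1$ in all directions.
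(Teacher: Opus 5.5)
Your proposal is correct and follows the paper's proof: differentiate the Eikonal identity $|\nabla_q\mathrm{d}_\Omega|^2+|\nabla_p\mathrm{d}_\Omega|^2=1$, valid on a neighborhood of the boundary, with respect to $p$, then evaluate at $x\in\partial\Omega$ using $\nabla\mathrm{d}_\Omega(x)=n(x)$. Your explicit index check for $\nabla^2_{p,\,q}$ and your remark that the identity must hold off $\partial\Omega$ to allow transversal differentiation are details the paper leaves implicit.
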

\begin{proof} The Eikonal equation ensures that for any point $x$ sufficiently close to $\partial\Omega$,
$$ \vert\nabla_q\mathrm{d}_{\Omega}(x)\vert^2+\vert\nabla_p\mathrm{d}_{\Omega}(x)\vert^2 = 1\;.$$
Differentiating the equality above with respect to the $p$-coordinates at $x\in\partial\Omega$ immediately concludes the proof.
\end{proof}
The following proposition provides a control on the surface measure of a subset of $\partial\Omega$ which appears later in the proof of Theorem~\ref{thm:polarity}.

\begin{prop}[Surface measure]\label{prop:control neighborhood Gamma_0}
Let $\rho\in(0,\,1/3)$ and let $\theta,\,M>0$. There exists a constant $C>0$ such that for all $\epsilon>0$ small enough,
\begin{equation}\label{eq:upper bound neighborhood tangential}
\int_{\partial\Omega\cap\mathrm{B}(0,\,M)}\mathbf{1}_{\Vert\nabla^2_{p,\,p}\mathrm{d}_{\Omega}(x)\Vert\geq\epsilon^\rho}\mathbf{1}_{|n_p(x)|\leq\theta\epsilon}\,\sigma_{2d-1}(\mathrm{d}x)\leq C\epsilon^{1-3\rho}
\end{equation}
where $\sigma_{2d-1}$ is the $(2d-1)$-dimensional Lebesgue surface measure. Furthermore,
\begin{equation}\label{eq:surface meas Q}
\sigma_{2d-1}\big(\big\{x\in\partial\Omega:|n_p(x)|=0,\,\nabla^2_{p,\,p}\mathrm{d}_\Omega(x)\neq\mathbb{O}_{d}\big\}\big)=0\;.
\end{equation}
\end{prop}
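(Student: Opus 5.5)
Since the estimate is local, we cover $\partial\Omega\cap\mathrm{B}(0,M)$ by finitely many small patches and, on each patch, write $\partial\Omega$ as a $C^{3,1}$ graph over a hyperplane. The key structural fact comes from Lemma~\ref{lem:signed distance}. Set $N(x):=n_p(x)=\nabla_p\mathrm{d}_\Omega(x)$, seen as a $C^{2,1}$ map from $\partial\Omega$ to $\R^d$. For $e\in\R^d$, differentiate $N$ along the vector $(0,e)$ projected onto the tangent space $T_x\partial\Omega$, i.e. along $v=(0,e)-\langle n_p(x),e\rangle n(x)$. The tangential differential satisfies
$$D_vN(x)=\nabla^2_{p,\,p}\mathrm{d}_\Omega(x)\,e-\langle n_p(x),e\rangle\big(\nabla^2_{p,\,q}\mathrm{d}_\Omega(x)n_q(x)+\nabla^2_{p,\,p}\mathrm{d}_\Omega(x)n_p(x)\big)=\nabla^2_{p,\,p}\mathrm{d}_\Omega(x)\,e\,,$$
where the last equality uses Lemma~\ref{lem:signed distance}. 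Thus the tangential derivative of $N$ in the directions $(0,e)$ equals $\nabla^2_{p,p}\mathrm{d}_\Omega(x)e$, up to the harmless correction $v\neq(0,e)$. So wherever $\Vert\nabla^2_{p,p}\mathrm{d}_\Omega\Vert\geq\epsilon^\rho$, the map $N$ has a directional derivative of size at least $\epsilon^\rho$ in some unit tangent direction.

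\textbf{Main estimate.} The hard part is converting this lower bound on one component of the derivative into a measure bound on $\{|N|\leq\theta\epsilon\}$. We work in one direction at a time. Pick a finite $\epsilon$-independent set of unit vectors $e_1,\dots,e_K$ in $\R^d$ so that any symmetric matrix $H$ satisfies $\max_{k,l}|\langle He_k,e_l\rangle|\geq c\Vert H\Vert$. The set in question is then contained in the union over $(k,l)$ of the sets where $|\partial_{v_k}\langle N,e_l\rangle|\geq c\epsilon^\rho$ and $|\langle N,e_l\rangle|\leq\theta\epsilon$. Fix such a pair and the scalar $C^{2,1}$ function $\phi=\langle N,e_l\rangle$ on a graph patch. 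Foliate the patch by the integral curves of the (smooth, nonvanishing) tangent field $v_k$. By the coarea formula or Fubini along these curves, it suffices to bound the one-dimensional measure, on each curve of bounded length, of $\{s:|\phi(s)|\leq\theta\epsilon,\ |\phi'(s)|\geq c\epsilon^\rho\}$.

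\textbf{One-dimensional bound.} On a single curve we split the bad set into maximal intervals. On each interval where $|\phi'|\geq c\epsilon^\rho/2$, $\phi$ is monotone, so the interval meets $\{|\phi|\leq\theta\epsilon\}$ in length at most $2\theta\epsilon/(c\epsilon^\rho/2)\lesssim\epsilon^{1-\rho}$. It remains to count such intervals. Since $\phi''$ is bounded (because $\phi\in C^{2,1}$), $\phi'$ needs length $\gtrsim\epsilon^\rho$ to drop from $c\epsilon^\rho$ to $c\epsilon^\rho/2$. A crude count therefore gives $O(\epsilon^{-\rho})$ intervals on a bounded curve, for a total of $O(\epsilon^{1-2\rho})$.

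We expect the counting to be the delicate point, and the crude count may not be sharp. The statement asks only for $\epsilon^{1-3\rho}$, which leaves room for losses from the non-exact direction $v$ versus $(0,e)$ (the correction term is $O(|n_p|)=O(\epsilon)$, negligible against $\epsilon^\rho$) and from uniformity in the foliation. If the interval count proves troublesome, the fallback is to cover the patch by cubes of side $\epsilon^{\rho}$ (more precisely $\epsilon^{2\rho}$-scaled) on which $\nabla^2_{p,p}\mathrm{d}_\Omega$ varies by less than $\epsilon^\rho/2$. On each cube the implicit function theorem shows that $\{|\phi|\leq\theta\epsilon\}$ is a slab of width $\lesssim\epsilon^{1-\rho}$. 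Summing the resulting relative fractions over all cubes gives at most $C\epsilon^{1-\rho}\cdot\epsilon^{-2\rho}=C\epsilon^{1-3\rho}$, which is exactly the claimed rate. We would carry out this cube version, since it matches the exponent $1-3\rho$.

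\textbf{Second claim.} For \eqref{eq:surface meas Q}, note that the set $\{|n_p|=0,\ \nabla^2_{p,p}\mathrm{d}_\Omega\neq\mathbb{O}_d\}\cap\mathrm{B}(0,M)$ is the increasing union over $j$ of the sets $\{|n_p|=0,\ \Vert\nabla^2_{p,p}\mathrm{d}_\Omega\Vert\geq 1/j\}$. Each of these is contained, for every small $\epsilon$ with $\epsilon^\rho\leq1/j$, in the set bounded by \eqref{eq:upper bound neighborhood tangential}. Letting $\epsilon\to0$, and using $1-3\rho>0$, each such set has measure zero. Monotone convergence, followed by letting $M\to\infty$, then gives \eqref{eq:surface meas Q}.
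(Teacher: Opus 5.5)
Your proof is correct, and its main line differs from the paper's.

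\textbf{The paper's route.} It localizes at scale $\epsilon^\rho$. It covers $\partial\Omega\cap\mathrm{B}(0,M)$ by $N_\epsilon\lesssim\epsilon^{-\rho(2d-1)}$ balls of radius $\sim\epsilon^\rho$, and on each ball chooses one unit vector $v_j$ with $|\nabla^2_{p,p}\mathrm{d}_\Omega v_j|\geq\epsilon^\rho/2$. It then applies the coarea formula to $g_j=\langle n_p,v_j\rangle/\epsilon^\rho$. The full gradient of $g_j$ is tangential by Lemma~\ref{lem:signed distance}, which is the same identity you use to get $D_{v}n_p=\nabla^2_{p,p}\mathrm{d}_\Omega\,e$. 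Finally it bounds the weighted level-set measures through the divergence theorem. That last bound loses a factor $\epsilon^{-\rho}$, because $|\nabla_{\partial\Omega}g_j|$ can be of order $\epsilon^{-\rho}$ through its $\nabla^2_{q,p}\mathrm{d}_\Omega$ component; this is where the exponent $1-3\rho$ comes from.

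\textbf{Your route.} You fix the $d^2$ coordinate pairs $(k,l)$ once and for all, use $\epsilon$-independent flow boxes of the tangential fields $v_k$, and reduce to a one-dimensional count. That count, exactly as you wrote it, already finishes the proof:
\begin{itemize}
\item $\phi'=\langle\nabla^2_{p,p}\mathrm{d}_\Omega e_k,e_l\rangle$ is Lipschitz along the curves;
\item so the maximal intervals of $\{|\phi'|>c\epsilon^\rho/2\}$ meeting $\{|\phi'|\geq c\epsilon^\rho\}$ have length $\gtrsim\epsilon^\rho$, and there are $O(\epsilon^{-\rho})$ of them;
\item each contributes $O(\epsilon^{1-\rho})$, so the total is $O(\epsilon^{1-2\rho})$, which is at most $C\epsilon^{1-3\rho}$ for $\epsilon\leq1$.
\end{itemize}
A better rate is no obstacle, so there is no reason to fall back on the cube version to ``match'' the exponent. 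That version is essentially the paper's localization, with Fubini along slabs replacing coarea and divergence. Cubes of side $\epsilon^{2\rho}$ give $\epsilon^{1-3\rho}$, and cubes of side $c\epsilon^\rho$ with $c$ small give $\epsilon^{1-2\rho}$.

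\textbf{What each buys.} Your route gives a sharper estimate using only Fubini and calculus. It would relax the condition $3\alpha-3\beta>1$ in the proof of Theorem~\ref{thm:polarity} to $3\alpha-2\beta>1$. The paper's route stays intrinsic on $\partial\Omega$ and needs no flow boxes.

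\textbf{Two small corrections.} First, the ``correction'' from $v\neq(0,e)$ vanishes identically by your own computation. Second, $v_k$ is only $C^{2,1}$, and it vanishes where $n=\pm(0,e_k)$. So take the flow boxes inside $\{|n_p|<1/2\}$; this suffices because the bad set lies in $\{|n_p|\leq\theta\epsilon\}$.

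Your argument for \eqref{eq:surface meas Q} agrees with the paper's Step~3.
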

\begin{proof} Let $M>0$ and let
$$\partial\Omega_M:=\partial\Omega\cap\mathrm{B}(0,\,M)\;.$$
Since the signed distance function $\mathrm{d}_{\Omega}$ is $C^{3,\,1}$ on $\partial\Omega_M$, there exists a constant $C_{\mathrm{Lip}}>0$ such that for all $x,\,y\in\partial\Omega_M$,
\begin{equation}\label{eq:Lipschitz hess distance}
\left\Vert\nabla^2_{p,\,p}\mathrm{d}_{\Omega}(x)-\nabla^2_{p,\,p}\mathrm{d}_{\Omega}(y)\right\Vert\leq C_{\mathrm{Lip}}|x-y|\;.
\end{equation}
Given that $\partial\Omega_M$ is bounded, it can be covered by a finite number $N_\epsilon$ of balls of radius $\epsilon^\rho/4C_{\mathrm{Lip}}$. Besides, since $\partial\Omega_M$ is Lipschitz continuous there exists a constant $\beta>0$ independent of $\epsilon>0$ when $\epsilon$ is small enough such that
\begin{equation}\label{eq:nbr of balls}
N_\epsilon\leq\frac{\beta}{\epsilon^{\rho(2d-1)}}\;.
\end{equation}
Denote by $z_j$ for $j\in\llbracket1,\,N_\epsilon\rrbracket$ the center of the corresponding balls and let
$$V_j:=\mathrm{B}(z_j,\,\epsilon^\rho/4C_{\mathrm{Lip}})\;.$$
Let us also define a $C^\infty$ non-negative function $\psi$ with values in $[0,\,1]$ and satisfying
\begin{equation}\label{eq:def K}
\psi:\lambda\in\R_+\mapsto\left\{\begin{aligned}
    1, & \qquad \lambda\leq1\\
    0, &\qquad \lambda\geq4.
  \end{aligned}\right. 
\end{equation}
and let
\begin{equation}\label{eq:def expr Psi_j}
\Psi_j:x\in\R^{2d}\mapsto\psi\bigg(\frac{|x-z_j|^2}{\epsilon^{2\rho}/(4C_{\mathrm{Lip}})^2}\bigg)\;.
\end{equation}
The elements above ensure that the following inequalities hold
\begin{align}
\int_{\partial\Omega_M}\mathbf{1}_{\Vert\nabla^2_{p,\,p}\mathrm{d}_\Omega(x)\Vert\geq\epsilon^\rho}\mathbf{1}_{|n_p(x)|\leq\theta\epsilon}\,\sigma_{2d-1}(\mathrm{d}x)&\leq\sum_{j=1}^{N_\epsilon}\int_{\partial\Omega_M\cap V_j}\mathbf{1}_{\Vert\nabla^2_{p,\,p}\mathrm{d}_\Omega(x)\Vert\geq\epsilon^\rho}\mathbf{1}_{|n_p(x)|\leq\theta\epsilon}\,\sigma_{2d-1}(\mathrm{d}x)\nonumber\\
&\leq\sum_{j=1}^{N_\epsilon}\int_{\partial\Omega_M\cap V_j}\Psi_j(x)\mathbf{1}_{\Vert\nabla^2_{p,\,p}\mathrm{d}_\Omega(x)\Vert\geq\epsilon^\rho}\mathbf{1}_{|n_p(x)|\leq\theta\epsilon}\,\sigma_{2d-1}(\mathrm{d}x)\label{eq:boundary integral partitioning}
\end{align}
since $\Psi_j$ is non-negative and $\Psi_j=1$ on $V_j$.

\textbf{Step 1}: Let us prove that for any $j\in\llbracket1,\,N_\epsilon\rrbracket$, if $\Vert\nabla^2_{p,\,p}\mathrm{d}_\Omega(\widehat{x})\Vert\geq\epsilon^\rho$ for some $\widehat{x}\in\partial\Omega_M\cap V_j$, then there exists a unitary vector $v_j\in\R^d$ such that for all $x\in\partial\Omega_M\cap V_j$,
\begin{equation}\label{eq:lower bound hessian velocity v_j}
|\nabla^2_{p,\,p}\mathrm{d}_\Omega(x)v_j|\geq\epsilon^\rho/2\;.
\end{equation}

Suppose the existence of $j\in\llbracket1,\,N_\epsilon\rrbracket$ and $\widehat{x}\in\partial\Omega_M\cap V_j$ such that $\Vert\nabla^2_{p,\,p}\mathrm{d}_\Omega(\widehat{x})\Vert\geq\epsilon^\rho$. Then, there exists a unitary vector $v_j(\widehat{x})\in\R^d$ such that $|\nabla^2_{p,\,p}\mathrm{d}_\Omega(\widehat{x})v_j(\widehat{x})|\geq\epsilon^\rho$. Additionally, it follows from~\eqref{eq:Lipschitz hess distance} that for any $x\in\partial\Omega_M\cap V_j$,
\begin{align*}
|\nabla^2_{p,\,p}\mathrm{d}_\Omega(x)v_j(\widehat{x})|&\geq|\nabla^2_{p,\,p}\mathrm{d}_\Omega(\widehat{x})v_j(\widehat{x})|-\Vert\nabla^2_{p,\,p}\mathrm{d}_\Omega(\widehat{x})-\nabla^2_{p,\,p}\mathrm{d}_\Omega(x)\Vert\,|v_j(\widehat{x})|\\
&\geq\frac{\epsilon^\rho}{2}\;,
\end{align*}
since $V_j$ is a ball of diameter $\epsilon^\rho/2C_{\mathrm{Lip}}$, hence the proof of Step 1.

\textbf{Step 2}: Let $j\in\llbracket1,\,N_\epsilon\rrbracket$ satisfying the assumption of Step 1. Let us define the function 
\begin{equation}\label{eq:expr g_j eps}
g^{(\epsilon)}_j:x\in\partial\Omega\mapsto\frac{\left\langle n_p(x),\,v_j\right\rangle}{\epsilon^\rho}\;.
\end{equation}
Our objective in this step is to apply the coarea formula to the integrals appearing in the summation~\eqref{eq:boundary integral partitioning} using the function $g^{(\epsilon)}_j$ as a level set function. For this purpose, let us first provide a lower-bound to the norm of the tangential gradient of $g^{(\epsilon)}_j$ on $\partial\Omega_M\cap V_j$. Note that its gradient is given by
\begin{equation}\label{eq:expr gradient g}
\nabla g^{(\epsilon)}_j(x)=\frac{1}{\epsilon^\rho}
\begin{pmatrix}
\nabla^2_{q,\,p}\mathrm{d}_{\Omega}(x) v_j  \\
\nabla^2_{p,\,p}\mathrm{d}_{\Omega}(x)v_j
\end{pmatrix}\in\R^{2d}\;.
\end{equation}
In particular, by Lemma~\ref{lem:signed distance},
\begin{align*}
\langle \nabla g^{(\epsilon)}_j(x),\,n(x)\rangle&=\frac{1}{\epsilon^\rho}\left\langle\nabla^2_{p,\,q}\mathrm{d}_{\Omega}(x)n_q(x)+\nabla^2_{p,\,p}\mathrm{d}_{\Omega}(x)n_p(x),\,v_j\right\rangle\\
&=0\;.
\end{align*}
Therefore, its tangential gradient on $\partial\Omega$ is given by
\begin{align}
\nabla_{\partial\Omega} g^{(\epsilon)}_j(x)&=\nabla g^{(\epsilon)}_j(x)-\langle \nabla g^{(\epsilon)}_j(x),\,n(x)\rangle n(x)\nonumber\\
&=\nabla g^{(\epsilon)}_j(x)\label{eq:expr gradient g tangential}\;.
\end{align}
As a result, by~\eqref{eq:lower bound hessian velocity v_j}, for all $x\in\partial\Omega_M\cap V_j$,
$$\left|\nabla_{\partial\Omega} g^{(\epsilon)}_j(x)\right|\geq\frac{1}{\epsilon^\rho}|\nabla^2_{p,\,p}\mathrm{d}_{\Omega}(x)v_j|\geq\frac{1}{2}\;.$$
Also, notice that if $|n_p(x)|\leq\theta\epsilon$, then $|g_j^{(\epsilon)}(x)|\leq \theta\epsilon^{1-\rho}$. Consequently, the following upper-bound holds for any $j\in\llbracket1,\,N_\epsilon\rrbracket$,
\begin{equation}\label{eq:summation V_j tangential grad}
\int_{\partial\Omega_M\cap V_j}\Psi_j(x)\mathbf{1}_{\Vert\nabla^2_{p,\,p}\mathrm{d}_\Omega(x)\Vert\geq\epsilon^\rho}\mathbf{1}_{|n_p(x)|\leq\theta\epsilon}\,\sigma_{2d-1}(\mathrm{d}x)\leq\int_{\partial\Omega}\Psi_j(x)\frac{|\nabla_{\partial\Omega} g^{(\epsilon)}_j(x)|^2}{1/4}\mathbf{1}_{|g_j^{(\epsilon)}(x)|\leq\theta\epsilon^{1-\rho}}\,\sigma_{2d-1}(\mathrm{d}x)\;.
\end{equation}
Furthermore, by the coarea formula~\cite[Theorem 18.8]{maggi2012sets},
\begin{align}
&\int_{\partial\Omega}\Psi_j(x)|\nabla_{\partial\Omega} g^{(\epsilon)}_j(x)|^2\mathbf{1}_{|g_j^{(\epsilon)}(x)|\leq\theta\epsilon^{1-\rho}}\,\sigma_{2d-1}(\mathrm{d}x)\nonumber\\
&=\int_{-\theta\epsilon^{1-\rho}}^{\theta\epsilon^{1-\rho}}\int_{\partial\Omega\cap\{g_j^{(\epsilon)}=t\}}\Psi_j(x)|\nabla_{\partial\Omega} g^{(\epsilon)}_j(x)|\sigma_{2d-2}(\mathrm{d}x)\mathrm{d}t\label{eq:coarea formula V_j}\,.
\end{align}
In addition, by the divergence theorem, for all $t\in[-\theta\epsilon^{1-\rho},\,\theta\epsilon^{1-\rho}]$,
\begin{equation}\label{eq:div thm haussdorf meas}
\int_{\partial\Omega\cap\{g_j^{(\epsilon)}=t\}}\Psi_j(x)|\nabla_{\partial\Omega} g^{(\epsilon)}_j(x)|\sigma_{2d-2}(\mathrm{d}x)=\int_{\partial\Omega\cap\{g_j^{(\epsilon)}<t\}}\mathrm{div}_{\partial\Omega}\big(\Psi_j\nabla_{\partial\Omega} g^{(\epsilon)}_j\big)(x)\sigma_{2d-1}(\mathrm{d}x)\;.
\end{equation}
Moreover, for $x\in\partial\Omega$,
\begin{equation}\label{eq:decomp div partial Omega}
\mathrm{div}_{\partial\Omega}\big(\Psi_j\nabla_{\partial\Omega} g^{(\epsilon)}_j\big)(x)=\Psi_j(x)\Delta_{\partial\Omega}g^{(\epsilon)}_j(x)+\langle\nabla_{\partial\Omega}\Psi_j(x),\,\nabla_{\partial\Omega}g^{(\epsilon)}_j(x)\rangle\;.    
\end{equation}
From the definitions provided in~\eqref{eq:def expr Psi_j} and~\eqref{eq:expr g_j eps}, we deduce the existence of a constant $C_1>0$ independent of $\epsilon>0$ such that for all $x\in\partial\Omega$,
$$|\Delta_{\partial\Omega}g^{(\epsilon)}_j(x)|+|\nabla_{\partial\Omega}g^{(\epsilon)}_j(x)|+|\nabla_{\partial\Omega}\Psi_j(x)|\leq\frac{C_1}{\epsilon^\rho}\;.$$
Therefore, we deduce from~\eqref{eq:decomp div partial Omega} that for all $x\in\partial\Omega$,
$$\big|\mathrm{div}_{\partial\Omega}\big(\Psi_j\nabla_{\partial\Omega} g^{(\epsilon)}_j\big)(x)\big|\leq\bigg(\frac{C_1}{\epsilon^\rho}+\frac{C_1^2}{\epsilon^{2\rho}}\bigg)\mathbf{1}_{|x-z_j|\leq\epsilon^\rho/2C_{\mathrm{Lip}}}$$
since $\Psi_j$ is compactly supported on the ball $\mathrm{B}(z_j,\,\epsilon^\rho/2C_{\mathrm{Lip}})$. Reinjecting into~\eqref{eq:div thm haussdorf meas} we obtain that
\begin{align*}
\int_{\partial\Omega\cap\{g_j^{(\epsilon)}=t\}}\Psi_j(x)|\nabla_{\partial\Omega} g^{(\epsilon)}_j(x)|\sigma_{2d-2}(\mathrm{d}x)&\leq\frac{C_1+C_1^2}{\epsilon^{2\rho}}\int_{\partial\Omega}\mathbf{1}_{|x-z_j|\leq\epsilon^\rho/2C_{\mathrm{Lip}}}\sigma_{2d-1}(\mathrm{d}x)\\
&\leq\frac{C_2}{\epsilon^{2\rho}}\;\epsilon^{\rho(2d-1)} 
\end{align*}
for some constant $C_2>0$ independent of $\epsilon>0$. All in all, we deduce from~\eqref{eq:boundary integral partitioning},~\eqref{eq:summation V_j tangential grad} and~\eqref{eq:coarea formula V_j} that
\begin{align*}
\int_{\partial\Omega_M}\mathbf{1}_{\Vert\nabla^2_{p,\,p}\mathrm{d}_\Omega(x)\Vert\geq\epsilon^\rho}\mathbf{1}_{|n_p(x)|\leq\theta\epsilon}\,\sigma_{2d-1}(\mathrm{d}x)&\leq8\theta\epsilon^{1-\rho}N_\epsilon\frac{C_2}{\epsilon^{2\rho}}\;\epsilon^{\rho(2d-1)}\\
&=8\theta\beta\epsilon^{1-3\rho}C_2
\end{align*}
using~\eqref{eq:nbr of balls}, hence the proof of~\eqref{eq:upper bound neighborhood tangential}.

\textbf{Step 3}: Let us conclude the proof of this proposition by showing~\eqref{eq:surface meas Q}. Taking the limit $\epsilon\rightarrow0$ in~\eqref{eq:upper bound neighborhood tangential} ensures that
$$\sigma_{2d-1}\big(\partial\Omega\cap\big\{x\in\partial\Omega:|n_p(x)|=0,\,\nabla^2_{p,\,p}\mathrm{d}_\Omega(x)\neq\mathbb{O}_{d}\big\}\big)=0\;.$$
Given that $(\partial\Omega_M)_{M\geq1}$ is an increasing sequence, the identity~\eqref{eq:surface meas Q} immediately follows by taking the limit $M\rightarrow\infty$ in the equality above.
\end{proof}
\subsection{Proof of Theorem~\ref{thm:polarity}}\label{sec:thm polarity Gamma0}
Before proving Theorem~\ref{thm:polarity}, let us show the following lemma which ensures a control on the transition density of the process~\eqref{eq:sde} remaining in a given ball.

\begin{lem}[Density upper-bound]\label{lem:upper bound density}
Let $M,\,T,\,\delta>0$. There exists a constant $C>0$ such that for all measurable subsets $A\subset\mathrm{B}(0,\,M)$, for all $t\in(0,\,T)$, for all $x\in\mathrm{B}(0,\,M)$ satisfying $\mathrm{dist}(x,\,A)\geq\delta$, 
\begin{equation}\label{eq:control density law in ball B_M}
\mathbb{P}_x\big(X(t)\in A,\,\sup_{s\in[0,\,t]}|X(s)|\leq M\big)\leq C|A|\,,
\end{equation}
where $|A|$ is the Euclidean volume of $A$.
\end{lem}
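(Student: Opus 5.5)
The plan is to reduce the statement to a uniform bound on the transition density of the process killed when it leaves $\mathrm{B}(0,\,M)$. First I will localize. Let $\widetilde{F},\,\widetilde{\Sigma}$ be smooth, globally Lipschitz coefficients with bounded derivatives that agree with $F,\,\Sigma$ on $\mathrm{B}(0,\,2M)$, with $\widetilde{\Sigma}$ uniformly invertible. Do the same for $G$: let $\widetilde{G}$ coincide with $G$ on the $p$-projection of $\mathrm{B}(0,\,2M)$, keep $D\widetilde{G}$ invertible, and make $\widetilde{G}$ affine outside a compact set. Denote by $\widetilde{X}$ the solution of the modified SDE, driven by the same Brownian motion. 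By pathwise uniqueness, $X=\widetilde{X}$ on the event $\{\sup_{s\le t}|X(s)|\le M\}$. Hence the left-hand side of~\eqref{eq:control density law in ball B_M} is at most $\mathbb{P}_x(\widetilde{X}(t)\in A)=\int_A \widetilde{p}_t(x,\,y)\,\mathrm{d}y$, where $\widetilde{p}_t$ is the transition density of the modified process. It therefore suffices to show that $\sup\{\widetilde{p}_t(x,\,y):\,t\in(0,\,T),\,x,\,y\in\mathrm{B}(0,\,M),\,|x-y|\ge\delta\}<\infty$.

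The modified generator satisfies the weak H\"ormander condition uniformly. Indeed, $\widetilde{\Sigma}$ is invertible and the bracket of $\partial_{p_i}$ with the drift produces $D\widetilde{G}(p)\partial_q$, which is invertible, so H\"ormander's condition holds at step one. By hypoellipticity (Malliavin calculus), $\widetilde{p}_t(x,\,y)$ is then smooth on $(0,\,\infty)\times\R^{2d}\times\R^{2d}$. Continuity alone handles $t\in[t_0,\,T]$ on the compact set of pairs $(x,\,y)$. The real issue is the regime $t\to0$ with $|x-y|\ge\delta$. There I plan to invoke the known off-diagonal small-time Gaussian-type upper bound for kinetic / H\"ormander diffusions (Kusuoka--Stroock or L\'eandre type). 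Such a bound reads
\[
\widetilde{p}_t(x,\,y)\le C t^{-N}\exp\big(-c\,\mathrm{d}_{\mathrm{cc}}(x,\,y)^2/t\big),
\]
where $\mathrm{d}_{\mathrm{cc}}$ is the control distance. Since $\mathrm{d}_{\mathrm{cc}}(x,\,y)$ is bounded below by a positive constant when $|x-y|\ge\delta$ and $x,\,y$ lie in a compact set, the right-hand side stays bounded as $t\to0$.

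The main obstacle is that such a precise off-diagonal estimate may not be available off-the-shelf for a degenerate drift. If citing it proves delicate, I would replace it with a more elementary argument based on the strong Markov property. Let $\sigma$ be the first time $X$ exits $\mathrm{B}(x,\,\delta/2)$. On $\{X(t)\in A\}$ we must have $\sigma<t$, and after $\sigma$ the process starts from a point at distance at least $\delta/2$ from $A$. This alone does not bound the density, so I would instead split time at $t/2$. I would write $\mathbb{P}_x(\widetilde{X}(t)\in A)=\mathbb{E}_x[\int_A \widetilde{p}_{t/2}(\widetilde{X}(t/2),\,y)\,\mathrm{d}y]$ and use a Chapman--Kolmogorov-type bound in which the small-time mass of the kernel is concentrated near the diagonal. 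Concretely, $\mathbb{P}_x(|\widetilde{X}(s)-x|\ge\delta/2 \text{ for some } s\le t)\le C\mathrm{e}^{-c/t}$ by exponential martingale bounds. I would combine this with the crude polynomial bound $\widetilde{p}_s(z,\,y)\le C s^{-N}$ coming from Malliavin nondegeneracy estimates (the Malliavin covariance of the kinetic process scales like $\mathrm{diag}(s^3,\,s)$, uniformly, thanks to the global bounds). The factor $\mathrm{e}^{-c/t}$ then absorbs $t^{-N}$ and yields a uniform constant $C$.
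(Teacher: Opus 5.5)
Your localization and your reduction to a uniform off-diagonal bound on the transition density of the modified process are fine. This is the skeleton of the paper's proof, and continuity does handle $t\in[t_0,T]$. The gap is in the one step that carries all the weight: $\sup\{\widetilde p_t(x,y):\,t\in(0,t_0),\,|x-y|\ge\delta\}<\infty$ is never established.

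The bound you quote, $Ct^{-N}\exp(-c\,\mathrm{d}_{\mathrm{cc}}(x,y)^2/t)$ with the control distance, is formulated (Kusuoka--Stroock, L\'eandre) for a strong H\"ormander structure in which the drift is controlled by the diffusion fields. Here the diffusion fields only span the $\partial_p$ directions, and the $q$-directions come solely from the drift $\widetilde G(p)\cdot\nabla_q$. The distance generated by the diffusion fields is therefore infinite between points with $q\neq q'$, so your displayed bound would force the density to vanish there, which is false. Correct small-time bounds in this weak-H\"ormander setting must involve the drift, for instance through a term $|q'-q-tp|^2/t^3$.

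Your fallback does not close the gap either. In $\mathbb{P}_x(\widetilde X(t)\in A)=\mathbb{E}_x[\int_A\widetilde p_{t/2}(\widetilde X(t/2),y)\,\mathrm{d}y]$, the exit estimate only controls the event $\{|\widetilde X(t/2)-x|\ge\delta/2\}$. On its complement, which has probability close to one, $\widetilde X(t/2)$ is still at distance at least $\delta/2$ from $A$. You then need $\sup_{|z-y|\ge\delta/2}\widetilde p_{t/2}(z,y)<\infty$, which is the statement you are proving with $(t,\delta)$ replaced by $(t/2,\delta/2)$, since the large displacement may happen during $[t/2,t]$. Iterating only keeps shrinking $\delta$.

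The paper makes this step rigorous as follows. It conjugates the localized process with $\Phi_M(q,p)=(q,G_M(p))$, which gives a kinetic SDE whose $q$-drift is exactly $p$. The Konakov--Menozzi--Molchanov bound $Ct^{-2d}\exp(-c[|p-p'|^2/(4t)+3|q'-q-tp|^2/t^3])$ then applies directly. A short compactness argument shows this stays bounded over $t\in(0,T)$, $x\in\mathrm{B}(0,M)$, $|x-x'|\ge\delta$, and the Jacobian of $\Phi_M$ is bounded.

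If you want to keep your idea that $\mathrm{e}^{-c/t}$ absorbs $t^{-N}$, the mechanism has to be a localized Malliavin integration by parts rather than forward Chapman--Kolmogorov. Write $\widetilde p_t(x,y)=\pm\mathbb{E}[\mathbf{1}_{\{\widetilde X(t)\in O\}}H_t]$, where $O$ is an orthant based at $y$. Choose it, in a coordinate $i$ with $|x_i-y_i|\ge\delta/\sqrt{2d}$, on the side away from $x_i$. Cauchy--Schwarz then gives $\widetilde p_t(x,y)\le\|H_t\|_{L^2}\,\mathbb{P}_x(|\widetilde X(t)-x|\ge\delta/\sqrt{2d})^{1/2}\le Ct^{-N}\mathrm{e}^{-c/t}$. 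Alternatively, complete your $t/2$ split with a backward estimate on $\int_{|z-y|\ge\delta/2}\widetilde p_{t/2}(z,y)\,\mathrm{d}z$ through the adjoint kinetic process.

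A smaller point: extending $G$ so that $D\widetilde G$ is invertible everywhere and $\widetilde G$ is affine at infinity is not a cutoff construction. It needs an extension theorem such as Palais', which is what the paper uses.
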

\begin{proof} Recall the function $\Phi$ defined in~\eqref{eq:def Phi} and let us fix $M>0$. For any $t>0$, for any $x\in\mathrm{B}(0,\,M)$,
$$\mathbb{P}_{x}\big(X(t)\in A,\,\sup_{s\in[0,\,t]}|X(s)|\leq M\big)\leq\mathbb{P}_{x}\big(\Phi(X(t))\in\Phi(A),\,\sup_{s\in[0,\,t]}|X(s)|\leq M\big)\;.$$

The process $(\Phi(X(t)))_{t\geq0}$ satisfies the SDE~\eqref{eq:sde} with coefficients $\widetilde{G},\,\widetilde{F},\,\widetilde{\Sigma}$ detailed in~\eqref{eq:expr coeff X tilde}. The goal now is to apply~\cite[Theorem 2.1]{Menozzi} to obtain an upper-bound on the transition density of the process $(\Phi(X(t)))_{t\geq0}$. However, since the coefficients $\widetilde{G},\,\widetilde{F},\,\widetilde{\Sigma}$ may be bounded and Lipschitz-continuous only locally, the following regularization procedure is applied.

It is easy to see that the function $p\in\R^{2d}\mapsto DG(0)^{-1}G(p)$ is a $C^\infty$ diffeomorphism which additionally preserves the orientation. By~\cite{palais1960}, this function can be extended outside of the ball $\mathrm{B}(0,\,M)$ by a $C^\infty$ diffeomorphism equal to the identity outside of some compact set. Therefore, we can extend $G$ by a function $G_M:\R^d\to\R^d$ equal to $p\mapsto DG(0)p$ outside of some compact set. In particular, all the derivatives of the extension $G_M$ are bounded on $\R^{2d}$. Also, let us take $C^\infty$ functions $F_M,\,\Sigma_M$ which coincide with $F,\,\Sigma$ on $\mathrm{B}(0,\,M)$ and are bounded, globally Lipschitz continuous on $\R^{2d}$ such that $\Sigma_M$ remains invertible on $\R^{2d}$.

Let us define $(X^M(t))_{t\geq0}$ as the solution to the SDE~\eqref{eq:sde} with coefficients $G_M,\,F_M,\,\Sigma_M$. By Friedman's uniqueness result~\cite[Theorem 5.2.1]{F}, for any $t>0$, the trajectories of $(X(s))_{s\in[0,\,t]}$ and $(X^M(s))_{s\in[0,\,t]}$ satisfying $X(0)=X^M(0)$, coincide on the event $\{\sup_{0\leq s\leq t}\vert X(s)\vert\leq~M\}$. Define now
$$\Phi_M:(q,\,p)\in\R^{2d}\mapsto (q,\,G_M(p))\;.$$
For all $x\in\mathrm{B}(0,\,M)$,
\begin{align}
&\mathbb{P}_{x}\big(\Phi(X(t))\in\Phi(A),\,\sup_{s\in[0,\,t]}|X(s)|\leq M\big)\nonumber\\
&=\mathbb{P}_{x}\big(\Phi_M(X^M(t))\in\Phi_M(A),\,\sup_{s\in[0,\,t]}|X(s)|\leq M\big)\nonumber\\
&\leq\mathbb{P}_{x}\big(\Phi_M(X^M(t))\in\Phi_M(A)\big)\label{eq:ineq law Phi X_t by Phi_M}\;.
\end{align}
The process $(\Phi_M(X^M(t)))_{t\geq0}$ satisfies the SDE~\eqref{eq:sde} with coefficients obtained from~\eqref{eq:expr coeff X tilde} by replacing $G,\,F,\,\Sigma$ with $G_M,\,F_M,\,\Sigma_M$. In particular, the drift and diffusion coefficients of $(\Phi_M(X^M(t)))_{t\geq0}$ are thus bounded and globally Lipschitz continuous. Therefore, by~\cite[Theorem 2.1]{Menozzi}, for any $T>0$, there exist constants $c,\,C>0$ such that the transition density of $\big(\Phi_M(X^M(t))\big)_{t\geq0}$ is bounded by the following Gaussian density for $t\in(0,\,T)$, $x=(q,\,p),\,x'=(q',\,p')\in\R^{2d}$,
\begin{equation}\label{eq:gaussian upper bound}
\mathrm{p}(t;x,\,x')=\frac{C}{t^{2d}}\mathrm{exp}\left(-c\left[\frac{|p-p'|^2}{4t}+\frac{3|q'-q-tp|^2}{t^3}\right]\right)\;.
\end{equation}
\textbf{Step 1}: Let us prove that for any $\delta>0$,
\begin{equation}\label{eq:sup density Phi X_t}
\sup_{t\in(0,\,T)}\sup_{x\in\mathrm{B}(0,\,M)}\sup_{|x-x'|\geq\delta}\mathrm{p}(t;\,\Phi_M(x),\,\Phi_M(x'))<\infty\;.
\end{equation}
Assume that the property above does not hold then there exist sequences $(t_n)_{n\geq1}$ in $(0,\,T)$, $(x_n=(q_n,\,p_n))_{n\geq1}$ in $\mathrm{B}(0,\,M)$, $(x'_n=(q'_n,\,p'_n))_{n\geq1}$ in $\R^{2d}$ satisfying $|x_n-x'_n|\geq\delta$ for all $n\geq1$ and such that
\begin{equation}\label{eq:limit density equal infty}
\lim_{n\rightarrow\infty}\mathrm{p}(t_n;\,\Phi_M(x_n),\,\Phi_M(x'_n))=\infty\;.
\end{equation}
Given that the sequence $(x_n)_{n\geq1}$ is bounded, up to extracting a subsequence, we can assume that it admits a limit when $n\rightarrow\infty$. Additionally, since 
$$\mathrm{p}(t_n;\,\Phi_M(x_n),\,\Phi_M(x'_n))\leq \frac{C}{t_n^{2d}}\,,$$
we deduce from~\eqref{eq:limit density equal infty} that $t_n\rightarrow0$ when $n\rightarrow\infty$. Furthermore, the inequality
$$\mathrm{p}(t_n;\,\Phi_M(x_n),\,\Phi_M(x'_n))\leq\frac{C}{t_n^{2d}}\left[\mathrm{exp}\left(-c\,\frac{\big|G_M(p_n)-G_M(p'_n)\big|^2}{4t_n}\right)\land\mathrm{exp}\left(-3c\,\frac{\big|q'_n-q_n-t_nG_M(p_n)\big|^2}{t_n^3}\right)\right]\;,$$
ensures that
$$\big|G_M(p_n)-G_M(p'_n)\big|\underset{n\rightarrow\infty}{\longrightarrow0}\,,\qquad\text{and}\qquad\big|q'_n-q_n-t_nG_M(p_n)\big|\underset{n\rightarrow\infty}{\longrightarrow}0\;.$$
Therefore, since the sequence $(p_n)_{n\geq1}$ is convergent and $G_M$ is a diffeomorphism, we deduce that 
$$|p_n-p'_n|\underset{n\rightarrow\infty}{\longrightarrow}0\,.$$
Moreover, given that $|p_n|\leq M$, we deduce that $t_n|p_n|\rightarrow0$ and therefore that $|q_n-q'_n|\rightarrow0$ when $n\rightarrow\infty$. Consequently, $|x_n-x'_n|$ converges to zero which is in contradiction with the fact that $|x_n-x'_n|\geq\delta$ for all $n\geq1$, hence~\eqref{eq:sup density Phi X_t}.

\textbf{Step 2}: Let us now conclude the proof of the inequality~\eqref{eq:control density law in ball B_M} using~\eqref{eq:ineq law Phi X_t by Phi_M}. For all $x\in\mathrm{B}(0,\,M)$,
\begin{align*}
\mathbb{P}_{x}\big(\Phi_M(X^M(t))\in\Phi_M(A)\big)&\leq\int_{\Phi_M(A)}\mathrm{p}(t;\,\Phi_M(x),\,y)\mathrm{d}y\\
&=\int_{A}\mathrm{p}(t;\,\Phi_M(x),\,\Phi_M(x'))\,\mathrm{det}|D\Phi_M(x')|\mathrm{d}x'\;.
\end{align*}
Since the derivatives of $\Phi_M$ are bounded on $\R^{2d}$, the proof immediately follows from~\eqref{eq:sup density Phi X_t}.
\end{proof}
\begin{proof}[Proof of Theorem~\ref{thm:polarity}]
For $\rho\geq0$, let us define the sets
\begin{align*}
\mathcal{G}_\rho&:=\big\{x\in\partial\Omega:|n_p(x)|=0,\,||\nabla^2_{p,\,p}\mathrm{d}_\Omega(x)||>\rho\big\}\\
\mathcal{T}&:=\big\{x\in\partial\Omega:|n_p(x)|=0,\,\xi(x)=0\big\}\;.
\end{align*}
Let also
\begin{equation}\label{eq:tau G_0 T}
\tau_{\mathcal{G}_0\cup\mathcal{T}}:=\inf\{t>0:\,X(t)\in\mathcal{G}_0\cup\mathcal{T}\}\;.
\end{equation}
The proof of Theorem~\ref{thm:polarity} immediately follows if for all $x\notin\mathcal{G}_0\cup\mathcal{T}$, for all $T\geq0$,
\begin{equation}\label{eq: proba tau_G_0 leq T}
\mathbb{P}_x(\tau_{\mathcal{G}_0\cup\mathcal{T}}\leq T)=0,
\end{equation}
by taking the limit $T\rightarrow\infty$. We first provide the proof of~\eqref{eq: proba tau_G_0 leq T} for any $x\notin\mathcal{G}_0\cup\mathcal{T}$ satisfying $\mathrm{dist}(x,\,\mathcal{G}_0\cup\mathcal{T})>~0$ and extend the result later to any $x\notin\mathcal{G}_0\cup\mathcal{T}$ in the last step of the proof.

Let us first fix $x\notin\mathcal{G}_0\cup\mathcal{T}$ satisfying $\mathrm{dist}(x,\,\mathcal{G}_0\cup\mathcal{T})>0$ and let $T\geq0$. Before delving into the proof of~\eqref{eq: proba tau_G_0 leq T} let us define constants $\alpha\in(0,\,1/2)$ and $\beta\in(0,\,\alpha/3)$ which satisfy
\begin{equation}\label{eq:conditions beta alpha}
2\alpha+\beta>1\,,\qquad3\alpha-3\beta>1\;.
\end{equation}
In practice, taking $\alpha\in(0,\,1/2)$ close enough to $1/2$ and $\beta>0$ small enough provide solutions to~\eqref{eq:conditions beta alpha}. For instance, $\alpha=17/36$ and $\beta=1/12$ satisfy these conditions. These constants are used below in the proof of~\eqref{eq: proba tau_G_0 leq T}.

\textbf{Step 1}: Let us fix an arbitrary small $\epsilon>0$. Consider also $M>0$ large enough such that
\begin{equation}\label{eq:x in ball M}
x\in\mathrm{B}(0,\,M)\;.
\end{equation}
Define the event
$$\mathcal{A}:=\left\{\sup_{0\leq t\leq T}\vert X(t)\vert\leq M ,\,\sup_{0\leq s<t\leq T}\frac{\vert p(t)-p(s)\vert}{\vert t-s\vert^{\alpha}}\leq M \right\}\;.$$
Given that the process $(X(t))_{t\geq0}$ does not explode in finite time almost-surely according to Assumption~\ref{ass:non expl} and since the trajectories of the process $(p(t))_{t\geq0}$ in~\eqref{eq:sde} are $\alpha$-H\"olderian continuous, there exists by $\sigma$-additivity $M = M _\epsilon$ large enough so that $\mathbb{P}_x(\mathcal{A}^c)\leq\epsilon$. As a result,
$$\mathbb{P}_x(\tau_{\mathcal{G}_0\cup\mathcal{T}}\leq T)\leq\mathbb{P}_x(\tau_{\mathcal{G}_0\cup\mathcal{T}}\leq T,\,\mathcal{A})+\epsilon\;.$$
Moreover, by the definition~\eqref{eq:tau G_0 T},
$$\mathbb{P}_x(\tau_{\mathcal{G}_0\cup\mathcal{T}}\leq T,\,\mathcal{A})=\lim_{n\rightarrow\infty}\mathbb{P}_x\big(\exists t\in(T/n,\,T],\,X(t)\in\mathcal{G}_{1/n^\beta}\cup\mathcal{T},\,\mathcal{A}\big)\;.$$
For $n\geq1$, consider the sequence of times $(t_k:=kT/n)_{1\leq k\leq n}$. It follows that
\begin{align}
&\mathbb{P}_x\big(\exists t\in(T/n,\,T],\,X(t)\in\mathcal{G}_{1/n^\beta}\cup\mathcal{T},\,\mathcal{A}\big)\nonumber\\
&\leq\sum_{k=1}^{n-1}\mathbb{P}_x\big(\exists t\in(t_k,\,t_{k+1}],\,X(t)\in\mathcal{G}_{1/n^\beta}\cup\mathcal{T},\,\mathcal{A}\big)\nonumber\\
&\leq\sum_{k=1}^{n-1}\mathbb{P}_x\big(\exists t\in(t_k,\,t_{k+1}],\,X(t)\in\mathcal{G}_{1/n^\beta},\,\mathcal{A}\big)+\sum_{k=1}^{n-1}\mathbb{P}_x\big(\exists t\in(t_k,\,t_{k+1}],\,X(t)\in\mathcal{T}\setminus\mathcal{G}_{1/n^\beta},\,\mathcal{A}\big)\label{eq:sums t_k}\;.
\end{align}
Therefore, the rest of the proof consists in showing that the sums above vanish when $n\rightarrow\infty$ by providing sharp upper-bounds on the probabilities therein.

\textbf{Step 2}: Let us provide in this step some preliminary estimates. Under the event $\mathcal{A}$, let us assume the existence of $\tau\in(t_k,\,t_{k+1}]$ for some $k\in\llbracket1,\,n-1\rrbracket$ such that $X(\tau)\in\mathcal{G}_{1/n^\beta}\cup\mathcal{T}$. First, let us deduce the existence of a deterministic constant $C_1>0$ independent of $k\in\llbracket1,\,n-1\rrbracket$ and $n\geq1$ such that for all $t\in[t_k,\,t_{k+1}]$,
\begin{equation}\label{eq:oscill X_t_k}
|q(t)-q(\tau)|\leq\frac{C_1}{n},\,\qquad|p(t)-p(\tau)|\leq\frac{C_1}{n^\alpha}\;.
\end{equation}
From~\eqref{eq:sde} and the continuity of $G$ on $\R^d$, we obtain that on the event $\mathcal{A}$,
$$|q(t)-q(\tau)|=\bigg|\int_{t}^\tau G(p(s))\mathrm{d}s\bigg|\leq\frac{T}{n}\sup_{z\in\mathrm{B}(0,\,M)}|G(z)|\;.$$
Additionally, under the event $\mathcal{A}$,
$$|p(t)- p(\tau)\vert\leq\frac{MT^\alpha}{n^\alpha},$$
hence~\eqref{eq:oscill X_t_k}. In particular, the inequality~\eqref{eq:oscill X_t_k} ensures that
\begin{equation}\label{eq:distance X_t_k G n beta}
|X(t_k)-X(\tau)|\leq\frac{2C_1}{n^\alpha}\;.
\end{equation}

\textbf{Step 3}: The goal of this step is to examine the limit of the first sum in~\eqref{eq:sums t_k} when $n\rightarrow\infty$. For this purpose, under the event $\mathcal{A}$, let us now assume the existence of $\tau\in(t_k,\,t_{k+1}]$ for some $k\in\llbracket1,\,n-1\rrbracket$ such that $X(\tau)\in\mathcal{G}_{1/n^\beta}$.

For $n\geq1$ and $C>0$, let us define the set
$$\mathcal{B}_n(C):=\bigg\{x\in\R^{2d}:\;|\nabla_p\mathrm{d}_{\Omega}(x)|\leq\frac{C}{n^{\alpha}}\,,\qquad|\mathrm{d}_{\Omega}(x)|\leq\frac{C}{n^{2\alpha}}\,,\qquad||\nabla^2_{p,\,p}\mathrm{d}_{\Omega}(x)||\geq\frac{1}{2n^{\beta}}\bigg\}\;.$$

Let us first prove the existence of a constant $C_2>0$ independent of $k\in\llbracket1,\,n-1\rrbracket$ and $n\geq1$ such that $X(t_{k})\in\mathcal{B}_n(C_2)$. Recall that the distance function $\mathrm{d}_\Omega$ is $C^{3,\,1}$ on a tubular neighborhood, denoted by $\mathcal{C}$, of $\partial\Omega\cap\mathrm{B}(0,\,M)$. We deduce from~\eqref{eq:distance X_t_k G n beta} that for $n\geq1$ large enough,   $X(t_k)\in\mathcal{C}$. Therefore, since $|\nabla_p\mathrm{d}_{\Omega}(X(\tau))|=0$, there exists a constant $K>0$ such that
\begin{align*}
|\nabla_p\mathrm{d}_{\Omega}(X(t_k))|&=|\nabla_p\mathrm{d}_{\Omega}(X(t_k))-\nabla_p\mathrm{d}_{\Omega}(X(\tau))|\\
&\leq K|X(t_k)-X(\tau)|\leq\frac{2KC_1}{n^\alpha}
\end{align*}
by~\eqref{eq:distance X_t_k G n beta}. The second inequality in the definition of $\mathcal{B}_n(C)$ is a consequence of the Taylor-Lagrange inequality along with~\eqref{eq:distance X_t_k G n beta},
\begin{align*}
\big|\mathrm{d}_{\Omega}(X(t_k))-\langle\nabla\mathrm{d}_{\Omega}(X(\tau)),\,X(t_k)-X(\tau)\rangle\big|&\leq\frac{1}{2}\sup_{z\in\mathcal{C}}||\nabla^2\mathrm{d}_\Omega(z)||\,|X(t_k)-X(\tau)|^2\\
&\leq\sup_{z\in\mathcal{C}}||\nabla^2\mathrm{d}_\Omega(z)||\frac{2C_1^2}{n^{2\alpha}}\;.
\end{align*}
In addition, by~\eqref{eq:oscill X_t_k},
\begin{align*}
|\langle\nabla\mathrm{d}_{\Omega}(X(\tau)),\,X(t_k)-X(\tau)\rangle|&=|\langle n_q(X(\tau)),\,q(t_k)-q(\tau)\rangle|\leq\frac{C_1}{n},
\end{align*}
hence the proof of the second inequality. Regarding the third inequality, the triangle inequality and the $C^{3,\,1}$ regularity of $\mathrm{d}_\Omega$ on the tube $\mathcal{C}$ ensure the existence of a constant $K'>0$ such that
\begin{align*}
||\nabla^2_{p,\,p}\mathrm{d}_{\Omega}(X(t_{k}))||&\geq||\nabla^2_{p,\,p}\mathrm{d}_{\Omega}(X(\tau))||-||\nabla^2_{p,\,p}\mathrm{d}_{\Omega}(X(t_{k}))-\nabla^2_{p,\,p}\mathrm{d}_{\Omega}(X(\tau))||\\
&\geq\frac{1}{n^\beta}-\frac{2K'C_1}{n^\alpha}\\
&\geq\frac{1}{2n^\beta}
\end{align*}
for $n\geq1$ large enough since $\alpha>\beta$, hence $X(t_{k})\in\mathcal{B}_n(C_2)$ for some constant $C_2>0$ independent of $n\geq1$.

Consider now the first probability in~\eqref{eq:sums t_k}. One deduces from~\eqref{eq:distance X_t_k G n beta} that for $n$ large enough,
\begin{align*}
&\mathbb{P}_x\big(\exists t\in(t_k,\,t_{k+1}],\,X(t)\in\mathcal{G}_{1/n^\beta},\,\mathcal{A}\big)\\
&\leq\mathbb{P}_x\big(X(t_k)\in\mathcal{B}_n(C_2),\,\mathrm{dist}(X(t_k),\,\mathcal{G}_{1/n^\beta})\leq2C_1/n^\alpha,\,\mathcal{A}\big)\;, \\
&\leq\mathbb{P}_x\big(X(t_k)\in\mathcal{B}_n(C_2),\,|X(t_k)-x|\geq \delta ,\,\mathcal{A}\big)\;,
\end{align*}
for some $\delta>0$ since $\mathrm{dist}(x,\,\mathcal{G}_{1/n^\beta})\geq\mathrm{dist}(x,\,\mathcal{G}_0\cup\mathcal{T})> 0$ given that $\mathcal{G}_{1/n^\beta}\subset\mathcal{G}_{0}$. Therefore, we deduce from~\eqref{eq:x in ball M} and Lemma~\ref{lem:upper bound density} the existence of a constant $C_3>0$ independent of $k\in\llbracket1,\,n-1\rrbracket$ and $n\geq1$ large enough such that
\begin{align*}
&\mathbb{P}_x\big(\exists t\in(t_k,\,t_{k+1}],\,X(t)\in\mathcal{G}_{1/n^\beta},\,\mathcal{A}\big)\\
&\leq C_3\int_{\R^{2d}}\mathbf{1}_{|x|\leq M}\mathbf{1}_{|\mathrm{d}_{\Omega}(x)|\leq C_2/n^{2\alpha}}\mathbf{1}_{|\nabla_p\mathrm{d}_{\Omega}(x)|\leq C_2/n^{\alpha}}\mathbf{1}_{\Vert\nabla^2_{p,\,p}\mathrm{d}_{\Omega}(x)\Vert\geq1/2n^\beta}\mathrm{d}x\;.
\end{align*}
As the support of the integral above lies in a bounded and close neighborhood of the $C^{3,\,1}$ boundary $\partial\Omega$, one can apply the change of variable defined by the tubular parametrization $(x,\,\lambda) \in \partial \Omega \times \mathbb R \mapsto x':=x+\lambda n(x)$ and detailed for instance in~\cite[Lemma 14.16]{GT} to obtain the following integral
\begin{equation}\label{eq:integral case X_tau G}
C_3\int_{-C_2/n^{2\alpha}}^{C_2/n^{2\alpha}}\int_{\partial\Omega}\mathbf{1}_{|x+\lambda n(x)|\leq M}\mathbf{1}_{|\nabla_p\mathrm{d}_{\Omega}(x+\lambda n(x))|\leq C_2/n^{\alpha}}\mathbf{1}_{||\nabla^2_{p,\,p}\mathrm{d}_{\Omega}(x+\lambda n(x))||\geq1/2n^\beta} \, J(x,\lambda) \sigma_{2d-1}(\mathrm{d}x)\mathrm{d}\lambda\;,
\end{equation}
where in the above $J(x,\lambda)$ is the Jacobian determinant which is bounded from above on $B(0,M)$ by a constant $C_J>0$. Notice that for $n\geq1$ large enough, for all $x\in\partial\Omega$, for all $\lambda\in[-C_2/n^{2\alpha},\,C_2/n^{2\alpha}]$,
$$\mathbf{1}_{|x+\lambda n(x)|\leq M}\leq\mathbf{1}_{|x|\leq2M}\;.$$
Similarly, the $C^{3,\,1}$ regularity of the distance function $\mathrm{d}_\Omega$ on a bounded tube around $\partial\Omega$ ensures that for $n\geq1$ large enough, the integral in~\eqref{eq:integral case X_tau G} admits the following upper-bound
$$\frac{2C_2C_3C_J>0}{n^{2\alpha}}\int_{\partial\Omega\cap\mathrm{B}(0,\,2M)}\mathbf{1}_{|\nabla_p\mathrm{d}_{\Omega}(x)|\leq2C_2/n^{\alpha}}\mathbf{1}_{||\nabla^2_{p,\,p}\mathrm{d}_{\Omega}(x)||\geq1/4n^\beta}\sigma_{2d-1}(\mathrm{d}x)\;.$$
Applying Proposition~\ref{prop:control neighborhood Gamma_0} for $\epsilon=1/n^\alpha$ and $\rho=\beta/\alpha$ thus yields the existence of a constant $C_4>0$ independent of $n\geq1$ large enough such that the above quantity is bounded by $C_4/n^{3\alpha-3\beta}$.
All in all, we obtain that
\begin{align*}
\sum_{k=1}^{n-1}\mathbb{P}_x\big(\exists t\in(t_k,\,t_{k+1}],\,X(t)\in\mathcal{G}_{1/n^\beta},\,\mathcal{A}\big)&\leq\sum_{k=1}^{n-1}\frac{C_4}{n^{3\alpha-3\beta}}=\frac{C_4}{n^{3\alpha-3\beta-1}}
\end{align*}
which vanishes when $n\rightarrow\infty$ since $3\alpha-3\beta>1$ by~\eqref{eq:conditions beta alpha}.

\textbf{Step 4}: The goal of this step is to show that the second sum in~\eqref{eq:sums t_k} vanishes when $n\rightarrow\infty$. Under the event $\mathcal{A}$, let us assume this time the existence of $k\in\llbracket1,\,n-1\rrbracket$ and $\tau\in(t_k,\,t_{k+1}]$ such that $X(\tau)\in\mathcal{T}\setminus\mathcal{G}_{1/n^\beta}$. Let us first prove that there exists a constant $C_5>0$ independent of $k\in\llbracket1,\,n-1\rrbracket$ such that for $n\geq1$ large enough,
\begin{equation}\label{eq:upper bound distance case T setminus G}
|\mathrm{d}_{\Omega}(X(t_{k}))|\leq\frac{C_5}{n^{2\alpha+\beta}}\;.
\end{equation}

Applying the Taylor-Lagrange inequality up to the second order, we have that
\begin{align*}
&\big\vert\mathrm{d}_{\Omega}(X(t_{k}))-\langle\nabla\mathrm{d}_{\Omega}(X(\tau)),\,X(t_k)-X(\tau)\rangle-\frac{1}{2}\langle X(t_k)-X(\tau),\,\nabla^2\mathrm{d}_{\Omega}(X(\tau))( X(t_k)-X(\tau))\rangle\big\vert\\
&\leq\frac{1}{6}\sup_{z\in\mathcal{C}}|||\nabla^3\mathrm{d}_\Omega(z)|||\,|X(t_k)-X(\tau)|^3\\
&\leq\frac{4}{3}\sup_{z\in\mathcal{C}}|||\nabla^3\mathrm{d}_\Omega(z)|||\frac{C_1^3}{n^{3\alpha}}
\end{align*}
using~\eqref{eq:distance X_t_k G n beta}. Moreover, given that $G$ is Lipschitz continuous on $\mathrm{B}(0,\,M)$, we deduce the existence of a constant $C_6>0$ such that
\begin{align*}
\big|\langle\nabla\mathrm{d}_{\Omega}(X(\tau)),\,X(t_k)-X(\tau)\rangle\big|&=\big|\langle n_q(X(\tau)),\,q(t_k)-q(\tau)\rangle\big|\\
&=\bigg|\int_\tau^{t_k}\langle n_q(X(\tau)),\,G(p(s))-G(p(\tau))\rangle\mathrm{d}s\bigg|\\
&\leq\frac{C_6}{n^{1+\alpha}}
\end{align*}
using~\eqref{eq:oscill X_t_k} and the fact that $X(\tau)\in\mathcal{T}$. The inequality~\eqref{eq:oscill X_t_k} also guarantees that
\begin{align*}
&\big|\langle X(t_k)-X(\tau),\,\nabla^2\mathrm{d}_{\Omega}(X(\tau)) (X(t_k)-X(\tau))\rangle\big|\\
&\leq\big|\langle q(t_k)-q(\tau),\,\nabla^2_{q,\,q}\mathrm{d}_{\Omega}(X(\tau)) (q(t_k)-q(\tau))\rangle+2\langle q(t_k)-q(\tau),\,\nabla^2_{q,\,p}\mathrm{d}_{\Omega}(X(\tau)) (p(t_k)-p(\tau))\rangle\big|\\
&+\big|\langle p(t_k)-p(\tau),\,\nabla^2_{p,\,p}\mathrm{d}_{\Omega}(X(\tau)) (p(t_k)-p(\tau))\rangle\big|\\
&\leq||\nabla^2_{p,\,p}\mathrm{d}_{\Omega}(X(\tau))||\,\frac{C_1^2}{n^{2\alpha}}+||\nabla^2\mathrm{d}_\Omega(X(\tau))||\bigg(\frac{C_1^2}{n^2}+2\frac{C_1^2}{n^{1+\alpha}}\bigg)\\
&\leq\frac{C_1^2}{n^{2\alpha+\beta}}+3\sup_{z\in\mathcal{C}}||\nabla^2\mathrm{d}_\Omega(z)||\frac{C_1^2}{n^{1+\alpha}}
\end{align*}
where in the last line we have used that $X(\tau) \notin \mathcal G_{1/n^\beta}$. Hence~\eqref{eq:upper bound distance case T setminus G} since $2\alpha+\beta<1+\alpha$. Therefore, given~\eqref{eq:distance X_t_k G n beta} and~\eqref{eq:upper bound distance case T setminus G}, we get for all $n$ large enough,
\begin{align*}
&\mathbb{P}_x\big(\exists t\in(t_k,\,t_{k+1}],\,X(t)\in\mathcal{T}\setminus\mathcal{G}_{1/n^\beta},\,\mathcal{A}\big)\\
&\leq\mathbb{P}_x\big(|\mathrm{d}_{\Omega}(X(t_{k}))|\leq C_5/n^{2\alpha+\beta},\,\mathrm{dist}(X(t_k),\,\mathcal{T})\leq2C_1/n^\alpha,\,\mathcal{A}\big)\;  \\
& \leq \mathbb{P}_x\big(|\mathrm{d}_{\Omega}(X(t_{k}))|\leq C_5/n^{2\alpha+\beta},\,|X(t_k)-x| \geq \delta,\,\mathcal{A}\big)\;,
\end{align*}
for some $\delta>0$ since $\mathrm{dist}(x,\,\mathcal{T})>0$. We deduce from~\eqref{eq:x in ball M} and Lemma~\ref{lem:upper bound density}, the existence of a constant $C_7>0$ such that for $n\geq1$ large enough,
\begin{align*}
\mathbb{P}_x\big(\exists t\in(t_k,\,t_{k+1}],\,X(t)\in\mathcal{T}\setminus\mathcal{G}_{1/n^\beta},\,\mathcal{A}\big)&\leq C_7\int_{\R^{2d}}\mathbf{1}_{|x|\leq M}\mathbf{1}_{|\mathrm{d}_{\Omega}(x)|\leq C_5/n^{2\alpha+\beta}}\mathrm{d}x\\
&\leq\frac{C_8}{n^{2\alpha+\beta}}
\end{align*}
for some constant $C_8>0$ independent of $n\geq1$ using the Weyl tube formula~\cite{WeylTube}. Finally, we obtain that
$$\sum_{k=1}^{n-1}\mathbb{P}_x\big(\exists t\in(t_k,\,t_{k+1}],\,X(t)\in\mathcal{T}\setminus\mathcal{G}_{1/n^\beta},\,\mathcal{A}\big)\leq\frac{C_8}{n^{2\alpha+\beta-1}}$$
which vanishes when $n\rightarrow\infty$ by~\eqref{eq:conditions beta alpha}.

\textbf{Step 5}: Let us fix an arbitrary point $x\notin\mathcal{G}_0\cup\mathcal{T}$ and let us show that~\eqref{eq: proba tau_G_0 leq T} also holds for $x$. The definitions of the sets $\mathcal{G}_0,\,\mathcal{T}$ guarantee that either $\mathrm{dist}(x,\,\mathcal{G}_0\cup\mathcal{T})>0$ or that
\begin{equation}\label{eq:remaining case x}
x\in\partial\Omega,\qquad |n_p(x)|=0,\qquad\xi(x)\neq0,\qquad||\nabla^2_{p,\,p}\mathrm{d}_\Omega(x)||=0\;.
\end{equation}
The case $\mathrm{dist}(x,\,\mathcal{G}_0\cup\mathcal{T})>0$ was already successfully treated in the previous steps and it remains to consider the case~\eqref{eq:remaining case x}. Following the proof of Theorem~\ref{thm:reg points} and in particular the small-time expansion in~\eqref{eq:developt asymptot Gamma}, one has that, $\mathbb{P}_x$ almost-surely,
$$\lim_{t\rightarrow0}\frac{\mathrm{d}_\Omega(X(t))}{t}=\xi(x)\;.$$
Therefore, $\mathbb{P}_x(\tau_{\partial\Omega}>0)=1$ and in particular there exists $\eta\in(0,\,T)$ small enough such that
$$\mathbb{P}_x(\tau_{\partial\Omega}\leq\eta)\leq\epsilon\;.$$
Consider now the probability in~\eqref{eq: proba tau_G_0 leq T}. By the Markov property,
\begin{align*}
\mathbb{P}_x(\tau_{\mathcal{G}_0\cup\mathcal{T}}\leq T)&\leq\mathbb{P}_x(\tau_{\mathcal{G}_0\cup\mathcal{T}}\leq T,\,\tau_{\partial\Omega}>\eta)+\epsilon\nonumber\\
&=\mathbb{E}_x\big[\mathbf{1}_{\tau_{\partial\Omega}>\eta}\mathbb{P}_{X(\eta)}(\tau_{\mathcal{G}_0\cup\mathcal{T}}\leq T-\eta)\big]+\epsilon
\end{align*}
Besides, any point $y\in\R^{2d}\setminus\partial\Omega$ is at a positive distance from $\mathcal{G}_0\cup\mathcal{T}$ and therefore the probability in the expectation above vanishes which concludes the proof by taking $\epsilon\rightarrow0$.
\end{proof}
\subsection{Proof of Theorem~\ref{thm:polarity Gamma+}}\label{sec:proof thm polarity Gamma +}
We conclude the current section with the proof of Theorem~\ref{thm:polarity Gamma+}.
\begin{proof}[Proof of Theorem~\ref{thm:polarity Gamma+}] 

Let us first prove that for any $x\in\Omega\cup\mathcal{I}$, 
\begin{equation}\label{eq:exit event x in Omega}
\mathbb{P}_x\big(X(\tau_{\partial \Omega})\in\Gamma^+,\,\nabla^2_{p,\,p}\mathrm{d}_\Omega(X(\tau_{\partial \Omega}))=\mathbb{O}_d\big)=0\;.
\end{equation}

Fix $x\in\Omega\cup\mathcal{I}$ and notice that $\tau_{\partial\Omega}>0$ $\mathbb{P}_x$ almost-surely by the definition of $\mathcal{I}$ and the continuity of the trajectories. The proof relies on small-time asymptotics of the process $(X(t))_{t\geq0}$ detailed below. Given the H\"olderian nature of the trajectories of $(p(t))_{t\geq0}$, we have that for any $\alpha\in(0,\,1/2)$, $\mathbb{P}_x$ almost-surely,
\begin{equation}\label{eq:small time asymp vitesse}
\lim_{t\rightarrow0}\frac{1}{t^\alpha}\big|p(\tau_{\partial \Omega}-t)-p(\tau_{\partial \Omega})\big|=0\;.
\end{equation}
Also, $\mathbb{P}_x$ almost-surely,
\begin{equation}\label{eq:small time asymp position}
\lim_{t\rightarrow0}\frac{1}{t}\big(q(\tau_{\partial \Omega})-q(\tau_{\partial \Omega}-t)\big)=\lim_{t\rightarrow0}\frac{1}{t}\int_{\tau_{\partial \Omega}-t}^{\tau_{\partial \Omega}}G(p(s))\mathrm{d}s=G(p(\tau_{\partial \Omega}))\;.    
\end{equation}
Define the process
$$Y:t\in[0,\,\tau_{\partial \Omega}]\mapsto X(\tau_{\partial \Omega}-t)-X(\tau_{\partial \Omega})\;.$$
By the Taylor-Lagrange inequality, $\mathbb{P}_x$ almost-surely,
\begin{align}
&\mathrm{d}_\Omega(X(\tau_{\partial \Omega}-t))\\
&=\mathrm{d}_\Omega(X(\tau_{\partial \Omega}))+\langle\nabla\mathrm{d}_{\Omega}(X(\tau_{\partial \Omega})),\,Y(t)\rangle+\frac{1}{2}\langle Y(t),\,\nabla^2\mathrm{d}_{\Omega}(X(\tau_{\partial \Omega}))Y(t)\rangle+\underset{t\rightarrow0}{O}(|Y(t)|^3)\nonumber\\
&=\langle\nabla\mathrm{d}_{\Omega}(X(\tau_{\partial \Omega})),\,Y(t)\rangle+\frac{1}{2}\langle Y(t),\,\nabla^2\mathrm{d}_{\Omega}(X(\tau_{\partial \Omega}))Y(t)\rangle+\underset{t\rightarrow0}{o}(t)\label{eq:Taylor Lagrange}\;,
\end{align}
given the small-time asymptotics in~\eqref{eq:small time asymp vitesse} and~\eqref{eq:small time asymp position}. Moreover, under the event
$$\big\{\nabla^2_{p,\,p}\mathrm{d}_\Omega(X(\tau_{\partial \Omega}))=\mathbb{O}_d,\,X(\tau_{\partial \Omega})\in\Gamma^+\big\}\;,$$
the limits~\eqref{eq:small time asymp vitesse} and~\eqref{eq:small time asymp position} guarantee that, $\mathbb{P}_x$ almost-surely,
\begin{align*}
&\langle Y(t),\,\nabla^2\mathrm{d}_{\Omega}(X(\tau_{\partial \Omega}))Y(t)\rangle\\
&=\langle q(\tau_{\partial \Omega}-t)-q(\tau_{\partial \Omega}),\,\nabla^2_{q,\,q}\mathrm{d}_{\Omega}(X(\tau_{\partial \Omega}))(q(\tau_{\partial \Omega}-t)-q(\tau_{\partial \Omega}))\rangle\\
&+2\langle q(\tau_{\partial \Omega}-t)-q(\tau_{\partial \Omega}),\,\nabla^2_{q,\,p}\mathrm{d}_{\Omega}(X(\tau_{\partial \Omega}))(p(\tau_{\partial \Omega}-t)-p(\tau_{\partial \Omega}))\rangle\\
&=\underset{t\rightarrow0}{o}(t)\;. 
\end{align*}
Furthermore, $\mathbb{P}_x$ almost-surely,
\begin{align*}
\frac{1}{t}\langle\nabla\mathrm{d}_{\Omega}(X(\tau_{\partial \Omega})),\,Y(t)\rangle&=\frac{1}{t}\langle n_q(X(\tau_{\partial \Omega})),\,q(\tau_{\partial \Omega}-t)-q(\tau_{\partial \Omega})\rangle\\
&=-\frac{1}{t}\int_{\tau_{\partial \Omega}-t}^{\tau_{\partial \Omega}}\langle n_q(X(\tau_{\partial \Omega})),\,G(p(s))\rangle\mathrm{d}s\\
&\underset{t\rightarrow0}{\longrightarrow}-\xi(X(\tau_{\partial \Omega}))<0
\end{align*}
since $X(\tau_{\partial \Omega})\in\Gamma^+$. Therefore,~\eqref{eq:Taylor Lagrange} guarantees that
$$\lim_{t\rightarrow0}\frac{1}{t}\mathrm{d}_\Omega(X(\tau_{\partial \Omega}-t))=-\xi(X(\tau_{\partial \Omega}))<0$$
which contradicts the fact that
$$\inf_{t\in[0,\,\tau_{\partial \Omega}]}\mathrm{d}_\Omega(X(t))\geq0$$
since the process $(X(t))_{t\geq0}$ stays in $\Omega$ during the time interval $(0,\,\tau_{\partial \Omega})$, hence the proof of~\eqref{eq:exit event x in Omega}.

Combining~\eqref{eq:exit event x in Omega} with Theorem~\ref{thm:polarity}, we deduce that the equality~\eqref{eq:bdy exit event} also holds. Given that the boundary set $\Lambda$ is composed only of regular points, the equality~\eqref{eq:proba hitting ext vs bdy} immediately follows which concludes the proof.
\end{proof}

\section{Boundary-value problem: proof of Theorem~\ref{thm:edp eq pot} and Theorem~\ref{thm:existence sol PDE}}\label{sec:prop function h}
In this section, we first prove the uniqueness of a classical solution to the boundary-value problem stated in Theorem~\ref{thm:edp eq pot}. Existence through the probabilistic representation~\eqref{eq:def fctn h} and the boundary continuity properties (Theorem~\ref{thm:existence sol PDE}) are given by the two propositions stated below, which will be independently proven in  Section~\ref{sec:classical sol} and Section~\ref{sec:continuity h} respectively.

\begin{prop}[Classical solution]\label{prop:classical sol}
Let $f\in C_b(\Lambda)$, $c,g\in C^\infty_b(\Omega)$. The function $h$ defined in~\eqref{eq:def fctn h} satisfies
\begin{itemize}
    \item $h\in C^\infty(\Omega)$,
    \item $\mathcal{L}h+ch=g$ on $\Omega$.
\end{itemize}
\end{prop}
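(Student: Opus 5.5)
The plan is to show that $h$ is a distributional solution of $(\mathcal{L}+c)h=g$ in $\Omega$, and then to upgrade it to a smooth classical solution through hypoellipticity. The operator $\mathcal{L}$ satisfies Hörmander's bracket condition. The fields $\Sigma\nabla_p$ span the $p$-directions, since $\Sigma$ is invertible. Their brackets with the drift $\langle G(p),\nabla_q\rangle$ produce $DG(p)\Sigma\,\nabla_q$-type directions, and these span the $q$-directions because $G$ is a diffeomorphism. Hence $\mathcal{L}+c$ is hypoelliptic with smooth coefficients (Assumption~\ref{ass:coeff}). It therefore suffices to prove that $h$ is locally bounded and measurable and solves the equation in $\mathcal{D}'(\Omega)$. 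Boundedness of $h$ on $\Omega$ follows directly from \eqref{eq:def fctn h}, the boundedness of $f$ on $\Lambda$ and of $g$, and the uniform integrability assumption \eqref{eq:uniform integ tau}. By Theorem~\ref{thm:polarity Gamma+}, $X(\tau_{\Omega^c})\in\Lambda$ almost surely, so $f(X(\tau_{\Omega^c}))$ is well defined.

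To obtain the distributional equation, I fix a test function $\varphi\in C_c^\infty(\Omega)$ and a small ball $\mathrm{B}\Subset\Omega$. I then use the strong Markov property at $\tau_{\mathrm{B}^c}$ (which is $\le\tau_{\Omega^c}$):
$$h(x)=\mathbb{E}_x\Big[\mathrm{e}^{\int_0^{\tau_{\mathrm{B}^c}}c(X)}h(X(\tau_{\mathrm{B}^c}))-\int_0^{\tau_{\mathrm{B}^c}}\mathrm{e}^{\int_0^sc(X)}g(X(s))\mathrm{d}s\Big],\qquad x\in\mathrm{B}.$$
Equivalently, I would avoid local problems altogether and use the semigroup characterization. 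Set
$$P^\Omega_t\psi(x)=\mathbb{E}_x\big[\mathbf{1}_{t<\tau_{\Omega^c}}\mathrm{e}^{\int_0^tc}\psi(X(t))\big].$$
The Markov property at the deterministic time $t$ gives
$$h(x)=P^\Omega_th(x)-\mathbb{E}_x\Big[\int_0^{t\wedge\tau_{\Omega^c}}\mathrm{e}^{\int_0^sc}g(X(s))\mathrm{d}s\Big]+\mathbb{E}_x\big[\mathbf{1}_{\tau_{\Omega^c}\le t}\,\mathrm{e}^{\int_0^{\tau}c}f(X(\tau_{\Omega^c}))\big]\cdot(\text{stopped term}).$$
I integrate this identity against $\varphi$, then divide by $t$ and let $t\to0$. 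Using the adjoint operator, $\int P^\Omega_t h\,\varphi=\int h\,(\text{killed adjoint semigroup})\varphi$. For $\varphi$ compactly supported in $\Omega$, the killed contributions are $o(t)$: the probability of reaching $\partial\Omega$ from $\mathrm{supp}\,\varphi$ before time $t$ decays faster than any power, by continuity and moment bounds on the increments. The limit then gives $\int h\,(\mathcal{L}^*+c)\varphi=\int g\varphi$, where $\mathcal{L}^*$ denotes the formal adjoint.

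Concretely, the main computation is as follows. For $x$ in a neighborhood of $\mathrm{supp}\,\varphi$, Itô's formula applied to $\mathrm{e}^{\int_0^tc}\varphi(X(t))$ shows that
$$\tfrac1t\big(\mathbb{E}_x[\mathrm{e}^{\int_0^{t}c}\varphi(X(t))]-\varphi(x)\big)\to(\mathcal{L}+c)\varphi(x)$$
boundedly. The remaining task is to swap the roles of $\varphi$ and $h$. For this I would write $\int\varphi(x)\big(h(x)-\text{Markov expression}\big)dx=0$ and use Fubini together with the transition density of the killed process. The density exists by Hörmander's theorem and satisfies the local bounds of Lemma~\ref{lem:upper bound density}.

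I expect this exchange (the duality step) to be the main obstacle. The reason is that $h$ is only known to be bounded and measurable a priori, so continuity of $P_t^\Omega h$ in $t$ cannot be used naively. My first attempt would be to use the Chapman–Kolmogorov symmetry of the (non-symmetric) density, i.e. $\int\varphi\,P^\Omega_t h=\int h\,P^{\Omega,*}_t\varphi$, where $P^{\Omega,*}_t\varphi\to\varphi$ and $\tfrac1t(P^{\Omega,*}_t\varphi-\varphi)\to(\mathcal{L}^*+c)\varphi$ weakly. The justification would come from the Kolmogorov forward equation satisfied by the density in the interior. Once the distributional identity holds, hypoellipticity yields $h\in C^\infty(\Omega)$, and the equation then holds pointwise.
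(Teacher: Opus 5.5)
\textbf{Gap: the duality step.} Your architecture matches the paper's: a weak solution in $\Omega$, with the boundary contributions removed by the super-polynomial small-time exit estimate from $\mathrm{supp}\,\varphi$, followed by H\"ormander. However, the step you flag as the main obstacle is the entire analytic content of the proposition, and the route you suggest for it does not close. After the Markov identity you must show
\[
\frac1t\int_\Omega h\,\bigl(P^{\Omega,*}_t\varphi-\varphi\bigr)\,\mathrm{d}x\;\longrightarrow\;\int_\Omega h\,(\mathcal{L}^*+c)\varphi\,\mathrm{d}x ,
\]
where $h$ is only known to be bounded and measurable. Its continuity in $\Omega$ is part of what is being proved. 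The forward Kolmogorov equation gives
\[
P^{\Omega,*}_t\varphi-\varphi=\int_0^t(\mathcal{L}^*+c)P^{\Omega,*}_s\varphi\,\mathrm{d}s
\]
only in $\mathcal{D}'(\Omega)$, i.e.\ convergence when tested against smooth functions. Testing against $h\in L^\infty$ requires $t^{-1}(P^{\Omega,*}_t\varphi-\varphi)\to(\mathcal{L}^*+c)\varphi$ weakly in $L^1$. That would need, for instance, $L^1$ control of $(\mathcal{L}^*+c)P^{\Omega,*}_s\varphi$ uniformly as $s\to0$ (second-order regularity of the degenerate Fokker--Planck flow up to the initial time), or a Duhamel/uniqueness argument for measure-valued solutions. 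Nothing in your sketch supplies either. (Incidentally, $\int\varphi\,P^\Omega_th=\int h\,P^{\Omega,*}_t\varphi$ is just Fubini with the kernel; no Chapman--Kolmogorov symmetry is involved.)

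\textbf{Missing idea: let the semigroup smooth $h$, not $\varphi$.} The paper regards $h$ as a bounded function on $\R^{2d}$ and uses the unkilled Feynman--Kac semigroup $P_t$ of Lemma~\ref{lem:smoothness P_tf}. For every bounded measurable $f$, the map $(t,x)\mapsto P_tf(x)$ is $C^\infty$ on $(0,\infty)\times\R^{2d}$ and solves $\partial_tP_tf=\mathcal{L}P_tf+cP_tf-g$ classically. This is proved by adding a small noise $\sqrt{2\alpha}\,\mathrm{d}\widetilde{B}$ in $q$, localizing the coefficients, passing to the limit in the weak formulation, and invoking hypoellipticity. The Markov property at time $t$ and Lemma~\ref{lem:exit time compact set} then give $\sup_K|P_th-h|\le c_1\mathrm{e}^{-c_2/t}$ on every compact $K\subset\Omega$. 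For $0<r<t$ one writes
\[
\int(P_th-P_rh)\varphi\,\mathrm{d}x=\int_r^t\int(\mathcal{L}P_sh+cP_sh-g)\varphi\,\mathrm{d}x\,\mathrm{d}s
\]
and integrates by parts against the smooth function $P_sh$. Let $r\to0$ and then $t\to0$, using the uniform convergence $P_sh\to h$ on $\mathrm{supp}\,\varphi$ and $t^{-1}\mathrm{e}^{-c_2/t}\to0$. This yields $\int h(\mathcal{L}^*\varphi+c\varphi)=\int g\varphi$, and no regularity of an adjoint semigroup near $t=0$ is ever needed. With this replacement, the rest of your outline (boundedness, exit estimate, hypoellipticity) goes through.
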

\begin{prop}[Boundary continuity]\label{prop:boundary continuity h} Under the assumptions of Proposition~\ref{prop:classical sol}, for all $x\in\mathcal{R}_f$,
$$\lim_{y\in\Omega\rightarrow x}h(y)=f(x)\;.$$
For all $x\in\mathrm{int}_{\partial\Omega}\mathcal{I}$,
$$\lim_{y\in\Omega\rightarrow x}h(y)=h(x)\;.$$
Additionally, if $(x_n)_{n\geq1}$ is a sequence in $\Omega$ converging to $x\in\mathcal{I}\setminus\mathrm{int}_{\partial\Omega}\mathcal{I}$ and satisfying~\eqref{eq:cone condition} then
$$\lim_{n\rightarrow\infty}h(x_n)=h(x)\;.$$
\end{prop}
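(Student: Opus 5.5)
We prove the three assertions of Proposition~\ref{prop:boundary continuity h} by trajectorial arguments. Everything rests on a continuity property of the exit time and exit point with respect to the initial condition, combined with the uniform integrability in Assumption~\ref{ass:Omega}.

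\textbf{Reduction.} We write $h(y)=\mathbb{E}_y[\Phi(X)]$, where $\Phi$ is the functional of the path inside~\eqref{eq:def fctn h}. We realise all processes on one probability space, driven by the same Brownian motion $B$ and with initial conditions $y\to x$. Strong solutions with smooth, locally Lipschitz coefficients and non-explosion (Assumption~\ref{ass:non expl}) then give $\sup_{t\le T}|X^y(t)-X^x(t)|\to0$ in probability for every fixed $T$. Assumption~\ref{ass:Omega} provides a bound on the contribution of the event $\{\tau>T\}$ that is uniform in the starting point. It therefore suffices to show the following: as $y\to x$, the exit time $\tau^y_{\Omega^c}\wedge T$ converges in probability to the corresponding limit, and $f(X^y(\tau^y))\mathbf{1}_{\tau^y\le T}$ converges to the right quantity. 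Given these, dominated convergence finishes the argument, since $c,g,f$ are bounded and $e^{c_\infty\tau}$ is uniformly integrable.

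\textbf{Regular points.} Let $x\in\mathcal{R}_f$. By Theorem~\ref{thm:reg points}, $\mathbb{P}_x(\tau_{\Omega^c}=0)=1$, and more precisely for every $\eta>0$ the path $X^x$ visits the open set $\overline{\Omega}^c$ during $(0,\eta)$. This last point needs care: regularity only gives $\tau_{\Omega^c}=0$, and we want strict entry into the exterior. We obtain it from the small-time expansions used in the proof of Theorem~\ref{thm:reg points}, where $\liminf_{t\to0}\mathrm{d}_\Omega(X(t))/\Psi(t)<0$, so $\mathrm{d}_\Omega(X(t))<0$ at arbitrarily small times. Uniform convergence of paths then forces $X^y$ to leave $\Omega$ before time $\eta$ with probability tending to one. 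Hence $\tau^y\to0$ and $X^y(\tau^y)\to x$ in probability. Theorem~\ref{thm:polarity Gamma+} gives $X^y(\tau^y)\in\Lambda$ almost surely, so continuity of the extension of $f$ at $x$ yields $f(X^y(\tau^y))\to f(x)$. The time integral vanishes because $\tau^y\to0$. Therefore $h(y)\to f(x)$.

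\textbf{Irregular points.} Here $\tau^x>0$ almost surely. By Theorem~\ref{thm:polarity Gamma+}, $\tau^x_{\partial\Omega}=\tau^x_{\Omega^c}$ and $X^x(\tau^x)\in\Lambda\subset\mathcal{R}$. The strict-entry property above, applied at the exit point through the strong Markov property, shows that $X^x$ enters $\overline{\Omega}^c$ immediately after $\tau^x$. This gives upper semicontinuity: $\limsup_y\tau^y\le\tau^x$. The main obstacle is the reverse inequality, namely preventing $X^y$ from exiting early near $x$ itself, or near some time $t\in(0,\tau^x)$ at which $X^x$ merely touches $\partial\Omega$.

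We handle touching at positive times as follows. By Theorem~\ref{thm:polarity Gamma+} and the strong Markov property, any boundary point hit by $X^x$ before $\tau^x$ would lie in $\Lambda\subset\mathcal{R}$, and the process would then exit immediately. So $X^x$ stays at positive distance from $\partial\Omega$ on every interval $[\eta,\tau^x-\eta]$, and uniform convergence prevents early exit of $X^y$ on those intervals. Near time $0$, we use that $x\in\mathrm{int}_{\partial\Omega}\mathcal{I}$: every boundary point close to $x$ is irregular, lying in $\Gamma^+$ or $\Gamma^{0,i}$. On $\Gamma^+$, the expansion~\eqref{eq:developt asymptot Gamma} shows that $\mathrm{d}_\Omega(X(t))\gtrsim t\,\xi$ is pushed inward. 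We then need to upgrade this into a statement uniform over starting points $y$ in a neighbourhood; we expect a comparison or barrier built from $\mathrm{d}_\Omega$ to do this, at least locally.

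For the cone condition~\eqref{eq:cone condition}, we have $\mathrm{d}_\Omega(x_n)\ge c|x_n-x|$. We compare $X^{x_n}$ with $X^x$ on $[0,\eta]$. Since $\mathrm{d}_\Omega(X^x(t))$ is bounded below by the irregular small-time growth (linear in $t$ on $\Gamma^+$, or the lower envelope $t/\log(1/t)^{2+\epsilon}$ on $\Gamma^{0,i}$), the path error $X^{x_n}-X^x$ should be of order $|x_n-x|(1+t)$ by a Lipschitz flow estimate. This keeps $\mathrm{d}_\Omega(X^{x_n}(t))>0$ on $(0,\eta]$ with probability tending to one. The grazing case $\Gamma^{0,i}$ is where we expect the estimates to be tightest.
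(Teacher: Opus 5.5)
The overall architecture is the right one: couple through a common Brownian motion, get convergence in probability of paths and exit times, then conclude by dominated convergence using Assumption~\ref{ass:Omega}. Your regular-point case is sound, and making strict entry into $\overline{\Omega}^c$ explicit is a good point. (Touching $\partial\Omega$ during $(0,\tau^x)$ is a non-issue: there the path lies in the open set $\Omega$, so $\mathrm{d}_\Omega(X^x)$ is bounded below on $[\eta,\tau^x-\eta]$ by compactness.)

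The gap is the one delicate step: excluding an early exit of $X^y$ near time $0$ when $x$ is irregular. You leave it open in both cases.

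For $x\in\mathrm{int}_{\partial\Omega}\mathcal{I}$ you defer to a barrier, uniform in $y$, that is never constructed, and you say nothing about $\Gamma^{0,i}$. No barrier is needed, and you already cite the key fact. By Theorem~\ref{thm:polarity Gamma+}, $X^y(\tau^y)\in\Lambda\subset\mathcal{R}$ almost surely. Since a whole boundary neighbourhood of $x$ is irregular, $r:=\mathrm{dist}(x,\mathcal{R})>0$. Hence, for $|y-x|<r/2$, exiting before time $a$ forces $\sup_{t\le a}|X^y(t)-y|\ge r/2$. Your path convergence then gives $\limsup_{y\to x}\mathbb{P}(\tau^y\le a)\le\mathbb{P}(\sup_{t\le a}|X^x(t)-x|\ge r/4)$, which vanishes as $a\to0$.

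For the cone condition, your comparison uses only the \emph{size} of the separation, $|X^{x_n}(t)-X^x(t)|\lesssim|x_n-x|$, against the growth of $\mathrm{d}_\Omega(X^x(t))$. This cannot close for $t\lesssim|x_n-x|$. Since $x$ is characteristic, the small-time expansion gives $\mathrm{d}_\Omega(X^x(t))=O(t\log\log(1/t))\ll|x_n-x|$ there. So $\mathrm{d}_\Omega(X^x(t))-|X^{x_n}(t)-X^x(t)|$ can be negative, and the bound $\mathrm{d}_\Omega(x_n)\ge c|x_n-x|$ is never propagated in time.

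The missing idea is that the \emph{normal component} of the separation is conserved over short times. Irregular points lie in $\Gamma$, so $n(x)=(n_q(x),0)$. Use a Lipschitz flow bound $\sup_{t\le T}|X^{x_n}(t)-X^x(t)|\le Y|x_n-x|$ (after localising the coefficients, $Y$ random). Then
\[
\big|\langle X^{x_n}(t)-X^x(t)-(x_n-x),\,n(x)\rangle\big|=\Big|\int_0^t\langle G(p^{x_n}(s))-G(p^x(s)),\,n_q(x)\rangle\,\mathrm{d}s\Big|\le C\,Y\,t\,|x_n-x|\;,
\]
while $|\nabla\mathrm{d}_\Omega(X^x(t))-n(x)|\lesssim\sup_{s\le a}|X^x(s)-x|$. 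A second-order Taylor expansion of $\mathrm{d}_\Omega$ at $X^x(t)$ then gives, for $t\in[0,a]$,
\[
\mathrm{d}_\Omega(X^{x_n}(t))\ge\mathrm{d}_\Omega(X^x(t))+\langle x_n-x,\,n(x)\rangle-D(a)\,|x_n-x|-C'\,|x_n-x|^2\;,
\]
where $C'$ is random but independent of $n$, and $D(a)\to0$ almost surely as $a\to0$.

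On $\{\tau^x>a\}$ you only need $\mathrm{d}_\Omega(X^x(t))\ge0$, not any growth rate. Then~\eqref{eq:cone condition} makes the right-hand side positive for $n$ large and $a$ small, whence $\lim_{a\to0}\limsup_n\mathbb{P}(\tau^{x_n}\le a)\le\mathbb{P}(\tau^x=0)=0$. In particular, the distinction between $\Gamma^+$ and $\Gamma^{0,i}$ plays no role.
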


Uniqueness in Theorem~\ref{thm:edp eq pot} is obtained using standard probabilistic arguments by showing that a classical solution necessarily identifies with the probabilistic representation $h$ defined in~\eqref{eq:def fctn h}.
\begin{proof}[Proof of uniqueness in Theorem~\ref{thm:edp eq pot}]
For $k\geq1$, define the open set
$$V_k:=\big\{x\in\Omega:\,|x|<k,\,\mathrm{d}_\Omega(x)>1/k\big\}\;.$$
Consider the sequence $(\tau_{V_k^c})_{k\geq1}$ of first exit times from $V_k$ of the process~\eqref{eq:sde}. The non-explosion of the process~\eqref{eq:sde} guarantees that $(\tau_{V_k^c})_{k\geq1}$ increases towards its limit $\tau_{\Omega^c}$ when $k\rightarrow\infty$. Consider now $u\in C^2(\Omega)\cap C_b(\Omega\cup\Lambda)$ satisfying~\eqref{eq:edp h}. Applying It\^o's formula to the process
$$t\geq0\mapsto u(X(t\land\tau_{V_k^c}))\,\mathrm{e}^{\int_0^{t\land\tau_{V_k^c}}c(X(s))\mathrm{d}s}\;,$$
we obtain that for all $x\in\R^{2d}$, for all $t\geq0$,
$$\mathbb{E}_x\bigg[u(X(t\land\tau_{V_k^c}))\,\mathrm{e}^{\int_0^{t\land\tau_{V_k^c}}c(X(s))\mathrm{d}s} -\int_0^{t\land\tau_{V_k^c}}\mathrm{e}^{\int_0^sc(X(r))\mathrm{d}r}g(X(s))\mathrm{d}s\bigg]=u(x)\;.$$
Since $u$ and $g$ are bounded functions, the limit $k\rightarrow\infty$ yields
$$\mathbb{E}_x\bigg[u(X(t\land\tau_{\Omega^c}))\,\mathrm{e}^{\int_0^{t\land\tau_{\Omega^c}}c(X(s))\mathrm{d}s}-\int_0^{t\land\tau_{\Omega^c}}\mathrm{e}^{\int_0^sc(X(r))\mathrm{d}r}g(X(s))\mathrm{d}s\bigg]=u(x)\;.$$
Since $u$ and $f$ coincide on $\Lambda$, taking the limit $t\rightarrow\infty$ ensures by the dominated convergence theorem and Theorem~\ref{thm:polarity Gamma+} that
$$u(x)=h(x)\;,$$
which concludes the proof.
\end{proof}

\subsection{Proof of Proposition~\ref{prop:classical sol}}\label{sec:classical sol}

The proof of Proposition~\ref{prop:classical sol} relies on the following lemmas.
\begin{lem}[Exit time]\label{lem:exit time compact set}
For any compact set $K\subset\Omega$, there exist constants $c_1,\,c_2,\,\delta>0$ such that for all $t\in(0,\,\delta)$,
$$\sup_{x\in K}\mathbb{P}_x(\tau_{\Omega^c}\leq t)\leq c_1\mathrm{e}^{-c_2/t}\;.$$
\end{lem}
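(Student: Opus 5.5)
We need $\sup_{x\in K}\mathbb{P}_x(\tau_{\Omega^c}\le t)\le c_1e^{-c_2/t}$ for small $t$. The plan is to reduce this to a small-displacement estimate. Since $K\subset\Omega$ is compact, $r:=\mathrm{dist}(K,\Omega^c)>0$ (if $\Omega^c=\emptyset$ there is nothing to prove). For $x\in K$, exiting $\Omega$ before time $t$ forces $\sup_{s\le t}|X(s)-x|\ge r$, so it suffices to prove
$$\sup_{x\in K}\mathbb{P}_x\Big(\sup_{s\le t}|X(s)-x|\ge r\Big)\le c_1e^{-c_2/t}\,,\qquad t\in(0,\delta)\;.$$

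First we localize. Set $M:=\sup_{x\in K}|x|+r+1$ and let $\sigma:=\inf\{s\ge0:\,|X(s)-X(0)|\ge r\}$. On $[0,\sigma]$ the process stays in $\overline{\mathrm{B}}(0,M)$. There the constants $C_G:=\sup_{\overline{\mathrm{B}}(0,M)}|G|$, $\sup_{\overline{\mathrm{B}}(0,M)}|F|$ and $\Lambda_\Sigma:=\sup_{\overline{\mathrm{B}}(0,M)}\Vert\Sigma\Vert$ are finite by continuity, and they do not depend on $x\in K$.

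On the event $\{\sigma\le t\}$ we have $r=|X(\sigma)-x|\le|q(\sigma)-q|+|p(\sigma)-p|$. The $q$-increment is bounded by $C_Gt$. The $p$-increment is bounded by $C_Ft+|M(\sigma)|$, where $M(s)=\int_0^s\Sigma(X(u))\mathrm{d}B(u)$ as in~\eqref{eq:def M_t}. Choose $\delta$ such that $(C_G+C_F)\delta\le r/2$. Then for $t<\delta$,
$$\{\sigma\le t\}\subset\Big\{\sup_{s\le t}|M(s\wedge\sigma)|\ge r/2\Big\}\;.$$

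Next we apply an exponential martingale inequality. The stopped martingale $N(s):=M(s\wedge\sigma)$ has components whose quadratic variations are bounded by $\Lambda_\Sigma^2\,t$ on $[0,t]$. Fix a component $i$ and a sign. The Bernstein-type bound, obtained from the supermartingale $\exp(\lambda N_i-\tfrac{\lambda^2}{2}\langle N_i\rangle)$ and Doob's maximal inequality optimized in $\lambda$, gives
$$\mathbb{P}\Big(\sup_{s\le t}\pm N_i(s)\ge\tfrac{r}{2\sqrt d}\Big)\le\exp\Big(-\frac{r^2}{8d\Lambda_\Sigma^2t}\Big)\;.$$
A union bound over the $2d$ choices of component and sign then yields the claim with $c_1=2d$ and $c_2=r^2/(8d\Lambda_\Sigma^2)$, uniformly in $x\in K$.

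The only delicate point is uniformity. The coefficients are merely locally bounded and the process could a priori be explosive. The stopping at $\sigma$ handles both issues: all bounds are used only before the process leaves $\mathrm{B}(x,r)$, and Assumption~\ref{ass:non expl} ensures the trajectories are well defined. No hypoellipticity or density estimate is needed.
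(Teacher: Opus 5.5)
Your proof is correct and follows the same scheme as the paper. Both arguments localize so that the coefficients are bounded (the paper uses a compact $K_*$ with $K\subset K_*\subset\Omega$, you use the stopping time $\sigma$ at distance $r$), absorb the drift for $t<\delta$, and bound the martingale part by a Gaussian tail. The only difference is how that tail is obtained: the paper applies a Dubins--Schwarz time change to the $d^2$ scalar integrals $\int_0^{\cdot}\Sigma_{i,k}(X(r))\,\mathrm{d}B_k(r)$, whereas you apply the exponential supermartingale inequality directly to the $d$ components, which avoids the time change and gives explicit constants.
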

\begin{lem}\label{lem:smoothness P_tf}
For any $f\in\mathrm{L}^\infty(\R^{2d})$, the function
\begin{equation}\label{eq:def fctn u}
Pf:(t,\,x)\in\R_+^*\times\R^{2d}\mapsto P_tf(x):=\mathbb{E}_x\bigg[\mathrm{e}^{\int_0^{t}c(X(s))\mathrm{d}s}f(X(t))-\int_0^{t}\mathrm{e}^{\int_0^sc(X(r))\mathrm{d}r}g(X(s))\mathrm{d}s\bigg]
\end{equation}
is in $C^\infty(\R_+^*\times\R^{2d})$. Moreover, for all $t>0$, for all $x\in\R^{2d}$,
\begin{equation}\label{eq:pde semigroup}
\partial_tP_tf(x)=\mathcal{L}P_tf(x)+c(x)P_tf(x)-g(x)\;.
\end{equation}
\end{lem}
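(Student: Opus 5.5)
The statement to be proved is the last one, Lemma~\ref{lem:smoothness P_tf}: smoothness of $(t,x)\mapsto P_tf(x)$ for $f\in\mathrm{L}^\infty$, together with the backward equation~\eqref{eq:pde semigroup}. The plan is to prove the equation first in the weak (distributional) sense on $\R_+^*\times\R^{2d}$ and then upgrade to smoothness by hypoellipticity of $\partial_t-\mathcal{L}-c$.

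\textbf{Localization.} I would first reduce to bounded coefficients. As in the proof of Lemma~\ref{lem:upper bound density}, replace $G,F,\Sigma$ by $G_M,F_M,\Sigma_M$ that coincide with the original ones on $\mathrm{B}(0,M)$. By the uniqueness argument already used there, the two processes coincide up to the exit time of the ball. By non-explosion (Assumption~\ref{ass:non expl}), the difference between $P_tf$ and its localized version $P^M_tf$ tends to $0$ locally uniformly in $(t,x)$ as $M\to\infty$, since $c$, $g$ and $f$ are bounded. Distributional convergence therefore lets us prove the weak equation for the localized semigroup and pass to the limit.

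\textbf{Weak equation.} For $\varphi\in C_c^\infty(\R_+^*\times\R^{2d})$, I want
\[
\int\!\!\int P_tf(x)\,(-\partial_t-\mathcal{L}^*-c)\varphi\,\mathrm{d}x\,\mathrm{d}t=-\int\!\!\int g\varphi\,\mathrm{d}x\,\mathrm{d}t .
\]
By density and dominated convergence (the bound $|P_tf|\le e^{c_\infty t}(\|f\|_\infty+t\|g\|_\infty)$ is uniform), it suffices to treat $f\in C_c^\infty$. For such $f$ I would argue through the Markov property instead of Itô's formula on $P_tf$, since $P_tf$ is not yet known to be $C^2$. Writing $P_{t+s}f=Q_s(P_tf)$, where $Q_s$ is the Feynman--Kac semigroup including the $g$ term, and expanding $Q_s\psi-\psi$ for smooth $\psi$ by Itô's formula, gives $\frac{d}{dt}\langle P_tf,\varphi\rangle=\langle P_tf,(\mathcal{L}^*+c)\varphi\rangle-\langle g,\varphi\rangle$ after duality. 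Concretely, I would pair $\varphi$ against $Q_s$ and use the generator identity $\frac1s(Q_s\varphi'-\varphi')\to(\mathcal{L}+c)\varphi'-g$ for test functions $\varphi'$, with the adjoint handled via the density bound of Lemma~\ref{lem:upper bound density}. Alternatively, I would invoke the Gaussian density bounds~\cite{Menozzi}, which make the semigroup strongly continuous on $\mathrm{L}^1_{loc}$.

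\textbf{Hypoellipticity.} The operator $\partial_t-\mathcal{L}-c$ satisfies H\"ormander's bracket condition. The fields $\Sigma_{\cdot j}\cdot\nabla_p$ span the $p$-directions because $\Sigma$ is invertible. Their brackets with the drift $\langle G(p),\nabla_q\rangle$ produce $DG(p)\Sigma_{\cdot j}\cdot\nabla_q$ modulo $p$-directions, and these span the $q$-directions because $DG$ is invertible. Together with $\partial_t$ this spans $\R^{1+2d}$. H\"ormander's theorem then gives that the distributional solution $P\!f$ is $C^\infty$, since the right-hand side $g$ is smooth, and~\eqref{eq:pde semigroup} then holds pointwise.

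\textbf{Main obstacle.} I expect the main obstacle to be making the weak formulation rigorous for merely bounded $f$ and for unbounded coefficients. The localization step together with the uniform bound on $P_tf$ is what I would rely on to pass all the limits.
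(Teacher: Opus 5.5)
\textbf{The weak-equation step has a genuine gap.} Your overall architecture (localize, prove the equation in the distributional sense, upgrade by H\"ormander) matches the paper's, and your bracket computation for $\partial_t-\mathcal{L}-c$ is fine. The problem is the weak-equation step, which is where all the real work lies.

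To produce $\langle P_tf,(\mathcal{L}^*+c)\varphi\rangle$ you must move $\mathcal{L}$ off $\psi=P_tf$, which is only known to be bounded. The generator identity $\frac1s(Q_s\varphi'-\varphi')\to(\mathcal{L}+c)\varphi'-g$ for smooth $\varphi'$ goes the wrong way. It only shows that $\mu_s:=Q_s^*(\varphi\,\mathrm{d}x)$ solves the forward equation when tested against \emph{smooth} functions. Pairing $\frac1s(\mu_s-\varphi\,\mathrm{d}x)$ with the non-smooth $P_tf$ needs something like $\|Q_s^*\varphi-\varphi\|_{\mathrm{L}^1}=O(s)$, i.e.\ that $\varphi$ lies in the domain of the $\mathrm{L}^1$-generator with image $\mathcal{L}^*\varphi$. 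The natural way to prove that is precisely $x$-regularity of $Q_r\psi$ for smooth $\psi$, which is what you were trying to avoid. The upper bound of Lemma~\ref{lem:upper bound density} (or strong continuity on $\mathrm{L}^1_{loc}$) gives existence and boundedness of a density but no such control. This obstruction is present even for $f\in C_c^\infty$ and bounded localized coefficients, so it is not the ``bounded $f$ / unbounded coefficients'' difficulty you flag.

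\textbf{How the paper fills it, and an alternative repair.} The paper supplies the missing regularity by elliptic regularization. It adds $\sqrt{2\alpha}\,\mathrm{d}\widetilde B$ to the $q$-equation, so that, with localized coefficients, $c_M,g_M$ and $f_n\in C_c^\infty$, classical theory \cite{F} gives $C^{1,2}$ solutions $u_{M,\alpha,n}$ of $\partial_tu=\mathcal{L}_{\alpha,M}u+c_Mu-g_M$. Their weak formulation is then just integration by parts. It then lets $\alpha\to0$ (pathwise $\mathrm{L}^2$ convergence $X^{\alpha,M}\to X^M$ via Gr\"onwall), then $n\to\infty$ and $M\to\infty$.

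An equally valid repair inside your framework is to use the It\^o route you rejected. For $f\in C_c^\infty$ and coefficients with bounded derivatives, $x\mapsto P^M_tf(x)$ is $C^2$ by differentiability of the stochastic flow, with no ellipticity needed (compare the paper's use of \cite{Bismut}). So the backward equation holds classically before any limit is taken.

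\textbf{A smaller point on reducing to smooth $f$.} Your reduction from $f\in\mathrm{L}^\infty$ to $f\in C_c^\infty$ cannot be a norm-density argument. You need $f_n\to f$ almost everywhere with a uniform bound, together with absolute continuity of the law of $X(t)$ (from \cite{Menozzi}), which is exactly how the paper passes $n\to\infty$.
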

\begin{proof}[Proof of Lemma~\ref{lem:exit time compact set}]
Given that $K$ is a compact subset of $\Omega$, there exists a compact set $K_*$ such that
$$K\subset K_*\subset\Omega,\,\quad\delta:=\mathrm{dist}(K,\,K_*^c)>0\;.$$
As a result, for any $x\in K$,
\begin{align}
\mathbb{P}_x(\tau_{\Omega^c}\leq t)&\leq\mathbb{P}_x(\tau_{K_*^c}\leq t)\nonumber\\
&\leq\mathbb{P}_x\bigg(\sup_{s\in[0,\,t\land\tau_{K_*^c}]}|X(s)-x|>\delta\bigg)\label{eq:ineq exit time compact}\;.
\end{align}
Given that the coefficients $G,\,F$ are bounded on the compact set $K_*$, there exists a constant $C_1>0$ such that, $\mathbb{P}_x$ almost-surely, for all $s\in[0,\,t\land\tau_{K_*^c}]$,
\begin{align*}
|X(s)-x|&\leq C_1t+\sup_{s\in[0,\,t\land\tau_{K_*^c}]}\bigg|\int_0^{s}\Sigma(X(r))\mathrm{d}B(r)\bigg|\\
&\leq C_1t+\sum_{1\leq i,\,k\leq d}\,\sup_{s\in[0,\,t\land\tau_{K_*^c}]}\bigg|\int_0^{s}\Sigma_{i,\,k}(X(r))\mathrm{d}B_k(r)\bigg|\;.
\end{align*}
In particular, on the event $\{\tau_{K_*^c}\leq t\}$, for $t\geq0$ small enough, there exist $i,\,k\in\llbracket1,\,d\rrbracket$ such that
$$\sup_{s\in[0,\,t\land\tau_{K_*^c}]}\bigg|\int_0^{s}\Sigma_{i,\,k}(X(r))\mathrm{d}B_k(r)\bigg|\geq\frac{\delta}{2d^2}\;.$$
By the Dubins-Schwarz theorem, there exists a Brownian motion $(W^{i,\,k}(t))_{t\geq0}$ such that the inequality above implies that
$$\frac{\delta}{2d^2}\leq\sup_{s\in\big[0,\,\int_0^{t\land\tau_{K_*^c}}\Sigma_{i,\,k}(X(r))^2\mathrm{d}r\big]}|W^{i,\,k}(s)|\leq\sup_{s\in[0,\,C_2t]}|W^{i,\,k}(s)|$$
for some constant $C_2>0$ independent of $i,\,k\in\llbracket1,\,d\rrbracket$ given that $\Sigma$ is uniformly bounded on $K_*$. All in all, by~\eqref{eq:ineq exit time compact},
\begin{align*}
\mathbb{P}_x(\tau_{\Omega^c}\leq t)&\leq\sum_{1\leq i,\,k\leq d}\mathbb{P}\bigg(\sup_{s\in[0,\,C_2t]}|W^{i,\,k}(s)|>\delta/2\bigg)\\
&\leq2d^2\mathrm{e}^{-\delta^2/(8C_2t)} 
\end{align*}
which concludes the proof.
\end{proof}
\begin{proof}[Proof of Lemma~\ref{lem:smoothness P_tf}]
The idea of the proof consists in approaching $Pf$ by smooth solutions of a perturbation of the operator $\mathcal{L}$, which allows us to deduce that $Pf$ is a distributional solution on $\R_+^*\times\R^{2d}$ of~\eqref{eq:pde semigroup}. The hypoellipticity property of the operator $\mathcal{L}$ shown in Lemma~\ref{lem: hypoellip} then concludes the proof.

Following the proof of Lemma~\ref{lem:upper bound density}, let us define for any $M>0$ $C^\infty$ functions $G_M,\,F_M,\,\Sigma_M$ coinciding with $G,\,F,\,\Sigma$ on the ball $\mathrm{B}(0,\,M)$, which are bounded along with their successive derivatives on $\R^{2d}$ and such that $\Sigma_M$ is invertible on $\R^{2d}$. The solution to~\eqref{eq:sde} with the coefficients $G_M,\,F_M,\,\Sigma_M$ is denoted by $(X^M(t))_{t\geq0}$. For $\alpha>0$, let us also define the process $(X^{\alpha,\,M}(t)=(q^{\alpha,\,M}(t),\,p^{\alpha,\,M}(t)))_{t\geq0}$ solution to
\begin{equation}\label{eq:sde perturbed}
\left\{ \begin{aligned} & \mathrm{d}q^{\alpha,\,M}(t)=G_M(p^{\alpha,\,M}(t))\mathrm{d}t+\sqrt{2\alpha}\mathrm{d}\widetilde{B}(t)\;,\\
 & \mathrm{d}p^{\alpha,\,M}(t)=F_M(q^{\alpha,\,M}(t),\,p^{\alpha,\,M}(t))\mathrm{d}t+\Sigma_M(q^{\alpha,\,M}(t),\,p^{\alpha,\,M}(t))\mathrm{d}B(t)\;,
\end{aligned}
\right.
\end{equation}
where $(\widetilde{B}(t))_{t\geq0}$ is a Brownian motion in $\R^d$ independent of $(B(t))_{t\geq0}$.

\textbf{Step 1}: Let us first prove that for all $t>0$, for all $x\in\R^{2d}$,
\begin{equation}\label{eq:cv sup L1 X_alpha to X}
\mathbb{E}_x\bigg[\sup_{s\in[0,\,t]}\big|X^{\alpha,\,M}(s)-X^M(s)\big|\bigg]\underset{\alpha\rightarrow0}{\longrightarrow}0\;.
\end{equation}
Given that the coefficients $G_M,\,F_M$ are globally Lipschitz continuous, there exists a constant $C_1>0$ such that, $\mathbb{P}_x$ almost-surely, for all $s\in[0,\,t]$,
\begin{align*}
\big|X^{\alpha,\,M}(s)-X^M(s)\big|&\leq C_1\int_0^{s}\big|X^{\alpha,\,M}(r)-X^M(r)\big|\mathrm{d}r+\bigg|\int_0^{s}\big[\Sigma_M(X^{\alpha,\,M}(r))-\Sigma_M(X^M(r))\big]\mathrm{d}B(r)\bigg|\\
&+\sqrt{2\alpha}|\widetilde{B}(s)|\;.
\end{align*}
Therefore, by the Cauchy-Schwarz inequality, for all $s\in[0,\,t]$,
\begin{align*}
\big|X^{\alpha,\,M}(s)-X^M(s)\big|^2&\leq 3tC_1^2\int_0^{s}\big|X^{\alpha,\,M}(r)-X^M(r)\big|^2\mathrm{d}r\\
&+3\,\sup_{r\in[0,\,s]}\bigg|\int_0^{r}\big[\Sigma_M(X^{\alpha,\,M}(u))-\Sigma_M(X^M(u))\big]\mathrm{d}B(u)\bigg|^2+6\alpha\,\sup_{r\in[0,\,t]}|\widetilde{B}(r)|^2\;.
\end{align*}
Furthermore, by Doob's inequality,
\begin{align*}
&\mathbb{E}_x\bigg[\sup_{r\in[0,\,s]}\bigg|\int_0^{r}\big[\Sigma_M(X^{\alpha,\,M}(u))-\Sigma_M(X^M(u))\big]\mathrm{d}B(u)\bigg|^2\bigg]\\
&\leq4\mathbb{E}_x\bigg[\int_0^{s}\mathrm{Tr}\big(\big(\Sigma_M(X^{\alpha,\,M}(r))-\Sigma_M(X^M(r))\big)\big(\Sigma^\dagger_M(X^{\alpha,\,M}(r))-\Sigma^\dagger_M(X^M(r))\big)\big)\mathrm{d}r\bigg]\;.
\end{align*}
As a result, given that $\Sigma_M$ is globally Lipschitz-continuous, we deduce the existence of a constant $C_2>0$ such that for all $s\in[0,\,t]$,
$$\mathbb{E}_x\bigg[\sup_{r\in[0,\,s]}\bigg|\int_0^{r}\big[\Sigma_M(X^{\alpha,\,M}(u))-\Sigma_M(X^M(u))\big]\mathrm{d}B(u)\bigg|^2\bigg]\leq C_2\mathbb{E}_x\bigg[\int_0^{s}\big|X^{\alpha,\,M}(r)-X^M(r)\big|^2\mathrm{d}r\bigg]\;.$$
It follows from the inequalities above that for all $s\in[0,\,t]$,
\begin{align*}
\mathbb{E}_x\bigg[\sup_{s\in[0,\,t]}\big|X^{\alpha,\,M}(s)-X^M(s)\big|^2\bigg]\leq&(3tC_1^2+3C_2)\int_0^{t}\mathbb{E}_x\bigg[\sup_{r\in[0,\,s]}\big|X^{\alpha,\,M}(r)-X^M(r)\big|^2\bigg]\mathrm{d}s\\
&+6\alpha\,\mathbb{E}\bigg[\sup_{s\in[0,\,t]}\big|\widetilde{B}(s)\big|^2\bigg]\;.
\end{align*}
By Gr\"onwall's inequality, we deduce the existence of a constant $C_3>0$ independent of $\alpha>0$ such that
$$\mathbb{E}_x\bigg[\sup_{s\in[0,\,t]}\big|X^{\alpha,\,M}(s)-X^M(s)\big|^2\bigg]\leq C_3\alpha\underset{\alpha\rightarrow0}{\longrightarrow}0\;.$$

For $M>0$, define now functions $c_M,\,g_M$ in $C^\infty_c(\R^{2d})$ which coincide with $c,\,g\in C_b^\infty(\R^{2d})$ on the open ball $\mathrm{B}(0,\,M)$ and such that
\begin{equation}\label{eq:unif boundedness c_M g_M}
\sup_{M>0}\big(||c_M||_\infty+||g_M||_\infty\big)<\infty\;.
\end{equation}
Let also $f\in\mathrm{L}^\infty(\R^{2d})$ and let $(f_n)_{n\geq1}$ be a uniformly bounded sequence in $C_c^\infty(\R^{2d})$ converging almost-everywhere to $f$. For $\alpha>0$ and $n\geq1$, define
$$u_{M,\,\alpha,\,n}:(t,\,x)\in\R_+^*\times\R^{2d}\mapsto\mathbb{E}_x\bigg[\mathrm{e}^{\int_0^{t}c_M(X^{\alpha,\,M}(s))\mathrm{d}s}f_n(X^{\alpha,\,M}(t))-\int_0^{t}\mathrm{e}^{\int_0^sc_M(X^{\alpha,\,M}(r))\mathrm{d}r}g_M(X^{\alpha,\,M}(s))\mathrm{d}s\bigg]\;.$$
\textbf{Step 2:} Let us now prove that for all $x\in\R^{2d}$, for all $t>0$,
\begin{equation}\label{eq:cv u_M alpha to u}
\lim_{M\rightarrow\infty}\lim_{n\rightarrow\infty}\lim_{\alpha\rightarrow0}u_{M,\,\alpha,\,n}(t,\,x)=P_tf(x)\;.
\end{equation}
By~\eqref{eq:cv sup L1 X_alpha to X}, we deduce from the dominated convergence theorem that for all $(t,\,x)\in\R_+^*\times\R^{2d}$, for all $M>0$, for all $n\geq1$,
$$u_{M,\,\alpha,\,n}(t,\,x)\underset{\alpha\rightarrow0}{\longrightarrow}u_{M,\,n}(t,\,x):=\mathbb{E}_x\bigg[\mathrm{e}^{\int_0^{t}c_M(X^{M}(s))\mathrm{d}s}f_n(X^{M}(t))-\int_0^{t}\mathrm{e}^{\int_0^sc_M(X^{M}(r))\mathrm{d}r}g_M(X^{M}(s))\mathrm{d}s\bigg]\;.$$
Since the vector $X^M(t)$ admits a density with respect to the Lebesgue measure by~\cite[Theorem 2.1]{Menozzi}, we deduce from the convergence of $f_n$ that
$$u_{M,\,n}(t,\,x)\underset{n\rightarrow\infty}{\longrightarrow}u_{M}(t,\,x):=\mathbb{E}_x\bigg[\mathrm{e}^{\int_0^{t}c_M(X^{M}(s))\mathrm{d}s}f(X^{M}(t))-\int_0^{t}\mathrm{e}^{\int_0^sc_M(X^{M}(r))\mathrm{d}r}g_M(X^{M}(s))\mathrm{d}s\bigg]\;.$$
In addition, since the coefficients $F_M,\,G_M,\,\Sigma_M,\,c_M,\,g_M$ coincide on the ball $\mathrm{B}(0,\,M)$, the process $X$ and $X^M$ coincide until the exit time $\tau_{\mathrm{B}(0,\,M)^c}$. Therefore,
\begin{align*}
u_M(t,\,x)&=\mathbb{E}_x\bigg[\mathbf{1}_{\tau_{\mathrm{B}(0,\,M)^c}>t}\bigg(\mathrm{e}^{\int_0^{t}c(X(s))\mathrm{d}s}f(X(t))-\int_0^{t}\mathrm{e}^{\int_0^sc(X(r))\mathrm{d}r}g(X(s))\mathrm{d}s\bigg)\bigg]\\
&+\mathbb{E}_x\bigg[\mathbf{1}_{\tau_{\mathrm{B}(0,\,M)^c}\leq t}\bigg(\mathrm{e}^{\int_0^{t}c_M(X^{M}(s))\mathrm{d}s}f(X^{M}(t))-\int_0^{t}\mathrm{e}^{\int_0^sc_M(X^{M}(r))\mathrm{d}r}g_M(X^{M}(s))\mathrm{d}s\bigg)\bigg]\;.
\end{align*}
As a result, given the boundedness of the functions $f,\,g,\,c$ and using~\eqref{eq:unif boundedness c_M g_M}, there exists a constant $C>0$ independent of $M>0$ such that for all $x\in\R^{2d}$, for all $t>0$,
$$|u_M(t,\,x)-P_tf(x)|\leq C\mathrm{e}^{Ct}\mathbb{P}_x(\tau_{\mathrm{B}(0,\,M)^c}\leq t)\underset{M\rightarrow\infty}{\longrightarrow}0\;,$$
hence~\eqref{eq:cv u_M alpha to u}.

\textbf{Step 3:} Let us now deduce that $Pf$ is a distributional solution in $\R_+^*\times\R^{2d}$ of~\eqref{eq:pde semigroup}. The hypoellipticity of the operator $\mathcal{L}$ shown in Lemma~\ref{lem: hypoellip} would then ensure that $Pf\in C^\infty(\R_+^*\times\R^{2d})$ and satisfies~\eqref{eq:pde semigroup} in the classical sense. 

By~\cite[Theorem 5.3]{F}, $u_{M,\,n,\,\alpha}$ is a classical solution in $\R_+^*\times\R^{2d}$ of 
$$\partial_tu_{M,\,n,\,\alpha}=\mathcal{L}_{\alpha,\,M}u_{M,\,n,\,\alpha}+c_Mu_{M,\,n,\,\alpha}-g_M$$
where $\mathcal{L}_{\alpha,\,M}$ is the infinitesimal generator of the process~\eqref{eq:sde perturbed}. In particular, $u_{M,\,n,\,\alpha}$ is also a solution in the sense of distributions. As a result, taking an arbitrary $\phi\in C_c^\infty(\R_+^*\times\R^{2d})$ ensures that for $\alpha,\,M>0$ and $n\geq1$,
$$\iint_{\R_+^*\times\R^{2d}}u_{M,\,n,\,\alpha}(t,\,x)\big(\partial_t\phi(t,\,x)+\mathcal{L}_{\alpha,\,M}^\dagger\phi(t,\,x)+c_M(x)\phi(t,\,x)\big)\mathrm{d}t\mathrm{d}x=\iint_{\R_+^*\times\R^{2d}}g_M(x)\phi(t,\,x)\mathrm{d}t\mathrm{d}x$$
where $\mathcal{L}_{\alpha,\,M}^\dagger$ is the adjoint operator of $\mathcal{L}_{\alpha,\,M}$ in $\mathrm{L}^2(\mathrm{d}x)$. Take $M>0$ large enough such that the support of $\phi$ is contained in $\R_+^*\times\mathrm{B}(0,\,M)$, then
$$\iint_{\R_+^*\times\R^{2d}}u_{M,\,n,\,\alpha}(t,\,x)\big(\partial_t\phi(t,\,x)+\mathcal{L}_{\alpha}^\dagger\phi(t,\,x)+c(x)\phi(t,\,x)\big)\mathrm{d}t\mathrm{d}x=\iint_{\R_+^*\times\R^{2d}}g(x)\phi(t,\,x)\mathrm{d}t\mathrm{d}x$$
where $\mathcal{L}_\alpha$ is the infinitesimal generator of the process~\eqref{eq:sde perturbed} where the coefficients $G_M,\,F_M,\,\Sigma_M$ are replaced by $G,\,F,\,\Sigma$. Taking the successive limits $\alpha\rightarrow0$, $n\rightarrow\infty$ and $M\rightarrow\infty$ ensures by~\eqref{eq:cv u_M alpha to u} that $Pf$ is a distributional solution of~\eqref{eq:pde semigroup} on $\R_+^*\times\R^{2d}$, which concludes the proof.
\end{proof}

\begin{proof}[Proof of Proposition~\ref{prop:classical sol}]

\textbf{Step 1:} Let us prove that for any compact set $K\subset\Omega$, there exist constants $c_1,\,c_2>0$ such that for all $x\in K$, for $t\geq0$ sufficiently small,
\begin{equation}\label{eq:control variation P_th}
\bigg|\frac{P_th(x)-h(x)}{t}\bigg|\leq\frac{c_1\mathrm{e}^{-c_2/t}}{t}\;.
\end{equation}
For $x\in\Omega$, one has that for all $t\geq0$,
\begin{align*}
h(x)&=\mathbb{E}_x\bigg[\mathbf{1}_{\tau_{\Omega^c}>t}\bigg(\mathrm{e}^{\int_0^{\tau_{\Omega^c}}c(X(s))\mathrm{d}s}f(X(\tau_{\Omega^c}))-\int_0^{\tau_{\Omega^c}}\mathrm{e}^{\int_0^sc(X(r))\mathrm{d}r}g(X(s))\mathrm{d}s\bigg)\bigg]\\
&+\mathbb{E}_x\bigg[\mathbf{1}_{\tau_{\Omega^c}\leq t}\bigg(\mathrm{e}^{\int_0^{\tau_{\Omega^c}}c(X(s))\mathrm{d}s}f(X(\tau_{\Omega^c}))-\int_0^{\tau_{\Omega^c}}\mathrm{e}^{\int_0^sc(X(r))\mathrm{d}r}g(X(s))\mathrm{d}s\bigg)\bigg]
\end{align*}
Applying the Markov property at time $t\geq0$ yields the equality
\begin{align*}
&\mathbb{E}_x\bigg[\mathbf{1}_{\tau_{\Omega^c}>t}\bigg(\mathrm{e}^{\int_0^{\tau_{\Omega^c}}c(X(s))\mathrm{d}s}f(X(\tau_{\Omega^c}))-\int_0^{\tau_{\Omega^c}}\mathrm{e}^{\int_0^sc(X(r))\mathrm{d}r}g(X(s))\mathrm{d}s\bigg)\bigg]\\
&=\mathbb{E}_x\bigg[\mathbf{1}_{\tau_{\Omega^c}>t}\bigg(\mathrm{e}^{\int_0^{t}c(X(s))\mathrm{d}s}h(X(t))-\int_0^{t}\mathrm{e}^{\int_0^sc(X(r))\mathrm{d}r}g(X(s))\mathrm{d}s\bigg)\bigg]\\
&=P_th(x)-\mathbb{E}_x\bigg[\mathbf{1}_{\tau_{\Omega^c}\leq t}\bigg(\mathrm{e}^{\int_0^{t}c(X(s))\mathrm{d}s}h(X(t))-\int_0^{t}\mathrm{e}^{\int_0^sc(X(r))\mathrm{d}r}g(X(s))\mathrm{d}s\bigg)\bigg]\;.
\end{align*}
As a result, since the function $h$ is bounded by Assumption~\ref{ass:Omega} and the functions $f,\,g,\,c$ are bounded as well, there exist constants $C_1,\,C_2>0$ such that for all $x\in\Omega$, for all $t\geq0$,
\begin{align*}
|h(x)-P_th(x)|\leq C_1\mathbb{P}_x(\tau_{\Omega^c}\leq t)\mathrm{e}^{C_2t}
\end{align*}
The inequality~\eqref{eq:control variation P_th} is then an immediate consequence of Lemma~\ref{lem:exit time compact set}.

\textbf{Step 2:} When the time $t\geq0$ is small, the process $(X(s))_{s\in[0,\,t]}$ remains in a neighborhood of $X(0)$ and only depends on the values of the coefficients in~\eqref{eq:sde} in this neighborhood. Therefore, under this condition we can assume that the coefficients of the process $(X(s))_{s\in[0,\,t]}$ are $C^\infty$, bounded and globally Lipschitz continuous which ensures that the transition density of $(X(s))_{s\in[0,\,t]}$ admits the Gaussian upper-bound given in~\eqref{eq:gaussian upper bound} by~\cite[Theorem 2.1]{Menozzi}. Using a standard change of variable, we easily deduce that for any compact set $K\subset\Omega$,
\begin{equation}\label{eq:continuity P_th at zero}
\int_K|P_rh(x)-h(x)|\mathrm{d}x\underset{r\rightarrow0}{\longrightarrow}0\;.
\end{equation}
Let us now take $\phi\in C_c^\infty(\Omega)$. From~\eqref{eq:control variation P_th} we deduce that
$$\lim_{t\rightarrow0}\lim_{r\rightarrow0}\int_{\Omega}\frac{(P_th(x)-P_rh(x))}{t}\phi(x)\mathrm{d}x=0\;.$$
In addition, let us now prove that
\begin{align}
&\lim_{t\rightarrow0}\lim_{r\rightarrow0}\int_{\Omega}\frac{(P_th(x)-P_rh(x))}{t}\phi(x)\mathrm{d}x\nonumber\\
&=\int_{\Omega}h(x)\big(\mathcal{L}^\dagger\phi(x)+c(x)\phi(x)\big)\mathrm{d}x-\int_{\Omega}g(x)\phi(x)\mathrm{d}x\label{eq:h distri sol}\;.
\end{align}
Combining both limits we shall deduce that $h$ is a distributional solution of $\mathcal{L}h+ch=g$ on $\Omega$ and the hypoellipticity of the operator $\mathcal{L}$ concludes the proof. Therefore, let us now prove~\eqref{eq:h distri sol}.

For $0<r<t$, by Lemma~\ref{lem:smoothness P_tf}, we have that
$$\int_{\Omega}\frac{(P_th(x)-P_rh(x))}{t}\phi(x)\mathrm{d}x=\int_{\Omega}\frac{1}{t}\int_r^t(\mathcal{L}P_sh(x)+c(x)P_sh(x)-g(x))\mathrm{d}s\phi(x)\mathrm{d}x\;.$$
In addition, since $\phi\in C_c^\infty(\Omega)$, by the Fubini permutation theorem,
\begin{align*}
&\int_{\Omega}\frac{1}{t}\int_r^t(\mathcal{L}P_sh(x)+c(x)P_sh(x)-g(x))\mathrm{d}s\phi(x)\mathrm{d}x\\
&=\frac{1}{t}\iint_{\Omega\times(r,\,t)}(\mathcal{L}P_sh(x)+c(x)P_sh(x)-g(x))\phi(x)\mathrm{d}s\mathrm{d}x\\
&=\frac{1}{t}\iint_{\Omega\times(r,\,t)}P_sh(x)(\mathcal{L}^\dagger\phi(x)+c(x)\phi(x))\mathrm{d}s\mathrm{d}x-\frac{(t-r)}{t}\int_{\Omega}g(x)\phi(x)\mathrm{d}x
\end{align*}
by an integration-by-parts. Taking the limit $r\rightarrow0$, the quantity above is equal to
\begin{align*}
&\int_\Omega\frac{1}{t}\int_{0}^tP_sh(x)\mathrm{d}s(\mathcal{L}^\dagger\phi(x)+c(x)\phi(x))\mathrm{d}x-\int_{\Omega}g(x)\phi(x)\mathrm{d}x\\
&=\int_\Omega\int_{0}^1P_{st}h(x)\mathrm{d}s(\mathcal{L}^\dagger\phi(x)+c(x)\phi(x))\mathrm{d}x-\int_{\Omega}g(x)\phi(x)\mathrm{d}x\;.
\end{align*}
Combining with~\eqref{eq:continuity P_th at zero}, we deduce~\eqref{eq:h distri sol} by taking the limit $t\rightarrow0$ in the equality above.
\end{proof}

\subsection{Proof of Proposition~\ref{prop:boundary continuity h}}\label{sec:continuity h}

Let us introduce additional notations in this section. For $x\in\R^{2d}$, let $(X^x(t))_{t\geq0}$ be the process~\eqref{eq:sde} satisfying $X(0)=x$ and let $\tau_A^x$ be its first hitting time of a set $A\subset\R^{2d}$.  

 The main difficulty in the proof of Proposition~\ref{prop:boundary continuity h} appears when considering the continuity of $h$ at irregular boundary points. Let $(x_n)_{n\geq0}$ be a converging sequence to $x\in\mathcal{I}$. If $(x_n)_{n\geq0}$ remains too close to regular boundary points, the process $(X^{x_n}(t))_{t\geq0}$ may exit $\Omega$ for sufficiently small times and the convergence $\tau^{x_n}_{\Omega^c}$ to $\tau^x_{\Omega^c}$ fails to happen. This critical situation is avoided when $x\in\mathrm{int}_{\partial\Omega}(\mathcal{I})$ or when the cone condition~\eqref{eq:cone condition} is satisfied. Regarding the continuity of $h$ at regular boundary points, standard arguments involving trajectorial convergence of the process allow to conclude. The proof of Proposition~\ref{prop:boundary continuity h} relies therefore on intermediary results stated below.

 For $M>0$, let us define functions $G_M,\,F_M,\,\Sigma_M$ in $C_c^\infty(\R^{2d})$ which coincide with $G,\,F,\,\Sigma$ on the open ball $\mathrm{B}(0,\,M)$. Denote also by $(X^{M,\,x}(t))_{t\geq0}$ the solution to~\eqref{eq:sde} with coefficients $G_M,\,F_M,\,\Sigma_M$ such that $X^{M,\,x}(0)=x$. Its first hitting time of a set $A$ is denoted by $\tau^{M,\,x}_A$. Let us now consider a sequence $(x_n)_{n\geq0}$ of points in $\Omega$ converging to $x\in\overline{\Omega}$.
\begin{lem}[Exit time]\label{lem:rapid exit time} Assume that $x\in\mathrm{int}_{\partial\Omega}(\mathcal{I})$ or that $x\in\mathcal{I}$ satisfies~\eqref{eq:cone condition}. Then,
$$\limsup_{n\rightarrow\infty}\mathbb{P}\big(\tau^{M,\,x_n}_{\Omega^c}\leq a)\underset{a\rightarrow0}{\longrightarrow}0\;.$$
\end{lem}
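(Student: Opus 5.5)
We aim to show that, uniformly along the sequence, the process started at $x_n$ cannot exit $\Omega$ in very short time. The plan is to reduce to a statement about the limit point $x$ plus a small-time continuity argument. Since the coefficients $G_M,F_M,\Sigma_M$ are compactly supported and smooth, the flows $y\mapsto X^{M,y}$ depend continuously on the initial condition: $\mathbb{E}[\sup_{s\le a}|X^{M,x_n}(s)-X^{M,x}(s)|]\to0$ (Gr\"onwall/Doob, as in Step 1 of the proof of Lemma~\ref{lem:smoothness P_tf}). Since $x\in\mathcal{I}$, we have $\mathbb{P}(\tau^{M,x}_{\Omega^c}\le a)\to0$ as $a\to0$ (for $M$ large, $X^M$ and $X$ agree near $x$). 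The difficulty is that closeness of paths does not transfer exit times: $X^{M,x}$ stays in $\Omega$ but may approach $\partial\Omega$ in a grazing manner, while $X^{M,x_n}$ starting slightly off may exit through nearby regular points.

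\textbf{Case $x\in\mathrm{int}_{\partial\Omega}\mathcal{I}$.} Choose a neighborhood $U$ of $x$ with $U\cap\partial\Omega\subset\mathcal{I}$. By the small-time analysis in the proof of Theorem~\ref{thm:reg points}, irregular points are in $\Gamma^+\cup\Gamma^{0,i}$, so $|n_p|=0$ on $U\cap\partial\Omega$. Hence, near $x$, $\partial\Omega$ is locally cylindrical, $\nabla^2_{p,p}\mathrm{d}_\Omega\equiv0$, and $\Gamma^{0,i}$ cannot occur on an open set. Thus $U\cap\partial\Omega\subset\Gamma^+$ with $\xi\ge\xi_0>0$ after shrinking $U$. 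Then I would compute $\mathrm{d}_\Omega(X^{M,y}(t))$ by It\^o's formula for $y\in U\cap\Omega$ and $t\le\tau_{U^c}$. Since $\nabla_p\mathrm{d}_\Omega=0$ and $\nabla^2_{p,p}\mathrm{d}_\Omega=0$ on the cylinder, there is no martingale term and $\frac{\mathrm d}{\mathrm dt}\mathrm{d}_\Omega(X(t))=\langle G(p),\nabla_q\mathrm{d}_\Omega\rangle\ge\xi_0/2>0$ near the boundary. So the process cannot reach $\partial\Omega$ before leaving $U$. Consequently $\mathbb{P}(\tau^{M,x_n}_{\Omega^c}\le a)\le\mathbb{P}(\sup_{s\le a}|X^{M,x_n}(s)-x_n|\ge r)$, which is bounded uniformly in $n$ by the exponential estimate of Lemma~\ref{lem:exit time compact set} and tends to $0$ as $a\to0$.

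\textbf{Cone case.} Let $\delta_n=|x_n-x|$, with $\mathrm{d}_\Omega(x_n)\ge c\,\delta_n$ by \eqref{eq:cone condition}. For times $t\le\eta\delta_n^{2}$ (or a suitable power), the increments $|X^{M,x_n}(t)-x_n|$ are $O(\sqrt t\log\log)$ with high probability, hence smaller than $c\delta_n/2$, so no exit occurs before $\eta\delta_n^2$. After that, I would couple with the path from $x$ and use the quantitative irregularity from the proof of Theorem~\ref{thm:reg points}. For $x\in\Gamma^+$, $\mathrm{d}_\Omega(X^x(t))\ge\xi(x)t/2$; for $x\in\Gamma^{0,i}$, $\mathrm{d}_\Omega(X^x(t))\ge t/\log(1/t)^{2+\epsilon}$ for $t\le a$, with probability close to one. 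Pathwise differences satisfy $|X^{M,x_n}(t)-X^{M,x}(t)|\lesssim\delta_n$ plus a martingale term of size $\delta_n\sqrt t$. Thus, on $[\eta\delta_n^2,a]$, the lower bound $t/\log(1/t)^{3}$ dominates once $\delta_n$ is small relative to $t$. The intermediate window $t\in[\eta\delta_n^2,\,C\delta_n]$ is handled by combining the initial distance $c\delta_n$ with a first-order Taylor expansion of $\mathrm{d}_\Omega$ at $x_n$: the drift gives $+\xi t$ or the quadratic nonnegative term, while the martingale part is $O(\delta_n^{1/2}\sqrt t)$-small since $|n_p|\lesssim\delta_n$ near $x$.

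\textbf{Main obstacle.} The hardest step is matching these regimes uniformly, especially for $\Gamma^{0,i}$, where the lower envelope is only logarithmically better than $t$ and the term $\langle n_p(x_n),M(t)\rangle$ has size $\delta_n\sqrt t$. I would first try to control it by the LIL uniformly over $n$, using the Gaussian scaling of $\Sigma(x)B$.
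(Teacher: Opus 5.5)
\textbf{Interior case.} Your argument for $x\in\mathrm{int}_{\partial\Omega}\mathcal{I}$ is correct, and it differs from the paper's. You show that such points lie in a locally cylindrical piece of $\Gamma^+$ on which $\nabla_p\mathrm{d}_\Omega\equiv0$ and $\langle G(p),\nabla_q\mathrm{d}_\Omega\rangle\ge\xi_0>0$. Hence $\mathrm{d}_\Omega$ is nondecreasing along paths near $x$. The paper instead uses that exit points lie in $\Lambda\subset\mathcal{R}$ (Theorem~\ref{thm:polarity Gamma+}) and that $\mathrm{dist}(x,\mathcal{R})>0$, so exiting before time $a$ forces a macroscopic displacement. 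Note that you need the displacement estimate from the proof of Lemma~\ref{lem:exit time compact set}, not its statement.

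\textbf{Cone case: the gap.} For $x\in\Gamma^{0,i}$, your late regime rests on
\[
\mathrm{d}_\Omega(X^{M,x_n}(t))\ge\mathrm{d}_\Omega(X^{M,x}(t))-|X^{M,x_n}(t)-X^{M,x}(t)|\ge t/\log(1/t)^{2+\epsilon}-Y\delta_n\;.
\]
This is positive only for $t\gtrsim Y\delta_n\log(1/\delta_n)^{2+\epsilon}$. Your early regime (Taylor expansion at $x_n$) is only asserted on $[0,C\delta_n]$, so the window in between is uncovered. Closing it means pushing the expansion at $x_n$ beyond $C\delta_n$, with probability bounds uniform in $n$. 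The $t^{3/2}$-order terms (the cubic term and $\int_0^t\langle\theta,M(s)\rangle\,\mathrm{d}s$) are still $o(\delta_n)$ there, but you would have to prove that along the whole sequence. The LIL-type almost-sure bounds you propose for the paths from $x_n$ have random thresholds that are not uniform in $n$. You flag this matching as the main obstacle and leave it open, so the cone case is not proved.

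\textbf{The missing idea.} The two coupled paths are driven by the same noise, which acts only on $p$, while $n(x)=(n_q(x),0)$ because $x\in\mathcal{I}\subset\Gamma$. Hence the normal component of their difference has no martingale part:
\[
\langle X^{M,x_n}(t)-X^{M,x}(t)-(x_n-x),\,n(x)\rangle=\int_0^t\langle G_M(p^{M,x_n}(s))-G_M(p^{M,x}(s)),\,n_q(x)\rangle\,\mathrm{d}s=O(aY|x_n-x|)\;,
\]
where $Y$ is the flow constant from~\eqref{eq:cv trajec X M x_n}. Taylor-expand $\mathrm{d}_\Omega$ around $X^{M,x}(t)$ instead of using its $1$-Lipschitz bound. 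The other components of the difference then enter only through $\nabla\mathrm{d}_\Omega(X^{M,x}(t))-n(x)=O(\sup_{s\le a}|X^{M,x}(s)-x|)$. This gives, uniformly for $t\in[0,a]$,
\[
\mathrm{d}_\Omega(X^{M,x_n}(t))\ge\mathrm{d}_\Omega(X^{M,x}(t))+\langle x_n-x,\,n(x)\rangle-D(a)|x_n-x|-CY^2|x_n-x|^2\;,
\]
with $D(a)\to0$ almost surely as $a\to0$.

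\textbf{Conclusion from this bound.} On $\{\tau^{M,x}_{\Omega^c}>a\}$ the first term is nonnegative. By~\eqref{eq:cone condition}, the right-hand side is positive once $D(a)$ is below the cone constant $c$ and $n$ is large. Therefore
\[
\limsup_n\mathbb{P}(\tau^{M,x_n}_{\Omega^c}\le a)\le\mathbb{P}(D(a)\ge c)+\mathbb{P}(\tau^{M,x}_{\Omega^c}\le a)\longrightarrow0\;.
\]
Only $\tau^{M,x}_{\Omega^c}>0$ is used. No lower envelope of $\mathrm{d}_\Omega(X^{M,x}(t))$, no time windows, and no distinction between $\Gamma^+$ and $\Gamma^{0,i}$ are needed. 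Your crude bound $|\mathrm{d}_\Omega(X^{M,x_n})-\mathrm{d}_\Omega(X^{M,x})|\le Y\delta_n$ is what discards the sign information carried by the cone condition, since $Y$ and $c$ are unrelated. That loss is what forces your multi-scale analysis.
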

\begin{lem}[Convergence of trajectories]\label{lem:subseq x_phi_n} Assume that $x\in\mathcal{R}\cup\mathrm{int}_{\partial\Omega}(\mathcal{I})$, or that~\eqref{eq:cone condition} is satisfied. For all $T\geq0$, %there exists a subsequence $(x_{\phi(n)})_{n\geq0}$ such that, almost-surely,
the following convergences in probability hold:
\begin{equation}\label{eq:cv sup X x_n and X with subseq}
\sup_{t\in[0,\, T]}|X^{x_{n}}(t)-X^x(t)|\overset{\mathbb{P}}{\underset{n\rightarrow\infty}{\longrightarrow}}0
\end{equation}
and
\begin{equation}\label{eq:cv tau x_n and tau with subseq}
\tau^{x_{n}}_{\Omega^c}\overset{\mathbb{P}}{\underset{n\rightarrow\infty}{\longrightarrow}}\tau^x_{\Omega^c}\;.
\end{equation}
\end{lem}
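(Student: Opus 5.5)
The plan is to prove the two convergences separately. Trajectorial convergence comes from standard stability of SDEs with locally Lipschitz coefficients. Exit-time convergence comes from a lower semicontinuity argument together with an upper semicontinuity argument, and the latter is where the regularity of $x$ or Lemma~\ref{lem:rapid exit time} enters.

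\textbf{Step 1: trajectorial convergence \eqref{eq:cv sup X x_n and X with subseq}.} Fix $T$ and $\epsilon>0$. By Assumption~\ref{ass:non expl}, choose $M$ with $\mathbb{P}(\sup_{[0,T]}|X^x|\geq M-1)\leq\epsilon$. The processes $X^{M,y}$ have globally Lipschitz coefficients, so a Gr\"onwall plus Doob estimate, identical to Step~1 of the proof of Lemma~\ref{lem:smoothness P_tf}, gives
\[
\mathbb{E}\Big[\sup_{[0,T]}|X^{M,x_n}-X^{M,x}|^2\Big]\leq C|x_n-x|^2\to0 .
\]
Since $X^{y}$ and $X^{M,y}$ coincide up to the exit of $\mathrm{B}(0,M)$ (Friedman's uniqueness), the event $\{\sup_{[0,T]}|X^{M,x_n}-X^{M,x}|<1\}\cap\{\sup_{[0,T]}|X^x|<M-1\}$ ensures that none of the processes leaves the ball. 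On that event the two pairs coincide, which yields \eqref{eq:cv sup X x_n and X with subseq}.

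\textbf{Step 2: lower semicontinuity, $\liminf\tau^{x_n}_{\Omega^c}\geq\tau^x_{\Omega^c}$ in probability.}

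\emph{Regular case.} If $x\in\mathcal{R}$, then $\tau^x_{\Omega^c}=0$ and there is nothing to prove.

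\emph{Irregular case.} If $x\in\mathcal{I}$, work on $[a,\tau^x_{\Omega^c}-\eta]$. By Theorem~\ref{thm:polarity Gamma+}, $\tau^x_{\partial\Omega}=\tau^x_{\Omega^c}>0$ almost surely. Hence on this interval $\mathrm{d}_\Omega(X^x)\geq\kappa>0$ for some random $\kappa$, and uniform convergence keeps $X^{x_n}$ inside $\Omega$ there. On $[0,a]$, Lemma~\ref{lem:rapid exit time} (which is exactly where the hypothesis $x\in\mathrm{int}_{\partial\Omega}\mathcal{I}$ or the cone condition \eqref{eq:cone condition} is used) gives $\limsup_n\mathbb{P}(\tau^{x_n}_{\Omega^c}\leq a)\to0$ as $a\to0$. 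Combining the two pieces, $\mathbb{P}(\tau^{x_n}_{\Omega^c}<\tau^x_{\Omega^c}-\eta)\to0$.

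\textbf{Step 3: upper semicontinuity, $\limsup\tau^{x_n}_{\Omega^c}\leq\tau^x_{\Omega^c}$ in probability.} This is the main obstacle. By the strong Markov property at $\tau:=\tau^x_{\Omega^c}$, together with \eqref{eq:proba hitting ext vs bdy} and \eqref{eq:bdy exit event}, the point $X^x(\tau)$ lies in $\Lambda\subset\mathcal{R}$. Regularity then implies that for every $\eta>0$, almost surely, there is $s\in(\tau,\tau+\eta)$ with $X^x(s)\in\overline{\Omega}^c$.

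This is the point that needs care. A regular point gives immediate exit from $\Omega$, that is $\tau_{\Omega^c}=0$, but uniform convergence needs a time at which the path is at positive distance \emph{outside} $\overline{\Omega}$, i.e. $\mathrm{d}_\Omega(X^x(s))<0$. To get this I will use the small-time expansions from the proof of Theorem~\ref{thm:reg points}, restricted to the two kinds of points in $\Lambda$:
\begin{itemize}
\item on $\mathrm{T}$, the law of the iterated logarithm gives $\liminf_{t\to0}\mathrm{d}_\Omega(X(t))/\sqrt{2t\log\log(1/t)}<0$;
\item on $\Gamma^-$ with $\nabla^2_{p,p}\mathrm{d}_\Omega=\mathbb{O}_d$, one has $\mathrm{d}_\Omega(X(t))/t\to\xi<0$.
\end{itemize}
Both are strictly negative lower limits, so $\mathrm{d}_\Omega(X^x(s))<0$ at some $s<\tau+\eta$. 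To apply this at the random point $X^x(\tau)$, I invoke the strong Markov property and the Blumenthal law with the statements, which hold for every deterministic starting point in $\Lambda$.

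Given such an $s$, uniform convergence on $[0,\tau+\eta]$ forces $\mathrm{d}_\Omega(X^{x_n}(s))<0$ for $n$ large, so $\tau^{x_n}_{\Omega^c}\leq\tau+\eta$ with probability tending to one. To handle the event $\tau=\infty$, or $\tau$ large, truncate by $T$ and use the uniform integrability of \eqref{eq:uniform integ tau}. Combining Steps~2 and~3 gives \eqref{eq:cv tau x_n and tau with subseq}.
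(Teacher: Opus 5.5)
Your architecture is the same as the paper's. You prove \eqref{eq:cv sup X x_n and X with subseq} by localization plus Lipschitz dependence on the initial point. Your $L^2$ Gr\"onwall--Doob bound replaces the paper's pathwise bound from Bismut, and that difference is immaterial. For the lower bound you use a positive margin of $\mathrm{d}_\Omega(X^x)$ on $[a,\tau^x_{\Omega^c}-\epsilon]$ together with Lemma~\ref{lem:rapid exit time}. For the upper bound you use a strictly negative excursion of $\mathrm{d}_\Omega(X^x)$ just after $\tau^x_{\Omega^c}$. The gap is in your Step~3 when $x\in\mathcal{R}$. There $\tau^x_{\Omega^c}=0$, so the upper bound is the whole content of \eqref{eq:cv tau x_n and tau with subseq}, namely $\tau^{x_n}_{\Omega^c}\to0$ in probability. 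Your argument rests on $X^x(\tau)\in\Lambda$, but \eqref{eq:proba hitting ext vs bdy}--\eqref{eq:bdy exit event} are only available for starting points in $\Omega\cup\mathcal{I}$. For $x\in\mathcal{R}$ you just have $X^x(\tau)=x$. The set $\mathcal{R}\setminus\Lambda$ is nonempty in general: it contains $\Upsilon$, $\Gamma^{0,r}$, and the points of $\Gamma^-$ with $\nabla^2_{p,\,p}\mathrm{d}_\Omega\neq\mathbb{O}_d$. For those points your two bullets say nothing. Regularity alone ($\tau_{\Omega^c}=0$) does not give $\inf_{[0,\eta]}\mathrm{d}_\Omega(X^x)<0$, which is exactly the subtlety you flagged yourself. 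So the step that is supposed to force $\tau^{x_n}_{\Omega^c}\le\eta$ is unsupported there.

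The missing ingredient is that \emph{every} regular point is entered strictly, i.e.\ $\mathrm{d}_\Omega(X^x(t))<0$ at arbitrarily small times. You get this by running through all cases of the proof of Theorem~\ref{thm:reg points}, not only the two cases in $\Lambda$:
\begin{itemize}
\item On $\Upsilon$: $\liminf_{t\to0}\mathrm{d}_\Omega(X(t))/(2t\log\log(1/t))=\tfrac12\lambda_{\min}(\Sigma(x)^\dagger\nabla^2_{p,\,p}\mathrm{d}_\Omega(x)\Sigma(x))<0$.
\item On $\Gamma^-$, whatever the Hessian: $\liminf_{t\to0}\mathrm{d}_\Omega(X(t))/t=\xi(x)<0$.
\item On $\Gamma^{0,r}$: Theorem~\ref{thm:small time asymp distance} with Proposition~\ref{prop:liminf bessel int BM} gives $\liminf_{t\to0}\log(1/t)^{3/2+\epsilon}t^{-3/2}\mathrm{d}_\Omega(X(t))=-\infty$. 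The $O(t^2\log\log(1/t)^2)$ remainder is negligible at that scale.
\end{itemize}
With this, $\kappa:=-\inf_{[0,\eta]}\mathrm{d}_\Omega(X^x)>0$ almost surely. Your Step~1 then gives $\mathbb{P}(\tau^{x_n}_{\Omega^c}>\eta)\le\mathbb{P}(\sup_{[0,\eta]}|X^{x_n}-X^x|\ge\kappa)\to0$. This is precisely the fact the paper's Step~2.a uses without comment when it writes $\inf_{[0,\epsilon]}\mathrm{d}_\Omega(X^x(t))<0$ from $\tau^x_{\Omega^c}=0$. Your restriction to $\Lambda$ is legitimate only in the irregular case, where Theorem~\ref{thm:polarity Gamma+} applies.

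Two smaller points:
\begin{itemize}
\item Lemma~\ref{lem:rapid exit time} is stated for the truncated processes $X^{M,x_n}$. You need the localization from your Step~1 to transfer it to $\tau^{x_n}_{\Omega^c}$.
\item $\tau^x_{\Omega^c}>0$ almost surely follows from $x\in\mathcal{I}$ itself, not from Theorem~\ref{thm:polarity Gamma+}.
\end{itemize}
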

 Notice that since $G_M,\,F_M,\,\Sigma_M$ are $C^\infty$ and admit bounded derivatives of all orders, it was shown in~\cite[Theorem 1.1]{Bismut} that the function
 $$(t,\,x)\in\R_+\times\R^{2d}\mapsto\nabla_xX^{M,\,x}(t)$$
 is a continuous function. Therefore, for any $T\geq0$, there exists a finite random variable $Y$ such that, almost-surely, for all $n\geq0$,
\begin{equation}\label{eq:cv trajec X M x_n}
\sup_{t\in[0,\, T]}\big|X^{M,\,x_{n}}(t)-X^{M,\,x}(t)\big|\leq Y|x_n-x|\;. 
\end{equation}
The inequality above is used several times in the proof of Lemmas~\ref{lem:rapid exit time} and~\ref{lem:subseq x_phi_n}

\begin{proof}[Proof of Lemma~\ref{lem:rapid exit time}]\

\textbf{Step 1}: Fix $T>0$. Let us start with the case $x\in\mathrm{int}_{\partial\Omega}(\mathcal{I})$ and consider $a\in(0,\,T)$. Since $\mathrm{dist}(x,\,\mathcal{R})>0$ and $X^{M,\,x_n}_{\tau^{M,\,x_n}_{\Omega^c}}\in\mathcal{R}$ almost-surely by Remark~\ref{rem:first exit point}, we deduce the existence of a constant $\delta>0$ such that for $n\geq0$ large enough,
$$\mathbb{P}\big(\tau^{M,\,x_n}_{\Omega^c}\leq a\big)\leq\mathbb{P}\bigg(\sup_{t\in[0,\,a]}\big|X^{M,\,x_n}(t)-x_n\big|>\delta\bigg)\;.$$
Furthermore, by~\eqref{eq:cv trajec X M x_n},
\begin{align*}
\sup_{t\in[0,\,a]}\big|X^{M,\,x_n}(t)-x_n\big|&\leq\sup_{t\in[0,\,a]}\big|X^{M,\,x_n}(t)-X^{M,\,x}(t)\big|+\sup_{t\in[0,\,a]}\big|X^{M,\,x}(t)-x\big|+|x_n-x|\\
&\leq (1+Y)|x_n-x|+\sup_{t\in[0,\,a]}\big|X^{M,\,x}(t)-x\big|\;.
\end{align*}
Consequently,
$$\limsup_{n\rightarrow\infty}\mathbb{P}\big(\tau^{M,\,x_n}_{\Omega^c}\leq a\big)\leq\mathbb{P}\bigg(\sup_{t\in[0,\,a]}\big|X^{M,\,x}(t)-x\big|\geq\delta\bigg)$$
which converges to zero when $a\rightarrow0$ given the continuity of the trajectories of $(X^{M,\,x}(t))_{t\geq0}$.

\textbf{Step 2}: Consider now the case $x\in\mathcal{I}\setminus\mathrm{int}_{\partial\Omega}(\mathcal{I})$ such that~\eqref{eq:cone condition} is satisfied. Let us first control the quantity $\big|\mathrm{d}_\Omega(X^{M,\,x_n}(t))-\mathrm{d}_\Omega(X^{M,\,x}(t))\big|$ in order to obtain a suitable lower-bound on the distance $\mathrm{d}_\Omega(X^{M,\,x_n}(t))$ for $t\in[0,\,a]$.

Let 
$$Z:=\sup_{t\in[0,\,T]}|X^{M,\,x}(t)|+Y\;.$$
Let us take $n\geq0$ large enough such that $|x_n-x|\leq1$. The triangle inequality along with~\eqref{eq:cv trajec X M x_n} ensure that
$$\sup_{t\in[0,\, T]}\big|X^{M,\,x_{n}}(t)\big|\leq Z\;.$$
By the Taylor-Lagrange inequality, for all $a\in(0,\,T)$, for all $t\in[0,\,a]$,
\begin{align*}
&\big|\mathrm{d}_\Omega(X^{M,\,x_n}(t))-\mathrm{d}_\Omega(X^{M,\,x}(t))-\langle X^{M,\,x_n}(t)-X^{M,\,x}(t),\,\nabla\mathrm{d}_\Omega(X^{M,\,x}(t))\rangle\big|\\
&\leq\sup_{|z|\leq Z}\big\Vert\nabla^2\mathrm{d}_\Omega(z)\big\Vert\,\big|X^{M,\,x_n}(t)-X^{M,\,x}(t)\big|^2\\
&\leq\sup_{|z|\leq Z}\big\Vert\nabla^2\mathrm{d}_\Omega(z)\big\Vert\,Y^2|x_n-x|^2\;.
\end{align*}
Furthermore,
\begin{align*}
&\big|\langle X^{M,\,x_n}(t)-X^{M,\,x}(t),\,\nabla\mathrm{d}_\Omega(X^{M,\,x}(t))\rangle-\langle x_n-x,\,n(x)\rangle\big|\\
&\leq\big|\langle X^{M,\,x_n}(t)-X^{M,\,x}(t),\,\nabla\mathrm{d}_\Omega(X^{M,\,x}(t))-n(x)\rangle\big|+\big|\langle X^{M,\,x_n}(t)-X^{M,\,x}(t)-(x_n-x),\,n(x)\rangle\big|
\end{align*}
Additionally, by~\eqref{eq:cv trajec X M x_n},
\begin{align*}
&\big|\langle X^{M,\,x_n}(t)-X^{M,\,x}(t),\,\nabla\mathrm{d}_\Omega(X^{M,\,x}(t))-n(x)\rangle\big|\\
&\leq\sup_{t\in[0,\,T]}|X^{M,\,x_n}(t)-X^{M,\,x}(t)|\,\sup_{|z|\leq Z}\big\Vert\nabla^2\mathrm{d}_\Omega(z)\big\Vert\,\sup_{t\in[0,\,a]}|X^{M,\,x}(t)-x|\\
&\leq Y|x_n-x|\,\sup_{|z|\leq Z}\big\Vert\nabla^2\mathrm{d}_\Omega(z)\big\Vert\,\sup_{t\in[0,\,a]}|X^{M,\,x}(t)-x|\;.
\end{align*}
Moreover, given that $|n_p(x)|=0$, using~\eqref{eq:sde} and the Lipschitz continuity of $G_M$, we deduce the existence of a constant $C>0$ such that for all $t\in[0,\,a]$,
\begin{align*}
\big|\langle X^{M,\,x_n}(t)-X^{M,\,x}(t)-(x_n-x),\,n(x)\rangle\big|&=\bigg|\int_0^t\langle G_M(p^{M,\,x_n}(s))-G_M(p^{M,\,x}(s)),\,n_q(x)\rangle\mathrm{d}s\bigg|\\
&\leq CYa|x_n-x|\;.
\end{align*}
Define the process
$$D:a\in[0,\,T]\mapsto Y\sup_{|z|\leq Z}\big\Vert\nabla^2\mathrm{d}_\Omega(z)\big\Vert\,\sup_{t\in[0,\,a]}|X^{M,\,x}(t)-x|+CYa\;.$$
It is clear that the process $(D(a))_{0\leq a\leq T}$ is independent of $n\geq0$ and that $D(a)$ converges to $0$ when $a\rightarrow0$. Furthermore, the previous inequalities ensure that for all $t\in[0,\,a]$, for all $n\geq0$,
$$\mathrm{d}_\Omega(X^{M,\,x_n}(t))\geq\mathrm{d}_\Omega(X^{M,\,x}(t))+\langle x_n-x,\,n(x)\rangle-D(a)|x_n-x|-\sup_{|z|\leq Z}\big\Vert\nabla^2\mathrm{d}_\Omega(z)\big\Vert\,Y^2|x_n-x|^2\;.$$
In particular, on the event $\{\tau^{M,\,x}_{\Omega^c}>a\}$, for all $t\in[0,\,a]$,
\begin{equation}\label{eq:indeq dist M x_n a}
\frac{\mathrm{d}_\Omega(X^{M,\,x_n}(t))}{|x_n-x|}\geq\frac{\langle x_n-x,\,n(x)\rangle}{|x_n-x|}-D(a)-\sup_{|z|\leq Z}\big\Vert\nabla^2\mathrm{d}_\Omega(z)\big\Vert\,Y^2|x_n-x|\;.
\end{equation}
Consequently,
\begin{align*}
\mathbb{P}\big(\tau^{M,\,x_n}_{\Omega^c}\leq a\big)&\leq\mathbb{P}\big(\tau^{M,\,x_n}_{\Omega^c}\leq a,\,\tau^{M,\,x}_{\Omega^c}>a\big)+\mathbb{P}\big(\tau^{M,\,x}_{\Omega^c}\leq a\big)\;.
\end{align*}
Also, by~\eqref{eq:indeq dist M x_n a},
\begin{align*}
\limsup_{n\rightarrow\infty}\mathbb{P}\big(\tau^{M,\,x_n}_{\Omega^c}\leq a,\,\tau^{M,\,x}_{\Omega^c}>a\big)\leq\mathbb{P}\big(D(a)\geq\liminf_{n\rightarrow\infty}\langle x_n-x,\,n(x)\rangle/|x_n-x|\big)\underset{a\rightarrow0}{\longrightarrow}0
\end{align*}
since~\eqref{eq:cone condition} holds and $D(a)\rightarrow0$ when $a\rightarrow0$ almost-surely. Therefore,
$$\lim_{a\rightarrow0}\limsup_{n\rightarrow\infty}\mathbb{P}\big(\tau^{M,\,x_n}_{\Omega^c}\leq a\big)\leq\mathbb{P}\big(\tau^{M,\,x}_{\Omega^c}=0\big)=0$$
since $x\in\mathcal{I}$, hence the proof.
\end{proof}
\begin{proof}[Proof of Lemma~\ref{lem:subseq x_phi_n}]
\textbf{Step 1}: Fix $T\geq0$. Using~\eqref{eq:cv trajec X M x_n}, let us first prove the convergence to zero in probability of the uniform distance between the processes $(X^{x_n}(t))_{t\in[0,\,T]}$ and $(X^{x}(t))_{t\in[0,\,T]}$. For $\epsilon>0$ and $M>0$,
\begin{align*}
&\mathbb{P}\bigg(\sup_{t\in[0,\,T]}\big|X^{x_n}(t)-X^x(t)\big|>\epsilon\bigg)\\
&\leq\mathbb{P}\bigg(\sup_{t\in[0,\,T]}\big|X^{x_n}(t)-X^x(t)\big|>\epsilon,\,\sup_{t\in[0,\,T]}|X^x(t)|\leq M/2,\,\sup_{t\in[0,\,T]}|X^{x_n}(t)|\leq M\bigg)\\
&+\mathbb{P}\bigg(\sup_{t\in[0,\,T]}|X^x(t)|\leq M/2,\,\sup_{t\in[0,\,T]}|X^{x_n}(t)|>M\bigg)+\mathbb{P}\bigg(\sup_{t\in[0,\,T]}|X^x(t)|>M/2\bigg)\;.
\end{align*}
Given that the processes $(X^x(t))_{t\in[0,\,T]}$, $(X^{M,\,x}(t))_{t\in[0,\,T]}$ and $(X^{x_n}(t))_{t\in[0,\,T]}$, $(X^{M,\,x_n}(t))_{t\in[0,\,T]}$ coincide until their first exit time from the ball $\mathrm{B}(0,\,M)$, we deduce that the first probability in the right-hand side of the inequality above admits the following upper-bound
$$\mathbb{P}\bigg(\sup_{t\in[0,\,T]}\big|X^{M,\,x_n}(t)-X^{M,\,x}(t)\big|>\epsilon\bigg)$$
which vanishes when $n\rightarrow\infty$ by~\eqref{eq:cv trajec X M x_n}. Similarly,
\begin{align*}
&\mathbb{P}\bigg(\sup_{t\in[0,\,T]}|X^x(t)|\leq M/2,\,\sup_{t\in[0,\,T]}|X^{x_n}(t)|>M\bigg)\\
&\leq\mathbb{P}\bigg(\sup_{t\in[0,\,T]}\big|X^{M,\,x_n}(t)-X^{M,\,x}(t)\big|>M/2\bigg)\underset{n\rightarrow\infty}{\longrightarrow}0\;.
\end{align*}
All in all,
$$\limsup_{n\rightarrow\infty}\mathbb{P}\bigg(\sup_{t\in[0,\,T]}\big|X^{x_n}(t)-X^x(t)\big|>\epsilon\bigg)\leq\mathbb{P}\bigg(\sup_{t\in[0,\,T]}|X^x(t)|>M/2\bigg)$$
which converges to zero when $M\rightarrow\infty$ given the non-explosion of the process $(X^x(t))_{t\in[0,\,T]}$.

\textbf{Step 2}: Let us now prove the convergence in probability of $\tau^{x_n}_{\Omega^c}$ to $\tau^{x}_{\Omega^c}$. Let us fix $\epsilon>0$.

\textbf{Step 2.a}: Consider first the case $x\in\mathcal{R}$. Assume that $|\tau^{x_n}_{\Omega^c}-\tau^{x}_{\Omega^c}|>\epsilon$. Since $\tau^{x}_{\Omega^c}=0$, we have that
$$\inf_{t\in[0,\,\epsilon]}\mathrm{d}_\Omega(X^{x}(t))<0\;\qquad\text{and}\qquad\inf_{t\in[0,\,\epsilon]}\mathrm{d}_\Omega(X^{x_n}(t))>0\;.$$
Therefore, on the event $\{\tau^{x_n}_{\Omega^c}>\epsilon\}$, the $1$-Lipschitz continuity of the signed distance $\mathrm{d}_\Omega$ ensures that
$$\bigg|\inf_{t\in[0,\,\epsilon]}\mathrm{d}_\Omega(X^{x}(t))\bigg|\leq\sup_{t\in[0,\,\epsilon]}\big(\mathrm{d}_\Omega(X^{x_n}(t))-\mathrm{d}_\Omega(X^{x}(t))\big)\leq\sup_{t\in[0,\,\epsilon]}\big|X^{x_n}(t)-X^x(t)\big|\;.$$
All in all, we deduce from Step 1 that
$$\mathbb{P}(\tau^{x_n}_{\Omega^c}>\epsilon)\leq\mathbb{P}\bigg(\sup_{t\in[0,\,\epsilon]}|X^{x_n}(t)-X^x(t)|\geq\big|\inf_{t\in[0,\,\epsilon]}\mathrm{d}_\Omega(X^{x}(t))\big|\bigg)\underset{n\rightarrow\infty}{\longrightarrow}0$$
since $\big|\inf_{t\in[0,\,\epsilon]}\mathrm{d}_\Omega(X^{x}(t))\big|$ is a positive random variable.

\textbf{Step 2.b}: Consider now $x\in\mathrm{int}_{\partial\Omega}(\mathcal{I})$, or that $x \in \mathcal I$ and~\eqref{eq:cone condition} is satisfied. The proof here is different given that the process $(X^x(t))_{t\geq0}$ does not exit $\Omega$ immediately since $\tau^x_{\Omega^c}>0$.

Let us take $a\in(0,\,T)$. Assume that $|\tau^{x_n}_{\Omega^c}-\tau^x_{\Omega^c}|>\epsilon$ and $\tau^{x_n}_{\Omega^c}>a$. Then, either $a<\tau^{x_n}_{\Omega^c}<\tau^x_{\Omega^c}-\epsilon$, in which case
$$\inf_{t\in[a,\,\tau^{x}_{\Omega^c}-\epsilon]}\mathrm{d}_\Omega(X^{x_n}(t))<0\qquad\text{and}\qquad\inf_{t\in[a,\,\tau^x_{\Omega^c}-\epsilon]}\mathrm{d}_\Omega(X^x(t))>0\;;$$
or $\tau^{x_n}_{\Omega^c}>\tau^x_{\Omega^c}+\epsilon$, in which case
$$\inf_{t\in[0,\,\tau^x_{\Omega^c}+\epsilon]}\mathrm{d}_\Omega(X^x(t))<0\qquad\text{and}\qquad\inf_{t\in[0,\,\tau^{x}_{\Omega^c}+\epsilon]}\mathrm{d}_\Omega(X^{x_n}(t))>0\;.$$
Now let
$$Y':=\min\bigg(\inf_{t\in[a,\,\tau^x_{\Omega^c}-\epsilon]}\mathrm{d}_\Omega(X^x(t)),\,-\inf_{t\in[0,\,\tau^x_{\Omega^c}+\epsilon]}\mathrm{d}_\Omega(X^x(t))\bigg)$$
which is a positive random variable. Given the $1$-Lipschitz continuity of the signed distance $\mathrm{d}_\Omega$, we deduce that on the event $\{|\tau^{x_n}_{\Omega^c}-\tau^x_{\Omega^c}|>\epsilon,\,\tau^x_{\Omega^c}<T,\,\tau^{x_n}_{\Omega^c}>a\}$, 
$$Y'\leq\sup_{t\in[0,\,T+\epsilon]}\big|\mathrm{d}_\Omega(X^{x_n}(t))-\mathrm{d}_\Omega(X^x(t))\big|\leq\sup_{t\in[0,\,T+\epsilon]}\big|X^{x_n}(t)-X^x(t)\big|\;.$$
All in all,
$$\mathbb{P}\big(|\tau^{x_n}_{\Omega^c}-\tau^x_{\Omega^c}|>\epsilon\big)\leq\mathbb{P}\bigg(\sup_{t\in[0,\,T+\epsilon]}\big|X^{x_n}(t)-X^x(t)\big|\geq Y'\bigg)+\mathbb{P}\big(\tau^x_{\Omega^c}\geq T\big)+\mathbb{P}\big(\tau^{x_n}_{\Omega^c}\leq a\big)\;.$$
Therefore, we deduce from Lemma~\ref{lem:rapid exit time} and Step 1 that
$$\lim_{a\rightarrow0}\limsup_{n\rightarrow\infty}\mathbb{P}\big(|\tau^{x_n}_{\Omega^c}-\tau^x_{\Omega^c}|>\epsilon\big)\leq\mathbb{P}\big(\tau^x_{\Omega^c}\geq T\big)$$
which vanishes when $T\rightarrow\infty$.
\end{proof}

\begin{proof}[Proof of Proposition~\ref{prop:boundary continuity h}]
Let $f \in C_b(\Lambda)$ and recall the set $\mathcal{R}_f$ defined in~\eqref{eq:def R_f}. Assume that $x\in\mathcal{R}_f\cup\mathrm{int}_{\partial\Omega}(\mathcal{I})$, or that $x \in \mathcal I$ and~\eqref{eq:cone condition} holds. The convergences in probability in Lemma~\ref{lem:subseq x_phi_n} ensure the almost-sure convergences through an appropriate subsubsequence $(x_{\phi(n)})_{n\geq0}$ of any subsequence of $(x_{n})_{n\geq0}$. Let us now prove that
\begin{equation}\label{eq:lim x_phi_n h}
\lim_{n\rightarrow\infty}h(x_{\phi(n)})=h(x)\;.
\end{equation}
To simplify the notations, we write below
\begin{align*}
\tau^{(n)}:=\tau_{\Omega^c}^{x_{\phi(n)}}\;,\qquad\tau:=\tau_{\Omega^c}^{x}\\
X^{(n)}:=X^{x_{\phi(n)}}\;,\qquad X:=X^x
\end{align*}
For $n\geq0$,
$$h(x_{\phi(n)})=\mathbb{E}\bigg[\mathrm{e}^{\int_0^{\tau^{(n)}}c(X^{(n)}(s))\mathrm{d}s}f(X^{(n)}(\tau^{(n)}))-\int_0^{\tau^{(n)}}\mathrm{e}^{\int_0^sc(X^{(n)}(r))\mathrm{d}r}g(X^{(n)}(s))\mathrm{d}s\bigg]\;.$$
In particular, the following equality holds for all $T\geq0$,
\begin{align*}
h(x_{\phi(n)})&=\mathbb{E}\bigg[\mathbf{1}_{\tau^{(n)}\geq T}\bigg(\mathrm{e}^{\int_0^{\tau^{(n)}}c(X^{(n)}(s))\mathrm{d}s}f(X^{(n)}(\tau^{(n)}))-\int_0^{\tau^{(n)}}\mathrm{e}^{\int_0^sc(X^{(n)}(r))\mathrm{d}r}g(X^{(n)}(s))\mathrm{d}s\bigg)\\
&+\mathbf{1}_{\tau^{(n)}<T}\bigg(\mathrm{e}^{\int_0^{\tau^{(n)}}c(X^{(n)}(s))\mathrm{d}s}f(X^{(n)}(\tau^{(n)}))-\int_0^{\tau^{(n)}}\mathrm{e}^{\int_0^sc(X^{(n)}(r))\mathrm{d}r}g(X^{(n)}(s))\mathrm{d}s\bigg)\bigg]\;.
\end{align*}
On the event $\{\tau^{(n)}<T\}$,
\begin{align*}
\big|X^{(n)}(\tau^{(n)})-X(\tau)\big|&\leq\big|X^{(n)}(\tau^{(n)})-X(\tau^{(n)})\big|+\big|X(\tau^{(n)})-X(\tau)\big|\\
&\leq\sup_{t\in[0,\,T]}\big|X^{(n)}(t)-X(t)\big|+\big|X(\tau^{(n)})-X(\tau)\big|
\end{align*}
which converges to zero almost-surely when $n\rightarrow\infty$. Hence,
$$X^{(n)}(\tau^{(n)})\underset{n\rightarrow\infty}{\longrightarrow}X(\tau)\;.$$
Let us now show that, almost-surely,
\begin{equation}\label{eq:cv f X_tau n}
f(X^{(n)}(\tau^{(n)}))\underset{n\rightarrow\infty}{\longrightarrow}f(X(\tau))\;.
\end{equation}
If $x\in\mathcal{I}$, Theorem~\ref{thm:polarity Gamma+} ensures that $X(\tau)\in\Lambda$ and $X^{(n)}(\tau^{(n)})\in\Lambda$ for all $n\geq0$. Given that $f\in C_b(\Lambda)$, the convergence~\eqref{eq:cv f X_tau n} follows. In the case $x\in\mathcal{R}_f$, $\tau=0$ and therefore $X(\tau)=x$. As a result, since $x\in\mathcal{R}_f$, we obtain~\eqref{eq:cv f X_tau n} as well. All in all, we conclude that, almost-surely,
\begin{align*}
&\mathbf{1}_{\tau^{(n)}<T}\bigg(\mathrm{e}^{\int_0^{\tau^{(n)}}c(X^{(n)}(s))\mathrm{d}s}f(X^{(n)}(\tau^{(n)}))-\int_0^{\tau^{(n)}}\mathrm{e}^{\int_0^sc(X^{(n)}(r))\mathrm{d}r}g(X^{(n)}(s))\mathrm{d}s\bigg)\\
&\underset{n\rightarrow\infty}{\longrightarrow}\mathbf{1}_{\tau<T}\bigg(\mathrm{e}^{\int_0^{\tau}c(X(s))\mathrm{d}s}f(X(\tau))-\int_0^{\tau}\mathrm{e}^{\int_0^sc(X(r))\mathrm{d}r}g(X(s))\mathrm{d}s\bigg)\;.
\end{align*}
Moreover, given the boundedness of the functions $f,\,g,\,c$ and recalling the constant $c_\infty$ in~\eqref{eq:def c_infty}, we also have that
\begin{align*}
&\mathbb{E}\bigg[\mathbf{1}_{\tau^{(n)}\geq T}\bigg(\mathrm{e}^{\int_0^{\tau^{(n)}}c(X^{(n)}(s))\mathrm{d}s}f(X^{(n)}(\tau^{(n)}))-\int_0^{\tau^{(n)}}\mathrm{e}^{\int_0^sc(X^{(n)}(r))\mathrm{d}r}g(X^{(n)}(s))\mathrm{d}s\bigg)\bigg]\\
&\leq||f||_\infty\mathbb{E}\bigg[\mathbf{1}_{\tau^{(n)}\geq T}\bigg(\mathbf{1}_{c_\infty\leq0}+\mathbf{1}_{c_\infty>0}\,\mathrm{e}^{\,c_\infty\tau^{(n)}}\bigg)\bigg]+||g||_\infty\mathbb{E}\bigg[\mathbf{1}_{\tau^{(n)}\geq T}\bigg(\mathbf{1}_{c_\infty\leq0}\,\tau^{(n)}+\mathbf{1}_{c_\infty>0}\,\frac{\mathrm{e}^{\,c_\infty\tau^{(n)}}}{c_\infty}\bigg)\bigg]\;.
\end{align*}
Therefore, taking the limit $T\rightarrow\infty$ and using Assumption~\ref{ass:Omega}, we obtain by the dominated convergence theorem that
$$\limsup_{T\rightarrow\infty}\limsup_{n\rightarrow\infty}\mathbb{E}\bigg[\mathbf{1}_{\tau^{(n)}\geq T}\bigg(\mathrm{e}^{\int_0^{\tau^{(n)}}c(X^{(n)}(s))\mathrm{d}s}f(X^{(n)}(\tau^{(n)}))-\int_0^{\tau^{(n)}}\mathrm{e}^{\int_0^sc(X^{(n)}(r))\mathrm{d}r}g(X^{(n)}(s))\mathrm{d}s\bigg)\bigg]=0\;.$$
Consequently, from any subsequence of $(x_n)_{n\geq0}$, we can extract a subsubsequence $(x_{\phi(n)})_{n\geq0}$ such that $h(x_{\phi(n)})$ converges to $h(x)$ when $n\rightarrow\infty$. This argument ensures that $h(x_n)$ converges to $h(x)$ when $n\rightarrow\infty$, hence the proof.
\end{proof}

\section{Proof of Proposition~\ref{prop:discont set}}\label{sec:counter example}

Let us now prove Proposition~\ref{prop:discont set}. We place ourselves in the two-dimensional setting. The goal is to provide an open set $\Omega\subset\R^{2}$ and a classical solution to~\eqref{eq:edp h} which is not in $C_b(\overline{\Omega})$ and whose discontinuity set has positive Lebesgue surface measure on $\partial\Omega$. The example relies on the construction of a smooth function whose zeros correspond to the Smith-Volterra-Cantor set.

\begin{lem}[Smith-Volterra-Cantor set]\label{lem: fat cantor set} For any $0<a<b$, there exists a non-negative function $U\in C^\infty([a,\,b])$ such that $U^{-1}(0)$ is a closed subset of $[a,\,b]$ with empty interior which admits a positive Lebesgue measure.
\end{lem}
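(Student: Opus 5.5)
The plan is the standard construction: build a fat Cantor set $K\subset[a,b]$, show it is the zero set of a smooth nonnegative function, and use the classical fact that every closed set in $\R$ is the zero set of some $C^\infty$ function.

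\textbf{Step 1 (the set).} Fix $L:=b-a$ and a summable sequence of removal lengths. Start from $K_0=[a,b]$. At stage $n\geq1$, remove from the middle of each of the $2^{n-1}$ remaining closed intervals an open interval of length $\ell_n:=L\,4^{-n}$. Each remaining interval at stage $n-1$ has length at least $L2^{-(n-1)}-\sum_{k<n}L4^{-k}$-type quantities, and a routine check shows this exceeds $\ell_n$, so the removal makes sense. The total removed length is
$$\sum_{n\geq1}2^{n-1}L\,4^{-n}=L/2.$$
Set $K:=\bigcap_n K_n$. Then $K$ is closed and has measure $L/2>0$. The maximal interval lengths at stage $n$ tend to $0$, so $K$ contains no interval and has empty interior.

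\textbf{Step 2 (the function).} The complement $[a,b]\setminus K$ is relatively open, so it is a countable union of disjoint open intervals $I_j=(a_j,b_j)$. It may also contain one-sided pieces at the endpoints, but $a,b\in K$ by construction, so none occur. Let
$$\varphi(s)=\mathrm{e}^{-1/s}\mathbf{1}_{s>0}.$$
On each $I_j$ define $u_j(y):=\varphi(y-a_j)\varphi(b_j-y)$, and set $u_j:=0$ outside $I_j$. Each $u_j$ is $C^\infty$ on $\R$, nonnegative, and positive exactly on $I_j$. Choose constants
$$c_j:=2^{-j}\Big(1+\max_{k\leq j}\|u_j^{(k)}\|_\infty\Big)^{-1}$$
and put $U:=\sum_j c_ju_j$. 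For every fixed $k$, the series of $k$-th derivatives converges uniformly, since its tail is bounded by $\sum_{j\geq k}2^{-j}$. Hence $U\in C^\infty([a,b])$ and $U\geq0$. The supports of the $u_j$ are disjoint and the $c_j$ are positive, so $U>0$ exactly on $\bigcup_jI_j$, and therefore $U^{-1}(0)=K$.

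\textbf{Expected obstacle.} The only delicate point is the uniform convergence of all derivatives, which is exactly what the choice of the $c_j$ ensures. Everything else is elementary.
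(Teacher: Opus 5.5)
Your proof is correct and follows the paper's route: take the Smith--Volterra--Cantor set in $[a,b]$, which is closed, has empty interior and measure $(b-a)/2$, and realize it as the zero set of a nonnegative smooth function. The only difference is that the paper gets the second step by citing Lee's Theorem 2.29, whereas you prove it directly with the weighted bump series $\sum_j c_j u_j$, which is the standard proof of that theorem specialized to one dimension. (Your $U$ also vanishes off $[a,b]$; this is harmless for the lemma as stated, and adding $\varphi(a-y)+\varphi(y-b)$ gives $U^{-1}(0)=K$ on all of $\R$ if needed.)
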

\begin{proof}
Let $0<a<b$. Let $\mathcal{S}$ be the Smith-Volterra-Cantor set on the interval $[a,\,b]$. It is well-known that $\mathcal{S}$ is a closed set with empty interior which admits a positive Lebesgue measure on $[a,\,b]$. By~\cite[Theorem 2.29]{lee2003smooth}, there exists a non-negative function $U\in C^\infty(\R)$ such that $U^{-1}(0)=\mathcal{S}$.
\end{proof}
\begin{proof}[Proof of Proposition~\ref{prop:discont set}] \textbf{Step 1}: We first construct a $C^\infty$ open set $\Omega$ in $\R^2$. Fix $0<a<b$ and let $U\in C^\infty([a,\,b])$ be the function obtained from Lemma~\ref{lem: fat cantor set}. Up to extending $U$ on $\R$ and multiplying it by a function with an appropriate sign, we can assume that $U\in C^\infty(\R)$, $U\geq0$ on $[b,\,+\infty)$ and $U\leq0$ on $(-\infty,\,a]$.

Take constants $m,\,L>0$ satisfying
\begin{equation}\label{eq:upper bound int_a b U}
\int_a^bU(s)\mathrm{d}s<m^2<L^2\;.
\end{equation}
Let us define the open sets
\begin{align*}
\mathcal{A}&:=\big\{(q,\,p)\in\R^2:\,q^2+\int_a^pU(s)\mathrm{d}s<m^2\big\}\;.\\
\mathcal{B}&:=\big\{q\in\R:\,|q|>L\big\}\times\R\;.
\end{align*}
Note that $\mathcal{A}$, $\mathcal{B}$ are disjoint $C^\infty$ open sets. Indeed, the smoothness of $\mathcal{B}$ is immediate. Regarding the open set $\mathcal{A}$, for any $(q,\,p)\in\partial\mathcal{A}$,
\begin{equation}\label{eq:boundary equality energy}
q^2+\int_a^pU(s)\mathrm{d}s=m^2\;.
\end{equation}
Therefore, the outward unitary normal vector from $\mathcal{A}$ at $(q,\,p)$ is given by
\begin{equation}\label{eq:def gradient defining fctn}
n(q,\,p)=\frac{1}{\sqrt{4q^2+U(p)^2}}\begin{pmatrix}2q \\
U(p)
\end{pmatrix}\in\R^2
\end{equation}
which is well defined given that $q=0$ and $p\in U^{-1}(0)$ cannot occur simultaneously by~\eqref{eq:boundary equality energy},~\eqref{eq:upper bound int_a b U} and the fact that $U^{-1}(0)\subset[a,\,b]$. Therefore, we deduce that $\mathcal{A}$ is $C^\infty$ since $U\in C^\infty(\R)$. In particular, the following open set 
$$\Omega:=\R^{2}\setminus\overline{\mathcal{A}\cup\mathcal{B}}$$ is also $C^\infty$. 

\textbf{Step 2}: Let $(X(t)=(q(t),\,p(t)))_{t\geq0}$ satisfying
$$\left\{ \begin{aligned} & \mathrm{d}q(t)=p(t)\mathrm{d}t\;,\\
 & \mathrm{d}p(t)=\mathrm{d}B(t)\;,
\end{aligned}
\right.$$
Take $c,\,g\equiv0$ and notice that Assumption~\ref{ass:coeff} holds. Let us now prove that Assumption~\ref{ass:Omega} also holds. Applying~\cite[Theorem 2.9 and Proposition 2.2]{champagnat2024quasi} to the process $(X(t))_{t\geq0}$ on the bounded-in-position cylinder set $\mathcal{B}^c$, we deduce that $(X(t))_{t\geq0}$ admits a unique quasi-stationary distribution on $\mathcal{B}^c$. Additionally, there exists a bounded function $\varphi$ on $\mathcal{B}^c$ and constants $C,\,\lambda,\,\beta>0$ such that for all $x\notin\mathcal{B}$, for all $t\geq0$,
$$\big|\mathbb{P}_x(\tau_{\mathcal{B}}>t)\mathrm{e}^{\lambda t}-\varphi(x)\big|\leq C\mathrm{e}^{-\beta t}\;.$$
Therefore,
$$\mathbb{P}_x(\tau_{\mathcal{B}}>t)\leq(||\varphi||_\infty+C)\mathrm{e}^{-\lambda t}\;.$$
Hence~\eqref{eq:uniform integ tau} since $\tau_{\Omega^c}\leq\tau_{\mathcal{B}}$ implies that $\mathbb{P}_x(\tau_{\Omega^c}>t)\leq\mathbb{P}_x(\tau_{\mathcal{B}}>t)$.

\textbf{Step 3}: Define the function
\begin{equation}\label{eq:expr hAB}
h_{\mathcal{A},\,\mathcal{B}}:x\in\Omega\mapsto\mathbb{P}_x(\tau_{\mathcal{A}}<\tau_{\mathcal{B}})\;.
\end{equation}
By Theorems~\ref{thm:edp eq pot} and~\ref{thm:existence sol PDE}, $h_{\mathcal{A},\,\mathcal{B}}\in C^\infty(\Omega)$ and satisfies
\begin{equation}\label{eq:edp h_AB}
  \left\{\begin{aligned}
    \mathcal{L}\,h_{\mathcal{A},\,\mathcal{B}}(x) & =0, &&\qquad x\in\Omega\;,\\
    h_{\mathcal{A},\,\mathcal{B}}(x) &=1, && \qquad x\in\Lambda\cap\partial\mathcal{A}\\
    h_{\mathcal{A},\,\mathcal{B}}(x) &=0, && \qquad x\in\Lambda\cap\partial\mathcal{B}\;.
  \end{aligned}\right. 
\end{equation}
Consider the following subset of $\partial\mathcal{A}$
$$\mathcal{D}:=\big\{(q,\,p)\in\partial\mathcal{A}:\,q>0,\;p\in U^{-1}(0)\cap U'^{-1}(0)\big\}\;.$$
Let us prove that $\mathcal{D}$ is a set of discontinuity points for $h_{\mathcal{A},\,\mathcal{B}}$ by showing that any point $x\in\mathcal{D}$ satisfies $h_{\mathcal{A},\,\mathcal{B}}(x)\in[0,\,1)$ and can be approached by a sequence of points $(x_n)_{n\geq0}$ on $\partial\mathcal{A}$ satisfying $h(x_n)=1$ for all $n\geq0$. 

Fix $x=(q,\,p)\in\mathcal{D}$. By~\eqref{eq:def gradient defining fctn},
$$\xi(x)=\frac{2pq}{\sqrt{4q^2+U(p)^2}}>0\;,$$
since $q,\,p>0$. Therefore, $x\in\Gamma^+$ and by Theorem~\ref{thm:reg points},
$$\mathbb{P}_x(\tau_\mathcal{A}>0)=1\;.$$
Hence, for any $\epsilon\in(0,\,1)$, there exists a constant $\delta>0$ such that
\begin{equation}\label{eq:ineq 1 tau_A}
\mathbb{P}_x(\tau_\mathcal{A}\leq\delta)\leq\epsilon/2\;.
\end{equation}
Furthermore, since $U(p)=U'(p)=0$, applying Theorem~\ref{thm:small time asymp distance} to the signed distance function $\mathrm{d}_\Omega$ and using the expression,
$$\lim_{t\rightarrow0}\frac{\mathrm{d}_\Omega(X(t))}{t}=\xi(x)>0.$$
As a result, up to decreasing the constant $\delta$, there exists a compact set $K_\delta\subset\Omega$ such that 
\begin{equation}\label{eq:ineq 2 X_delta K_delta}
\mathbb{P}_x(X(\delta)\notin K_\delta)\leq\epsilon/2\;.
\end{equation}
Let us show that $h_{\mathcal{A},\,\mathcal{B}}(x)\in[0,\,1)$. For any $T\geq0$,
\begin{align*}
1-h_{\mathcal{A},\,\mathcal{B}}(x)&=\mathbb{P}_x\big(\tau_{\mathcal{B}}<\tau_{\mathcal{A}}\big)\\
&\geq\mathbb{P}_x\big(\tau_\mathcal{A}>T+\delta,\,X(\delta)\in K_\delta,\,X(T+\delta)\in\mathcal{B}\big)\\
&=\mathbb{E}_x\big[\mathbf{1}_{\tau_\mathcal{A}>\delta,\,X(\delta)\in K_\delta}\mathbb{P}_{X(\delta)}\big(X(T)\in\mathcal{B},\,\tau_\mathcal{A}>T\big)\big]\\
&\geq\mathbb{P}_x(\tau_\mathcal{A}>\delta,\,X(\delta)\in K_\delta)\;\inf_{y\in K_\delta}\mathbb{P}_y(X(T)\in\mathcal{B},\,\tau_\mathcal{A}>T)\;.
\end{align*}
Consequently, by~\eqref{eq:ineq 1 tau_A} and~\eqref{eq:ineq 2 X_delta K_delta},
$$1-h_{\mathcal{A},\,\mathcal{B}}(x)\geq(1-\epsilon)\inf_{y\in K_\delta}\mathbb{P}_y(X(T)\in\mathcal{B},\,\tau_\mathcal{A}>T)\;.$$
The positivity of the infimum above follows from standard controllability arguments which can be found for instance in~\cite[Lemma 4.10]{cutoff_Langevin}. As a result, $h_{\mathcal{A},\,\mathcal{B}}(x)\in[0,\,1)$. 

Let us now construct a sequence of points $(x_n)_{n\geq0}$ on $\partial\mathcal{A}$ which converge to $x=(q,\,p)$ and such that $h_{\mathcal{A},\,\mathcal{B}}(x_n)=1$. Since $U^{-1}(0)$ is a set with empty interior, there exists a sequence $(p_n)_{n\geq0}$ in $[a,\,b]$ converging to $p$ such that for all $n\geq0$, $U(p_n)>0$. Define the sequence
$$\forall n\geq0\;,\qquad x_n:=\bigg(\sqrt{m^2-\int_a^{p_n}U(s)\mathrm{d}s},\,p_n\bigg)\in\partial\mathcal{A}\;.$$
We deduce from~\eqref{eq:def gradient defining fctn} that $x_n\in\mathrm{T}\cap\partial\mathcal{A}$ since $U(p_n)>0$. Therefore, by Theorem~\ref{thm:reg points}, $\mathbb{P}_{x_n}(\tau_\mathcal{A}=0)=1$ which yields $h_{\mathcal{A},\,\mathcal{B}}(x_n)=1$ by the definition of $h_{\mathcal{A},\,\mathcal{B}}$ , hence the discontinuity of $h_{\mathcal{A},\,\mathcal{B}}$ at $x$.

\textbf{Step 3}: To conclude the proof it remains to show that the surface measure $\sigma_1(\mathcal{D})$ is positive. Parametrizing the boundary set $\mathcal{D}$ in the $p$-coordinates using~\eqref{eq:boundary equality energy}, we obtain
\begin{align*}
\sigma_1(\mathcal{D})&=\int_{U^{-1}(0)\cap U'^{-1}(0)}\sqrt{1+\frac{U(p)^2/4}{m^2-\int_a^p U(s)\mathrm{d}s}}\mathrm{d}p\\
&=\int_{U^{-1}(0)\cap U'^{-1}(0)}\mathrm{d}p\;.
\end{align*}
Furthermore,
\begin{align*}
\sigma_1\big(U^{-1}(0)\cap U'^{-1}(0)\big)&=\sigma_1\big(U^{-1}(0)\big)-\sigma_1\big(U^{-1}(0)\cap (U'^{-1}(0))^c\big)\\
&=\sigma_1\big(U^{-1}(0)\big)
\end{align*}
since the set $U^{-1}(0)\cap (U'^{-1}(0))^c$ is composed of isolated points in $[a,\,b]$ and therefore has zero Lebesgue measure. The positivity of $\sigma_1(U^{-1}(0))$ ensured by Lemma~\ref{lem: fat cantor set} concludes the proof.
\end{proof}

\textbf{Acknowledgements}: The work of M. Ramil was partially supported by ANR-25-CE40-6875-01 (DySLoS).

\section{Appendix}
 
\begin{lem}[Hypoellipticity]\label{lem: hypoellip} The operator $\mathcal{L}$ is hypoelliptic.    
\end{lem}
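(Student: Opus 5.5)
We will verify Hörmander's bracket condition for $\mathcal{L}$ and then invoke Hörmander's theorem. The generator also appears in the time-dependent form $\partial_t - \mathcal{L} - c$ in Lemma~\ref{lem:smoothness P_tf}, so the condition has to be checked for both operators.

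First we put $\mathcal{L}$ in Hörmander form, $\mathcal{L} = X_0 + \frac12\sum_{k=1}^d X_k^2$. Here $X_k = \sum_{i} \Sigma_{ik}(q,p)\,\partial_{p_i}$ for $k\in\llbracket1,d\rrbracket$, which are the columns of $\Sigma$ viewed as vector fields in the $p$-directions. The drift is
\[
X_0 = \langle G(p),\nabla_q\rangle + \langle \widehat F(q,p),\nabla_p\rangle,
\]
where $\widehat F$ is $F$ corrected by the first-order term $-\frac12\sum_k (X_k\Sigma_{\cdot k})$ that arises when $\frac12\mathrm{Tr}(\Sigma\Sigma^\dagger\nabla^2_{p,p})$ is expanded as a sum of squares. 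This correction is smooth and points only in the $p$-directions. All coefficients are $C^\infty$ by Assumption~\ref{ass:coeff}, and the zeroth-order term $c$ does not affect hypoellipticity.

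Next we check the rank condition at each point $x=(q,p)$. Since $\Sigma(x)$ is invertible, the fields $X_1,\dots,X_d$ span $\{0\}\times\R^d$. For the $q$-directions we compute the brackets
\[
[X_k,X_0] = \sum_i \Sigma_{ik}\,\partial_{p_i}G(p)\cdot\nabla_q + (\text{$p$-directional terms}),
\]
whose $q$-component is $DG(p)\,\Sigma(x)e_k$. Because $G$ is a diffeomorphism, $DG(p)$ is invertible, and $\Sigma(x)$ is invertible as well. Hence the $q$-components of $[X_k,X_0]$, $k\in\llbracket1,d\rrbracket$, span $\R^d$. Combined with the $p$-directions already obtained, the Lie algebra generated by $X_0,\dots,X_d$ spans $\R^{2d}$ at every point using brackets of length at most two. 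Hörmander's theorem then shows that $\mathcal{L}+c$ is hypoelliptic.

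For the parabolic operator we work on $\R_+^*\times\R^{2d}$ with drift $-\partial_t + X_0$, taking the fields $X_k$ with zero $t$-component. The same brackets give all spatial directions, and $X_0-\partial_t$ together with them supplies the $t$-direction, so $\partial_t - \mathcal{L} - c$ is hypoelliptic too.

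There is no real obstacle here; the argument is a direct bracket computation. The only care needed is keeping the Itô–Stratonovich correction in $X_0$ and noting that it is tangent to the $p$-directions, so it cannot change the $q$-component of $[X_k,X_0]$.
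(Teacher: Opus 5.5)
Your proof is correct and follows the paper's argument. Both write $\mathcal{L}$ in H\"ormander form with the columns of $\Sigma$ as $p$-directional fields, note that these span the $p$-directions, and use that the $q$-components of the brackets $[X_k,X_0]$ form the invertible matrix $DG(p)\Sigma(x)$. Your extra check for $\partial_t-\mathcal{L}-c$ is a worthwhile addition, because the paper relies on hypoellipticity of that parabolic operator in Lemma~\ref{lem:smoothness P_tf} but only verifies the bracket condition for $\mathcal{L}$ itself.
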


\begin{proof}[Proof of Lemma~\ref{lem: hypoellip}]
We aim to use H\"{o}rmander's theorem in~\cite{HM} which provides a sufficient condition ensuring the hypoellipticity. To this extent, we notice that the operator $\mathcal{L}$ can be written in the following form
$$\mathcal{L}=Y_0+\sum_{i=1}^dY_i^2\;,$$
where for every $i\in\llbracket1,\,d\rrbracket$,
$$Y_i=\sum_{k=1}^d\Sigma_{k,\,i}\,\partial_{p_k}$$
and
$$Y_0=\sum_{j=1}^dG_j\,\partial_{q_j}+\sum_{j=1}^dF_j\,\partial_{p_j}-\sum_{j,\,k=1}^d\Sigma_{k,\,i}\,\partial_{p_k}\Sigma_{j,\,i}\,\partial_{p_j}\;.$$
H\"{o}rmander's theorem ensures that if the Lie algebra generated by $\{Y_0,\,Y_1,\ldots,\,Y_d\}$ spans the derivatives in all directions then the operator $\mathcal{L}$ is hypoelliptic. Let us first notice that since the matrix $\Sigma$ is invertible,
\begin{equation}\label{eq:span Y_i velocities}
\mathrm{Span}\{Y_1,\ldots,\,Y_d\}=\mathrm{Span}\{\partial_{p_1},\ldots,\,\partial_{p_d}\}\;.
\end{equation}
It remains to generate the partial derivatives in the $q$-coordinates. In order to do that let us consider for $i\in\llbracket1,\,d\rrbracket$ the following Lie bracket
$$[Y_i,\,Y_0]=Y_iY_0-Y_0Y_i\;.$$
Easy computations show the existence of smooth functions $(H_j)_{1\leq j\leq d}$ on $\Omega$ such that
\begin{align*}
[Y_i,\,Y_0]&=\sum_{k,\,j=1}^d\Sigma_{i,\,k}\,\partial_{p_k}G_j\,\partial_{q_j}+\sum_{j=1}^dH_j\,\partial_{p_j}\\
&=\sum_{j=1}^d(\Sigma\,DG^\dagger)_{i,\,j}\partial_{q_j}+\sum_{j=1}^dH_j\,\partial_{p_j}\;.
\end{align*}
Since the matrix $\Sigma\,DG^\dagger$ is invertible we deduce that the first summation spans all the partial derivatives in the $q$-coordinates while the second sum involves only partial derivatives in the $p$-coordinates. Therefore, it follows from~\eqref{eq:span Y_i velocities} that
$$\mathrm{Span}\{Y_1,\ldots,\,Y_d,\,[Y_1,\,Y_0],\ldots,\,[Y_d,\,Y_0]\}=\mathrm{Span}\{\partial_{q_1},\ldots,\,\partial_{q_d},\,\partial_{p_1},\ldots,\,\partial_{p_d}\}\;.$$
which concludes the proof.
\end{proof}
\bibliographystyle{plain}
\bibliography{biblio}

@article{derridj1971probleme,
  title={Un probl{\`e}me aux limites pour une classe d'op{\'e}rateurs du second ordre hypoelliptiques},
  author={Derridj, M.},
  booktitle={Annales de l'institut Fourier},
  volume={21},
  number={4},
  pages={99--148},
  year={1971}
}

@article{bernard2024existence,
  title={On the existence and asymptotic behavior of weak solution of the kinetic {F}okker-{P}lanck equation in a bounded domain with absorbing boundary},
  author={Bernard, E.},
  journal={Bulletin des Sciences Math{\'e}matiques},
  volume={197},
  pages={103523},
  year={2024},
  publisher={Elsevier}
}

@article{avelin2026weak,
  title={Weak and {P}erron Solutions for {S}tationary {K}ramers-{F}okker-{P}lanck Equations in {B}ounded {D}omains},
  author={Avelin, B. and Hou, M.},
  journal={Potential Analysis},
  volume={64},
  number={1},
  pages={11},
  year={2026},
  publisher={Springer}
}

@article{garain2022regularity,
  title={On regularity and existence of weak solutions to nonlinear {K}olmogorov-{F}okker-{P}lanck type equations with rough coefficients},
  author={Garain, P. and Nystr{\"o}m, K.},
  journal={arXiv preprint arXiv:2204.12277},
  year={2022}
}

@article{sanchez2023krein,
  title={On the {K}rein-{R}utman theorem and beyond},
  author={Sanchez, C. F. and Gabriel, P. and Mischler, S.},
  journal={arXiv:2305.06652},
  year={2023}
}

@article{kim2026sharp,
  title={Sharp regularity near the grazing set for kinetic {F}okker-{P}lanck equations},
  author={Kim, K. and Weidner, M.},
  journal={arXiv preprint arXiv:2603.04121},
  year={2026}
}

@article{ramil2022explicit,
  title={Explicit and sharp two-sided estimates for the killed {L}angevin process},
  author={Ramil, M.},
  journal={arXiv preprint arXiv:2209.02378},
  year={2022}
}

@article{zhu2024regularity,
  title={Regularity of kinetic {F}okker--{P}lanck equations in bounded domains},
  author={Zhu, Y.},
  journal={Annales Henri Lebesgue},
  volume={7},
  pages={1323--1366},
  year={2024}
}

@inproceedings{bony1969principe,
  title={Principe du maximum, in{\'e}galit{\'e} de {H}arnack et unicit{\'e} du probleme de {C}auchy pour les op{\'e}rateurs elliptiques d{\'e}g{\'e}n{\'e}r{\'e}s},
  author={Bony, J.-M.},
  booktitle={Annales de l'institut Fourier},
  volume={19},
  number={1},
  pages={277--304},
  year={1969}
}

@incollection{lee2003smooth,
  title={Smooth manifolds},
  author={Lee, J. M.},
  booktitle={Introduction to smooth manifolds},
  pages={1--29},
  year={2003},
  publisher={Springer}
}

@article{Bismut,
  title={A generalized formula of {I}t{\^o} and some other properties of stochastic flows},
  author={Bismut, J.-M.},
  journal={Zeitschrift f{\"u}r Wahrscheinlichkeitstheorie und verwandte Gebiete},
  volume={55},
  number={3},
  pages={331--350},
  year={1981},
  publisher={Springer}
}

@inproceedings{cattiaux1992stochastic,
  title={Stochastic calculus and degenerate boundary value problems},
  author={Cattiaux, P.},
  booktitle={Annales de l'institut Fourier},
  volume={42},
  number={3},
  pages={541--624},
  year={1992}
}

@inproceedings{dvoretzky1951some,
  title={Some problems on random walk in space},
  author={Dvoretzky, A. and Erd{\"o}s, P.},
  booktitle={Proc. Second Berkeley Symp. Math. Statist. Probab},
  pages={353--367},
  year={1951}
}

@article{lachal1997local,
  title={Local asymptotic classes for the successive primitives of {B}rownian motion},
  author={Lachal, A.},
  journal={The Annals of Probability},
  pages={1712--1734},
  year={1997},
  publisher={JSTOR}
}

@article{gantert1993inversion,
  title={An inversion of {S}trassen's law of the iterated logarithm for small time},
  author={Gantert, N.},
  journal={The Annals of Probability},
  volume={21},
  number={2},
  pages={1045--1049},
  year={1993},
  publisher={Institute of Mathematical Statistics}
}

@article{EK,
  title={Eyring-{K}ramers law for the underdamped {L}angevin process},
  author={Lee, S. and Ramil, M. and Seo, I.},
  journal={arXiv preprint arXiv:2503.12610},
  year={2025}
}

@book{maggi2012sets,
  title={Sets of finite perimeter and geometric variational problems: an introduction to {G}eometric Measure Theory},
  author={Maggi, F.},
  volume={135},
  year={2012},
  publisher={Cambridge University Press}
}

@book {F,
    AUTHOR = {Friedman, A.},
     TITLE = {Stochastic differential equations and applications. {V}ol. 1},
      NOTE = {Probability and Mathematical Statistics, Vol. 28},
 PUBLISHER = {Academic Press, New
              York-London},
      YEAR = {1975},   
}

@article{spitzer1958some,
  title={Some theorems concerning 2-dimensional {B}rownian motion},
  author={Spitzer, F.},
  journal={Trans. Amer. Math. Soc},
  volume={87},
  number={1},
  pages={187--197},
  year={1958},
  publisher={Springer}
}

@article{palais1960,
  title={Extending diffeomorphisms},
  author={Palais, R. S.},
  journal={Proceedings of the American Mathematical Society},
  volume={11},
  number={2},
  pages={274--277},
  year={1960},
  publisher={JSTOR}
}

@article {Menozzi,
    AUTHOR = {Konakov, V. and Menozzi, S. and Molchanov,
              S.},
     TITLE = {Explicit parametrix and local limit theorems for some
              degenerate diffusion processes},
   JOURNAL = {Ann. Inst. Henri Poincar\'{e} Probab. Stat.},
  FJOURNAL = {Annales de l'Institut Henri Poincar\'{e} Probabilit\'{e}s et
              Statistiques},
    VOLUME = {46},
      YEAR = {2010},
    NUMBER = {4},
     PAGES = {908---923}, 
}

@article {WeylTube,
    AUTHOR = {Weyl, H.},
     TITLE = {On the {V}olume of {T}ubes},
   JOURNAL = {Amer. J. Math.},
  FJOURNAL = {American Journal of Mathematics},
    VOLUME = {61},
      YEAR = {1939},
    NUMBER = {2},
     PAGES = {461--472}, 
}

@article{Mischler,
  title={The {K}inetic {F}okker--{P}lanck equation in a domain: {U}ltracontractivity, hypocoercivity, and long-time asymptotic behavior},
  author={Carrapatoso, K. and Mischler, S.},
  journal={Rendiconti Lincei},
  volume={35},
  number={4},
  pages={643--680},
  year={2025}
}

@article {Vel,
    AUTHOR = {Hwang, H. J. and Jang, J. and Vel\'{a}zquez, J. J. L.},
     TITLE = {The {F}okker-{P}lanck equation with absorbing boundary
              conditions},
   JOURNAL = {Arch. Ration. Mech. Anal.},
  FJOURNAL = {Archive for Rational Mechanics and Analysis},
    VOLUME = {214},
      YEAR = {2014},
    NUMBER = {1},
     PAGES = {183--233}, 
}

@article {HM,
    AUTHOR = {H\"{o}rmander, L.},
     TITLE = {Hypoelliptic second order differential equations},
   JOURNAL = {Acta Math.},
  FJOURNAL = {Acta Mathematica},
    VOLUME = {119},
      YEAR = {1967},
     PAGES = {147--171}, 
}

@book{fichera1959unified,
  title={On a unified theory of boundary value problems for elliptic-parabolic equations of second order},
  author={Fichera, G.},
  year={1959},
  publisher={Mathematics Research Center, United States Army, University of Wisconsin New}
}

@article{cutoff_Langevin,
  title={Asymptotic stability and cut-off phenomenon for the underdamped {L}angevin dynamics},
  author={Lee, S. and Ramil, M. and Seo, I.},
  journal={Accepted in Annals of Applied Probability},
  year={2023}
}

@inproceedings{lelievre2024estimation,
  title={Estimation of statistics of transitions and {H}ill relation for {L}angevin dynamics},
  author={Leli{\`e}vre, T. and Ramil, M. and Reygner, J.},
  booktitle={Annales de l'Institut Henri Poincar{\'e} (B) Probabilit{\'e}s et Statistiques},
  volume={60},
  number={3},
  pages={1645--1683},
  year={2024},
  organization={Institut Henri Poincar{\'e}}
}

@article{champagnat2024quasi,
  title={Quasi-stationary distribution for kinetic {SDE}s with low regularity coefficients},
  author={Champagnat, N. and Leli{\`e}vre, T. and Ramil, M. and Reygner, J. and Villemonais, D.},
  journal={arXiv preprint arXiv:2410.01042},
  year={2024}
}

@book {GT,
    AUTHOR = {Gilbarg, D. and Trudinger, N. S.},
     TITLE = {Elliptic partial differential equations of second order},
    SERIES = {Classics in Mathematics},
      NOTE = {Reprint of the 1998 edition},
 PUBLISHER = {Springer-Verlag, Berlin},
      YEAR = {2001}, 
}

@article{LelRamRey,
  title={A probabilistic study of the kinetic {F}okker-{P}lanck equation in cylindrical domains},
  author={Leli{\`e}vre, T. and Ramil, M. and Reygner, J.},
  journal={Journal of Evolution Equations},
volume = 22,
number= 38,
  year={2022}
}
 
\end{document}